\newtheorem{theorem}{Theorem}
\newtheorem{corollary}[theorem]{Corollary}
\newtheorem{lemma}[theorem]{Lemma}
\newtheorem{proposition}[theorem]{Proposition}
\newenvironment{proof}[1][Proof]{\noindent\textbf{#1.} }{\ \rule{0.5em}{0.5em}}
\numberwithin{equation}{section}
\begin{document}

\textbf{Classical solvability of the multidimensional free boundary
problem for the thin film equation in the case of partial wetting.}

\bigskip

\textbf{S.P. Degtyarev}

\bigskip

\textbf{Institute of Applied Mathematics and Mechanics, National
Academy of Sciences of Ukraine, Donetsk,}

\textbf{State Institute of Applied Mathematics and Mechanics,
Donetsk.}

\bigskip

E-mail: \ degtyar@i.ua

\bigskip

 \ \ \ \ \ \ \ \ \ \ \ \ \ \ \ \ \ \ To the memory of my dear  Big chief Academician I.V.Skrypnik

\bigskip

\begin{abstract}
We prove locally in time the existence of the unique smooth solution
(including smooth interface) to the multidimensional free boundary
problem for the thin film equation in the case of partial wetting.
We also obtain the Schauder estimates and solvability for the
Dirichlet and the Neumann problem for a linear degenerate parabolic
equation of fourth order. The final expanded version of this paper is available at AIMS Journals, Discrete and Continuous Dynamical Systems - A at http://aimsciences.org/article/doi/10.3934/dcds.2017156
\end{abstract}

\bigskip

Key words: thin film equation, free boundary problem, degenerate
parabolic equation, Schauder's estimates, smooth solution.

\bigskip

MSC:  35R35, 35K55, 35K65,

\section{Introduction.}
\label{s1}

The present paper is devoted to the studying of a local in time
smooth solution to a free boundary problem for the thin film
equation in multidimensional setting. The literature on the thin
film equation is so numerous that it is impossible to give the
complete overview in this brief introduction. Among papers on the
thin film equation in the one dimensional or multidimensional
setting we mention only the papers \cite{11} - \cite{Liouville} and
we refer the reader to these papers on questions on physical origins
of the model. At the same time the literature on smoothness of the
solutions to the thin film models are far not so numerous even for
the case of one spatial variable. Regularity and smoothness of the
solution and it's free boundary in the one dimensional setting was
obtained in the papers \cite{11} - \cite{G12.2}. As for the case of
more than one spatial variable (the multidimensional setting), the
author is aware only of the paper \cite{J1} (see also the paper
\cite{Galakt} in this connection). It is well known that
multidimensional setting is fundamentally differ from the
one-dimensional one. In the one dimensional case the free boundary
is just a point at each moment of time. So there is no the question
about the smoothness of the free boundary with respect to the
spatial variables. Instead, in the multidimensional setting the
problem require the studying of the smoothness of the free boundary
not only with respect to time but also with respect to the spatial
variables.

In the present paper we consider the free boundary model for the
thin film equation in the case of partial wetting. In fact, the
present paper can be seen as a generalization to the
multidimensional setting of the paper \cite{11}. So all physical
foundations for the mathematical model below can be found in
\cite{11}.

Let us turn to the formal mathematical statement of the problem. Let
$N\geq1$ be an integer, $T>0$.\ Let $Q$ be a (non-cylindrical)
bounded domain in $R^{N}\times\lbrack0,T]$ with the lateral boundary
$S_{T}$. Denote also for each $t\in\lbrack0,T]$ the open section
$Q_{t}=\{(y,\tau)\in Q:\tau=t\in\lbrack0,T]\}\subset R^{N}$. We
denote independent variables by $(y,\tau)$ in view of a subsequent
transformation of the problem to new variables. We denote for
further $Q_{0}=\{(y,0)\in Q\}\equiv\Omega$, where $\Omega$ is a
given domain in $R^{N}$. In this notation
$S_{T}=\{(y,\tau):\tau\in\lbrack0,T],y\in\partial Q_{\tau}\}$. The
domain $Q$ is unknown and has to be determined together with the
unknown nonnegative function $h(y,\tau)$, $(y,\tau)\in Q$, by the
conditions

\begin{equation}
\frac{\partial h}{\partial\tau}+\nabla(h^{2}\nabla\Delta h)=0,\quad(y,\tau)\in
Q, \label{1.1}%
\end{equation}

\bigskip%
\begin{equation}
h(y,\tau)=0,\quad y\in\partial Q_{\tau},\quad\text{that is}\quad(y,\tau)\in
S_{T}, \label{1.2}%
\end{equation}

\bigskip%
\begin{equation}
\frac{\partial h}{\partial\overline{n}_{\tau}}(y,\tau)=g(y,\tau),\quad
y\in\partial Q_{\tau},\quad\text{that is}\quad(y,\tau)\in S_{T}, \label{1.3}%
\end{equation}

\begin{equation}
h(y,\tau)>0,\quad y\in Q_{\tau},\quad\text{in open}\quad Q_{\tau}, \label{1.4}%
\end{equation}

\begin{equation}
h(y,0)=h_{0}(y),\quad y\in\overline{Q_{0}}\equiv\overline{\Omega}. \label{1.5}%
\end{equation}
Here $\nabla=(\partial/\partial y_{1},...,\partial/\partial y_{N})$,
$\Delta$ is the Laplace operator, $\Omega$ is a given domain in
$R^{N}$, $g(y,\tau)$ is a given function on
$R^{N}\times\lbrack0,T]$, $h_{0}(y)$ is a given function on
$\overline{\Omega}$, $\overline{n}_{\tau}$ is the unite outward
normal to $\partial Q_{\tau}$. To formulate strict conditions on the
data $\Omega$, $g(y,\tau)$, $h_{0}(y)$ we have to introduce some
function spaces we use below.

Let $M$ be a positive integer. In the space $R^{M}$ we use standard
H\"{o}lder spaces $C^{\overline{l}}(R^{M})$, where $\overline{l}=(l_{1}%
,l_{2},...,l_{M})$, $l_{i}$ are arbitrary positive non-integers. The
norm in such spaces is defined by

\begin{equation}
\left\Vert u\right\Vert _{C^{\overline{l}}(R^{M})}\equiv\left\vert
u\right\vert _{R^{M}}^{(\overline{l})}=\left\vert u\right\vert _{R^{M}}%
^{(0)}+\sum_{i=1}^{M}\left\langle u\right\rangle _{x_{i},R^{M}}^{(l_{i})},
\label{s1.9}%
\end{equation}

\begin{equation}
\left\langle u\right\rangle _{x_{i},R^{M}}^{(l_{i})}=\sup_{x\in R^{M}%
,h>0}\frac{\left\vert D_{x_{i}}^{[l_{i}]}u(x_{1},...,x_{i}+h,...,x_{M}%
)-D_{x_{i}}^{[l_{i}]}u(x)\right\vert }{h^{l_{i}-[l_{i}]}}, \label{s1.10}%
\end{equation}

\[
\langle u\rangle^{\overline{l}}\equiv \sum_{i=1}^{M}\left\langle
u\right\rangle _{x_{i},R^{M}}^{(l_{i})},
\]
where $[l_{i}]$ is the integer part of the number $l_{i}$, $D_{x_{i}}%
^{[l_{i}]}u$ is the derivative of order $[l_{i}]$ with respect to the variable
$x_{i}$ of a function $u$.

\begin{proposition}
\label{Ps1.1}

Seminorm \eqref{s1.10} can be equivalently defined by (\cite{Triebel}%
,\cite{Sol15}, \cite{Gol18} )%

\begin{equation}
\left\langle u\right\rangle _{x_{i},R^{M}}^{(l_{i})}\simeq\sup_{x\in
R^{M},h>0}\frac{\left\vert \Delta_{h,x_{i}}^{k}u(x)\right\vert }{h^{l_{i}}%
},\quad k>l_{i}, \label{s1.11}%
\end{equation}
where $\Delta_{h,x_{i}}u(x)=u(x_{1},...,x_{i}+h,...,x_{N})-u(x)$ is the
difference from a function $u(x)$ with respect to the variable $x_{i}$ with a
step $h$, $\Delta_{h,x_{i}}^{k}u(x)=$ $\Delta_{h,x_{i}}\left(  \Delta
_{h,x_{i}}^{k-1}u(x)\right)  =\left(  \Delta_{h,x_{i}}\right)  ^{k}u(x)$ is
the difference of power $k$.
\end{proposition}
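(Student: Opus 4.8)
The plan is to reduce the statement to one dimension: freezing all coordinates other than $x_{i}$ and writing $x=x_{i}$, $l=l_{i}$, $m=[l_{i}]$, $\alpha=l-m\in(0,1)$, and $k>l$ an integer (so $k\ge m+1$), it suffices to prove that
\[
A_{\mathrm{cl}}:=\sup_{x\in R,\,h>0}\frac{\left|D_{x}^{m}u(x+h)-D_{x}^{m}u(x)\right|}{h^{\alpha}}\quad\text{and}\quad A_{k}:=\sup_{x\in R,\,h>0}\frac{\left|\Delta_{h}^{k}u(x)\right|}{h^{l}}
\]
are comparable, with constants depending only on $k$ and $l$ (both quantities annihilate polynomials of degree $\le m$, so the equivalence is genuinely one of seminorms). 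Throughout I will use the elementary bound $\left|\Delta_{h}^{j}v(x)\right|\le 2^{j-1}\sup_{y}\left|\Delta_{h}v(y)\right|$, obtained by writing $\Delta_{h}^{j}=\Delta_{h}^{j-1}\circ\Delta_{h}$ and estimating $\left|\Delta_{h}^{j-1}w\right|\le 2^{j-1}\left\|w\right\|_{\infty}$.

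For the inequality $A_{k}\le C(k,l)\,A_{\mathrm{cl}}$ I would use that $D_{x}^{m}u$ exists, so that $\Delta_{h}^{m}u(x)=\int_{[0,h]^{m}}D_{x}^{m}u(x+s_{1}+\dots+s_{m})\,ds$ and hence $\Delta_{h}^{k}u(x)=\Delta_{h}^{k-m}\!\left(\Delta_{h}^{m}u\right)(x)=\int_{[0,h]^{m}}\!\left(\Delta_{h}^{k-m}D_{x}^{m}u\right)\!(x+s_{1}+\dots+s_{m})\,ds$. Applying the elementary bound with $v=D_{x}^{m}u$ and $j=k-m\ge1$, together with the Hölder continuity $\sup_{y}\left|\Delta_{h}D_{x}^{m}u(y)\right|\le A_{\mathrm{cl}}\,h^{\alpha}$, gives $\left|\Delta_{h}^{k}u(x)\right|\le 2^{k-m-1}h^{m}A_{\mathrm{cl}}h^{\alpha}=2^{k-m-1}A_{\mathrm{cl}}\,h^{l}$, which is the desired bound.

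The reverse inequality $A_{\mathrm{cl}}\le C(k,l)\,A_{k}$ is the main obstacle, since one must recover the genuine derivative $D_{x}^{m}u$ and its Hölder modulus from control of a single $k$-th difference, i.e.\ "invert" the difference operator. I would argue via a dyadic (Littlewood--Paley) decomposition $u=\sum_{j\ge0}u_{j}$ with $\widehat{u_{j}}$ supported in $\{|\xi|\sim2^{j}\}$, using two facts: (i) Bernstein's inequalities $\left\|D_{x}^{m}u_{j}\right\|_{\infty}+2^{-j}\left\|D_{x}^{m+1}u_{j}\right\|_{\infty}\le C\,2^{jm}\left\|u_{j}\right\|_{\infty}$; and (ii) the reverse frequency-localized estimate $\left\|u_{j}\right\|_{\infty}\le C\,2^{-jl}A_{k}$, which holds because $u_{j}=\widetilde{\varphi}_{j}\ast\Delta_{2^{-j}}^{k}u$ with $\left\|\widetilde{\varphi}_{j}\right\|_{L^{1}}$ bounded uniformly in $j$ (one sets $\widehat{\widetilde{\varphi}_{j}}(\xi)=(e^{i2^{-j}\xi}-1)^{-k}\psi(2^{-j}\xi)$ with $\psi$ smooth and supported in an annulus avoiding the zeros of $e^{it}-1$). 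Combining (i) and (ii) yields $\sum_{j}\left\|D_{x}^{m}u_{j}\right\|_{\infty}\le CA_{k}\sum_{j}2^{-j\alpha}<\infty$, so $D_{x}^{m}u=\sum_{j}D_{x}^{m}u_{j}$ converges uniformly and is continuous; and splitting $D_{x}^{m}u(x+h)-D_{x}^{m}u(x)$ into the sum over $2^{-j}\ge h$, bounded by $h\sum_{2^{-j}\ge h}\left\|D_{x}^{m+1}u_{j}\right\|_{\infty}\le CA_{k}\,h^{\alpha}$, plus the sum over $2^{-j}<h$, bounded by $2\sum_{2^{-j}<h}\left\|D_{x}^{m}u_{j}\right\|_{\infty}\le CA_{k}\,h^{\alpha}$, gives $A_{\mathrm{cl}}\le C(k,l)A_{k}$.

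I would then remark that the converse also admits a Fourier-free proof, as in \cite{Triebel},\cite{Sol15},\cite{Gol18}: the Marchaud inequality reduces a general $k>l$ to the minimal case $k=m+1$, and then a mollification $u_{\varepsilon}=u\ast\rho_{\varepsilon}$ with a Cauchy-in-$C^{m}$ argument as $\varepsilon\to0$ recovers $D_{x}^{m}u$ and its modulus of continuity. In either approach the crux is the reverse estimate, because inverting a single difference operator is precisely where the frequency localization — or, classically, Marchaud's telescoping argument — is indispensable.
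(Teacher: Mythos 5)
The paper does not prove Proposition \ref{Ps1.1} at all: it is quoted as a known equivalence with references to \cite{Triebel}, \cite{Sol15}, \cite{Gol18}, so there is no in-paper argument to compare yours with. Your proof is in substance the standard argument from those sources and is essentially correct: the direction $\sup_{h}h^{-l_i}|\Delta_{h,x_i}^{k}u|\le C\left\langle u\right\rangle_{x_i}^{(l_i)}$ via the integral representation of $\Delta_h^{m}$ together with the crude bound $|\Delta_h^{j}v|\le 2^{j-1}\sup_y|\Delta_h v(y)|$ is fine, and the converse via dyadic frequency blocks, Bernstein's inequalities, and inversion of the symbol $(e^{ih\xi}-1)^{k}$ on an annulus (or, Fourier-free, Marchaud's inequality plus mollification) is the classical route. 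Two points deserve care. First, as written, the decomposition $u=\sum_{j\ge0}u_j$ with every $\widehat{u_j}$ supported in an annulus $\{|\xi|\sim2^{j}\}$ is not a decomposition of $u$: either you use the homogeneous decomposition over all $j\in\mathbb{Z}$ (and then must say a word about convergence modulo polynomials), or you keep an inhomogeneous low-frequency block, for which the multiplier $(e^{i2^{-j}\xi}-1)^{-k}$ is not admissible and which you control instead by $|u|_{R^{M}}^{(0)}$ --- harmless here, since the space $C^{\overline{l}}(R^{M})$ of the paper carries the sup norm, but then what you actually prove is equivalence of the seminorms up to the term $|u|_{R^{M}}^{(0)}$, which is all that is needed. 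Second, your parenthetical claim that the statement is ``genuinely one of seminorms'' because both sides annihilate polynomials of degree $\le m$ is not accurate when $k\ge m+2$: for $u(x)=x_i^{m+1}$ the right-hand side of \eqref{s1.11} vanishes while \eqref{s1.10} is infinite, so the pure seminorm equivalence fails for unbounded functions and must be read, as in the cited references, for functions of the class under consideration.
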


The same is also valid not only for the whole space $R^{M}$ but also
for it's subsets of the form
$R^{M}\cap\{x_{i_{1}},x_{i_{2}},...,x_{i_{K}}\geq0\}$ with $K\leq
M$. It is known that functions from the space
$C^{\overline{l}}(R^{M})$ have also mixed derivatives up to definite
orders and all derivatives are H\"{o}lder continuous with respect to
all variables with some exponents in accordance
with ratios between the exponents $l_{i}$. Namely, if $\overline{k}%
=(k_{1},...,k_{M})$ with nonnegative integers $k_{i}$, $k_{i}\leq\lbrack
l_{i}]$, and
\begin{equation}
\omega=1-\sum_{i=1}^{N}\frac{k_{i}}{l_{i}}>0, \label{s1.11.1}%
\end{equation}
then (see for example \cite{Sol15} )
\begin{equation}
D_{x}^{\overline{k}}u(x)\in C^{\overline{d}}(R^{M}),\ \ \ \ \Vert
D_{x}^{\overline{k}}u\Vert_{C^{\overline{d}}(R^{M})}\leq C\Vert u\Vert
_{C^{\overline{l}}(R^{M})}, \label{s1.12}%
\end{equation}
where
\begin{equation}
\overline{d}=(d_{1},...,d_{M}),\, \,d_{i}=\omega l_{i}. \label{s1.12.1}%
\end{equation}
Moreover, relation \eqref{s1.12} is valid not only for $R^{M}$ but for any
domain $\Omega\subset R^{M}$ with sufficiently smooth boundary and we have%

\begin{equation}
\Vert D_{x}^{\overline{k}}u\Vert_{C^{\overline{d}}(\overline{\Omega})}\leq
C\Vert u\Vert_{C^{\overline{l}}(\overline{\Omega})}. \label{s1.12+1}%
\end{equation}
For special domains of the form $\Omega_{+}=$ $R^{M}\cap\{x_{i_{1}},x_{i_{2}%
},...,x_{i_{K}}\geq0\}$ we have even more strong inequality just for seminorms%

\begin{equation}
{\sum\limits_{\overline{k}}}{\sum\limits_{i=1}^{M}}\left\langle D_{x}%
^{\overline{k}}u\right\rangle _{x_{i},\overline{\Omega}_{+}}^{(d_{i})}\leq
C\sum_{i=1}^{M}\left\langle u\right\rangle _{x_{i},\overline{\Omega}_{+}%
}^{(l_{i})}. \label{s1.12+2}%
\end{equation}
Here the sum is taken over all $\overline{k}$ with the property
\eqref{s1.11.1} and $d_{i}$ are defined in \eqref{s1.12.1}.

The analog of this estimate for an arbitrary smooth domain $\Omega$ (including
bounded domains) is%

\begin{equation}
{\sum\limits_{\overline{k}}}{\sum\limits_{i=1}^{M}}\left\langle D_{x}%
^{\overline{k}}u\right\rangle _{x_{i},\overline{\Omega}}^{(\widehat{d}_{i}%
)}\leq C\left(  \sum_{i=1}^{M}\left\langle u\right\rangle _{x_{i}%
,\overline{\Omega}}^{(l_{i})}+|u|_{\overline{\Omega}}^{(0)}\right)
\label{s1.12+2.1}%
\end{equation}
with arbitrary $\widehat{d}_{i}\leq d_{i}$.

Now we define weighted H\"{o}lder spaces for problem \eqref{1.1}-
\eqref{1.5}. These spaces are a particular case of spaces from \cite{SpGen}
(see the preprint version in \cite{SpGenArx}).

Let $\gamma\in(0,1)$. Let $\Omega$ has the boundary
$\Gamma=\partial\Omega$ of the class $C^{4+\gamma}$. Let $d(x)$ be a
function of the class $C^{1+\gamma
}(\overline{\Omega})$ with the property%

\begin{equation}
\nu\cdot dist(x,\partial\Omega)\leq d(x)\leq\nu^{-1}\cdot dist(x,\partial
\Omega),\quad,dist(x,\partial\Omega)\leq1,\quad\nu>0. \label{s1.8.1}%
\end{equation}
As such a function can serve, for example, the bounded solution of the problem%

\[
\Delta d(x)=-1,x\in\Omega,\quad d(x)|_{\partial\Omega}=0.
\]
For $x,\overline{x}\in\Omega$ we denote $d(x,\overline{x}%
)=\max\{d(x),d(\overline{x})\}$ and for a function $u(x)$ denote%

\begin{equation}
\left\langle u\right\rangle _{\gamma/2,\overline{\Omega}}^{(\gamma)}%
=\sup_{x,\overline{x}\in\overline{\Omega}}d(x,\overline{x})^{\gamma/2}%
\frac{|u(\overline{x})-u(x)|}{|\overline{x}-x|^{\gamma}}. \label{1.6}%
\end{equation}
Note that weighted seminorm \eqref{1.6} is equivalent to the usual
H\"{o}lder seminorm with respect to some Carnot-Caratheodory metric
for equation \eqref{1.1} (see \cite{Dask1}, \cite{Kor1}, \cite{J1},
\cite{SpGen} for the definitions and see \cite{D1}, \cite{SpGen} for
the equivalence).

Define the space $C_{\gamma/2}^{\gamma}(\overline{\Omega})$ as the space of
functions $u(x)$ with the finite norm%

\begin{equation}
|u|_{\gamma/2,\overline{\Omega}}^{(\gamma)}\equiv\left\Vert u\right\Vert
_{C_{\gamma/2}^{\gamma}(\overline{\Omega})}\equiv|u|_{\overline{\Omega}}%
^{(0)}+\left\langle u\right\rangle _{\gamma/2,\overline{\Omega}}^{(\gamma)},
\label{s1.8.2}%
\end{equation}
where $|u|_{\overline{\Omega}}^{(0)}\equiv\max_{\overline{\Omega}%
}|u(x)|$. And define the space
$C_{2,\gamma/2}^{4+\gamma}(\overline{\Omega})$ as the
space of continuous in $\overline{\Omega}$ functions $u(x)$ with the finite norm%

\begin{equation}
\left\Vert u\right\Vert _{C_{2,\gamma/2}^{4+\gamma}(\overline{\Omega})}%
\equiv|u|_{2,\gamma/2,\overline{\Omega}}^{(4+\gamma)}\equiv|u|_{\overline
{\Omega}}^{(0)}+\sum_{|\alpha|=4}\left\langle d(x)^{2}D_{x}^{\alpha
}u(x)\right\rangle _{\gamma/2,\overline{\Omega}}^{(\gamma)}, \label{s1.8.3}%
\end{equation}
where $\alpha=(\alpha_{1},...,\alpha_{N})$ is a multiindex, $|\alpha
|=\alpha_{1}+...+\alpha_{N}$.

For $T>0$ denote $\Omega_{T}=\{(x,t):x\in\Omega,t\in(0,T)\}$ and define the
space $C_{2,\gamma/2}^{4+\gamma,\frac{4+\gamma}{4}}(\overline{\Omega}_{T})$ as
the space of continuous in $\overline{\Omega}_{T}$ functions $u(x,t)$ with the
finite norm%

\begin{equation}
\left\Vert u\right\Vert _{C_{2,\gamma/2}^{4+\gamma,\frac{4+\gamma}{4}%
}(\overline{\Omega}_{T})}\equiv|u|_{2,\gamma/2,\overline{\Omega}_{T}%
}^{(4+\gamma)}\equiv|u|_{\overline{\Omega}_{T}}^{(0)}+\left\langle
u\right\rangle _{2,\gamma/2,\overline{\Omega}_{T}}^{(4+\gamma)},
\label{s1.8.4}%
\end{equation}
where%

\begin{equation}
\left\langle u\right\rangle _{2,\gamma/2,\overline{\Omega}_{T}}^{(4+\gamma
)}\equiv\sum_{|\alpha|=4}\left\langle d(x)^{2}D_{x}^{\alpha}%
u(x,t)\right\rangle
_{\gamma/2,\overline{\Omega}_{T}}^{(\gamma)}+\left\langle
D_{t}u\right\rangle _{t,\overline{\Omega}_{T}}^{(\gamma/4)}.
\label{sem}
\end{equation}
Note
that functions from $C_{2,\gamma/2}^{4+\gamma}(\overline{\Omega})$,
$C_{2,\gamma/2}^{4+\gamma,\frac{4+\gamma}{4}}(\overline{\Omega}_{T})$
have unweighted first order and some second order derivatives with
respect to $x$ - see the next section for details. Note also that
all norms defined for different functions $d(x)\in C^{1+\gamma
}(\overline{\Omega})$ with properties \eqref{s1.8.1} are equivalent.

We can formulate now our assumptions on the data of problem
\eqref{1.1}- \eqref{1.5}. Let $\gamma\in(0,1)$ be fixed and fix
$\gamma^{\prime}\in(\gamma,1)$. We suppose that the initial domain
$\Omega$ is sufficiently smooth,

\begin{equation}
\Gamma\equiv\partial\Omega\in C^{7+\gamma}. \label{1.7}%
\end{equation}
Here and everywhere below we denote $\Gamma\equiv\partial\Omega$,
$\Omega _{T}\equiv\Omega\times\lbrack0,T]$,
$\Gamma_{T}\equiv\Gamma\times\lbrack0,T]$ - the lateral boundary of
$\Omega_{T}$.
For the initial data $h_{0}(y)$ we assume that%

\begin{equation}
|h_{0}|_{2,\gamma/2,\overline{\Omega}}^{(4+\gamma^{\prime})}\leq\mu
<\infty,\quad h_{0}(y)>0,y\in\Omega,\quad\left.  \frac{\partial h_{0}%
}{\partial\overline{n}}\right\vert _{\partial\Omega}\geq\nu>0, \label{1.8}%
\end{equation}
where $\overline{n}$ is the unit inward normal to
$\Gamma\equiv\partial\Omega $. Here and below we denote by the same
symbols $\mu$, $\nu$, $C$ all absolute constants or constants
depending only on fixed data of the problem. About the boundary
condition $g(y,\tau)$ in \eqref{1.3} we suppose that

\begin{equation}
g(y,\tau)\in C^{2}(R^{N}\times\lbrack0,T]),\quad g(y,\tau)\leq-\nu<0.
\label{1.9}%
\end{equation}
We suppose also that the following agreement condition for $\tau=0$,
$y\in\partial\Omega$ is fulfilled%

\begin{equation}
h_{0}(y)=0,y\in\Gamma\equiv\partial\Omega,\quad\left.  \frac{\partial h_{0}%
}{\partial\overline{n}}\right\vert _{\partial\Omega}=\left.  g(y,0)\right\vert
_{\partial\Omega}.\label{1.10}%
\end{equation}

Formulate now the main result of the paper.

\begin{theorem}.
\label{T.1.1}
Under assumptions \eqref{1.7}- \eqref{1.10} problem
\eqref{1.1}- \eqref{1.5} has the unique solution $h(y,\tau)\in
C_{2,\gamma/2}^{4+\gamma,\frac{4+\gamma}{4}}(\overline{Q})$ for some
$T\leq T_{0}(\Omega,g,h_{0})$ and the free boundary $S_{T}$ belongs
to the class $C_{x,t}^{2+\gamma/2,1+\gamma/4}$.
\end{theorem}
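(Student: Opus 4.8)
The plan is to reduce the free boundary problem \eqref{1.1}--\eqref{1.5} to a fixed-domain nonlinear problem by the standard von Mises / normal-coordinate change of variables, and then to solve the resulting quasilinear degenerate parabolic problem by a contraction argument based on the Schauder theory for the linearized operator. First I would introduce new variables $(x,t)$ in which the unknown domain $Q$ becomes the fixed cylinder $\Omega_T=\Omega\times[0,T]$; near the boundary $\Gamma$ one writes $y=x+\rho(x,t)\,\overline{n}(x)+\dots$, or more robustly uses the Hanzawa diffeomorphism built from a single scalar ``height of the free boundary'' function $\rho(x',t)$ defined on $\Gamma_T$, extended into $\Omega_T$ with a cutoff. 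The condition \eqref{1.2} ($h=0$ on $S_T$) together with the oblique condition \eqref{1.3} ($\partial h/\partial\overline{n}_\tau=g$) is what pins down $\rho$: since $\partial h_0/\partial\overline{n}\ge\nu>0$ on $\partial\Omega$, the function $h$ vanishes simply at the boundary, so the zero level set is a graph over $\Gamma$ and $\rho$ is determined by requiring $h\equiv 0$ there, while \eqref{1.3} becomes a scalar equation for the motion of $\rho$. In the new coordinates, \eqref{1.1} becomes a fourth-order quasilinear degenerate parabolic equation $h_t+\mathcal{A}(h,\nabla h,\rho)h=F$ on the fixed cylinder, degenerate like $d(x)^2$ near $\partial\Omega$, which is exactly the structure the weighted spaces $C_{2,\gamma/2}^{4+\gamma,\frac{4+\gamma}{4}}$ in \eqref{s1.8.3}--\eqref{sem} are designed to handle.

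Next I would set up the iteration. Given a candidate pair $(\tilde h,\tilde\rho)$ in a small ball (of radius comparable to $\mu$, centered at suitable extensions of the initial data) of the space $C_{2,\gamma/2}^{4+\gamma,\frac{4+\gamma}{4}}(\overline{\Omega}_T)\times C^{2+\gamma/2,1+\gamma/4}(\Gamma_T)$, freeze the coefficients at $(\tilde h,\tilde\rho)$ and solve the resulting \emph{linear} degenerate fourth-order parabolic problem for $h$ with the appropriate boundary conditions, and update $\rho$ from the (now linear in $\rho$) evolution equation coming from \eqref{1.3}. This is the point at which the paper's promised Schauder estimates and solvability for the Dirichlet and Neumann problems for the linear degenerate fourth-order equation (announced in the abstract) are invoked: they give a unique solution $h$ with $\|h\|_{C_{2,\gamma/2}^{4+\gamma,\frac{4+\gamma}{4}}(\overline{\Omega}_T)}\le C\big(\text{data}+\text{lower-order terms}\big)$, and the coefficients being only $C^{\gamma',\gamma'/4}$-close to constants (because $\gamma'>\gamma$ and because on a short time interval $\tilde h$ stays close to $h_0$) allows the standard perturbation-of-coefficients argument. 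The compatibility condition \eqref{1.10} guarantees that the zeroth- and first-order corner compatibility conditions at $t=0$, $y\in\Gamma$ needed for the $C^{4+\gamma}$ Schauder solvability are met.

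Then I would verify that the solution map $(\tilde h,\tilde\rho)\mapsto(h,\rho)$ maps the small ball into itself and is a contraction for $T=T_0$ small enough. Self-mapping follows because the difference between the nonlinear right-hand side/coefficients evaluated at $(\tilde h,\tilde\rho)$ and at the frozen initial configuration is $O(T^{\delta})$ in the relevant norms for some $\delta>0$ (gain of a small power of $T$ coming from the H\"older-in-time seminorms and the vanishing of the difference at $t=0$), so the Schauder constant times this small factor can be made $<1/2$; the contraction estimate is the same computation applied to differences of two candidates, using that $\mathcal{A}$ and $F$ depend on $(\tilde h,\tilde\rho)$ in a locally Lipschitz way between the spaces (with one derivative loss absorbed by the $\gamma'$ vs $\gamma$ gap). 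Banach's fixed point theorem then yields a unique fixed point, hence a unique solution $h\in C_{2,\gamma/2}^{4+\gamma,\frac{4+\gamma}{4}}(\overline{Q})$ and a free boundary $S_T\in C^{2+\gamma/2,1+\gamma/4}_{x,t}$; transferring back to the original variables $(y,\tau)$ via the (now smooth, invertible) change of variables gives Theorem \ref{T.1.1}. Uniqueness for the original problem follows either directly from the uniqueness of the fixed point after observing any two solutions generate admissible $(\tilde h,\tilde\rho)$, or by a separate energy/linearization argument.

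The main obstacle, and the technical heart of the paper, is the Schauder theory for the linear degenerate fourth-order parabolic operator in the weighted spaces \eqref{s1.8.2}--\eqref{sem}: establishing existence, uniqueness and the a priori estimate for the Dirichlet and Neumann problems when the operator degenerates like $d(x)^2$ at $\partial\Omega$ (equivalently, is non-degenerate in the Carnot--Carath\'eodory metric mentioned after \eqref{1.6}). This requires careful localization, model-problem analysis in a half-space with the weight $x_N^2$, construction/estimation of the corresponding parametrix or Green's function, and matching the boundary conditions \eqref{1.2}--\eqref{1.3} with the right weighted trace spaces; all the interpolation-type facts \eqref{s1.12}--\eqref{s1.12+2.1} recorded in the introduction are exactly what is needed to control the lower-order and mixed-derivative terms in that argument. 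Once that linear theory is in hand, the nonlinear fixed-point step is, by comparison, routine.
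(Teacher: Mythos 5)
Your overall framework (Hanzawa reduction to the fixed cylinder, weighted Schauder theory for the degenerate linear operator, smallness in $T$ to close a fixed-point argument) is aligned with the paper, but the step where you close the loop on the free-boundary function contains a genuine gap. You propose to solve a scalar linear degenerate problem for $h$ and then ``update $\rho$ from the (now linear in $\rho$) evolution equation coming from \eqref{1.3}''. However, \eqref{1.3} contains no time derivative of the interface: after the change of variables it becomes the contact-angle condition \eqref{3.3}, an algebraic/first-order-in-space relation on $\Gamma_T$ between $\partial h/\partial\overline{n}$, $\rho_{\lambda}$ and $\rho_{\omega_{i}}$. The time evolution of the interface enters instead through the transformed interior equation \eqref{3.1}, via the term $b_{h,\rho}\,\rho_{t}$ produced when $\partial h/\partial\tau$ is recomputed in the new variables. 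Hence the linearization is a genuinely coupled system for the pair $(u,\delta)$ — this is exactly the structure of \eqref{004.24}, \eqref{004.41} and of the linear problem \eqref{8.10}--\eqref{8.12+2} — and $\rho$ cannot be determined from \eqref{1.3} alone. Even setting that aside, your update would lose regularity: the trace of $\partial h/\partial\overline{n}$ lies only in $C^{1+\gamma/2,1/2+\gamma/4}(\Gamma_T)$ (Proposition \ref{P.2.3}), one spatial derivative short of the class $C^{2+\gamma/2,1+\gamma/4}(\Gamma_T)$ in which the interface must live, so the map you describe does not return to the ball on which you want to contract.

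The paper's resolution of precisely this point is the substitution $v=u-A\delta$ with $A\approx\partial w/\partial\lambda\ge\nu>0$ (this is where the partial-wetting assumptions \eqref{1.8}, \eqref{1.9} are used): $v$ then solves a scalar Neumann problem for the degenerate operator, handled by the linear theory of Section \ref{s7}, and $\delta$ is recovered from the zero-order Dirichlet trace, $\delta=(\varphi-v|_{\Gamma_T})/A$, which has exactly the $C^{2+\gamma/2,1+\gamma/4}$ regularity required; the remaining lower-order coupling terms are absorbed by a contraction in $T$ (Lemma \ref{L.8.1}, Proposition \ref{P.8.1}, Theorem \ref{T.8.1}). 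With that coupled linear solvability in hand, the paper closes the nonlinear step not by a frozen-coefficient quasilinear iteration but by the quantitative inverse function theorem (Corollary \ref{CIF}) around the approximate solution $(w,\sigma)$ built in Proposition \ref{P.7.1}, using \eqref{004.45} to make $\|F(0)\|$ small for small $T$; your Banach-fixed-point variant could in principle be made to work, but only after replacing your $\rho$-update by the solution of the coupled linearized system (or an equivalent decoupling device), since it is that coupling — not the scalar Dirichlet and Neumann problems by themselves — that determines the free boundary.
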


The method of the proving of Theorem \ref{T.1.1} consists of
reducing of the problem to some nonlinear operator equation and
applying the Inverse Function theorem as it was done in \cite{D1}.
So we formulate a variant of such theorem.
%==============

\begin{theorem} \label{IF} (\cite{IF}, Theorem 1.2 and it's proof.)

Let $U$ be open in a Banach space $H$, and let $F:U\rightarrow Y$ be
continuously differentiable on $U$, where $Y$ is a Banach space. Let
$x_{0}\in U$ and assume that $F^{\prime}(x_{0}): H\rightarrow Y$ is
a toplinear isomorphism (i.e. invertible as a continuous linear
map). Then $F$ is a local diffeomorphism at $x_{0}$ and there exists
$d>0$ such that the inverse mapping $F^{-1}$ is defined on the ball
$K_{d}=\{y\in Y:\|y-y_{0}\|\leq d\}$, $y_{0}=F(x_{0})$.

Here $d=d(M_{1}, M_{2}, \omega_{0})$ depends only on $M_{1}$,
$M_{2}$, $\omega_{0}$, where
\[
M_{1}=\|F^{\prime}(x_{0})\|_{H\rightarrow Y}, \ \
M_{2}=\|(F^{\prime} (x_{0}))^{-1}\|_{Y \rightarrow H}, \ \
\]
\[
\omega_{0}=\sup_{x_{1},x_{2}\in U}
\|F^{\prime}(x_{1})-F^{\prime}(x_{2})\|_{H\rightarrow Y}.
\]

\end{theorem}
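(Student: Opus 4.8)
The plan is to prove this by the Banach contraction principle applied to a Newton--Picard operator, carrying the constants along explicitly so that the final radius $d$ comes out in terms of $M_1$, $M_2$ and $\omega_0$ only.

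First I would set $A=F'(x_0)$, so $M_1=\|A\|_{H\to Y}$ and $M_2=\|A^{-1}\|_{Y\to H}$, and for each $y\in Y$ introduce
\[
\Phi_y(x)=x-A^{-1}\bigl(F(x)-y\bigr),\qquad x\in U .
\]
A point $x$ is a fixed point of $\Phi_y$ if and only if $F(x)=y$, so solving $F(x)=y$ is reduced to a fixed point problem. Since $F$ is continuously differentiable, $D\Phi_y(x)=I-A^{-1}F'(x)=A^{-1}\bigl(F'(x_0)-F'(x)\bigr)$, whence, because $x_0\in U$,
\[
\|D\Phi_y(x)\|_{H\to H}\le M_2\,\|F'(x_0)-F'(x)\|\le M_2\,\omega_0=:\theta
\]
uniformly for $x\in U$. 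Working --- after shrinking $U$ to a convex ball about $x_0$ if necessary, which only decreases $\omega_0$, hence $\theta$ --- in the regime $\theta<1$, the mean value inequality shows that each $\Phi_y$ is a $\theta$-contraction on $U$.

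Next I would verify the self-mapping property that fixes $d$. Writing $y_0=F(x_0)$ and using $\Phi_{y_0}(x_0)=x_0$ one has
\[
\Phi_y(x)-x_0=\bigl(\Phi_{y_0}(x)-\Phi_{y_0}(x_0)\bigr)+A^{-1}(y-y_0),
\]
so $\|\Phi_y(x)-x_0\|\le\theta\|x-x_0\|+M_2\|y-y_0\|$. Hence, if $\overline B(x_0,r)\subset U$ and $\|y-y_0\|\le d:=(1-\theta)r/M_2$, the closed ball $\overline B(x_0,r)$ is mapped into itself by $\Phi_y$, and by the contraction principle $\Phi_y$ has a unique fixed point $G(y)\in\overline B(x_0,r)$. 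This defines the inverse $F^{-1}=G$ on $K_d=\overline B(y_0,d)$. Comparing two fixed points via $\|G(y_1)-G(y_2)\|=\|\Phi_{y_1}(G(y_1))-\Phi_{y_2}(G(y_2))\|\le\theta\|G(y_1)-G(y_2)\|+M_2\|y_1-y_2\|$ gives the Lipschitz bound $\|G(y_1)-G(y_2)\|\le M_2(1-\theta)^{-1}\|y_1-y_2\|$, so $G$ is continuous; the bound $\|F'(x)\|\le M_1+\omega_0$ on $U$ yields the reverse Lipschitz estimate for $F$, so $G$ is bi-Lipschitz.

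Finally, to upgrade $G$ to a $C^1$ map (so that $F$ is a genuine local diffeomorphism at $x_0$) I would observe that for every $x\in U$ the operator $F'(x)=A\bigl(I+A^{-1}(F'(x)-A)\bigr)$ is invertible by the Neumann series, since $\|A^{-1}(F'(x)-A)\|\le\theta<1$, with $\|F'(x)^{-1}\|\le M_2(1-\theta)^{-1}$. The standard argument --- expanding $F\bigl(G(y+k)\bigr)-F\bigl(G(y)\bigr)=k$ to first order at $G(y)$ and using differentiability of $F$ there together with the Lipschitz continuity of $G$ just established --- then shows that $G$ is differentiable at every $y\in\operatorname{int}K_d$ with $G'(y)=F'(G(y))^{-1}$, and $G'$ is continuous as a composition of the continuous maps $y\mapsto G(y)$, $x\mapsto F'(x)$ and operator inversion. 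This proves the theorem. The only delicate point is the bookkeeping: one must ensure that the reduction to $\theta=M_2\omega_0<1$ and the choice of the working ball $\overline B(x_0,r)\subset U$ are organized so that $r$, and hence $d=(1-\theta)r/M_2$, is controlled purely by $M_1,M_2,\omega_0$ (in the normalization of \cite{IF}); $M_1$ itself enters only through the two-sided Lipschitz bounds. Everything else is the routine contraction-mapping scheme.
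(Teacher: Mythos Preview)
The paper does not give its own proof of this theorem; it merely quotes it from \cite{IF} (Lange, Theorem~1.2) and then uses it via Corollary~\ref{CIF}. So there is nothing in the paper to compare your argument against line by line. That said, your proof is the standard Newton--Picard contraction argument, which is exactly the proof in \cite{IF}, and it is correct in all essentials.

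The one point you flag yourself deserves a sharper comment. As written, your radius is $d=(1-\theta)r/M_2$ with $\theta=M_2\omega_0$, where $r$ is any radius with $\overline B(x_0,r)\subset U$. Thus $d$ depends not only on $M_1,M_2,\omega_0$ but also on how large a ball $U$ contains around $x_0$; the statement ``$d=d(M_1,M_2,\omega_0)$'' is therefore literally false without an extra hypothesis on $U$. In the paper's application (Section~\ref{s9}) this causes no trouble, because there $U=\emph{B}_r$ is itself a ball of a fixed radius $r=r(h_0)$ chosen in advance, so $r$ is part of the data and $d$ is controlled by $M_1,M_2,\omega_0$ together with that fixed $r$. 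If you want the theorem to stand on its own with the stated dependence, you should either (i) assume $U$ is a ball of a prescribed radius and absorb that radius into the list of constants, or (ii) replace $\omega_0$ by the modulus of continuity of $F'$ at $x_0$ and choose $r$ so that $M_2\sup_{\|x-x_0\|\le r}\|F'(x)-F'(x_0)\|\le 1/2$, which makes $r$ (hence $d$) a function of that modulus and $M_2$. Either fix is routine; just be explicit about which you adopt. Note also that $M_1$ enters your argument only through the bi-Lipschitz estimate for $F$, not through $d$ itself, so strictly speaking $d$ depends on $M_2$, $\omega_0$, and the size of $U$.
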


Due to this classical theorem we have the following very simple but
fundamentally important for us assertion, where $x_{0}$ serves as an
approximate solution to the equation $F(x)=0$.

\begin{corollary}
\label{CIF}
Let the conditions of Theorem \ref{IF} are satisfied.
Then there exists $\varepsilon_{0}=\varepsilon_{0}(M_{1}, M_{2},
\omega_{0})>0$ such that if
$\|F(x_{0})\|_{Y}=\|y_{0}\|_{Y}\leq\varepsilon_{0}$, then for some
$x^{*}\in U$ we have $F(x^{*})=0$.
\end{corollary}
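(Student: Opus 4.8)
The plan is to read the conclusion off directly from Theorem \ref{IF}, with essentially no extra work. First I would set $\varepsilon_{0}:=d$, where $d=d(M_{1},M_{2},\omega_{0})$ is the radius furnished by Theorem \ref{IF}; by the final paragraph of that theorem this quantity depends only on $M_{1}$, $M_{2}$, $\omega_{0}$, so the claimed form of the dependence of $\varepsilon_{0}$ is automatic. Since by hypothesis the conditions of Theorem \ref{IF} hold, in particular $F^{\prime}(x_{0})$ is a toplinear isomorphism, Theorem \ref{IF} applies and produces a local inverse $F^{-1}$ defined on the closed ball $K_{d}=\{y\in Y:\|y-y_{0}\|\leq d\}$, where $y_{0}=F(x_{0})$, with values in $U$.

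Next I would simply observe that the hypothesis $\|F(x_{0})\|_{Y}=\|y_{0}\|_{Y}\leq\varepsilon_{0}=d$ says exactly that $\|0-y_{0}\|_{Y}\leq d$, i.e. the origin $0\in Y$ lies in $K_{d}$. Therefore $x^{*}:=F^{-1}(0)$ is a well-defined point of $U$, and since $F^{-1}$ is the inverse of $F$ restricted near $x_{0}$ we get $F(x^{*})=F\bigl(F^{-1}(0)\bigr)=0$. This is the point whose existence the corollary asserts.

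The only thing that needs any care is making sure the radius $d$ in Theorem \ref{IF} is genuinely uniform in the three listed quantities and does not covertly depend on $x_{0}$ or $y_{0}$ in any other way; but this is precisely the assertion $d=d(M_{1},M_{2},\omega_{0})$ recorded in the theorem, so there is no real obstacle here — the corollary is a one-line consequence. In the later application one takes $x_{0}$ to be an explicitly constructed approximate solution of $F(x)=0$, estimates the residual $\|F(x_{0})\|_{Y}$, and shrinks the time horizon $T$ until this residual drops below $\varepsilon_{0}$, at which point the corollary delivers an exact solution $x^{*}$.
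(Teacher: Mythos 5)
Your proof is correct and follows essentially the same route as the paper: both arguments note that $\|y_{0}\|_{Y}\leq\varepsilon_{0}$ places $0$ in the ball $K_{d}$ on which $F^{-1}$ is defined, and then take $x^{*}=F^{-1}(0)$. The only cosmetic difference is that the paper chooses $\varepsilon_{0}=d/2$ for a safety margin while you take $\varepsilon_{0}=d$, which is equally valid since $K_{d}$ is the closed ball.
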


\begin{proof}
This corollary immediately follows from Theorem \ref{IF} if we
choose $\varepsilon_{0}=d/2$ so that $0\in K_{d}$.
\end{proof}

Note that since we are going to use this corollary, the key
ingredient of our proof of Theorem \ref{T.1.1} is the proof of the
fact that $f^{\prime}(x_{0})$ is invertible.

%=======
The rest of the paper is organised as follows. In Section \ref{s2}
we collect some auxiliary assertion we need below, including some
properties of the weighted H\"{o}lder spaces. Section \ref{s3} is
devoted to a reformulation of the original problem as a nonlinear
problem in a fixed domain with some additional unknown function for
a parametrization of the free boundary.  In Section \ref{s4} we
calculate the Frechet derivative of the nonlinear operator of the
problem from section \ref{s3}. Section \ref{s5} is devoted to
obtaining the Schauder estimates in weighted H\"{o}lder classes for
some model problems for the linearised thin film equation and in
Section \ref{s6} we show the solvability of slightly different
model problems. In Section \ref{s7} we consider the Neumann and the
Dirichlet problem for the linearised thin film equation in an
arbitrary smooth domain. Section \ref{s8} shows the invertibility of
the Frechet derivative from Section \ref{s4} by prooving the unique
solvability of some linear problem. At last, Section \ref{s9}
completes the proof of Theorem \ref{T.1.1}.

\section{Auxiliary assertions.}%
\label{s2}

Let $\Omega$ be a domain in $R^{N}$ with the boundary
$\Gamma=\partial\Omega$ of the class $C^{4+\gamma}$
,$\Omega_{T}$=$\Omega\times\lbrack0,T]$, $T>0$. Denote also $H\equiv
R_{+}^{N}=\{x\in R^{N}:x_{N}\geq0\}$. We need for further use two
technical lemmas.

\begin{lemma} (
\cite{SpGen}) \label{L.02.1}.
Let a function $u(x)\in
C_{\gamma/2}^{\gamma}(\overline {\Omega})$. Then $u(x)$ belongs to
the unweighted class $C^{\gamma
/2}(\overline{\Omega})$ and%

\begin{equation}
\left\Vert u\right\Vert _{C^{\gamma/2}(\overline{\Omega})}\leq C\left\Vert
u\right\Vert _{C_{\gamma/2}^{\gamma}(\overline{\Omega})}. \label{02.01}%
\end{equation}
\end{lemma}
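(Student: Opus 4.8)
The plan is to show that the weighted H\"older control on $u$ with weight $d(x,\overline{x})^{\gamma/2}$ forces an \emph{unweighted} H\"older bound, but with the halved exponent $\gamma/2$; the loss of exponent is precisely what compensates for the loss of the weight near the boundary. First I would fix two points $x,\overline{x}\in\overline{\Omega}$ and, without loss of generality, assume $d(x)\geq d(\overline{x})$, so that $d(x,\overline{x})=d(x)$. I would then split into two regimes according to the relative size of $|\overline{x}-x|$ and $d(x)$.

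In the \emph{near-diagonal regime}, where $|\overline{x}-x|\leq \tfrac12 d(x)$ (say), both points lie in a ball on which the weight $d(\cdot)$ is comparable to $d(x)$ by \eqref{s1.8.1} and the Lipschitz-type control on $d\in C^{1+\gamma}(\overline{\Omega})$; hence from \eqref{1.6},
\[
|u(\overline{x})-u(x)|\leq \langle u\rangle_{\gamma/2,\overline{\Omega}}^{(\gamma)}\, d(x)^{-\gamma/2}\,|\overline{x}-x|^{\gamma}
= \langle u\rangle_{\gamma/2,\overline{\Omega}}^{(\gamma)}\,\bigl(|\overline{x}-x|/d(x)\bigr)^{\gamma/2}\,|\overline{x}-x|^{\gamma/2},
\]
and since $|\overline{x}-x|/d(x)\leq \tfrac12$, the bracketed factor is bounded, giving $|u(\overline{x})-u(x)|\leq C\langle u\rangle^{(\gamma)}_{\gamma/2}|\overline{x}-x|^{\gamma/2}$. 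In the \emph{far regime}, where $|\overline{x}-x|>\tfrac12 d(x)$, the two points cannot be joined directly while staying away from the boundary, so instead I would just estimate $|u(\overline{x})-u(x)|\leq 2|u|^{(0)}_{\overline{\Omega}}$ and observe that, because $\Omega$ is bounded, $|\overline{x}-x|$ is bounded below in terms of $\mathrm{diam}\,\Omega$ only when it is not small; more carefully, one uses a chain of points marching along a path from $\overline{x}$ to $x$ staying at distance comparable to the current position's depth — a standard "Harnack chain"/doubling-the-distance argument — each link being controlled by the near-diagonal estimate, and the number of links is controlled because each step at least doubles the depth until the depth reaches $O(1)$. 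Summing the geometric series of link contributions $\sum_k C\langle u\rangle^{(\gamma)}_{\gamma/2}(2^{-k}|\overline{x}-x|)^{\gamma/2}$ converges and yields the same bound $C\langle u\rangle^{(\gamma)}_{\gamma/2}|\overline{x}-x|^{\gamma/2}$, and when the chain cannot be completed without hitting the boundary one falls back on $2|u|^{(0)}_{\overline{\Omega}}|\overline{x}-x|^{\gamma/2}/(\mathrm{const})$ using boundedness of $\mathrm{diam}\,\Omega$.

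Combining the two regimes gives $\langle u\rangle^{(\gamma/2)}_{\overline{\Omega}}\leq C\bigl(\langle u\rangle^{(\gamma)}_{\gamma/2,\overline{\Omega}}+|u|^{(0)}_{\overline{\Omega}}\bigr)$, and adding $|u|^{(0)}_{\overline{\Omega}}$ to both sides produces \eqref{02.01}. The only genuinely delicate point is the far regime: one must make sure the chaining argument terminates and that the accumulated error is still $O(|\overline{x}-x|^{\gamma/2})$ rather than $O(|\overline{x}-x|^{\gamma})$ — this is exactly where the exponent must drop from $\gamma$ to $\gamma/2$, since the worst pair of points is one where $|\overline{x}-x|\sim d(x)$ and there the weighted estimate only gives the $\gamma/2$ power uniformly. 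Everything else is a routine covering/comparability argument using \eqref{s1.8.1} and $d\in C^{1+\gamma}(\overline{\Omega})$, and the constant $C$ depends only on $\nu$, $\mathrm{diam}\,\Omega$, and $\|d\|_{C^{1+\gamma}}$.
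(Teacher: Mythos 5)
You should note first that the paper contains no proof of Lemma \ref{L.02.1} to compare with: it is imported from \cite{SpGen}, where it is proved for $\Omega=R_{+}^{N}$, with the remark that the general case is analogous. Judged on its own merits, your two-regime argument is the standard one and is essentially correct: the halving of the exponent exactly absorbs the weight when $|\overline{x}-x|\sim d(x,\overline{x})$, and the far regime is handled by chaining toward the interior. Two points should be tightened. First, in the near regime no comparability of the weight along the segment is needed: since the weight in \eqref{1.6} is $d(x,\overline{x})=\max\{d(x),d(\overline{x})\}$, the bound $|u(\overline{x})-u(x)|\leq \langle u\rangle_{\gamma/2,\overline{\Omega}}^{(\gamma)}\bigl(|\overline{x}-x|/d(x,\overline{x})\bigr)^{\gamma/2}|\overline{x}-x|^{\gamma/2}$ is immediate whenever $|\overline{x}-x|\leq d(x,\overline{x})$. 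Second, in the far regime the chain must be capped at depth comparable to $|\overline{x}-x|$ (which is what your displayed sum $\sum_{k}(2^{-k}|\overline{x}-x|)^{\gamma/2}$ actually encodes), not carried up ``until the depth reaches $O(1)$'' as your sentence says: lifting a point from depth $d(x)$ to unit depth contributes $\sum_{k}\langle u\rangle(2^{k}d(x))^{\gamma/2}\sim\langle u\rangle$, which is not $O(|\overline{x}-x|^{\gamma/2})$ when the points are close. With the chain stopped at depth $\sim|\overline{x}-x|$, the horizontal step at that depth costs $\langle u\rangle|\overline{x}-x|^{-\gamma/2}\cdot C|\overline{x}-x|^{\gamma}$, again of the correct size; and the sup-norm fallback (your last clause, whose phrasing about ``hitting the boundary'' is off, since the chain moves away from the boundary) is needed, and legitimate, only when $|\overline{x}-x|$ exceeds the fixed scale on which \eqref{s1.8.1} controls $d$, because then $|\overline{x}-x|$ is bounded below. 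With these repairs your argument yields \eqref{02.01} with $C=C(\nu,\gamma,\Omega)$, in agreement with the cited result.
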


\begin{lemma} (\cite{SpGen}) \label{L.02.2}
Let $K\subseteq\overline{\Omega}$ be a
compact set. Let$\ U\subset C_{\gamma/2}^{\gamma}(K)$ be a bounded
subset in $C_{\gamma/2}^{\gamma}(K)$ that is

\begin{equation}
u(x)\in U\Rightarrow\left\Vert u\right\Vert _{C_{\gamma/2}^{\gamma}(K)}\leq M
\label{s1.0024}%
\end{equation}
for some constant $M>0$. Then there exists a sequence
$\{u_{n}(x)\}\subset U$ and a function $u_{0}(x)\in$
$C_{\gamma/2}^{\gamma}(K)$ from the same space $C_{\gamma/2
}^{\gamma}(K)$ such that for any $\gamma^{\prime}\in(0,\gamma)$%

\begin{equation}
\left\Vert u_{n}-u_{0}\right\Vert _{C_{\gamma^{\prime}/2}^{\gamma^{\prime
}}(K)}+\left\Vert u_{n}-u_{0}\right\Vert _{C^{\gamma^{\prime}/2}%
(K)}\rightarrow_{n\rightarrow\infty}0,\quad\left\Vert u_{0}\right\Vert
_{C_{\gamma/2}^{\gamma}(K)}\leq M. \label{s1.0025}%
\end{equation}
\end{lemma}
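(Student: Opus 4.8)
The plan is to deduce the statement from a standard Arzel\`a--Ascoli compactness argument, combined with the lower semicontinuity of the weighted seminorm \eqref{1.6} under uniform convergence and an elementary interpolation inequality between the scales $C_{\gamma/2}^{\gamma}$ and $C_{\gamma'/2}^{\gamma'}$. First I would invoke Lemma~\ref{L.02.1}: since $\|u\|_{C_{\gamma/2}^{\gamma}(K)}\le M$ for every $u\in U$, the embedding of Lemma~\ref{L.02.1} (which holds verbatim on the compact set $K$) shows that the family $U$ is bounded in the unweighted class $C^{\gamma/2}(K)$, hence uniformly bounded and equicontinuous on $K$. By the Arzel\`a--Ascoli theorem there is a sequence $\{u_n\}\subset U$ and a continuous function $u_0$ on $K$ with $|u_n-u_0|_{K}^{(0)}\to0$ as $n\to\infty$.

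Next I would verify that this limit already lies in $C_{\gamma/2}^{\gamma}(K)$ with $\|u_0\|_{C_{\gamma/2}^{\gamma}(K)}\le M$, using that the weighted seminorm is lower semicontinuous with respect to uniform convergence. Concretely, for each fixed pair $x\ne\overline{x}$ in $K$ one passes to the limit inside the difference quotient, $d(x,\overline{x})^{\gamma/2}|u_0(\overline{x})-u_0(x)|/|\overline{x}-x|^{\gamma}=\lim_n d(x,\overline{x})^{\gamma/2}|u_n(\overline{x})-u_n(x)|/|\overline{x}-x|^{\gamma}\le\liminf_n\langle u_n\rangle_{\gamma/2,K}^{(\gamma)}$, and then takes the supremum over all such pairs to get $\langle u_0\rangle_{\gamma/2,K}^{(\gamma)}\le\liminf_n\langle u_n\rangle_{\gamma/2,K}^{(\gamma)}$; together with $|u_0|_{K}^{(0)}=\lim_n|u_n|_{K}^{(0)}$ this yields $\|u_0\|_{C_{\gamma/2}^{\gamma}(K)}\le\liminf_n\|u_n\|_{C_{\gamma/2}^{\gamma}(K)}\le M$, and in particular the seminorm of $u_0$ is finite.

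It then remains to upgrade uniform convergence to convergence in $C_{\gamma'/2}^{\gamma'}(K)$ and $C^{\gamma'/2}(K)$ for $\gamma'\in(0,\gamma)$. Setting $v=u_n-u_0$, we have $|v|_{K}^{(0)}\to0$ while $\langle v\rangle_{\gamma/2,K}^{(\gamma)}\le2M$, and the point is the interpolation bound
\[
\langle v\rangle_{\gamma'/2,K}^{(\gamma')}\le C\,\bigl(|v|_{K}^{(0)}\bigr)^{1-\gamma'/\gamma}\bigl(\langle v\rangle_{\gamma/2,K}^{(\gamma)}\bigr)^{\gamma'/\gamma},
\]
which follows from the splitting $|v(\overline{x})-v(x)|=|v(\overline{x})-v(x)|^{1-\theta}|v(\overline{x})-v(x)|^{\theta}$ with $\theta=\gamma'/\gamma$: the first factor is bounded by $2|v|_{K}^{(0)}$ and the second by $\bigl(\langle v\rangle_{\gamma/2,K}^{(\gamma)}|\overline{x}-x|^{\gamma}d(x,\overline{x})^{-\gamma/2}\bigr)^{\theta}$ via \eqref{1.6}, after which one checks that multiplying by $d(x,\overline{x})^{\gamma'/2}|\overline{x}-x|^{-\gamma'}$ leaves the powers $|\overline{x}-x|^{\gamma\theta-\gamma'}$ and $d(x,\overline{x})^{\gamma'/2-\gamma\theta/2}$, both of which vanish exactly for this $\theta$. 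Applying this with $v=u_n-u_0$ gives $\|u_n-u_0\|_{C_{\gamma'/2}^{\gamma'}(K)}\to0$, and then Lemma~\ref{L.02.1} applied with exponent $\gamma'$ (or, equivalently, the classical H\"older interpolation inequality) yields $\|u_n-u_0\|_{C^{\gamma'/2}(K)}\to0$ as well. The only delicate point is the bookkeeping in this last step --- recognizing that $\theta=\gamma'/\gamma$ is precisely the exponent for which both the boundary weight $d(x,\overline{x})$ and the distance $|\overline{x}-x|$ drop out of the estimate simultaneously; the rest of the argument is routine.
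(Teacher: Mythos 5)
Your argument is correct: Arzel\`a--Ascoli via the embedding of Lemma \ref{L.02.1}, lower semicontinuity of the weighted seminorm under uniform convergence (giving $\left\Vert u_{0}\right\Vert _{C_{\gamma/2}^{\gamma}(K)}\leq M$), and the interpolation bound with $\theta=\gamma'/\gamma$, for which your exponent bookkeeping (both $|\overline{x}-x|$ and $d(x,\overline{x})$ dropping out) checks out. Note that the paper itself gives no proof of this lemma --- it is quoted from \cite{SpGen}, with only the remark that the half-space case proved there extends to general domains --- so there is nothing to compare line by line; your proof is the natural self-contained argument, and it implicitly uses (as the paper's statement also does) that $K$ is a ``nice'' compact set, e.g.\ the closure of a subdomain, so that Lemma \ref{L.02.1} applies on $K$.
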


Lemmas \ref{L.02.1} and \ref{L.02.2} were proved in \cite{SpGen}  for
the case $\Omega=R_{+}^{N}$ but the general case is completely
similar.

Below in this section we collect for the further use some assertions
about spaces $C_{2,\gamma/2}^{4+\gamma}(\overline{\Omega})$,
$C_{2,\gamma
/2}^{4+\gamma,\frac{4+\gamma}{4}}(\overline{\Omega}_{T})$. For the
proofs we refer the reader to the paper \cite{SpGen}
(see also
the preprint version \cite{SpGenArx}), where the more
general spaces $C^{m+\gamma, \frac{m+\gamma}{m}}_{n,\omega\gamma}(\overline{\Omega}_{T})$
are considered.

First of all, functions from the space $C_{2,\gamma/2}^{4+\gamma
,\frac{4+\gamma}{4}}(\overline{\Omega}_{T})$ has finite some
weighted and unweighted lower order derivatives.

\begin{proposition} (\cite{SpGen})%
\label{P.2.1}

Let $u(x,t)\in
C_{2,\gamma/2}^{4+\gamma,\frac{4+\gamma}{4}}(\overline{\Omega
}_{T})$.
There is an absolute constant $C=C(\Omega,\gamma)$ with%

\begin{equation}
\sum_{|\alpha|=1}|D_{x}^{\alpha}u(x,t)|_{\gamma/2,\overline{\Omega}_{T}%
}^{(\gamma)}+\sum_{|\alpha|=1}\left[  D_{x}^{\alpha}u(x,t)\right]
_{\overline{\Omega}_{T}}^{(1,\gamma/4)}+\sum_{|\alpha|=3}|d(x)D_{x}^{\alpha
}u(x,t)|_{\gamma/2,\overline{\Omega}_{T}}^{(\gamma)}\leq C|u|_{2,\gamma
/2,\overline{\Omega}_{T}}^{(4+\gamma)}, \label{2.1}%
\end{equation}
where for a function $v(x,t)$ on $\overline{\Omega}_{T}$%

\begin{equation}
\left[  v\right]  _{\overline{\Omega}_{T}}^{(1,\gamma/4)}\equiv\sup
_{\substack{\overline{h}:x+\overline{h},x+2\overline{h}\in\overline{\Omega
},\\\theta>0:t+\theta\leq T}}\frac{|\Delta_{\theta,t}\Delta_{\overline{h}%
,x}^{2}v(x,t)|}{|\overline{h}|\theta^{\gamma/4}}, \label{2.2}%
\end{equation}

$\Delta_{\theta,t}v(x,t)=v(x,t+\theta)-v(x,t)$, $\Delta_{\overline
{h},x}v(x,t)=v(x+\overline{h},t)-v(x,t)$, $\Delta_{\overline{h},x}%
^{2}v(x,t)=\Delta_{\overline{h},x}\left(  \Delta_{\overline{h},x}%
v(x,t)\right)$.

\end{proposition}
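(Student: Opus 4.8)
The plan is to derive \eqref{2.1} by the standard localization-plus-interpolation scheme for parabolic Schauder spaces, carrying the degenerate weight $d(x)$ through the argument. First I would cover $\overline\Omega$ by finitely many balls: interior balls on which $\nu\le d(x)\le\nu^{-1}$, and boundary balls in each of which a $C^{4+\gamma}$ diffeomorphism flattens $\Gamma$ so that $\Omega$ becomes a piece of the half-space $H=R^N_+$; by \eqref{s1.8.1} and the noted equivalence of the norms built from different admissible $d$, one may then take $d(x)\simeq x_N$ there. Multiplying $u$ by a subordinate partition of unity and using that all derivatives of the cut-offs are bounded and that $d$ is comparable on overlapping charts, it suffices to prove \eqref{2.1} in two model situations: (i) on a fixed ball with $d\equiv1$, where \eqref{2.1} is the classical interpolation inequality for the unweighted parabolic H\"older class $C^{4+\gamma,\frac{4+\gamma}4}$, obtainable, e.g., from the difference characterization of Proposition~\ref{Ps1.1} together with the $C^0$--$C^{4+\gamma}$ interpolation; and (ii) on the slab $H\times[0,T]$ with $d(x)=x_N$.

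For the boundary model I would work one dyadic slab at a time. Write $H=\bigcup_j\Pi_j$ with $\Pi_j\simeq\{x_N\sim2^{-j}\}$, chosen so that consecutive slabs overlap. On $\Pi_j$ the weight is $\simeq2^{-j}$, so $\langle x_N^2D_x^\alpha u\rangle^{(\gamma)}_{\gamma/2}\le B$ ($|\alpha|=4$) gives, after rescaling $\Pi_j$ to unit size, that the fourth $x$-derivatives of $u$ lie in $C^\gamma$ with norm $\lesssim2^{-2j}(B+M_j)$, where $M_j:=\|x_N^2D_x^4u\|_{L^\infty(\Pi_j\times[0,T])}$. The crux is to bound $M_j$ uniformly in $j$: on an interior slab the interpolation inequality on a unit ball bounds $\|D_x^4u\|_{L^\infty}$, hence $M_0$, by $C(|u|^{(0)}+B)$, and this is propagated towards the boundary through the telescoping estimate $M_{j+1}\le M_j+C\,2^{-j\gamma/2}B$, obtained by comparing $x_N^2D_x^4u$ at overlapping points of $\Pi_j$ and $\Pi_{j+1}$ via the $C^\gamma$ bound just derived; summing the geometric series yields $\sup_jM_j\le C(|u|^{(0)}+B)$, i.e. $\|d^2D_x^4u\|_{L^\infty}\le C\,|u|^{(4+\gamma)}_{2,\gamma/2}$. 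With $\|D_x^4u\|_{L^\infty}$ controlled on every slab, the lower-order $x$-derivatives follow from a Hardy/Taylor argument in the normal variable: integrating $\partial_{x_N}^4u$ from a point of the nearest interior slab while keeping track of the weight shows $x_N^{\,k-2}D_x^\alpha u$ to be bounded and weighted-$C^\gamma$ for $|\alpha|=k\le3$ --- hence unweighted for $k\le2$ and with weight $d^1$ for $k=3$ --- the degree-$\le3$ normal-polynomial part being controlled by $|u|^{(0)}$ and the remainder by $B$; purely tangential derivatives cost nothing extra since tangential differences commute with $x_N$, so \eqref{s1.12+2} applies to them directly.

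The time variable is then put in by interpolation. Since the norm controls $\langle D_tu\rangle^{(\gamma/4)}_t$ and, together with $|u|^{(0)}$, bounds $D_tu$ itself, the increment $v:=u(\cdot,t_1)-u(\cdot,t_2)$ satisfies $|v|^{(0)}\lesssim|t_1-t_2|$ while its fourth $x$-derivatives keep the same weighted $C^\gamma$ bound as those of $u$; applying the spatial estimate just proved to rescaled copies of $v$ yields, for $|\alpha|\le3$, $t$-H\"older continuity of $D_x^\alpha u$ with some exponent $>\gamma/4$. Combining this with the spatial second-difference bound on $D_xu$ and splitting into the regimes $|\bar h|^4\gtrsim\theta$ (use the space regularity) and $|\bar h|^4\lesssim\theta$ (use the $t$-regularity just obtained) produces the $|\bar h|\,\theta^{\gamma/4}$ modulus needed for $[D_x^\alpha u]^{(1,\gamma/4)}$. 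One then undoes the partition of unity and the flattening maps; all constants end up depending only on $\Omega$ and $\gamma$, which is \eqref{2.1}.

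The step I expect to be genuinely delicate is the uniform bound on $M_j=\|d^2D_x^4u\|_{L^\infty}$ across all dyadic slabs up to the boundary: a single-slab interpolation inequality is too lossy --- it would let a spurious negative power of $d$ leak into the lower-order terms --- so one must exploit the overlap of neighbouring slabs, which is the Hardy-type heart of the estimate, and then be careful in the normal-direction integration that $D_x^k u$ for $k\le3$ genuinely picks up only the weight $d^{\max(k-2,0)}$ and nothing worse.
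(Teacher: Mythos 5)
First, a point of reference: the paper itself contains no proof of Proposition \ref{P.2.1} --- it is imported verbatim from \cite{SpGen} --- so there is no in-paper argument to measure yours against. Your overall scheme (localization and flattening so that $d(x)\simeq x_N$, dyadic slabs with a telescoping bound giving $\sup_j\|x_N^2D_x^4u\|_{L^\infty(\Pi_j)}\leq C(|u|^{(0)}+B)$, then integration in the normal variable and interpolation in $t$) is the natural one and is consistent in spirit with the arguments the paper does display for the neighbouring facts (the proof of Theorem \ref{Ts6.3}, Lemma \ref{L.2.1}); in particular your telescoping step is essentially inequality \eqref{2.15}.

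There is, however, a genuine error precisely at the step you flag as delicate. After the normal-direction integration you claim that $x_N^{k-2}D_x^\alpha u$ is bounded for $|\alpha|=k\leq3$, ``hence unweighted for $k\leq2$''. For $k=2$ this is false: from $|D_{x_N}^4u|\lesssim Mx_N^{-2}$ one gets $|D_x^3u|\lesssim Mx_N^{-1}$, and the next integration produces a logarithm, $|D_x^2u|\lesssim M|\ln d(x)|$, which is sharp --- the paper records exactly this in Lemma \ref{L.2.5} and in the remark following Proposition \ref{P.2.1}, and the model function $\ln^{(2)}x_N$ of Lemma \ref{Ls1.2} realizes it. This is not cosmetic, because your route to the two seminorms on $D_xu$ in \eqref{2.1} --- and above all to the mixed seminorm $[D_x^\alpha u]^{(1,\gamma/4)}$ of \eqref{2.2} --- passes through ``the spatial second-difference bound on $D_xu$'', which you justify by the (false) boundedness of $D_x^2u$. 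The Zygmund bound $|\Delta_{\overline h,x}^2D_xu|\leq C|\overline h|$ is true, but it must be proved by exploiting cancellation in the second difference (e.g.\ via the decomposition of Lemma \ref{Ls1.2}, or a two-regime argument comparing $|\overline h|$ with $x_N$ and using $x_N|D_x^3u|\leq C$ when $|\overline h|\lesssim x_N$); the naive estimate through $\|D_x^2u\|_\infty$ or through first differences only yields $|\overline h|\,|\ln|\overline h||$, which is exactly why the statement is phrased with second differences. A related looseness occurs in your final splitting $|\overline h|^4\gtrless\theta$: in the regime where you propose to ``use the space regularity'', a bound that is uniform in $t$ can never produce the factor $\theta^{\gamma/4}$; one must apply the spatial argument to the increment $\Delta_{\theta,t}u$ and its derivatives, using the weighted time-H\"older seminorms as in \eqref{s1.7}, so this part too needs the more careful case analysis rather than the two-line split you sketch.
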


Thus first derivatives $D_{x}^{\alpha}u(x,t)$ with $|\alpha|=1$
belong to the Zigmund space $Z^{1}$ with the additional smoothness
in $t$. At the same time the second derivatives $D_{x}^{\alpha
}u(x,t)$ with $|\alpha|=2$ may be unbounded and in general
$|D_{x}^{2}u(x,t)|\sim C|\ln d(x)|$ as $x\rightarrow
\partial\Omega$ - see \cite{11}, \cite{SpGen}.

\begin{lemma} (\cite{SpGen})
\label{L.2.5}
Let $u(x,t)\in C_{2,\gamma/2}^{4+\gamma,\frac{4+\gamma}{4}}%
(\overline{\Omega}_{T})$. Then for $|\alpha|=2$%

\begin{equation}
|D_{x}^{\alpha}u(x,t)|\leq C|\ln d(x)||d(x)D_{x}^{3}u|_{\overline{\Omega}_{T}%
}^{(0)}, \label{2.2.1}%
\end{equation}
where%

\[
|d(x)D_{x}^{3}u|_{\overline{\Omega}_{T}}^{(0)}=%
{\displaystyle\sum\limits_{|\beta|=3}}
|d(x)D_{x}^{\beta}u|_{\overline{\Omega}_{T}}^{(0)}.
\]
\end{lemma}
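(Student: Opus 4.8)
The plan is to obtain \eqref{2.2.1} by integrating the (integrably singular) third order derivatives of $u$ along a chain of points that marches from $(x,t)$ into the interior of $\Omega$, the distance to $\Gamma$ being multiplied by a fixed factor at each step, so that the number of steps is $\asymp|\ln d(x)|$. Fix $(x_0,t)\in\overline{\Omega}_{T}$ and set $\delta=d(x_0)$; it suffices to treat $\delta\le\delta_0$ for a fixed small $\delta_0=\delta_0(\Omega)$, the range $\delta\ge\delta_0$ being covered by the same argument over a bounded number of steps. By Proposition~\ref{P.2.1} the quantity $|d(x)D_x^3u|_{\overline{\Omega}_{T}}^{(0)}$ is finite and $D_x^3u(\cdot,t)$ is continuous on $\{x\in\Omega:d(x)>0\}$, so the increments below make sense and $|D_x^3u(x,t)|\le d(x)^{-1}|d(x)D_x^3u|_{\overline{\Omega}_{T}}^{(0)}$.

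Using the $C^{4+\gamma}$ regularity of $\Gamma$ and the two–sided bound \eqref{s1.8.1} on $d$, I would construct, uniformly in $x_0$, points $x_0,x_1,\dots,x_K\in\Omega$ and segments $[x_{k-1},x_k]\subset\Omega$ with: (i) $|x_k-x_{k-1}|\le C\,d(x_{k-1})$; (ii) $c\,d(x_{k-1})\le d(y)\le C\,d(x_{k-1})$ for all $y\in[x_{k-1},x_k]$; (iii) $d(x_k)\ge 2\,d(x_{k-1})$ while $d(x_{k-1})\le\delta_0$, and $d(x_K)\ge c_0>0$. Concretely one moves from $x_{k-1}$ a distance of order $d(x_{k-1})$ in a direction close to $\nabla d(x_{k-1})$ — equivalently, roughly along the inner normal at the nearest point of $\Gamma$; since near $\Gamma$ the function $d$ is Lipschitz with constant comparable to $1$ and has nondegenerate gradient, such a move increases $d$ by a definite proportion while the whole segment stays in $\Omega$ with $d$ not oscillating on it, which is (i)–(iii). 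Property (iii) forces $2^{K}\delta\asymp c_0$, hence $K\le C\,(1+|\ln\delta|)\le C\,|\ln d(x_0)|$.

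Writing, for $|\alpha|=2$,
\[
D_x^\alpha u(x_0,t)=D_x^\alpha u(x_K,t)+\sum_{k=1}^{K}\bigl(D_x^\alpha u(x_{k-1},t)-D_x^\alpha u(x_k,t)\bigr),
\]
and bounding each increment by the mean value theorem along $[x_{k-1},x_k]$ together with (i), (ii) and the pointwise bound on $D_x^3u$,
\[
\begin{aligned}
\bigl|D_x^\alpha u(x_{k-1},t)-D_x^\alpha u(x_k,t)\bigr|
&\le|x_k-x_{k-1}|\max_{[x_{k-1},x_k]}|D_x^3u(\cdot,t)|\\
&\le C\,d(x_{k-1})\cdot\frac{|d(x)D_x^3u|_{\overline{\Omega}_{T}}^{(0)}}{c\,d(x_{k-1})}=C\,|d(x)D_x^3u|_{\overline{\Omega}_{T}}^{(0)},
\end{aligned}
\]
so the sum is at most $CK\,|d(x)D_x^3u|_{\overline{\Omega}_{T}}^{(0)}\le C\,|\ln d(x_0)|\,|d(x)D_x^3u|_{\overline{\Omega}_{T}}^{(0)}$. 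The endpoint term $D_x^\alpha u(x_K,t)$ lives at a fixed interior distance $d(x_K)\ge c_0$, where $D_x^3u(\cdot,t)$ is bounded on the compact set $\{d\ge c_0/2\}$; one more application of the same step over a path of bounded length controls it by the same quantity, and since $d(x_0)\le\delta_0$ gives $|\ln d(x_0)|\ge|\ln\delta_0|>0$, it is absorbed into the right–hand side. Combining the two contributions yields $|D_x^\alpha u(x_0,t)|\le C\,|\ln d(x_0)|\,|d(x)D_x^3u|_{\overline{\Omega}_{T}}^{(0)}$, which is \eqref{2.2.1}.

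I expect the only genuine work to be the geometric chain of the second paragraph: ensuring, uniformly near a general $C^{4+\gamma}$ boundary, that a single step really multiplies $d$ by a fixed factor while keeping the segment inside $\Omega$ and $d$ comparable to $d(x_{k-1})$ along it. This is exactly where the smoothness of $\Gamma$ and the equivalence \eqref{s1.8.1} enter; the rest is the elementary observation that $\sum_{k=1}^{K}1=K\asymp|\ln\delta|$ and that an $O(1/d)$ third derivative integrates to a logarithm along such a chain.
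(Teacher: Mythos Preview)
The paper does not give its own proof of this lemma; it is simply cited from \cite{SpGen}. Judging from the closely related computations that \emph{do} appear here---notably the representation $x_N^{1/2}D_x^{\alpha}u=-x_N^{1/2}\int_{x_N}^{R}\xi^{-3/2}a(x',\xi,t)\,d\xi$ in the proof of Lemma~\ref{L.2.1}---the intended argument in the half-space model $d(x)=x_N$ is the continuous version of yours: write
\[
D_x^{\alpha}u(x',x_N,t)-D_x^{\alpha}u(x',R,t)=-\int_{x_N}^{R}D_{x_N}D_x^{\alpha}u(x',\xi,t)\,d\xi,
\]
bound the integrand by $\xi^{-1}|d(x)D_x^{3}u|^{(0)}_{\overline{\Omega}_T}$, and integrate to get $\ln(R/x_N)$. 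Your dyadic chain is the discrete analogue, done intrinsically in $\Omega$ without first flattening the boundary; the two are equivalent, and your version trades the localisation step for the geometric construction you flag in the last paragraph.

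There is, however, a genuine gap in your handling of the endpoint term $D_x^{\alpha}u(x_K,t)$. ``One more application of the same step over a path of bounded length'' only controls the \emph{difference} $D_x^{\alpha}u(x_K,t)-D_x^{\alpha}u(x_*,t)$, not $|D_x^{\alpha}u(x_K,t)|$ itself, and there is no reason for a second derivative at a fixed interior point to be dominated purely by $|d(x)D_x^{3}u|^{(0)}_{\overline{\Omega}_T}$: take $u(x,t)=x_1^{2}$, for which the right-hand side of \eqref{2.2.1} vanishes identically while the left-hand side equals $2$. This is really a defect of the statement as literally written rather than of your method---the lemma is meant as a bound on the blow-up rate of $D_x^{2}u$ near $\Gamma$, and in its precise form should carry an additive lower-order term (or be read as a bound on $|D_x^{\alpha}u(x,t)-D_x^{\alpha}u(x_*,t)|$ for a fixed interior reference point $x_*$). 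Your argument proves exactly that refined version, which is what is actually used downstream in Lemma~\ref{L.2.1} and Theorem~\ref{Ts6.3}; just say so explicitly rather than trying to absorb the endpoint into $|d(x)D_x^{3}u|^{(0)}_{\overline{\Omega}_T}$.
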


We denote by $C_{2,\gamma/2,0}^{4+\gamma,\frac{4+\gamma}{4}%
}(\overline{\Omega}_{T})$ the closed subspace of $C_{2,\gamma/2}%
^{4+\gamma,\frac{4+\gamma}{4}}(\overline{\Omega}_{T})$ consisting of functions
$u(x,t)$ with the property $u(x,0)\equiv u_{t}(x,0)\equiv0$ in $\overline
{\Omega}$.

\begin{proposition}( \cite{SpGen})
\label{P.2.2} Let $u(x,t)\in
C_{2,\gamma/2,0}^{4+\gamma,\frac{4+\gamma}{4}}(\overline
{\Omega}_{T})$, $T\leq1$. Then for $|\alpha|=2,3$ and with some
$\delta>0$%

\begin{equation}
|d(x)^{2}D_{x}^{\alpha}u|_{\overline{\Omega}_{T}}^{(\gamma,\gamma/4)}\leq
CT^{\delta}\left\Vert u\right\Vert
_{C_{2,\gamma/2,0}^{4+\gamma,\frac
{4+\gamma}{4}}(\overline{\Omega}_{T})}. \label{0000001}%
\end{equation}
And for $|\alpha|<2$%

\begin{equation}
|D_{x}^{\alpha}u|_{\overline{\Omega}_{T}}^{(\gamma,\gamma/4)}\leq CT^{\delta
}\left\Vert u\right\Vert _{C_{2,\gamma/2,0}^{4+\gamma,\frac{4+\gamma}{4}%
}(\overline{\Omega}_{T})}. \label{0000001.1}%
\end{equation}
\end{proposition}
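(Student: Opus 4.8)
Both estimates are interpolation inequalities of the type ``lower-order quantity $\le T^{\delta}\cdot\|u\|_{C_{2,\gamma/2,0}^{4+\gamma,(4+\gamma)/4}(\overline{\Omega}_T)}$'', and the positive power $T^{\delta}$ is manufactured entirely by the homogeneous initial conditions $u(x,0)\equiv u_t(x,0)\equiv 0$. The elementary mechanism is this: if $\phi(x,t)$ is continuous on $\overline{\Omega}_T$, $\phi(x,0)\equiv 0$, and $\langle\phi\rangle_{t,\overline{\Omega}_T}^{(\beta)}\le A$ with $\beta\in(0,1)$, then $|\phi(x,t)|\le At^{\beta}\le AT^{\beta}$; and if moreover $\phi_t$ exists with $\phi_t(x,0)\equiv 0$ and $\langle\phi_t\rangle_{t}^{(\beta)}\le A$, then integrating twice in $t$ gives $|\phi(x,t)|\le CAt^{1+\beta}\le CAT^{1+\beta}$. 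Taking $\phi=u$, $\beta=\gamma/4$ (so that $\langle u_t\rangle_t^{(\gamma/4)}\le\|u\|$ is literally a term of the norm \eqref{sem}) already yields $|u|_{\overline{\Omega}_T}^{(0)}\le CT^{1+\gamma/4}\|u\|$, the sup-norm part of \eqref{0000001.1} for $|\alpha|=0$.

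For the other relevant quantities --- namely $D_x^{\alpha}u$ with $|\alpha|=1$ and $d(x)^2D_x^{\alpha}u$ with $|\alpha|=2,3$ --- one applies the same device once one checks that (i) the function in question has vanishing trace at $t=0$, and (ii) it is H\"{o}lder continuous in $t$ with some exponent $\ge\gamma/4$ and seminorm $\le C\|u\|$ uniformly in $T\le 1$. Item (i) follows because $D_x^{\alpha}u$ ($|\alpha|\le 1$) and $d^2D_x^{\alpha}u$ ($|\alpha|\le 4$) are continuous up to $\{t=0\}$ --- this is contained in Proposition \ref{P.2.1} and Lemma \ref{L.2.5} --- together with $u(x,0)\equiv0$. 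Item (ii) is the parabolic time-regularity of the intermediate derivatives, a structural property of the space $C_{2,\gamma/2}^{4+\gamma,(4+\gamma)/4}$ established in \cite{SpGen} (by parabolic scaling $D_x^{\alpha}u$ is H\"{o}lder of order $(4+\gamma-|\alpha|)/4$ in $t$ and $d^2D_x^{\alpha}u$, $|\alpha|=4$, is H\"{o}lder of order $\gamma/4$ in $t$, all with seminorms controlled by $\|u\|$). Hence each such quantity $Q$ satisfies $|Q|_{\overline{\Omega}_T}^{(0)}\le CT^{\gamma/4}\|u\|$.

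It remains to promote this sup-norm smallness to the full norms $|\cdot|^{(\gamma,\gamma/4)}$ and $|d(x)^2(\cdot)|^{(\gamma,\gamma/4)}$ occurring in \eqref{0000001}, \eqref{0000001.1}. For this I use the convexity (interpolation) inequalities for H\"{o}lder seminorms: for $\sigma'<\sigma$, $\langle w\rangle^{(\sigma')}\le C(|w|^{(0)})^{1-\sigma'/\sigma}(\langle w\rangle^{(\sigma)})^{\sigma'/\sigma}$ in $x$ or in $t$, together with their weighted counterparts for $\langle\,\cdot\,\rangle_{\gamma/2}^{(\gamma)}$. Feeding in the sup-norm bound $|w|^{(0)}\le CT^{\gamma/4}\|u\|$ just obtained, and a strictly higher --- but still $T$-uniformly bounded --- seminorm of $w$ (supplied by Proposition \ref{P.2.1}, Lemma \ref{L.2.5} and the space properties of \cite{SpGen}, e.g. $\langle dD_x^3u\rangle_{\gamma/2}^{(\gamma)}\le C\|u\|$, $\langle D_xu\rangle_{\gamma/2}^{(\gamma)}+[D_xu]^{(1,\gamma/4)}\le C\|u\|$ for $|\alpha|=1$, and time-H\"{o}lder seminorms of order $>\gamma/4$), one gets every seminorm on the left of \eqref{0000001}, \eqref{0000001.1} bounded by $CT^{\delta}\|u\|$ with $\delta>0$ depending only on $\gamma$. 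Adding the sup-norm and seminorm estimates finishes the proof.

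\textbf{Main obstacle.} The delicate point is that the intermediate derivatives are not all bounded: by Lemma \ref{L.2.5} the pure second $x$-derivatives blow up logarithmically at $\partial\Omega$, so the interpolation must be run for the \emph{weighted} objects $d(x)^2D_x^{\alpha}u$, and multiplication by the weight $d(x)^2\in C^{1+\gamma}(\overline{\Omega})$ --- in particular the identity $D_x(d^2D_x^{\alpha}u)=d^2D_x^{\alpha+1}u+(\text{lower order})$, which near the boundary pairs the formally ``too singular'' term $d^2D_x^4u$ with the bounded factor --- has to be handled using exactly the weighted seminorm built into \eqref{s1.8.3}, \eqref{sem}. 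One also has to verify carefully that the $t=0$ traces of these weighted derivatives vanish, so that the gain mechanism of the first paragraph is applicable; this again rests on the continuity statements of Proposition \ref{P.2.1} and Lemma \ref{L.2.5}. Once this bookkeeping of weights and exponents is done, the rest is routine.
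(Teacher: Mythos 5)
The paper contains no proof of Proposition \ref{P.2.2} at all: it is quoted from \cite{SpGen}, and the only argument of this type actually written out in the paper is the proof of the analogous Lemma \ref{L.2.1}. Your proposal is correct and uses essentially that same mechanism --- the vanishing of $u$ and $u_t$ at $t=0$ combined with the surplus time-H\"older exponents of the weighted intermediate derivatives (Propositions \ref{P.2.1}, \ref{P.2.5}) traded for a factor $T^{\delta}$, then interpolation and weight bookkeeping for the spatial seminorms, where for $|\alpha|\le 1$ one must indeed rely on the Zygmund-type seminorm $\left[ D_{x}u\right]^{(1,\gamma/4)}$ or the logarithmic bound of Lemma \ref{L.2.5} rather than a Lipschitz bound, since $D_{x}^{2}u$ is unbounded, as you note.
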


%------------
Such inequalities for usual H\"{o}lder norms in
$C^{l_{1},l_{2}}_{0}(\overline{\Omega}_{T})$ are well studied
(\cite{SolParab}, \cite{Bizh}) and we have ($l'_{1}>l_{1}$,
$l'_{2}>l_{2}$)

\begin{equation}
\|u\|_{C^{l_{1},l_{2}}_{0}(\overline{\Omega}_{T})}\leq CT^{\delta}
\|u\|_{C^{l'_{1},l'_{2}}_{0}(\overline{\Omega}_{T})}. \label{C0}
\end{equation}

%---------------

Let $\overline{n}$ be outward normal to $\Gamma$. We consider now
the question of traces of $u(x,t)\in C_{2,\gamma/2}^{4+\gamma,\frac{4+\gamma}{4}%
}(\overline{\Omega}_{T})$ at $\Gamma_{T}$.

\begin{proposition}( \cite{SpGen}) \label{P.2.3}
A function $u(x,t)\in C_{2,\gamma/2}^{4+\gamma,\frac{4+\gamma}{4}}%
(\overline{\Omega}_{T})$ and it's derivative $\partial
u/\partial\overline{n}$ on $\Gamma_{T}$ have traces at $\Gamma_{T}$
from the spaces $u(x,t)|_{\Gamma_{T}}\in$
$C_{x^{\prime},t}^{2+\gamma/2,1+\frac{\gamma}{4}}(\Gamma_{T})$,
$\partial u/\partial\overline{n}\in$ $C_{x^{\prime},t}^{1+\gamma
/2,1/2+\frac{\gamma}{4}}(\Gamma_{T})$ \ ($x^{\prime}\in\Gamma$) and

\[
\left\Vert u(x^{\prime},t)\right\Vert _{C_{x^{\prime},t}^{2+\gamma
/2,1+\frac{\gamma}{4}}(\Gamma_{T})}\leq C|u|_{2,\gamma/2,\overline{\Omega}%
_{T}}^{(4+\gamma)},
\]
\begin{equation}
\left\Vert \frac{\partial u}{\partial\overline{n}}(x^{\prime},t)\right\Vert
_{C_{x^{\prime},t}^{1+\gamma/2,1/2+\frac{\gamma}{4}}(\Gamma_{T})}\leq
C|u|_{2,\gamma/2,\overline{\Omega}_{T}}^{(4+\gamma)}. \label{s1.s5.2}%
\end{equation}
\end{proposition}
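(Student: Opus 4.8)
The plan is to deduce the statement, as in \cite{SpGen}, by localisation and flattening of the boundary followed by a dyadic ``telescoping in the distance to $\Gamma$'' argument carried out in the half-space model. First I would cover $\Gamma$ by finitely many coordinate patches and, in each patch, introduce $C^{4+\gamma}$ coordinates $(x',x_N)$ straightening $\Gamma$ to $\{x_N=0\}$ and $\Omega$ to $\{x_N>0\}$; under such a change of variables the weight $d(x)$ is comparable to $x_N$, and by the chain rule the norm \eqref{s1.8.4} is comparable to the analogous half-space norm, the lower order terms produced by differentiating the coordinate map being absorbed by the lower order parts of the norm (cf. Proposition \ref{P.2.1}, which also shows that the first $x$-derivatives of $u$ are bounded and belong to $C^{\gamma/2}$ in $x$ up to $\{x_N=0\}$, so that the traces $u(x',0,t)$ and $\partial_{x_N}u(x',0,t)$ exist and are continuous). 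One ingredient is then immediate: the seminorm $\left\langle D_t u\right\rangle^{(\gamma/4)}_{t}$ in \eqref{sem} is unweighted and taken over all of $\overline{\Omega}_T$, so $\partial_t u$ has a trace on $\{x_N=0\}$ lying in $C^{\gamma/4}$ in $t$. It thus remains to establish the tangential regularity of $u(x',0,t)$ and of $\partial_{x_N}u(x',0,t)$, together with the sharp temporal exponents.

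The core of the argument, and its main difficulty, is that by Lemma \ref{L.2.5} generic second $x$-derivatives of $u$ blow up like $|\ln x_N|$ as $x_N\to 0$, so neither the existence of two tangential derivatives of $u(x',0,t)$ nor the H\"older continuity of $\partial_{x'}\partial_{x_N}u(x',0,t)$ is automatic; they must be extracted from fourth order bounds that degenerate at the boundary. I would control the relevant finite differences of the traces using the equivalent difference characterisation of the (weighted) seminorms, Proposition \ref{Ps1.1} and \eqref{s1.8.3}. To estimate, say, a second tangential difference $\Delta^2_{h',x'}\bigl[D^2_{x'}u\bigr](x',0,t)$, pick a cutting height $\rho\simeq|h'|$ and write the trace difference as the same difference evaluated on the slice $\{x_N=\rho\}$, plus two vertical corrections measuring how $D^2_{x'}u(x',s,t)$ changes as $s$ runs from $\rho$ down to $0$ (the telescoping below simultaneously shows this limit exists). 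On the slice $\{x_N=\rho\}$ one is away from the degeneracy: there $x_N^2|D^4u|$ is controlled, i.e. $|D^4u|\lesssim\rho^{-2}$ with oscillation $\lesssim\rho^{\gamma/2-2}|\cdot|^{\gamma}$ on balls of radius $\sim\rho$ — exactly what the seminorm $\left\langle d(x)^2 D^\alpha u\right\rangle^{(\gamma)}_{\gamma/2}$ encodes — and a Taylor/interpolation estimate on that scale gives the bound with the correct power of $|h'|$. Each vertical correction is handled dyadically: decomposing $0<x_N<\rho$ into layers $\{2^{-j-1}\rho<x_N<2^{-j}\rho\}$ and integrating the appropriate third derivative across each layer, the weighted seminorm furnishes a contribution of order $(2^{-j}\rho)^{\gamma/2}$, and $\sum_j(2^{-j}\rho)^{\gamma/2}\lesssim\rho^{\gamma/2}$ converges — it is precisely the exponent $\gamma/2$ in the weight in \eqref{s1.8.3} that makes the telescoping summable. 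The same scheme bounds the H\"older seminorm of $\partial_{x'}\partial_{x_N}u(x',0,t)$ for the Neumann trace; and, feeding in the anisotropic finite-difference seminorms of Proposition \ref{P.2.1} (the $[\,\cdot\,]^{(1,\gamma/4)}$ term and the $t$-seminorm of $D_t u$) with the parabolic choice of cutting height $\rho\simeq\theta^{1/2}$ for a time step $\theta$, it produces the temporal exponents $1+\gamma/4$ for $u|_{\Gamma_T}$ and $1/2+\gamma/4$ for $(\partial u/\partial\overline{n})|_{\Gamma_T}$, the loss of $1/4$ in the latter being exactly what the degeneracy of the weight forces and what the balance $\rho\simeq\theta^{1/2}$ accounts for.

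Finally I would reassemble: summing the local estimates against the partition of unity and using that the cut-off functions and the flattening maps are smooth and that the norms of $u|_{\Gamma_T}$ and $(\partial u/\partial\overline{n})|_{\Gamma_T}$ in $C^{2+\gamma/2,1+\gamma/4}_{x',t}(\Gamma_T)$ and $C^{1+\gamma/2,1/2+\gamma/4}_{x',t}(\Gamma_T)$ are intrinsic to $\Gamma_T$ up to equivalence. I expect the genuinely delicate step to be the second one — the dyadic estimate of the vertical corrections — since there one must distil honest tangential regularity of the trace out of fourth order bounds that degenerate at $\Gamma$ and second order bounds that are only logarithmically unbounded; the rest is routine bookkeeping with weighted H\"older norms. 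A complete proof along these lines, for the more general spaces $C^{m+\gamma,\frac{m+\gamma}{m}}_{n,\omega\gamma}(\overline{\Omega}_T)$, is given in \cite{SpGen} (see also the preprint \cite{SpGenArx}).
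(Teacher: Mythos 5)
The paper itself offers no proof of this proposition: it is imported verbatim from \cite{SpGen}, and the only argument in the text is the remark after Proposition \ref{P.2.4} that the half-space case is proved in \cite{SpGen} and the general case follows by localisation near $\Gamma_{T}$. Your localisation/flattening and reassembly steps therefore match everything the paper actually says; the issue is your reconstruction of the half-space core, and there the dyadic telescoping does not close as described. The seminorms you are allowed to use (the norm \eqref{s1.8.4} together with Proposition \ref{P.2.1}) give, on a slice $\{x_{N}=s\}$ and for a tangential step $h'$, only $|D^{3}_{x}u|\leq Cs^{-1}$, $|\Delta_{h',x'}D^{3}_{x}u|\leq C|h'|^{\gamma}s^{-1-\gamma/2}$ and $|\Delta_{h',x'}D^{4}_{x}u|\leq C|h'|^{\gamma}s^{-2-\gamma/2}$. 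Integrating these across the layer $\{2^{-j-1}\rho<x_{N}<2^{-j}\rho\}$ with $\rho\simeq|h'|$ produces a contribution of order $(2^{-j}\rho)\cdot|h'|^{\gamma}(2^{-j}\rho)^{-1-\gamma/2}=|h'|^{\gamma}(2^{-j}\rho)^{-\gamma/2}=\rho^{\gamma/2}2^{j\gamma/2}$, not $(2^{-j}\rho)^{\gamma/2}$: the geometric series diverges as $j\to\infty$. Falling back near $x_{N}=0$ on the cruder bounds $|D^{3}_{x}u|\lesssim s^{-1}$ or the Zygmund bound $|\Delta^{2}_{h',x}\nabla u|\lesssim|h'|$ either gives a logarithmic divergence or, after optimising the crossover height, the exponent $\tfrac{2+3\gamma/2}{1+\gamma/2}$, which is strictly smaller than $2+\gamma/2$. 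In particular your integration of $D^{2}_{x'}\partial_{x_{N}}u$ down to the boundary does not even establish that $D^{2}_{x'}u(x',s,t)$ has a limit as $s\to0$ (your bounds do not distinguish the purely normal second derivative, which is the only one known to behave like $\ln s$, from the tangential ones). The slice estimate at height $\rho\simeq|h'|$ is fine; it is precisely the vertical correction — the step you yourself flag as delicate — that your estimates cannot deliver.

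Within the framework of this paper, the proposition is not obtained by a layer-by-layer integration at all, but follows (in the half space, then by localisation) from the anisotropic embedding inequalities of Proposition \ref{P.2.5}, also taken from \cite{SpGen}: the seminorms $\langle D^{\alpha}_{x'}D^{j}_{x_{N}}u\rangle^{(\gamma)}_{\gamma/2,x'}$ and their unweighted counterparts with $|\alpha|=2-j$ encode exactly the tangential regularity of $u$ and $\partial u/\partial x_{N}$ up to $\{x_{N}=0\}$, while the terms $\langle D^{\alpha}_{x}u\rangle^{(1-j/2+\gamma/4)}_{t}$ with $|\alpha|=j=1$ and $\langle D_{t}u\rangle^{(\gamma/4)}_{t}$ give the temporal exponents $1/2+\gamma/4$ and $1+\gamma/4$. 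Proving \eqref{s1.7} requires the finer tools of \cite{SpGen} (higher-difference characterisations, interpolation/rescaling arguments, and the separation of the purely normal derivatives via correctors of the type $\ln^{(2)}x_{N}$ as in Lemma \ref{Ls1.2}), so the genuinely hard part of the statement cannot be recovered by the weighted-seminorm bookkeeping you propose.
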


As for the extension of functions $v(x^{\prime},t)$ from the class
$C_{x^{\prime},t}^{2+\gamma/2,1+\frac{\gamma}{4}}(\Gamma_{T})$ to
the region $\overline{\Omega}_{T}$ , we have the following
assertion.

Denote a neighbourhood of $\Gamma_{T}$

\[
N_{\lambda}\equiv\{(x,t)\in\overline{\Omega}_{T}:\quad dist(x,\Gamma
)\leq\lambda\},
\]
where $\lambda>0$ is sufficiently small.

\begin{proposition}( \cite{SpGen}) \label{P.2.4}
For any sufficiently small $\lambda>0$\ there exists an operator
$E:C_{x^{\prime},t}^{2+\gamma/2,1+\frac{\gamma}{4}}(\Gamma_{T})\rightarrow
C_{2,\gamma/2}^{4+\gamma,\frac{4+\gamma}{4}}(\overline{\Omega}_{T})$
and $E:C_{x^{\prime},t}^{\gamma/2,\gamma/4}(\Gamma_{T}) \rightarrow
C_{\gamma/2}^{\gamma,\gamma/4}(\overline{\Omega}_{T})$ with the
property:

for a given function $v(x^{\prime},t)\in C_{x^{\prime},t}^{2+\gamma
/2,1+\frac{\gamma}{4}}(\Gamma_{T})$ the function $w(x,t)=Ev\in C_{2,\gamma
/2}^{4+\gamma,\frac{4+\gamma}{4}}(\overline{\Omega}_{T})$ has support in a
neighborhood $N_{\lambda}$ of $\Gamma_{T}$ and satisfies%

\begin{equation}
w(x,t)|_{\Gamma_{T}}=v(x^{\prime},t),\quad\left\Vert w\right\Vert
_{C_{2,\gamma/2}^{4+\gamma,\frac{4+\gamma}{4}}(\overline{\Omega}_{T})}\leq
C_{\lambda}\left\Vert v\right\Vert _{C_{x^{\prime},t}^{2+\gamma/2,1+\frac
{\gamma}{4}}(\Gamma_{T})}, \label{s1.s5.3}%
\end{equation}
where the constant $C$ does not depend on $v$.

Besides, the operator $E$ possesses the property%

\begin{equation}
\frac{\partial Ev}{\partial t}=E\frac{\partial v}{\partial t},\quad
(x,t)\in\overline{\Omega}_{T}. \label{00002.0001}%
\end{equation}

\end{proposition}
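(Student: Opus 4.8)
The plan is to build $E$ explicitly in a boundary‑flattened half‑space model and then patch via a partition of unity. First I would fix a finite cover of a neighbourhood of $\Gamma$ by charts in each of which a $C^{7+\gamma}$ diffeomorphism straightens $\Gamma$ onto a piece of $\{x_N=0\}$; by \eqref{1.7} such a diffeomorphism is smooth enough that it transforms the weighted norms of $C_{2,\gamma/2}^{4+\gamma,\frac{4+\gamma}{4}}(\overline{\Omega}_T)$, of $C_{\gamma/2}^{\gamma,\gamma/4}(\overline{\Omega}_T)$ and of the boundary spaces into equivalent ones, and it takes $d(x)$ to a function comparable to $x_N$ (all admissible weights $d$ give equivalent norms). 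Choosing a subordinate partition of unity $\{\chi_j(x)\}$, independent of $t$ and $C^{7+\gamma}$‑smooth so that multiplication by $\chi_j$ is bounded on all spaces in play, it then suffices to produce, on the model region $R_+^N\times[0,T]$ with $R_+^N=\{x_N\ge0\}$, a linear operator taking $v(x',t)$ on $\{x_N=0\}\times[0,T]$ to $w(x,t)$ on $R_+^N\times[0,T]$ with $w|_{x_N=0}=v$ and the two quantitative bounds. A cut‑off $\eta(x_N)$ with $\eta\equiv1$ near $0$ and $\operatorname{supp}\eta\subset\{x_N<\lambda\}$ then localizes the support to $N_\lambda$, and reassembling the local pieces gives the global $E$.

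For the model operator I would use the slice‑wise averaging extension
\[
w(x',x_N,t)=\int_{R^{N-1}}\Phi(z')\,v\bigl(x'-x_N z',\,t\bigr)\,dz',
\]
where $\Phi\in C_0^\infty(R^{N-1})$ is fixed with $\int\Phi=1$ and vanishing moments $\int z'^{\,\beta}\Phi(z')\,dz'=0$ for all multi‑indices $\beta$ with $1\le|\beta|\le K$, $K$ large (after localization $v(\cdot,t)$ is compactly supported on $R^{N-1}$, so no preliminary spatial extension is needed). The identity $w|_{x_N=0}=v$ is immediate from $\int\Phi=1$; linearity and differentiation under the integral sign give at once the commutation property $\partial_t(Ev)=E(\partial_t v)$ of \eqref{00002.0001}, since $t$ enters $w$ only as a passive parameter; and multiplication by $\eta(x_N)$ (and later by the $\chi_j$) disturbs neither property because these factors are $t$‑independent and $\eta(0)=1$.

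It remains to verify the two mapping estimates, and here the mechanism is the usual one. For $v\in C_{x',t}^{2+\gamma/2,1+\frac{\gamma}{4}}(\Gamma_T)$ the second $x'$‑derivatives of $v$ are $\gamma/2$‑H\"older in $x'$ and $\gamma/4$‑H\"older in $t$, and $\partial_t v$ is $\gamma/2$‑H\"older in $x'$. When differentiating $w$ in $x$ at most two derivatives are allowed to fall on $v$; every further derivative, in particular every $\partial_{x_N}$, is moved onto $\Phi$ by integration by parts in $z'$, each such step producing a factor $x_N^{-1}$ and, thanks to the moment conditions, a difference of $\nabla_{x'}^2 v$ minus its value at $x'$. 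For $|\alpha|=4$ this yields a representation of the form $D_x^{\alpha}w(x,t)=x_N^{-2}\int\Psi_\alpha(z')\bigl[\nabla_{x'}^2 v(x'-x_N z',t)-\nabla_{x'}^2 v(x',t)\bigr]dz'$, whence $|d(x)^2 D_x^{\alpha}w|\le C\,x_N^{\gamma/2}\langle v\rangle$, and estimating the corresponding differences of this representation in $x$ and in $t$ controls the full seminorm \eqref{sem}; the $\gamma/4$‑H\"older bound on $D_t w=\int\Phi\,\partial_t v(x'-x_N z',t)\,dz'$ is inherited directly from that of $\partial_t v$. The low‑order mapping $E:C_{x',t}^{\gamma/2,\gamma/4}(\Gamma_T)\to C_{\gamma/2}^{\gamma,\gamma/4}(\overline{\Omega}_T)$ is verified by the same representation with no derivatives pulled out, using only the $\gamma/2$‑H\"older continuity of $v$ and the smoothness of $\Phi$.

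The main obstacle I expect is the clean execution of these weighted H\"older estimates near $\Gamma$, i.e.\ showing that $d(x)^2 D_x^{\alpha}w$ lies in $C_{\gamma/2}^{\gamma}$ with the weight $d(x,\overline{x})^{\gamma/2}=\max\{d(x),d(\overline{x})\}^{\gamma/2}$: one must split the standard two cases $|x-\overline{x}|\gtrsim d(x,\overline{x})$ and $|x-\overline{x}|\lesssim d(x,\overline{x})$, and in the second case carry out the careful scaling (both points sit at normal distance $\sim\rho:=d(x,\overline{x})$, and one compares the representations at scale $\rho$) that makes the powers of $\rho$, of $|x-\overline{x}|$, and of the weight balance to an absolute constant. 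This is precisely the balance that forces the mismatched exponents $(2+\gamma/2,\,1+\gamma/4)$ of the trace space in Proposition \ref{P.2.3}, and it — together with the bookkeeping of the mixed space–time H\"older differences — is where essentially all the work lies; $T$‑independence of the constant is automatic because $E$ acts slice‑wise in $t$. Everything else (the partition of unity, the change of variables, Leibniz with the cut‑offs) is routine given the transformation properties of the weighted spaces collected in Section \ref{s2}.
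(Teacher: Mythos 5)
Your construction is sound, but it is a genuinely different route from the one taken in the paper: here Proposition \ref{P.2.4} is not proved at all — it is quoted from \cite{SpGen}, where it is established for the half-space $\Omega=\{x_N\ge 0\}$, and the text only remarks that the general domain follows "in standard way by the localisation near $\Gamma_T$". You instead build the half-space model operator explicitly, as a convolution with a kernel dilated at scale $x_N$, $w(x',x_N,t)=\int\Phi(z')v(x'-x_Nz',t)\,dz'$, and then localize exactly as the paper suggests. Your operator does deliver everything the statement asks for: the trace identity from $\int\Phi=1$, the commutation \eqref{00002.0001} because $E$ acts slice-wise in $t$, the simultaneous boundedness on the low-order pair of spaces, and the top-order bound, since after moving two derivatives onto the kernel one gets $|d^2D_x^\alpha w|\le Cx_N^{\gamma/2}\langle\nabla_{x'}^2v\rangle^{(\gamma/2)}$ and the two-case splitting $|x-\overline x|\gtrless c\,d(x,\overline x)$ (sup bound in the far case, the interior bound $|\nabla(x_N^2D^4_xw)|\lesssim x_N^{-1+\gamma/2}$ in the near case) balances the weight in \eqref{sem} to an absolute constant, exactly as you indicate. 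Two small corrections/cautions: the vanishing-moment conditions on $\Phi$ are not what produces the cancellation at top order — the kernels $\Psi_\alpha$ obtained by integration by parts are exact $z'$-derivatives of compactly supported functions and hence have zero mean automatically (the moments only help for intermediate-order representations), and a pure $\partial_{x_N}$ does not land on $\Phi$ directly but first becomes $z'\cdot\nabla_{x'}$ acting on $v$ and is then converted via $\nabla_{x'}f(x'-x_Nz')=-x_N^{-1}\nabla_{z'}[f(x'-x_Nz')]$; also, if one wants the full family of mixed space–time seminorms of the class (as in \eqref{s1.7}) rather than only the defining seminorm \eqref{sem}, one should either estimate the time differences of your representation using the $t$-Hölder part of the trace norm or invoke the equivalence proved in \cite{SpGen}. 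What each approach buys: the paper's citation keeps the exposition short but opaque, while your explicit kernel makes the proposition self-contained, makes \eqref{00002.0001} and the simultaneous low-order mapping property transparent, and gives the $\lambda$-dependence of the constant in \eqref{s1.s5.3} directly from the cut-off; the price is the weighted Hölder bookkeeping you correctly identify as the bulk of the work.
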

Propositions \ref{P.2.3} and \ref{P.2.4} were proved in \cite{SpGen}
for the halfspace $\Omega={x:x_{N}\geq 0}$ but the general case is
obtained in standard way by the localisation near $\Gamma_{T}$. In
the special case of $\Omega=\{x:x_{N}\geq0\}\equiv R_{+}^{N}$ ,
$\Omega_{T}=R_{+}^{N}\times\lbrack0,T]\equiv R_{+,T}^{N}$ , and
$d(x)=x_{N}$\ we have the following properties of the space
$C_{2,\gamma /2}^{4+\gamma,\frac{4+\gamma}{4}}(R_{+,T}^{N})$.

\begin{proposition} (\cite{SpGen}) \label{P.2.5}

Let $u(x,t)\in C_{2,\gamma/2}^{4+\gamma,\frac{4+\gamma}{4}}(R_{+,T}^{N})$. Then%

\[
\left\langle u\right\rangle _{2,\gamma/2,\overline{Q}}^{(4+\gamma
,\frac{4+\gamma}{4})}\equiv{\sum\limits_{j=0}^{1}}{\sum\limits_{|\alpha|=4-j}%
}\left\langle x_{N}^{2-j}D_{x}^{\alpha}u\right\rangle _{\gamma/2,R_{+,T}^{N}%
}^{(\gamma,\gamma/4)}+{\sum\limits_{j=0}^{2}}{\sum\limits_{|\alpha|=4-j}%
}\left\langle x_{N}^{2-j/2}D_{x}^{\alpha}u\right\rangle _{t,R_{+,T}^{N}%
}^{(\frac{\gamma+j}{4})}+
\]

\[
+\left\langle D_{t}u\right\rangle _{\gamma/2,R_{+,T}^{N}}^{(\gamma,\gamma
/4)}+{\sum\limits_{j=0}^{1}}{\sum\limits_{|\alpha|=2-j+(1-\omega)\gamma}%
}\left\langle D_{x^{\prime}}^{\alpha}D_{x_{N}}^{j}u\right\rangle _{x^{\prime
},R_{+,T}^{N}}^{(\{m-n+(1-\omega)\gamma\})}+
\]

\[
+{\sum\limits_{j=0}^{1}}{\sum\limits_{|\alpha|=2-j}}\left\langle D_{x^{\prime
}}^{\alpha}D_{x_{N}}^{j}u\right\rangle _{\gamma/2,x^{\prime},R_{+,T}^{N}%
}^{(\gamma)}+
\]

\begin{equation}
+{\sum\limits_{j=1}^{1}}{\sum\limits_{|\alpha|=j}}\left\langle D_{x}^{\alpha
}u\right\rangle _{t,R_{+,T}^{N}}^{(1-\frac{j}{2}+\frac{\gamma}{4})}\leq
C\left(  {\sum\limits_{i=1}^{N}}\left\langle x_{N}^{2}D_{x_{i}}^{4}%
u\right\rangle _{\gamma/2,x_{i},R_{+,T}^{N}}^{(\gamma)}+\left\langle
D_{t}u\right\rangle _{t,R_{+,T}^{N}}^{(\gamma/4)}\right)  , \label{s1.7}%
\end{equation}
where $\left\langle x_{N}^{2}D_{x_{i}}^{4}u\right\rangle _{\gamma
/2,x_{i},R_{+,T}^{N}}^{(\gamma)}$ , $i=\overline{1,N}$, are the
corresponding weighted H\"{o}lder constants with respect to only
particular variables $x_{i}$ of the corresponding fourth derivatives
with respect to the same variable.

Moreover,
\begin{equation}
x_{N}^{2-j}D_{x}^{\alpha}u(x,t)\rightarrow0,x_{N}\rightarrow0,\quad
j=0,1,\alpha=(\alpha_{1},...,\alpha_{N}),|\alpha|=4-j,\alpha_{N}<4-j.
\label{s1.7.1}%
\end{equation}

\end{proposition}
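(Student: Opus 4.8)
The plan is to pass to the equivalent finite-difference description of all the seminorms and then to combine three ingredients: (i) the flat anisotropic interpolation inequalities \eqref{s1.12}--\eqref{s1.12+2} already recorded above, which convert control of the pure top-order derivatives $D_{x_i}^{4}u$ into control of all mixed spatial derivatives with the correctly reduced H\"older exponents; (ii) the parabolic scaling $x\sim t^{1/4}$ intrinsic to \eqref{1.1}, which links a time difference of step $\theta$ with a spatial difference of step $\theta^{1/4}$ and thereby distributes the unit of $t$-smoothness carried by $\langle D_{t}u\rangle_{t}^{(\gamma/4)}$ among the intermediate $x$-derivatives, with the order reduction dictated by the weight; and (iii) a dyadic decomposition in the distance to $\{x_{N}=0\}$ to cope with the degenerate weight $x_{N}^{2}$. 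First I would use Proposition \ref{Ps1.1} to replace every H\"older seminorm in \eqref{s1.7} by the corresponding seminorm built from the iterated differences $\Delta_{h,x_{i}}^{k}$ and $\Delta_{\theta,t}$; after this reduction the interpolation manipulations become purely algebraic bookkeeping of exponents.

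The \emph{interior} contribution is handled by rescaling. On a ball $B_{r}(x_{0})$ with $r=x_{0,N}/2$ the weight $x_{N}^{2}$ is comparable to the constant $x_{0,N}^{2}$, so on such a ball the right-hand side of \eqref{s1.7} says exactly that $x_{0,N}^{2}D_{x}^{4}u$ and $D_{t}u$ have bounded H\"older norms (exponent $\gamma$ in $x$, $\gamma/4$ in $t$). Mapping $B_{r}(x_{0})\times(t_{0}-r^{4},t_{0})$ to the unit parabolic cylinder, applying the flat anisotropic interpolation \eqref{s1.12}--\eqref{s1.12+2} together with the standard parabolic interpolation \eqref{C0}, and scaling back produces, on $B_{r}(x_{0})$, the bound for every intermediate derivative with precisely the power of $x_{0,N}$ and the exponent appearing on the left of \eqref{s1.7}; summing over such balls gives all the "interior" parts of the left-hand seminorms, including the $t$-seminorms $\langle x_{N}^{2-j}D_{x}^{\alpha}u\rangle_{t}^{((\gamma+j)/4)}$ once one checks that the exponents produced by the two interpolations are mutually consistent.

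It remains to treat pairs of points $x,\overline{x}$ (and times) with $|x-\overline{x}|$ of the same order as, or larger than, $d(x,\overline{x})=\max\{x_{N},\overline{x}_{N}\}$, together with the boundary traces on $\{x_{N}=0\}$. For these I would run a telescoping/Hardy argument over the dyadic shells $x_{N}\sim 2^{-m}$: integrating the pointwise bound $|D_{x_{N}}^{4}u|\le C x_{N}^{-2}(\cdot)$ four times in $x_{N}$ recovers the logarithmic estimate of Lemma \ref{L.2.5} for $|\alpha|=2$ and the boundedness and Zygmund-type smoothness of the first-order derivatives from Proposition \ref{P.2.1}; carrying the tangential and time differences through the same telescoping yields the weighted boundary seminorms $\langle D_{x'}^{\alpha}D_{x_{N}}^{j}u\rangle_{\gamma/2,x'}^{(\gamma)}$ and the fractional-order tangential trace seminorms in \eqref{s1.7}. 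The limit relation \eqref{s1.7.1} comes out of the same shell analysis: when $\alpha_{N}<4-j$ one writes $D_{x}^{\alpha}u=\partial_{x_{k}}D_{x}^{\beta}u$ with a tangential index $k$, so that $x_{N}^{2-j}D_{x}^{\alpha}u=x_{N}\,\partial_{x_{k}}\!\bigl(x_{N}^{1-j}D_{x}^{\beta}u\bigr)$; since $x_{N}^{1-j}D_{x}^{\beta}u$ lies in a weighted $C^{\gamma}$-class, its tangential derivative at distance $x_{N}$ from the boundary is $O(x_{N}^{\gamma/2-1})$, and the extra factor $x_{N}$ forces the product to tend to $0$ with $x_{N}$; the case $\alpha_{N}=4-j$ (all derivatives normal) is excluded precisely because there the $x_{N}^{-1}$ losses from differentiation are not beaten by the weight.

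The main obstacle is the simultaneous presence of the degenerate weight and the parabolic anisotropy in the mixed seminorms: one must verify that the powers of $x_{N}$ generated by the dyadic rescaling match the reduced anisotropic exponents generated by interpolation, and that the cross terms between a tangential (or time) difference and a normal difference do not destroy the weight. Concretely the delicate estimates are those for $\langle x_{N}^{2-j}D_{x}^{\alpha}u\rangle_{t}^{((\gamma+j)/4)}$ and for the fractional tangential seminorms, where \eqref{C0} is unavailable on a boundary-touching cylinder and one genuinely needs the shell decomposition with careful exponent bookkeeping; once the difference-seminorm reformulation is in place, everything else is routine.
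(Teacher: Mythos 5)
You should first note that the paper itself contains no proof of Proposition \ref{P.2.5}: it is imported verbatim from \cite{SpGen}, so the only in-paper material to compare with is the analogous machinery of Section \ref{s5} (seminorms built on tenth-order differences, the equivalence \eqref{eqv}, a scaling/blow-up argument closed by the Liouville Theorem \ref{T.002.1}, and the corrector of Lemma \ref{Ls1.2}). Measured against that, your central mechanism has a genuine gap. The step ``rescale a cylinder $B_{x_{0,N}/2}(x_0)\times(t_0-r^4,t_0)$ to the unit cylinder, apply the flat interpolation \eqref{s1.12+2.1} and \eqref{C0}, scale back and sum over balls'' cannot produce a \emph{pure seminorm} inequality such as \eqref{s1.7}: on a bounded (rescaled) cylinder every interpolation inequality of the type \eqref{s1.12+2.1}/\eqref{C0} carries a zero-order term $|u|^{(0)}$, and after undoing the scaling this term reappears multiplied by negative powers of $x_{0,N}$; it is not majorized by the right-hand side of \eqref{s1.7} and does not telescope away when you sum over dyadic shells approaching $\{x_N=0\}$. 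To eliminate it you must subtract on each cylinder a Taylor-type corrector adapted to the degenerate weight, and here the corrector is \emph{not} a polynomial: it must contain the $\ln^{(2)}x_N$ term of Lemma \ref{Ls1.2}, which encodes the logarithmic blow-up of second derivatives recorded in Lemma \ref{L.2.5}. Alternatively one argues as in Lemma \ref{L.5.1}: replace the seminorms by high-order-difference seminorms, invoke their equivalence \eqref{eqv} (itself a nontrivial weighted analogue of Proposition \ref{Ps1.1}, valid on the half-space and not a formality), and run a rescaling/contradiction argument. Your claim that after the difference reformulation everything is ``purely algebraic bookkeeping'' conceals exactly these steps; the borderline terms on the left of \eqref{s1.7} (the fractional tangential seminorms and the weighted time seminorms with exponents $(\gamma+j)/4$) are precisely where naive exponent bookkeeping breaks down.

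The argument you give for \eqref{s1.7.1} is also not valid as stated. From ``$x_N^{1-j}D_x^{\beta}u$ lies in a weighted $C^{\gamma}$ class'' you infer that its tangential derivative is $O(x_N^{\gamma/2-1})$; but H\"older control of a function gives no pointwise bound on its derivative. If you try to recover $D_{x_k}D_x^{\beta}u$ from difference quotients of $D_x^{\beta}u$ with step $h\sim x_N$, the error term involves the oscillation of $D_{x_k}D_x^{\beta}u$ itself — the very quantity being estimated — and the resulting computation yields only boundedness of $x_N^{2-j}D_x^{\alpha}u$, not its vanishing. A correct proof of \eqref{s1.7.1} uses the finiteness of $\left\langle x_N^{2-j}D_x^{\alpha}u\right\rangle^{(\gamma)}_{\gamma/2}$ together with integration in a tangential direction $x_k$ with $\alpha_k\geq 1$ (comparing against the bound on $x_N^{1-j}D_x^{\alpha-e_k}u$) to exclude a persistent positive lower bound of $x_N^{2-j}|D_x^{\alpha}u|$ near $\{x_N=0\}$; this is exactly where the hypothesis $\alpha_N<4-j$ enters, and it is absent from your sketch.
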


It is important also that the following interpolation inequalities are valid.

\begin{theorem}
\label{Ts6.2}( \cite{SpGen})
Let a function $u(x,t)\in$ $C_{2,\gamma/2}^{4+\gamma,\frac{4+\gamma}{4}%
}(R_{+,\infty}^{N})$ and $\alpha=(\alpha_{1},...,\alpha_{N})$, $|\alpha|=4$,
be a multiindex, $k\in\{1,2,...,N\}$. Then for any $\varepsilon>0$%

\[
\left\langle x_{N}^{2}D_{x}^{\alpha}u\right\rangle _{\gamma/2,x_{k,}%
R_{+,\infty}^{N}}^{(\gamma)}\leq C\varepsilon^{-\alpha_{k}-\gamma}%
{\sum\limits_{i=1,i\neq k}^{N}}\left\langle x_{N}^{2}D_{x_{i}}^{4}%
u\right\rangle _{\gamma/2,x_{i},R_{+,\infty}^{N}}^{(\gamma)}+
\]

\begin{equation}
+C\varepsilon^{4-\alpha_{k}}\left\langle x_{N}^{2}D_{x_{k}}^{4}u\right\rangle
_{\gamma/2,x_{k},R_{+,\infty}^{N}}^{(\gamma)},\ \ \ k<N, \label{s1.6.3}%
\end{equation}

\[
\left\langle x_{N}^{2}D_{x}^{\alpha}u\right\rangle _{\gamma/2,x_{N}%
,,R_{+,\infty}^{N}}^{(\gamma)}\leq C\varepsilon^{-\alpha_{k}-(1-\omega)\gamma
}{\sum\limits_{i=1}^{N-1}}\left\langle x_{N}^{2}D_{x_{i}}^{4}u\right\rangle
_{\gamma/2,x_{i},R_{+,\infty}^{N}}^{(\gamma)}+
\]

\begin{equation}
+C\varepsilon^{4-\alpha_{k}}\left\langle x_{N}^{2}D_{x_{N}}^{4}u\right\rangle
_{\gamma/2,x_{N},R_{+,\infty}^{N}}^{(\gamma)},\ \ \ k=N, \label{s1.6.4}%
\end{equation}

\begin{equation}
\left\langle x_{N}^{2}D_{x}^{\alpha}u\right\rangle _{t,\overline{Q}}%
^{(\gamma/4)}\leq\varepsilon^{-\gamma/4}C{\sum\limits_{i=1}^{N}}\left\langle
x_{N}^{2}D_{x_{i}}^{4}u\right\rangle _{\gamma/2,x_{i},R_{+,\infty}^{N}%
}^{(\gamma)}+C\varepsilon\left\langle D_{t}u\right\rangle _{t,R_{+,\infty}%
^{N}}^{(\gamma/4)}, \label{s1.6.5}%
\end{equation}

\[
\left\langle D_{t}u\right\rangle _{\gamma/2,x_{k},R_{+,\infty}^{N}}^{(\gamma
)}\leq C\varepsilon^{-\gamma}{\sum\limits_{i=1,i\neq k}^{N}}\left\langle
x_{N}^{2}D_{x_{i}}^{4}u\right\rangle _{\gamma/2,x_{i},R_{+,\infty}^{N}%
}^{(\gamma)}+\varepsilon^{-\gamma/2}C\left\langle D_{t}u\right\rangle
_{t,R_{+,\infty}^{N}}^{(\gamma/4)}+
\]

\begin{equation}
+\varepsilon^{4}C\left\langle x_{N}^{2}D_{x_{k}}^{4}u\right\rangle
_{\gamma/2,x_{k},R_{+,\infty}^{N}}^{(\gamma)},\ \ \ k<N, \label{s1.6.6}%
\end{equation}

\[
\left\langle D_{t}u\right\rangle _{\gamma/2,x_{N},R_{+,\infty}^{N}}^{(\gamma
)}\leq C\varepsilon^{-2-\gamma/2}{\sum\limits_{i=1}^{N-1}}\left\langle
x_{N}^{2}D_{x_{i}}^{4}u\right\rangle _{\gamma/2,x_{i},R_{+,\infty}^{N}%
}^{(\gamma)}+
\]

\begin{equation}
+C\varepsilon^{-\gamma/2}\left\langle D_{t}u\right\rangle _{t,R_{+,\infty}%
^{N}}^{(\gamma/4)}+C\varepsilon^{2}\left\langle x_{N}^{2}D_{x_{N}}%
^{4}u\right\rangle _{\gamma/2,x_{N},R_{+,\infty}^{N}}^{(\gamma)},
\label{s1.6.7}%
\end{equation}
where the constants $C$ does not depend on $\varepsilon$, $u$.
\end{theorem}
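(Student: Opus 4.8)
The plan is to bootstrap all of \eqref{s1.6.3}--\eqref{s1.6.7} from the single ``seed'' estimate of Proposition \ref{P.2.5}, which already bounds every quantity appearing on the left-hand sides of \eqref{s1.6.3}--\eqref{s1.6.7} by the right-hand side seminorms \emph{with a fixed constant} (no $\varepsilon$). The whole content of the theorem is therefore the explicit, and essentially optimal, dependence on the parameter $\varepsilon$, and the natural way to extract it is a scaling argument: the cylinder $R_{+,\infty}^{N}$ is invariant under the anisotropic dilations $x'\mapsto\lambda x'$, $x_{N}\mapsto\mu x_{N}$, $t\mapsto\sigma t$ with $\lambda,\mu,\sigma>0$, so applying Proposition \ref{P.2.5} to a dilated function $v(x,t)=u(\lambda x',\mu x_{N},\sigma t)$, tracking how each seminorm transforms, and dividing out produces, after a suitable choice of $\lambda,\mu,\sigma$ in terms of $\varepsilon$, the asserted $\varepsilon$-weighted inequalities. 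All quantities involved are dilation-covariant seminorms, so this reduction is legitimate; the finite-difference description of the seminorms from Proposition \ref{Ps1.1} is convenient for the bookkeeping.

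The easiest group consists of the purely temporal and purely tangential interpolations. For \eqref{s1.6.5} one dilates in time only, $v(x,t)=u(x,\varepsilon t)$: then $\langle x_{N}^{2}D_{x}^{\alpha}v\rangle_{t}^{(\gamma/4)}=\varepsilon^{\gamma/4}\langle x_{N}^{2}D_{x}^{\alpha}u\rangle_{t}^{(\gamma/4)}$ and $\langle D_{t}v\rangle_{t}^{(\gamma/4)}=\varepsilon^{1+\gamma/4}\langle D_{t}u\rangle_{t}^{(\gamma/4)}$ while the spatial fourth-derivative seminorms are unchanged, and feeding this into Proposition \ref{P.2.5} for $v$ and dividing by $\varepsilon^{\gamma/4}$ gives \eqref{s1.6.5} exactly. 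Inequality \eqref{s1.6.3} is obtained the same way by dilating the single tangential variable $x_{k}$ ($k<N$), using this time the stationary counterpart of Proposition \ref{P.2.5} at each fixed $t$ (which carries no $\langle D_{t}u\rangle$ term): then $\langle x_{N}^{2}D_{x}^{\alpha}u\rangle_{\gamma/2,x_{k}}^{(\gamma)}$ picks up the factor $\varepsilon^{\alpha_{k}+\gamma}$ and $\langle x_{N}^{2}D_{x_{k}}^{4}u\rangle_{\gamma/2,x_{k}}^{(\gamma)}$ the factor $\varepsilon^{4+\gamma}$, which after division yields the exponents $\varepsilon^{-\alpha_{k}-\gamma}$ and $\varepsilon^{4-\alpha_{k}}$. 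For \eqref{s1.6.6} an $x_{k}$-dilation again gives the exponents $\varepsilon^{-\gamma}$ and $\varepsilon^{4}$ directly, and the sharper factor $\varepsilon^{-\gamma/2}$ in front of $\langle D_{t}u\rangle_{t}^{(\gamma/4)}$ (rather than the cruder $\varepsilon^{-\gamma}$) is recovered by composing the resulting estimate with \eqref{s1.6.5}.

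The delicate group is everything involving the degenerate normal direction, namely \eqref{s1.6.4} and \eqref{s1.6.7}, where the exponents $(1-\omega)\gamma$, $\varepsilon^{-2-\gamma/2}$, $\varepsilon^{2}$ occur. A bare $x_{N}$-dilation $v(x,t)=u(x',\mu x_{N},t)$ moves $\langle x_{N}^{2}D_{x}^{\alpha}u\rangle_{\gamma/2,x_{N}}^{(\gamma)}$ by the factor $\mu^{\alpha_{N}-2+\gamma/2}$ — because the weight $x_{N}^{2}$ and the metric weight $d(x,\overline{x})^{\gamma/2}=\max(x_{N},\overline{x}_{N})^{\gamma/2}$ are themselves dilation-covariant — but this alone only reproduces $\gamma$, not the smaller $(1-\omega)\gamma$. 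To gain the extra $\omega\gamma$ one must use that, by \eqref{s1.7.1}, the weighted derivatives $x_{N}^{2-j}D_{x}^{\alpha}u$ with $\alpha_{N}<4-j$ vanish as $x_{N}\to0$ with a quantitative (Hardy-type) rate: this lets one split a difference quotient in $x_{N}$ into a piece near the boundary, controlled by the vanishing, and a piece where $x_{N}$ is comparable to its larger endpoint, controlled by freezing $x_{N}^{2}$ on the dyadic slab $\{\rho\le x_{N}\le2\rho\}$ and invoking the already-established interior interpolation there uniformly in $\rho$. Combining this with the coupled normal--time dilation $t\mapsto\mu^{2}t$ — the natural parabolic scaling of the model operator $u_{t}+x_{N}^{2}D_{x_{N}}^{4}u$ in the degenerate direction — and optimizing over the two scale parameters yields the sharp exponents in \eqref{s1.6.4} and \eqref{s1.6.7}.

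The main obstacle I expect is this last group. One must carry out the slab decomposition simultaneously with the scaling so that all constants stay independent of the slab parameter $\rho$ all the way up to $x_{N}=0$, and one must track how each of the $\alpha_{k}$ one-variable interpolations and the single time interpolation contribute so that the combined powers of $\varepsilon$ come out to be \emph{exactly} $\varepsilon^{-\alpha_{k}-(1-\omega)\gamma}$, $\varepsilon^{4-\alpha_{k}}$, $\varepsilon^{-2-\gamma/2}$, $\varepsilon^{2}$ and not merely comparable quantities. The tangential and temporal inequalities \eqref{s1.6.3}, \eqref{s1.6.5}, \eqref{s1.6.6}, by contrast, drop out almost mechanically from Proposition \ref{P.2.5} via the single-variable dilations above; so the order of work is: prove those first, then bootstrap \eqref{s1.6.4} and \eqref{s1.6.7} from them using \eqref{s1.7.1} and the frozen-weight interior estimates.
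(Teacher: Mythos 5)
You should note at the outset that the paper contains no proof of Theorem \ref{Ts6.2}: it is quoted verbatim from \cite{SpGen}, so your argument can only be judged on its own merits. The easy half of your plan is sound. Applying Proposition \ref{P.2.5} to $u(x,\varepsilon t)$ and dividing by $\varepsilon^{\gamma/4}$ does give \eqref{s1.6.5} exactly, and applying it to $u$ dilated by $\lambda=\varepsilon$ in the single tangential variable $x_{k}$ and by $\sigma$ in $t$, then letting $\sigma\to0$ (the left side and the spatial right-hand terms do not depend on $\sigma$, while the time term carries $\sigma^{1+\gamma/4}$), gives \eqref{s1.6.3}; this limiting device also removes your appeal to a ``stationary counterpart'' of Proposition \ref{P.2.5}, which is nowhere stated in the paper.

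The genuine gap is that the remaining three inequalities carry powers of $\varepsilon$ strictly sharper than anything rescaling of Proposition \ref{P.2.5} can produce, and your proposal does not bridge that distance. For \eqref{s1.6.6}, dilating $x_{k}$ by $\lambda$ and $t$ by $\sigma$ yields the coefficients $\sigma^{-1}\lambda^{-\gamma}$, $\sigma^{\gamma/4}\lambda^{-\gamma}$, $\sigma^{-1}\lambda^{4}$ on the three right-hand terms; forcing the last two below $\varepsilon^{-\gamma/2}$ and $\varepsilon^{4}$ requires $\varepsilon^{1/2}\sigma^{1/4}\le\lambda\le\varepsilon\sigma^{1/4}$, which is impossible for $\varepsilon<1$. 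Hence \eqref{s1.6.6} is not a formal consequence of the scaled estimate, and your one-line remedy ``compose with \eqref{s1.6.5}'' does not work as stated: \eqref{s1.6.5} controls $\left\langle x_{N}^{2}D_{x}^{\alpha}u\right\rangle _{t}^{(\gamma/4)}$, a quantity that never enters the scaled bound for $\left\langle D_{t}u\right\rangle _{\gamma/2,x_{k}}^{(\gamma)}$, and no chain of the inequalities you have actually produced improves the coefficient of $\left\langle D_{t}u\right\rangle _{t}^{(\gamma/4)}$ from $\varepsilon^{-\gamma}$ to $\varepsilon^{-\gamma/2}$. The same phenomenon, which you do acknowledge, occurs in the normal direction: bare $x_{N}$-dilation gives $\varepsilon^{-\alpha_{N}-\gamma}$ in \eqref{s1.6.4} and $\varepsilon^{-2-\gamma}$ in \eqref{s1.6.7} on the tangential sums, where the theorem asserts $\varepsilon^{-\alpha_{N}-(1-\omega)\gamma}$ (here $\omega=1/2$) and $\varepsilon^{-2-\gamma/2}$, and your proposed repair — a quantitative rate extracted from \eqref{s1.7.1}, dyadic slabs with frozen weight, a coupled normal--time dilation and an optimization — remains a plan: \eqref{s1.7.1} as stated is purely qualitative, no rate is derived, and the bookkeeping that is supposed to produce exactly these exponents is never carried out. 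Since the sharpened powers of $\varepsilon$ are the entire content of the theorem beyond Proposition \ref{P.2.5}, the argument is incomplete precisely where it matters; what is missing is a direct estimate of the relevant differences (choosing the comparison time step in terms of $\varepsilon$, the spatial step $h$ and the weight $x_{N}$, with the case split $h\lessgtr x_{N}$), which is presumably how \cite{SpGen} proceeds.
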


\begin{theorem}
\label{Ts6.3}

Let a function $u(x,t)\in$ $C_{2,\gamma/2}^{4+\gamma,\frac{4+\gamma}{4}%
}(\overline{\Omega}_{T})$ . Then for $\varepsilon>0$%

\begin{equation}
{\sum\limits_{|\alpha|=4}}|d^{2}(x)D_{x}^{\alpha}u(x,t)|_{\overline{Q}}%
^{(0)}\leq\varepsilon{\sum\limits_{|\alpha|=4}}\left\langle d^{2}%
(x)D_{x}^{\alpha}u(x,t)\right\rangle _{\gamma/2,x,\overline{\Omega}_{T}%
}^{(\gamma)}+ \label{2.15}%
\end{equation}

\[
+\frac{C}{\varepsilon^{C}}{\sum\limits_{|\alpha|=3}}|d(x)D_{x}^{\alpha
}u(x,t)|_{\overline{\Omega}_{T}}^{(0)},
\]

\begin{equation}
{\sum\limits_{|\alpha|=3}}\left\langle d^{2}(x)D_{x}^{\alpha}%
u(x,t)\right\rangle _{x,\gamma/2,\overline{\Omega}_{T}}^{(\gamma)}%
\leq\varepsilon{\sum\limits_{|\alpha|=4}|}d^{2}(x)D_{x}^{\alpha}%
u(x,t)|_{\overline{\Omega}_{T}}^{(0)}+\frac{C}{\varepsilon^{C}}{\sum
\limits_{|\alpha|=3}}|d(x)D_{x}^{\alpha}u(x,t)|_{\overline{\Omega}_{T}}^{(0)},
\label{2.16}%
\end{equation}

\begin{equation}
{\sum\limits_{|\alpha|=3}}|d(x)D_{x}^{\alpha}u(x,t)|_{\overline{Q}}^{(0)}%
\leq\varepsilon{\sum\limits_{|\alpha|=3}}\left\langle d(x)D_{x}^{\alpha
}u(x,t)\right\rangle _{x,\gamma/2,\overline{\Omega}_{T}}^{(\gamma)}+\frac
{C}{\varepsilon^{C}}{\sum\limits_{|\alpha|=2}}|d(x)D_{x}^{\alpha
}u(x,t)|_{\overline{\Omega}_{T}}^{(0)}+ \label{2.17}%
\end{equation}

\[
+\frac{C}{\varepsilon^{C}}{\sum\limits_{|\alpha|=1}}|D_{x}^{\alpha
}u(x,t)|_{\overline{\Omega}_{T}}^{(0)},
\]

\begin{equation}
{\sum\limits_{|\alpha|=2}}\left\langle d(x)D_{x}^{\alpha}u(x,t)\right\rangle
_{x,\gamma/2,\overline{\Omega}_{T}}^{(\gamma)}\leq\varepsilon{\sum
\limits_{|\alpha|=3}}|d(x)D_{x}^{\alpha}u(x,t)|_{\overline{Q}}^{(0)}+\frac
{C}{\varepsilon^{C}}{\sum\limits_{|\alpha|=2}}|d(x)D_{x}^{\alpha
}u(x,t)|_{\overline{Q}}^{(0)}, \label{2.18}%
\end{equation}

\begin{equation}
{\sum\limits_{|\alpha|=2}}|d(x)D_{x}^{\alpha}u(x,t)|_{\overline{Q}}^{(0)}%
\leq\varepsilon{\sum\limits_{|\alpha|=2}}\left\langle d(x)D_{x}^{\alpha
}u(x,t)\right\rangle _{x,\gamma/2,\overline{\Omega}_{T}}^{(\gamma)}+\frac
{C}{\varepsilon^{C}}{\sum\limits_{|\alpha|=1}}|D_{x}^{\alpha}%
u(x,t)|_{\overline{\Omega}_{T}}^{(0)}, \label{2.19}%
\end{equation}

\begin{equation}
{\sum\limits_{|\alpha|=1}}\left\langle D_{x}^{\alpha}u(x,t)\right\rangle
_{x,\gamma/2,\overline{\Omega}_{T}}^{(\gamma)}\leq\varepsilon{\sum
\limits_{|\alpha|=1}}\left\langle D_{x}^{\alpha}u(x,t)\right\rangle
_{x,\overline{\Omega}_{T}}^{(\gamma^{\prime})}+\frac{C}{\varepsilon^{C}}%
{\sum\limits_{|\alpha|=1}}|D_{x}^{\alpha}u(x,t)|_{\overline{\Omega}_{T}}%
^{(0)}\leq\label{2.20}%
\end{equation}

\[
\leq\varepsilon{\sum\limits_{|\alpha|=3}}|d(x)D_{x}^{\alpha}u(x,t)|_{\overline
{Q}}^{(0)}+\frac{C}{\varepsilon^{C}}{\sum\limits_{|\alpha|=1}}|D_{x}^{\alpha
}u(x,t)|_{\overline{\Omega}_{T}}^{(0)},
\]

\begin{equation}
{\sum\limits_{|\alpha|=1}}|D_{x}^{\alpha}u(x,t)|_{\overline{\Omega}_{T}}%
^{(0)}\leq\varepsilon{\sum\limits_{|\alpha|=1}}\left\langle D_{x}^{\alpha
}u(x,t)\right\rangle _{x,\gamma/2,\overline{\Omega}_{T}}^{(\gamma)}+\frac
{C}{\varepsilon^{C}}|u(x,t)|_{\overline{\Omega}_{T}}^{(0)}, \label{2.21}%
\end{equation}

\begin{equation}
|u(x,t)|_{\overline{\Omega}_{T}}^{(0)}\leq\varepsilon{\sum\limits_{|\alpha
|=1}}|D_{x}^{\alpha}u(x,t)|_{\overline{\Omega}_{T}}^{(0)}+\frac{C}%
{\varepsilon^{C}}\left\Vert u\right\Vert _{L_{2}(\overline{\Omega}_{T})}.
\label{2.22}%
\end{equation}

\end{theorem}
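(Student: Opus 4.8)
The plan is to prove the chain of interpolation inequalities \eqref{2.15}--\eqref{2.22} by combining three ingredients: a localization/partition-of-unity argument reducing the estimates to two model situations, the sharp interpolation inequalities for the half-space already stated in Theorem \ref{Ts6.2}, and a one-dimensional interpolation argument on segments normal to $\Gamma$ that handles the degeneracy coming from the weight $d(x)^{2}$. Since the successive inequalities \eqref{2.15}--\eqref{2.22} each control one fewer derivative in terms of lower-order quantities, once the top one \eqref{2.15} is in hand, each subsequent one follows by the same scheme (or by a short self-contained one-dimensional argument), so the main effort is concentrated on the top inequalities.

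First I would set up a finite cover of $\overline{\Omega}$ by coordinate patches: interior balls $B_{i}$ with $\operatorname{dist}(B_{i},\Gamma)\geq c>0$, and boundary patches where $\Gamma$ is straightened so that locally $\Omega$ becomes $R_{+}^{N}$ with $d(x)\simeq x_{N}$. In an interior patch the weight $d(x)^{2}$ is bounded above and below by positive constants, so \eqref{2.15}--\eqref{2.22} reduce to the classical nondegenerate parabolic interpolation inequalities of the form $|D^{k}u|^{(0)}\leq\varepsilon\langle D^{m}u\rangle^{(\gamma)}+C\varepsilon^{-C}|D^{k-1}u|^{(0)}$, which follow from the standard one-variable estimate $\|v'\|_{C(I)}\leq\varepsilon\|v\|_{C^{1+\gamma}(I)}+C\varepsilon^{-C}\|v\|_{C(I)}$ applied along each coordinate direction and then summed; the parabolic scaling $t\sim x^{4}$ is respected by the way the seminorms are weighted. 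In a boundary patch, after flattening, I would apply the half-space interpolation inequalities \eqref{s1.6.3}--\eqref{s1.6.7} of Theorem \ref{Ts6.2}, which already give exactly the needed trade-off between the full mixed fourth derivatives $x_{N}^{2}D_{x}^{\alpha}u$, the pure derivatives $x_{N}^{2}D_{x_{i}}^{4}u$ and $\langle D_{t}u\rangle^{(\gamma/4)}$; passing from pure-variable seminorms to the full $\langle\,\cdot\,\rangle_{\gamma/2}^{(\gamma)}$-seminorm on the left is done by Proposition \ref{P.2.5} (the estimate \eqref{s1.7} controls the full weighted seminorm by the sum of pure-variable ones). The cut-off functions $\zeta_{i}$ contribute only lower-order terms, since differentiating $\zeta_{i}$ lowers the order of differentiation on $u$ and is harmless for the weight (one uses $|D\zeta_{i}|\leq C$ and that on $\operatorname{supp}\zeta_{i}$ either $d\simeq x_{N}$ or $d\simeq 1$); these extra terms are absorbed into the lower-order terms on the right, possibly after one more application of the already-proved lower inequalities in the chain.

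The delicate point — and the step I expect to be the main obstacle — is the appearance of the weights $d(x)^{2}$ and $d(x)$ with different powers in consecutive inequalities (e.g. $d^{2}D^{4}$ versus $dD^{3}$ in \eqref{2.16}, $dD^{3}$ versus $dD^{2}$ in \eqref{2.17}), together with the fact, recorded in Lemma \ref{L.2.5}, that $D^{2}u$ need not be bounded and only $|D^2 u|\leq C|\ln d||dD^{3}u|^{(0)}$ holds. To handle \eqref{2.16}--\eqref{2.17} I would argue on a segment $\{x'\}\times(0,\lambda)$ normal to $\Gamma$: writing $v(s)=u(x',s\,\overline n+\ldots)$ one has a function on $(0,\lambda)$ and wants, e.g., $\sup_{s}|s\,v''(s)|$ bounded by $\varepsilon\sup|s^{2}v'''(s)|+C\varepsilon^{-C}\sup|s\,v''|$-type quantities — this is a weighted Landau--Kolmogorov inequality on a half-line which is proved by the standard trick of estimating $v''(s)$ from $v''(s_{0})$ via the fundamental theorem of calculus on an interval of length comparable to $s$ and optimizing, exactly as in the one-dimensional theory underlying \cite{11}. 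Because the weight powers are tuned so that $d^{2}D^{4}$, $dD^{3}$, $D^{2}$ (mod logarithm), $D^{1}$, $D^{0}$ sit at consistent "scaling levels", the log loss in Lemma \ref{L.2.5} is swallowed by an arbitrarily small power of $\varepsilon$ and never obstructs the absorption. Finally \eqref{2.20}--\eqref{2.22} are the nondegenerate end of the chain: \eqref{2.20} upgrades the $C^{\gamma}$-seminorm of $D_{x}u$ to the $C^{\gamma'}$-seminorm with $\gamma'>\gamma$ using plain Hölder interpolation (here one uses that, by Proposition \ref{P.2.1}, $D_{x}u$ is genuinely in an unweighted Zygmund-type class and \eqref{2.1} bounds it by $|u|^{(4+\gamma)}_{2,\gamma/2}$, so the top term $\sum_{|\alpha|=3}|dD^{\alpha}u|^{(0)}$ re-enters), \eqref{2.21} is the elementary $\|Du\|^{(0)}\leq\varepsilon\langle Du\rangle^{(\gamma)}+C\varepsilon^{-C}\|u\|^{(0)}$, and \eqref{2.22} is the $L^{2}$-to-$L^{\infty}$ interpolation, both standard. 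Assembling the local estimates with the partition of unity and then iterating the chain from \eqref{2.22} upward to \eqref{2.15}, re-absorbing at each stage, completes the proof.
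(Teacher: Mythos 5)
Your overall strategy -- localize so that $\Omega$ becomes $R_{+}^{N}$ with $d(x)=x_{N}$ and the function has compact support, and then prove each inequality by elementary pointwise interpolation -- is indeed the skeleton of the paper's proof (the paper fixes $t$, quotes \eqref{2.15} from the reference on the weighted spaces, proves \eqref{2.16} by a near/far two-case argument with the mean value theorem, proves \eqref{2.17} by averaging over a small cube $Q_{\varepsilon}$ and integrating by parts, and treats the remaining inequalities analogously using \eqref{2.2.1}). However, two concrete steps of your plan do not do the work you assign to them. First, the half-space inequalities \eqref{s1.6.3}--\eqref{s1.6.7} of Theorem \ref{Ts6.2} compare \emph{fourth-order} weighted seminorms (mixed versus pure-variable, and versus $\left\langle D_{t}u\right\rangle^{(\gamma/4)}$); they never lower the order of differentiation or change the power of the weight, so they cannot "give exactly the needed trade-off" for \eqref{2.15}--\eqref{2.19}, whose whole point is to trade $d^{2}D^{4}$ against $dD^{3}$, $dD^{3}$ against $dD^{2}$ and $D^{1}$, etc. The paper does not use Theorem \ref{Ts6.2} here at all, and invoking it in the boundary patches leaves the actual estimates unproved.

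Second, your one-dimensional Landau--Kolmogorov argument "on a segment normal to $\Gamma$" is the right kind of mechanism but, as stated, it only sees pure normal derivatives and differences of points lying on the same normal line, whereas the left-hand sides of \eqref{2.16}, \eqref{2.18}, \eqref{2.20} are full spatial H\"older seminorms (arbitrary pairs of points, all mixed multi-indices $\alpha$). The paper's replacement is direction-free: for \eqref{2.16} it splits into $|x-\overline{x}|\leq\varepsilon$ (mean value theorem applied to $x_{N}^{2}D_{x}^{\alpha}u$, which produces exactly the terms $|x_{N}^{2}D^{4}u|^{(0)}$ and $|x_{N}D^{3}u|^{(0)}$ because the derivative may fall on the weight) and $|x-\overline{x}|>\varepsilon$ (crude sup-norm bound, using the compact support to absorb one factor of $x_{N}$), followed by a rescaling of $\varepsilon$; for the sup-norm inequalities \eqref{2.17}, \eqref{2.19}, \eqref{2.21} it represents the function as its average over the cube $Q_{\varepsilon}$ plus a H\"older correction controlled via Lemma \ref{L.02.1} (the weighted class embeds into unweighted $C^{\gamma/2}$ -- a point your sketch also omits, and which is what makes the correction uniformly small near $\{x_{N}=0\}$), and then integrates by parts in the cube integral; this integration by parts is precisely what makes the specific right-hand sides of \eqref{2.17} (namely $|dD^{2}u|^{(0)}$ and $|D^{1}u|^{(0)}$) appear. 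If you delete the appeal to Theorem \ref{Ts6.2}, carry out your FTC/optimization argument for arbitrary directions and mixed derivatives (or replace it by the cube-averaging device), and add the embedding step of Lemma \ref{L.02.1}, your proposal becomes essentially the paper's proof; as written, these gaps are genuine.
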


\begin{proof}
Due to the possibility of the localization it is enough to consider
the case
of $\Omega=\{x:x_{N}\geq0\}\equiv R_{+}^{N}$ , $\Omega_{T}=R_{+}^{N}%
\times\lbrack0,T]\equiv R_{+,T}^{N}$ , $d(x)=x_{N}$ and $u(x,t)$ is
a function with compact support in the set $\{|x|\leq
R,t\in\lbrack0,T]\}$. Besides, below in the proof the argument $t$
is fixed.

Inequality \eqref{2.15} was proved in \cite{SpGen} so we start with
\eqref{2.16}.
Let $|\alpha|=3$ and let $x,\overline{x}\in R_{+}^{N}$, $x_{N}%
\leq\overline{x}_{N}$\bigskip. Consider the ratio%

\[
A\equiv x_{N}^{\gamma/2}\frac{|x_{N}^{2}D_{x}^{\alpha}u(x,t)-\overline{x}%
_{N}^{2}D_{x}^{\alpha}u(\overline{x},t)|}{|x-\overline{x}|^{\gamma}}.
\]
Let we are given an $\varepsilon>0$ and consider two cases. Let
first $|x-\overline{x}|\leq\varepsilon$. Then with some
$x_{\theta}\in\lbrack
x,\overline{x}]$%

\[
A\leq x_{N}^{\gamma/2}|x-\overline{x}|^{1-\gamma}\frac{|x_{N}^{2}D_{x}%
^{\alpha}u(x,t)-\overline{x}_{N}^{2}D_{x}^{\alpha}u(\overline{x}%
,t)|}{|x-\overline{x}|}\leq
\]

\[
\leq C_{R}\varepsilon^{1-\gamma}|\nabla\left(  x_{\theta N}^{2}D_{x}^{\alpha
}u(x_{\theta},t)\right)  |\leq
\]

\[
\leq C_{R}\varepsilon^{1-\gamma}\left(  {\sum\limits_{|\beta|=4}|}x_{N}%
^{2}D_{x}^{\alpha}u(x,t)|_{\overline{\Omega}_{T}}^{(0)}+{\sum\limits_{|\beta
|=3}}|x_{N}D_{x}^{\alpha}u(x,t)|_{\overline{\Omega}_{T}}^{(0)}\right)  .
\]

If now $|x-\overline{x}|>\varepsilon$, then%

\[
A\leq C_{R}\varepsilon^{-\gamma}\left(  |x_{N}^{2}D_{x}^{\alpha}%
u(x,t)|+|\overline{x}_{N}^{2}D_{x}^{\alpha}u(\overline{x},t)|\right)  \leq
\]

\[
\leq\frac{C_{R}}{\varepsilon^{\gamma}}{\sum\limits_{|\beta|=3}}|x_{N}%
D_{x}^{\alpha}u(x,t)|_{\overline{\Omega}_{T}}^{(0)}.
\]
Substituting now $\varepsilon^{\frac{1}{1-\gamma}}$ instead of
$\varepsilon$, we obtain \eqref{2.16} from the last two estimates in
view of the definition of the expression $A$.

Consider now
\eqref{2.17}. Let $|\alpha|=3$ and let $x,y\in R_{+}^{N}$\bigskip. We have%

\[
x_{N}D_{x}^{\alpha}u(x,t)=\frac{\left(  x_{N}D_{x}^{\alpha}u(x,t)-y_{N}%
D_{x}^{\alpha}u(y,t)\right)  }{|x-y|^{\gamma/2}}|x-y|^{\gamma/2}+y_{N}%
D_{x}^{\alpha}u(y,t).
\]
Integrate this inequality in $y$ over the set $Q_{\varepsilon}=\{y\in R_{+}%
^{N}:|x_{i}-y_{i}|\leq\varepsilon,i=\overline{1,N}\}$. According to
Lemma \ref{L.02.1}, we have, dividing by $C\varepsilon^{N}$,

\[
|x_{N}D_{x}^{\alpha}u(x,t)|\leq\varepsilon^{\gamma/2}\left\langle x_{N}%
D_{x}^{\alpha}u(x,t)\right\rangle _{x,\gamma/2,R_{+,T}^{N}}^{(\gamma)}%
+\frac{C}{\varepsilon^{N}}\left\vert
{\displaystyle\int\limits_{Q_{\varepsilon}}}
y_{N}D_{x}^{\alpha}u(y,t)dy\right\vert .
\]
Integrating in the integral by parts and taking into account that
the point $x$ is arbitrary, we obtain \eqref{2.17}.

The proofs of the others inequalities are completely analogous with
the taking into account \eqref{2.2.1}.

\end{proof}

\begin{lemma}%
\label{L.2.1}
Let $u(x,t)\in C_{2,\gamma/2,0}^{4+\gamma,\frac{4+\gamma}{4}%
}(\overline{\Omega}_{T})$, $T\leq1$. Then for
$|\alpha|=2,3$ and with some $\delta>0$

\begin{equation}
|d(x)^{\frac{3}{2}}D_{x}^{\alpha}u|_{\gamma/2,\overline{\Omega}_{T}}%
^{(\gamma,\gamma/4)}\leq CT^{\delta}\left\Vert u\right\Vert _{C_{2,\gamma
/2,0}^{4+\gamma,\frac{4+\gamma}{4}}(\overline{\Omega}_{T})},\quad|\alpha|=3,
\label{2.3}%
\end{equation}

\begin{equation}
|d(x)^{\frac{1}{2}}D_{x}^{\alpha}u|_{\gamma/2,\overline{\Omega}_{T}}%
^{(\gamma,\gamma/4)}\leq CT^{\delta}\left\Vert u\right\Vert _{C_{2,\gamma
/2,0}^{4+\gamma,\frac{4+\gamma}{4}}(\overline{\Omega}_{T})},\quad|\alpha|=2,
\label{2.4}%
\end{equation}

\begin{equation}
|\nabla_{x}u|_{\gamma/2,\overline{\Omega}_{T}}^{(\gamma,\gamma/4)}%
+|u|_{\gamma/2,\overline{\Omega}_{T}}^{(\gamma,\gamma/4)}\leq CT^{\delta
}\left\Vert u\right\Vert _{C_{2,\gamma/2,0}^{4+\gamma,\frac{4+\gamma}{4}%
}(\overline{\Omega}_{T})}. \label{2.4.0}%
\end{equation}

\end{lemma}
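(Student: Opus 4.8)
The plan is, as in the proofs of Theorem~\ref{Ts6.3} and Proposition~\ref{P.2.2}, to localize near $\Gamma_T$ and reduce to the model case $\Omega=R^N_+$, $d(x)=x_N$, with $u$ of compact support (the nondegenerate interior part being routine). Estimate \eqref{2.4.0} then requires nothing new: it is precisely the part $|\alpha|<2$ of Proposition~\ref{P.2.2}, i.e. \eqref{0000001.1}, together with the trivial remark that the weighted seminorm $|\cdot|^{(\gamma,\gamma/4)}_{\gamma/2,\overline{\Omega}_T}$ is dominated by the unweighted one, since $d(x,\overline{x})^{\gamma/2}$ is bounded on the support of $u$.

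For \eqref{2.3} the idea is to interpolate, in the exponent of the weight, between the $T^\delta$-small bound $|x_N^2 D_x^\alpha u|^{(\gamma,\gamma/4)}_{\overline{\Omega}_T}\le CT^\delta\|u\|$ of Proposition~\ref{P.2.2} (estimate \eqref{0000001}, valid for $|\alpha|=3$) and the merely bounded estimate $|x_N D_x^\alpha u|^{(\gamma,\gamma/4)}_{\gamma/2,\overline{\Omega}_T}\le C\|u\|$, which follows from Propositions~\ref{P.2.1} and \ref{P.2.5}. Concretely, fix a small $\sigma>0$ depending only on $\delta$ and $\gamma$ and estimate the seminorm $\langle x_N^{3/2}D_x^\alpha u\rangle$ by a case analysis on the pair of points $x,\overline{x}$ (say $x_N\le\overline{x}_N$), rather than by splitting the domain — this avoids any patching issue. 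If $\overline{x}_N\ge T^\sigma$ one uses the representation $x_N^{3/2}D_x^\alpha u=x_N^{-1/2}(x_N^2 D_x^\alpha u)$: on the relevant scale $x_N^{-1/2}$ is bounded by $CT^{-C\sigma}$ with a correspondingly controlled Hölder constant, while $x_N^2 D_x^\alpha u$ is $T^\delta$-small both pointwise and in $C^{\gamma,\gamma/4}$; if in addition the other point lies in the boundary strip $\{x_N<T^\sigma\}$, one bounds $x_N^{3/2}|D_x^\alpha u|$ there simply by $x_N^{1/2}|x_N D_x^\alpha u|\le CT^{\sigma/2}\|u\|$. If $\overline{x}_N<T^\sigma$, one uses instead $x_N^{3/2}D_x^\alpha u=x_N^{1/2}(x_N D_x^\alpha u)$ and the Leibniz identity at the pair $x,\overline{x}$: the factor $x_N D_x^\alpha u$ is only bounded, by $C\|u\|$, but the gain in $T$ comes from the other factor, for which an elementary computation gives $\|x_N^{1/2}\|_\infty\le CT^{\sigma/2}$ and $\langle x_N^{1/2}\rangle^{(\gamma)}_{\gamma/2,\{x_N<T^\sigma\}}\le CT^{\sigma(1-\gamma)/2}$ on that strip. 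In every case one lands on a positive power of $T$ times $\|u\|$, provided $\sigma$ was chosen small enough in terms of $\delta,\gamma$; the $t$-Hölder and mixed space--time parts are treated the same way, the factors $x_N^{\pm1/2}$ being independent of $t$.

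For \eqref{2.4} the scheme is identical, with $x_N^{1/2}D_x^\alpha u$, $|\alpha|=2$, in place of $x_N^{3/2}D_x^\alpha u$, and using Proposition~\ref{P.2.2} for $|\alpha|=2$ in the region where $x_N$ is not small. The one genuinely new point — and the one I expect to be the main obstacle — is near the boundary, where second derivatives are no longer bounded but only logarithmic: by Lemma~\ref{L.2.5}, $|D_x^\alpha u|\le C|\ln x_N|\,|x_N D_x^3 u|^{(0)}$, so the factorization $x_N^{1/2}\cdot(x_N D_x^\alpha u)$ cannot be invoked naively. Here one first upgrades $x_N D_x^\alpha u$, $|\alpha|=2$, to a genuine weighted Hölder bound $|x_N D_x^\alpha u|^{(\gamma,\gamma/4)}_{\gamma/2,\overline{\Omega}_T}\le C\|u\|$ by combining the interpolation inequalities \eqref{2.18}, \eqref{2.19} of Theorem~\ref{Ts6.3} with Propositions~\ref{P.2.1} and \ref{P.2.5} (absorbing the term $\langle x_N D_x^2 u\rangle$ on the right of \eqref{2.18} after using \eqref{2.19}); then one multiplies by $x_N^{1/2}$ as above, and for pointwise estimates one uses that $x_N^{1/2}|\ln x_N|$ is still $o(1)$ as $T\to 0$ on the strip $\{x_N<CT^\sigma\}$. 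Thus the delicate points are exactly the handling, near $\Gamma_T$, of the logarithmically singular second derivatives while still extracting a positive power of $T$, and the bookkeeping of the exponents in the case analysis; everything else reduces to the kind of elementary estimates already carried out in the proof of Theorem~\ref{Ts6.3}.
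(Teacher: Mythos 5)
Your reduction of \eqref{2.4.0} to \eqref{0000001.1} of Proposition \ref{P.2.2} is fine, and your overall two-zone scheme (splitting at $x_N\sim T^{\sigma}$ and trading weight exponents against powers of $T$, with Proposition \ref{P.2.2} supplying the smallness away from the boundary) is genuinely different from the paper, which instead gains its powers of $T$ globally: $T^{1/4}$ from the enhanced time-H\"older exponents in \eqref{s1.7} (e.g. $\langle x_N^{3/2}D_x^{3}u\rangle_t^{((\gamma+1)/4)}$) and $T^{\gamma/4}$ from the fact that $u$ lies in the subspace with zeros, so that every sup-norm such as $|x_ND_x^{3}u|^{(0)}$ or $|x_N^{2}\nabla D_x^{3}u|^{(0)}$ is itself $O(T^{\gamma/4})\Vert u\Vert$; for $|\alpha|=2$ the paper then integrates $\xi^{3/2}D_{x_N}D_x^{2}u$ in $\xi$ rather than upgrading $x_ND_x^{2}u$ through \eqref{2.18}--\eqref{2.19} as you do.

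However, as written your argument has a genuine gap in the cross-zone case of the spatial H\"older quotient, and it comes precisely from never invoking the vanishing at $t=0$. Take $|\alpha|=3$, $x_N<T^{\sigma}/2\le T^{\sigma}\le\overline{x}_N$; then $|x-\overline{x}|\ge T^{\sigma}/2$, and your recipe bounds the quotient by $C\,T^{-\sigma\gamma}\bigl(x_N^{1/2}|x_ND_x^{3}u(x,t)|+\overline{x}_N^{-1/2}|\overline{x}_N^{2}D_x^{3}u(\overline{x},t)|\bigr)\le C\bigl(T^{\sigma(1/2-\gamma)}+T^{\delta-\sigma/2-\sigma\gamma}\bigr)\Vert u\Vert$, since in the strip you only allow yourself $|x_ND_x^{3}u|^{(0)}\le C\Vert u\Vert$. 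The second term is harmless for small $\sigma$, but the first is a positive power of $T$ only when $\gamma<1/2$, and shrinking $\sigma$ cannot change the sign of $\sigma(1/2-\gamma)$; the same defect appears, with worse exponents, in the mixed case for \eqref{2.4}. (The paper's own proof does quietly use $\gamma<1/2$ at one point, but the lemma is stated under the standing assumption $\gamma\in(0,1)$.) The missing ingredient is exactly the mechanism the paper relies on: since $u(x,0)\equiv u_t(x,0)\equiv0$, the strip sup-norms are themselves small, e.g. $|x_ND_x^{3}u|^{(0)}\le T^{\gamma/4}\langle x_ND_x^{3}u\rangle_t^{(\gamma/4)}\le CT^{\gamma/4}\Vert u\Vert$ and, for second derivatives, $|x_ND_x^{2}u|^{(0)}\le CT^{(2+\gamma)/4}\Vert u\Vert$ via the $j=2$ term of \eqref{s1.7}; alternatively the pointwise interpolation $x_N^{3/2}|D_x^{3}u|\le\bigl(|x_ND_x^{3}u|\bigr)^{1/2}\bigl(|x_N^{2}D_x^{3}u|\bigr)^{1/2}\le CT^{\delta/2}\Vert u\Vert$ does the job. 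With either of these inserted into your mixed case the argument closes for all $\gamma\in(0,1)$; without it, the proposal as stated does not prove \eqref{2.3}, \eqref{2.4}.
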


\begin{proof}
Due to the possibility of the localization it is enough to consider
the case
of $\Omega=\{x:x_{N}\geq0\}\equiv R_{+}^{N}$ , $\Omega_{T}=R_{+}^{N}%
\times\lbrack0,T]\equiv R_{+,T}^{N}$ , $d(x)=x_{N}$ and $u(x,t)$ is
a function with compact support in the set $\{|x|\leq
R,t\in\lbrack0,T]\}$. Consider first the case $|\alpha|=3$. Let
$t$,$\overline{t}\in\lbrack0,T]$. Then it follows from \eqref{s1.7}
that

\begin{equation}
\frac{|x_{N}^{3/2}D_{x}^{\alpha}u(x,t)-x_{N}^{3/2}D_{x}^{\alpha}%
u(x,\overline{t})|}{|t-\overline{t}|^{\gamma/4}}=\frac{|x_{N}^{3/2}%
D_{x}^{\alpha}u(x,t)-x_{N}^{3/2}D_{x}^{\alpha}u(x,\overline{t})|}%
{|t-\overline{t}|^{\gamma/4+1/4}}|t-\overline{t}|^{1/4}\leq\label{2.4.1}%
\end{equation}

\[
\leq\left\langle x_{N}^{3/2}D_{x}^{\alpha}u(x,t)\right\rangle _{t,R_{+,T}^{N}%
}^{(\gamma+1)/4}T^{\frac{1}{4}}\leq CT^{\frac{1}{4}}\left\Vert u\right\Vert
_{C_{2,\gamma/2,0}^{4+\gamma,\frac{4+\gamma}{4}}(R_{+,T}^{N})}.
\]

This means that%

\begin{equation}
\left\langle x_{N}^{3/2}D_{x}^{\alpha}u(x,t)\right\rangle _{t,R_{+,T}^{N}%
}^{\gamma/4}\leq CT^{\frac{1}{4}}\left\Vert u\right\Vert _{C_{2,\gamma
/2,0}^{4+\gamma,\frac{4+\gamma}{4}}(R_{+,T}^{N})}. \label{2.5}%
\end{equation}
Consider now the properties $x_{N}^{3/2}D_{x}^{\alpha}u(x,t)$ with
respect to $x$. Let $x$,$\overline{x}\in R_{+}^{N}$ and let $x_{N}%
\leq\overline{x}_{N}$ .
Consider two cases. Let first $h=|x-\overline{x}|\leq x_{N}$. Then we have%

\[
A\equiv x_{N}^{\gamma/2}\frac{|x_{N}^{3/2}D_{x}^{\alpha}u(x,t)-\overline
{x}_{N}^{3/2}D_{x}^{\alpha}u(\overline{x},t)|}{|x-\overline{x}|^{\gamma}}\leq
x_{N}^{\gamma/2}\frac{x_{N}^{3/2}|D_{x}^{\alpha}u(x,t)-D_{x}^{\alpha
}u(\overline{x},t)|}{|x-\overline{x}|^{\gamma}}+
\]

\begin{equation}
+x_{N}^{\gamma/2}\frac{|x_{N}^{3/2}-\overline{x}_{N}^{3/2}|}{|x-\overline
{x}|^{\gamma}}|D_{x}^{\alpha}u(\overline{x},t)|\equiv A_{1}+A_{2}. \label{2.6}%
\end{equation}
For $A_{1}$, sinse $\gamma\in(0,1/2)$ and since $u(x,0)\equiv0$, we
have with some $x_{\theta}\in\lbrack x,\overline{x}]$%

\[
A_{1}=x_{N}^{\gamma/2}x_{N}^{3/2}|x-\overline{x}|^{1-\gamma}\frac
{|D_{x}^{\alpha}u(x,t)-D_{x}^{\alpha}u(\overline{x},t)|}{|x-\overline{x}|}\leq
x_{N}^{\gamma/2}x_{N}^{3/2}x_{N}^{1-\gamma}|\nabla D_{x}^{\alpha}u(x_{\theta
},t)|\leq
\]

\begin{equation}
\leq C_{R}|x_{N}^{2}\nabla D_{x}^{\alpha}u(x,t)|_{R_{+,T}^{N}}^{(0)}\leq
C_{R}T^{\frac{\gamma}{4}}\left\langle x_{N}^{2}\nabla D_{x}^{\alpha
}u(x,t)\right\rangle _{t,R_{+,T}^{N}}^{(\gamma/4)}\leq C_{R}T^{\frac{\gamma
}{4}}\left\Vert u\right\Vert _{C_{2,\gamma/2,0}^{4+\gamma,\frac{4+\gamma}{4}%
}(R_{+,T}^{N})}. \label{2.7}%
\end{equation}
And analogously for $A_{2}$ (since $h=|x-\overline{x}|\leq x_{N}$,
$x_{N}\sim\overline{x}_{N}\sim x_{N\theta}$)%

\[
A_{2}\leq Cx_{N}^{\gamma/2}x_{N\theta}^{3/2-\gamma}|D_{x}^{\alpha}%
u(\overline{x},t)|\leq C_{R}|x_{N}D_{x}^{\alpha}u(x,t)|_{R_{+,T}^{N}}%
^{(0)}\leq
\]

\begin{equation}
\leq C_{R}T^{\frac{\gamma}{4}}\left\langle x_{N}D_{x}^{\alpha}%
u(x,t)\right\rangle _{t,R_{+,T}^{N}}^{(\gamma/4)}\leq C_{R}T^{\frac{\gamma}%
{4}}\left\Vert u\right\Vert _{C_{2,\gamma/2,0}^{4+\gamma,\frac{4+\gamma}{4}%
}(R_{+,T}^{N})}. \label{2.8}%
\end{equation}

Thus we obtain in the case $h=|x-\overline{x}|\leq x_{N}$%

\begin{equation}
A\leq C_{R}T^{\frac{\gamma}{4}}\left\Vert u\right\Vert _{C_{2,\gamma
/2,0}^{4+\gamma,\frac{4+\gamma}{4}}(R_{+,T}^{N})},\quad h=|x-\overline{x}|\leq
x_{N}. \label{2.8.1}%
\end{equation}
Let now $h=|x-\overline{x}|>x_{N}$ and we note that in this case
$\overline{x}_{N}\leq x_{N}+h<2h$. We have%

\[
A\leq\left(  \frac{x_{N}}{h}\right)  ^{\gamma}x_{N}^{(1-\gamma)/2}|x_{N}%
^{3/2}D_{x}^{\alpha}u(x,t)|+\left(  \frac{\overline{x}_{N}}{h}\right)
^{\gamma}\overline{x}_{N}^{(1-\gamma)/2}|\overline{x}_{N}^{3/2}D_{x}^{\alpha
}u(\overline{x},t)|\leq
\]

\begin{equation}
\leq C_{R}|x_{N}D_{x}^{\alpha}u(x,t)|_{R_{+,T}^{N}}^{(0)}\leq C_{R}%
T^{\frac{\gamma}{4}}\left\Vert u\right\Vert _{C_{2,\gamma/2,0}^{4+\gamma
,\frac{4+\gamma}{4}}(R_{+,T}^{N})}. \label{2.8.2}%
\end{equation}
From \eqref{2.8.1},
\eqref{2.8.2} it follows that%

\begin{equation}
\left\langle x_{N}^{3/2}D_{x}^{\alpha}u(x,t)\right\rangle _{\gamma
/2,x,R_{+,T}^{N}}^{(\gamma)}\leq C_{R}T^{\frac{\gamma}{4}}\left\Vert
u\right\Vert _{C_{2,\gamma/2,0}^{4+\gamma,\frac{4+\gamma}{4}}(R_{+,T}^{N})}.
\label{2.9}%
\end{equation}
At last,%

\begin{equation}
|x_{N}^{3/2}D_{x}^{\alpha}u(x,t)|_{R_{+,T}^{N}}^{(0)}\leq C_{R}T^{\frac
{\gamma}{4}}\left\langle x_{N}D_{x}^{\alpha}u(x,t)\right\rangle _{t,R_{+,T}%
^{N}}^{(\gamma/4)}\leq C_{R}T^{\frac{\gamma}{4}}\left\Vert u\right\Vert
_{C_{2,\gamma/2,0}^{4+\gamma,\frac{4+\gamma}{4}}(R_{+,T}^{N})}. \label{2.10}%
\end{equation}
Estimates \eqref{2.5}, \eqref{2.9}, and \eqref{2.10} prove
\eqref{2.3}.

Consider inequality
\eqref{2.4}. Represent $x_{N}^{1/2}D_{x}^{\alpha}u(x,t)$ as%

\[
x_{N}^{1/2}D_{x}^{\alpha}u(x,t)=-x_{N}^{1/2}%
{\displaystyle\int\limits_{x_{N}}^{R}}
\xi^{-3/2}a(x^{\prime},\xi,t)d\xi,
\]
where
$a(x^{\prime},\xi,t)\equiv\xi^{3/2}D_{x_{N}}D_{x}^{\alpha}u(x^{\prime
},\xi,t)$. Analogously
\eqref{2.4.1} we have%

\[
\frac{|x_{N}^{1/2}D_{x}^{\alpha}u(x,t)-x_{N}^{1/2}D_{x}^{\alpha}%
u(x,\overline{t})|}{|t-\overline{t}|^{\gamma/4}}\leq x_{N}^{1/2}T^{1/4}%
{\displaystyle\int\limits_{x_{N}}^{R}}
\xi^{-3/2}\frac{|a(x^{\prime},\xi,t)-a(x^{\prime},\xi,\overline{t}%
)|}{|t-\overline{t}|^{\gamma/4}}\leq
\]

\[
\leq C_{R}T^{1/4}\left\langle x_{N}^{3/2}D_{x_{N}}D_{x}^{\alpha}%
u(x,t)\right\rangle _{t,R_{+,T}^{N}}^{\gamma/4+1/4}\leq C_{R}T^{1/4}\left\Vert
u\right\Vert _{C_{2,\gamma/2,0}^{4+\gamma,\frac{4+\gamma}{4}}(R_{+,T}^{N})}.
\]
This means that%

\begin{equation}
\left\langle x_{N}^{1/2}D_{x}^{\alpha}u(x,t)\right\rangle _{t,R_{+,T}^{N}%
}^{\gamma/4}\leq C_{R}T^{1/4}\left\Vert u\right\Vert _{C_{2,\gamma
/2,0}^{4+\gamma,\frac{4+\gamma}{4}}(R_{+,T}^{N})}. \label{2.11}%
\end{equation}
The properties of $x_{N}^{1/2}D_{x}^{\alpha}u(x,t)$ with respect to
the $x$ variables are considered analogously to \eqref{2.9} on the
base of
\eqref{2.2.1} and this gives%

\[
\left\langle x_{N}^{1/2}D_{x}^{\alpha}u(x,t)\right\rangle _{\gamma
/2,x,R_{+,T}^{N}}^{(\gamma)}+|x_{N}^{1/2}D_{x}^{\alpha}u(x,t)|_{R_{+,T}^{N}%
}^{(0)}\leq
\]

\begin{equation}
\leq C_{R}T^{\frac{\gamma}{4}}\left\langle x_{N}D_{x}^{\alpha
}u(x,t)\right\rangle _{t,R_{+,T}^{N}}^{(\gamma/4)}\leq C_{R}T^{\frac{\gamma
}{4}}\left\Vert u\right\Vert _{C_{2,\gamma/2,0}^{4+\gamma,\frac{4+\gamma}{4}%
}(R_{+,T}^{N})}. \label{2.12}%
\end{equation}
Now \eqref{2.4} follows from \eqref{2.11}, \eqref{2.12}.

The proof of \eqref{2.4.0} is completely analogous due to the
Newton-Leibnits formula and \eqref{2.2.1}.

\end{proof}

Below we will use also the following inequality for functions
$u(x,t), \ v(x,t)\in
C^{\gamma,\gamma/4}_{\gamma/2}(\overline{\Omega}_{T})$

\begin{equation}
|uv|^{(\gamma,\gamma/4)}_{\gamma/2,\overline{\Omega}_{T}}\leq
CT^{\gamma/4}|u|^{(\gamma,\gamma/4)}_{\gamma/2,\overline{\Omega}_{T}}
|v|^{(\gamma,\gamma/4)}_{\gamma/2,\overline{\Omega}_{T}}.
\label{2.5555555}
\end{equation}

This inequality is completely analogous to the well known unweighted
case.

 We have the following assertion (\cite{SpGen})
Let $Q^{+}=R^{N}_{+} \times [0,\infty )$

\begin{lemma}
\label{Ls1.2}

Let a function $u(x,t)\in C_{2,\gamma/2}^{4+\gamma,\frac{4+\gamma}{4}%
}(\overline{Q}^{+})$. Denote%

\begin{equation}
a_{u}=\lim_{(x,t)\rightarrow(0,0)}x_{N}^{2}D_{x_{N}}^{4}u(x,t). \label{s1.007}%
\end{equation}
and denote%

\begin{equation}
\widetilde{Q}_{u}(x_{N})=-a\ln^{(2)}x_{N}, \label{s1.008}%
\end{equation}
where%

\begin{equation}
\ln^{(2)}x_{N}\equiv{\int\limits_{0}^{x_{N}}}d\xi{\int\limits_{0}^{\xi}}\ln\xi
d\xi. \label{s1.009}%
\end{equation}
Denote further%

\begin{equation}
Q_{u}(x,t)=-a_{u}\ln^{(2)}x_{N}+{\sum\limits_{|\alpha|\leq2}}\frac{a_{\alpha}%
}{\alpha!}(x-\overline{e})^{\alpha}+a^{(1)}t, \label{s1.0012}%
\end{equation}
where $\alpha=(\alpha_{1},...,\alpha_{N})$, $\alpha!=\alpha_{1}!...\alpha
_{N}!$, $\overline{e}=(0,...,1)\in R^{N}$, $(x-\overline{e})^{\alpha}%
=x_{1}^{\alpha_{1}}...x_{N-1}^{\alpha_{N-1}}(x_{N}-1)^{\alpha_{N}}$,%

\[
a_{\alpha}=D_{x}^{\alpha}(u-\widetilde{Q}_{u}(x_{N}))|_{x=\overline{e}%
,t=0},\qquad\,a^{(1)}=D_{t}(u-\widetilde{Q}_{u}(x_{N}))|_{x=\overline{e}%
,t=0}.
\]
Then the function $Q_{u}(x,t)$ has the following properties%

\begin{equation}
x_{N}^{2-j}D_{x}^{\alpha}[u(x,t)-Q_{u}(x,t)]|_{(x,t)=(0,0)}=0,\quad
j=0,1,\,|\alpha|=4-j, \label{s1.0012+1}%
\end{equation}

\begin{equation}
D_{x}^{\alpha}[u(x,t)-Q_{u}(x,t)]|_{(x,t)=(\overline{e},0)}=0,\,|\alpha
|\leq2,\hspace{0.05in}D_{t}[u(x,t)-Q_{u}(x,t)]|_{(x,t)=(\overline{e},0)}=0.
\label{s1.0012+2}%
\end{equation}

\begin{equation}
x_{N}^{n-j}D_{x}^{\alpha}Q_{u}(x,t)\equiv const,\quad|\alpha|=4-j,0\leq
j\leq2,\alpha_{N}<2,\,\,D_{t}Q_{u}(x,t)\equiv const. \label{s1.0015}%
\end{equation}

At last for $j\leq2$ and $|\alpha|=2-j$%

\[
D_{x^{\prime}}^{\alpha}D_{x_{N}}^{j}Q_{u}(x,t)\quad\text{does not depend on
}x^{\prime}\text{ and }t.
\]

\end{lemma}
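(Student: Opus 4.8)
The plan is to take $Q_{u}$ exactly as it is defined in \eqref{s1.008}--\eqref{s1.0012} and verify the four stated properties in turn; three of them are purely algebraic, and only one normal-derivative limit requires real work. First I would check that the limit $a_{u}$ in \eqref{s1.007} is legitimate: since $u\in C_{2,\gamma/2}^{4+\gamma,\frac{4+\gamma}{4}}(\overline{Q}^{+})$, the product $x_{N}^{2}D_{x_{N}}^{4}u$ has finite weighted H\"older seminorm in $x$ and finite H\"older seminorm in $t$ (both are terms of $\left\langle u\right\rangle_{2,\gamma/2}^{(4+\gamma,\frac{4+\gamma}{4})}$, cf.\ \eqref{s1.7}), so by Lemma \ref{L.02.1} it belongs to an ordinary H\"older class near the corner and extends continuously to $(0,0)$. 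Next I would record the elementary one-variable identities coming from $F(\xi):=\int_{0}^{\xi}\ln s\,ds=\xi\ln\xi-\xi$, namely $D_{x_{N}}\ln^{(2)}x_{N}=F(x_{N})$, $D_{x_{N}}^{2}\ln^{(2)}x_{N}=\ln x_{N}$, $D_{x_{N}}^{3}\ln^{(2)}x_{N}=x_{N}^{-1}$, $D_{x_{N}}^{4}\ln^{(2)}x_{N}=-x_{N}^{-2}$, so that
\[
x_{N}^{2}D_{x_{N}}^{4}\widetilde{Q}_{u}(x_{N})\equiv a_{u},\qquad x_{N}D_{x_{N}}^{3}\widetilde{Q}_{u}(x_{N})\equiv -a_{u},
\]
while $\widetilde{Q}_{u}$ is real analytic for $x_{N}>0$ and in particular smooth near $x_{N}=1$; since $\overline{e}$ lies at distance $1$ from $\{x_{N}=0\}$, near $(\overline{e},0)$ the function $u$, hence $v:=u-\widetilde{Q}_{u}$, is $C^{2}$ in $x$ and $C^{1}$ in $t$, so the coefficients $a_{\alpha}$, $a^{(1)}$ are well defined.

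Property \eqref{s1.0012+2} is then immediate from the construction: with $P(x,t)=\sum_{|\alpha|\le2}\frac{a_{\alpha}}{\alpha!}(x-\overline{e})^{\alpha}+a^{(1)}t$ we have $u-Q_{u}=v-P$, and $P$ is precisely the order-two (in $x$), first-order (in $t$) Taylor data of $v$ at $(\overline{e},0)$, so $D_{x}^{\alpha}(u-Q_{u})|_{(\overline{e},0)}=0$ for $|\alpha|\le2$ and $D_{t}(u-Q_{u})|_{(\overline{e},0)}=0$. For \eqref{s1.0012+1} I would split by the multiindex. Since $D_{x}^{\alpha}P=0$ for $|\alpha|\ge3$, and $D_{x}^{\alpha}\widetilde{Q}_{u}=0$ whenever $\alpha$ is not a pure $x_{N}$-derivative (i.e.\ whenever $\alpha_{N}<|\alpha|$), for $|\alpha|=4-j$ with $\alpha_{N}<4-j$ we get $x_{N}^{2-j}D_{x}^{\alpha}(u-Q_{u})=x_{N}^{2-j}D_{x}^{\alpha}u\to0$ by \eqref{s1.7.1}; and for $j=0$, $\alpha=(0,\dots,4)$, $x_{N}^{2}D_{x_{N}}^{4}(u-Q_{u})=x_{N}^{2}D_{x_{N}}^{4}u-a_{u}\to0$ by the very definition of $a_{u}$.

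The remaining case, $j=1$ and $\alpha=(0,\dots,3)$, is the hard part, since \eqref{s1.7.1} does \emph{not} cover this index. Here $x_{N}D_{x_{N}}^{3}(u-Q_{u})=x_{N}D_{x_{N}}^{3}v$ and I must show it tends to $0$ at $(0,0)$. Put $\varepsilon(x,t):=x_{N}^{2}D_{x_{N}}^{4}v=x_{N}^{2}D_{x_{N}}^{4}u-a_{u}$, which is continuous near the corner with $\varepsilon(0,0)=0$. Fixing $x_{N}^{0}>0$ and integrating in $x_{N}$,
\[
x_{N}D_{x_{N}}^{3}v(x',x_{N},t)=x_{N}D_{x_{N}}^{3}v(x',x_{N}^{0},t)-x_{N}\!\int_{x_{N}}^{x_{N}^{0}}\!\frac{\varepsilon(x',\xi,t)}{\xi^{2}}\,d\xi .
\]
The first term tends to $0$ because on the slice $x_{N}=x_{N}^{0}$ one has $d(x)\ge c>0$, hence $|D_{x}^{3}v|\le C/d\le C'$ there (Proposition \ref{P.2.1} applied to $u$, together with $D_{x_{N}}^{3}\widetilde{Q}_{u}=-a_{u}x_{N}^{-1}$ bounded for $x_{N}\ge x_{N}^{0}$). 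For the integral, given $\delta>0$ choose $\rho>0$ with $|\varepsilon|<\delta$ on $\{|x'|+\xi+|t|<\rho\}$; then for $(x',x_{N},t)$ with $|x'|+|t|<\rho/2$ and $x_{N}<\rho/2$ the part over $[x_{N},\rho/2]$ is bounded by $x_{N}\delta\int_{x_{N}}^{\infty}\xi^{-2}\,d\xi=\delta$, and the part over $[\rho/2,x_{N}^{0}]$ (if nonempty) is $O(x_{N})\to0$. Thus $x_{N}D_{x_{N}}^{3}v\to0$, which completes \eqref{s1.0012+1}.

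Finally, \eqref{s1.0015} and the last displayed assertion are read off directly from $Q_{u}=\widetilde{Q}_{u}(x_{N})+P(x,t)$: $D_{t}Q_{u}=a^{(1)}$ is constant; for $|\alpha|=4-j$, $0\le j\le2$, with $\alpha_{N}<2$ we have $\alpha_{N}<2\le|\alpha|$, so $D_{x}^{\alpha}\widetilde{Q}_{u}=0$, while $D_{x}^{\alpha}P$ is a constant (zero if $|\alpha|\ge3$, equal to $a_{\alpha}$ if $|\alpha|=2$), so $x_{N}^{2-j}D_{x}^{\alpha}Q_{u}$ is constant; and for $|\alpha|+j=2$, $D_{x'}^{\alpha}D_{x_{N}}^{j}\widetilde{Q}_{u}$ depends only on $x_{N}$ and $D_{x'}^{\alpha}D_{x_{N}}^{j}P$ is a constant, so $D_{x'}^{\alpha}D_{x_{N}}^{j}Q_{u}$ depends on neither $x'$ nor $t$. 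The only nonroutine step is the limit $x_{N}D_{x_{N}}^{3}v\to0$ above, which rests on the continuity of $x_{N}^{2}D_{x_{N}}^{4}u$ up to $(0,0)$ and the a priori bound $|D_{x}^{3}v|\le C/d(x)$; everything else is bookkeeping around the Taylor-matching definition of $P$ and the identity $D_{x_{N}}^{4}\ln^{(2)}x_{N}=-x_{N}^{-2}$.
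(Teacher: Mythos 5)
Your verification is correct. Note that the paper itself gives no proof of this lemma (it is quoted from the reference \cite{SpGen}), so there is nothing in-text to compare against; your argument is the natural Taylor-matching one, and all of \eqref{s1.0012+2}, \eqref{s1.0015} and the last assertion are indeed pure bookkeeping with $Q_{u}=\widetilde{Q}_{u}(x_{N})+P$ together with $D_{x_{N}}^{4}\ln^{(2)}x_{N}=-x_{N}^{-2}$. The only step requiring an argument is exactly the one you isolate, the case $j=1$, $\alpha=(0,\dots,0,3)$ of \eqref{s1.0012+1}, which is not covered by \eqref{s1.7.1}; your treatment of it is sound: the continuity of $x_{N}^{2}D_{x_{N}}^{4}u$ up to the corner (from the weighted $x$-H\"older and $t$-H\"older bounds in \eqref{s1.7} together with Lemma \ref{L.02.1}) justifies $a_{u}$, and the identity $x_{N}D_{x_{N}}^{3}v(x',x_{N},t)=x_{N}D_{x_{N}}^{3}v(x',x_{N}^{0},t)-x_{N}\int_{x_{N}}^{x_{N}^{0}}\varepsilon(x',\xi,t)\xi^{-2}d\xi$, with the bound $|D_{x}^{3}u|\le C/d(x)$ from Proposition \ref{P.2.1} for the boundary term and the $\delta$--$\rho$ splitting of the integral, correctly yields $x_{N}D_{x_{N}}^{3}(u-Q_{u})\to0$ at $(0,0)$.
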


In what follows we will use also the following Liouville theorem.
Consider in the domain $Q_{+}=\{(x,t):x\in
R_{+}^{N},-\infty<t<\infty\}$  the homogeneous boundary
value problem for a unknown function $u(x,t)$%

\begin{equation}
\frac{\partial u}{\partial t}+\nabla(x_{N}^{2}\nabla\Delta
u)=0,\quad(x,t)\in
Q_{+},\label{002.1}%
\end{equation}

\begin{equation}
u(x^{\prime},0,t)=0,\quad x_{N}=0, \label{002.2}
\end{equation}
or, instead of boundary condition
\eqref{002.2}, the boundary condition%

\begin{equation}
\frac{\partial u(x^{\prime},0,t)}{\partial x_{N}}=0,\quad x_{N}=0.\label{002.3}%
\end{equation}

\begin{theorem}
\label{T.002.1} Let a solution to problem \eqref{002.1},
\eqref{002.2} (or \eqref{002.1}, \eqref{002.3}) belongs to the class
$C_{2,\gamma/2}^{4+\gamma,\frac{4+\gamma}{4}}(K_{R})$  for any
compact set $K_{R}=\{|x|\leq R,|t|\leq R\}\cap Q_{+}$, $R>0$, and has a power growth%

\[
|u(x,t)|_{K_{R}}^{(0)}\leq CR^{A},
\]
where $C$ and $A$ are some positive constants. Then $u(x,t)$ is a
polynomial with respect to the variables $x^{\prime}$ and $t$.

If in addition the function $u(x,t)$ satisfies the initial condition
 $u(x,0)\equiv 0$, then $u(x,t)\equiv 0$.

 \end{theorem}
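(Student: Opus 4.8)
The plan is to use a scaling invariance of the problem together with the boundary Schauder estimates for the linearised equation (established in Sections \ref{s5} and \ref{s7}) to show that all sufficiently high tangential and time derivatives of $u$ vanish identically; the second assertion then follows by differentiating the equation in $t$.

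\textbf{Scaling and differentiation.} First observe that the operator $u\mapsto\partial_t u+\nabla\cdot(x_N^{2}\nabla\Delta u)$, the half-space $Q_+$, and each of the homogeneous boundary conditions \eqref{002.2}, \eqref{002.3} are invariant under the anisotropic dilations $(x,t)\mapsto(\lambda x,\lambda^{2}t)$: the weight $x_N^{2}$ exactly compensates the two extra spatial derivatives of the fourth-order term, so if $u$ is a solution then so is $u_\lambda(x,t):=u(\lambda x,\lambda^{2}t)$, and $u_\lambda$ still belongs to $C_{2,\gamma/2}^{4+\gamma,(4+\gamma)/4}(K_R)$ for every $R$. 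The problem is moreover invariant under translations in $x'=(x_1,\dots,x_{N-1})$ and in $t$, and, since the coefficient $x_N^{2}$ does not depend on $x'$ or $t$, every derivative $w=D_{x'}^{\alpha}D_t^{m}u$ again solves the same homogeneous problem \eqref{002.1}--\eqref{002.2} (resp. \eqref{002.1}, \eqref{002.3}) and lies in the same weighted class on each $K_R$.

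\textbf{Decay of high derivatives and the first assertion.} Fix a boundary point, translate it to the origin, and for $\lambda\ge1$ set $v=u_\lambda$. By the growth hypothesis $|v|_{K_1\cap Q_+}^{(0)}\le C\lambda^{2A}$. Since $v$ solves the homogeneous problem, the boundary Schauder estimate yields $\|v\|_{C_{2,\gamma/2}^{4+\gamma,(4+\gamma)/4}(K_{1/2}\cap Q_+)}\le C|v|_{K_1\cap Q_+}^{(0)}\le C\lambda^{2A}$ with a constant independent of $\lambda$; applying the same estimate to the solutions $D_{x'}^{\alpha}D_t^{m}v$ and iterating (each such function is again a solution in the class), one bounds every tangential and time derivative of $v$ on $K_{1/2}\cap Q_+$ by $C_{\alpha,m}\lambda^{2A}$. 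Undoing the scaling via $D_{x'}^{\alpha}D_t^{m}v(x,t)=\lambda^{|\alpha|+2m}(D_{x'}^{\alpha}D_t^{m}u)(\lambda x,\lambda^{2}t)$ gives
\[
\bigl|D_{x'}^{\alpha}D_t^{m}u\bigr|\le C_{\alpha,m}\,\lambda^{\,2A-|\alpha|-2m}\quad\text{on }\{|x|\le\lambda/2,\ |t|\le\lambda^{2}/4\}\cap Q_+ .
\]
Letting $\lambda\to\infty$ (these cylinders exhaust $Q_+$) we obtain $D_{x'}^{\alpha}D_t^{m}u\equiv0$ whenever $|\alpha|+2m>2A$. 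Taking $k_0=\lfloor 2A\rfloor+1$, every $(\alpha,m)$ with $|\alpha|+m=k_0$ satisfies $|\alpha|+2m\ge k_0>2A$, so all $(x',t)$-derivatives of $u$ of total order $k_0$ vanish; hence for each fixed $x_N>0$ the function $(x',t)\mapsto u(x',x_N,t)$ is a polynomial of degree $<k_0$, and by continuity up to $x_N=0$ the same holds on all of $\overline{Q}_+$. This is precisely the first assertion (with coefficients depending on $x_N$).

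\textbf{Vanishing under the initial condition, and the main obstacle.} Suppose in addition $u(x,0)\equiv0$. By the first part $u(x,t)=\sum_{j=0}^{d}c_j(x)t^{j}$ with $c_0\equiv u(\cdot,0)\equiv0$ and each $c_j$ in $C_{2,\gamma/2}^{4+\gamma,(4+\gamma)/4}$ in $x$ (the $c_j$ are fixed linear combinations of $u(\cdot,t_k)$ at $d+1$ distinct times). Writing $L:=\nabla\cdot(x_N^{2}\nabla\Delta\,\cdot\,)$, which involves only $x$-derivatives, substitution into \eqref{002.1} and comparison of the coefficients of $t^{j}$ gives $(j+1)c_{j+1}=-Lc_j$ for $0\le j\le d-1$ and $Lc_d=0$; since $c_0\equiv0$, induction yields $c_j\equiv0$ for all $j$, i.e. $u\equiv0$ (equivalently, $\partial_t^{j}u(\cdot,0)=(-1)^{j}L^{j}u(\cdot,0)=0$ for every $j$ and $u$ is a polynomial in $t$). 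The only nonelementary ingredient in the whole argument is the boundary Schauder estimate with a scale-invariant constant for the degenerate linearised equation under both the Dirichlet and the Neumann condition, together with the bookkeeping that lets it pass through the derivative iteration so that the decay exponent $2A-|\alpha|-2m$ is produced cleanly — exactly what Sections \ref{s5} and \ref{s7} supply; the scaling invariance, the polynomial reconstruction, and the $t$-recursion are routine.
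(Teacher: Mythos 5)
Your proposal rests on a local, scale-invariant boundary Schauder estimate of the form $\|v\|_{C_{2,\gamma/2}^{4+\gamma,\frac{4+\gamma}{4}}(K_{1/2}\cap Q_{+})}\leq C\,|v|_{K_{1}\cap Q_{+}}^{(0)}$ for solutions of the homogeneous problem, which you attribute to Sections \ref{s5} and \ref{s7}. This is where the argument breaks down, for two reasons. First, no such estimate is proved in the paper: Theorem \ref{T.5.1}, Theorem \ref{T.5.2} and Theorems \ref{T.7.1}--\ref{T.7.2} are global estimates for \emph{compactly supported} solutions, bounding the weighted seminorms of $u$ by the norms of the data $f$, $g$, $\varphi$, $\psi$; there is no interior-up-to-the-boundary estimate with only $|u|^{(0)}$ on the right-hand side, and producing one by cut-off for this degenerate operator is a nontrivial missing step, not a quotation. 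Second, and decisively within this paper's logic, the Schauder estimates you invoke are themselves proved \emph{using} Theorem \ref{T.002.1}: the blow-up argument of Lemma \ref{L.5.1} identifies the limit function $r(y,\tau)$ as a polynomial (and, in the case with initial condition, as zero) precisely by appealing to Theorem \ref{T.002.1}, and Theorem \ref{T.5.2} rests on Corollary \ref{C.002.1}. So your route is circular here: the Liouville theorem must be available \emph{before} Section \ref{s5}. The paper's own proof is entirely different in nature — it is delegated to \cite{Liouville}, where the result is obtained by local integral (energy-type) estimates, the only point argued in the present paper being that the Neumann case \eqref{002.3} goes through verbatim because the relevant boundary integrals over $\{x_{N}=0\}$ still vanish. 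A smaller, repairable gap: you assert that every $D_{x'}^{\alpha}D_{t}^{m}u$ again lies in $C_{2,\gamma/2}^{4+\gamma,\frac{4+\gamma}{4}}$ on compact sets; membership of $u$ in that class controls only the weighted fourth derivatives and $D_{t}u$, so this requires a tangential difference-quotient bootstrap that you do not supply (and which again would lean on the unproved local estimate).

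On the positive side, several ingredients are sound and worth keeping: the anisotropic scaling $(x,t)\mapsto(\lambda x,\lambda^{2}t)$ is indeed an exact invariance of \eqref{002.1} (the weight $x_{N}^{2}$ compensates two spatial orders), the exponent bookkeeping $|D_{x'}^{\alpha}D_{t}^{m}u|\leq C\lambda^{2A-|\alpha|-2m}$ is correct, and the reduction of the second assertion to the recursion $(j+1)c_{j+1}=-Lc_{j}$, $c_{0}\equiv0$, is a clean way to pass from the polynomial structure to $u\equiv0$. But as written the core analytic input is both absent and circular, so the proof does not stand on its own, nor does it reproduce the integral-estimate proof the paper relies on.
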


This theorem was proved in \cite{Liouville} in the case of boundary
condition \eqref{002.2} by the method of local integral estimates,
but all the reasoning of the proof a word to a word is applicable
also to boundary condition \eqref{002.3}. Condition \eqref{002.2} is
used in \cite{Liouville} only in the places of the proof, where some
boundary integrals over $\{x_{N}=0\}$ vanish. But all those boundary
integrals vanish also under condition \eqref{002.3}. So we refer the
reader to \cite{Liouville} for the proof.

\begin{corollary}
\label{C.002.1} Let a function $u(x)$ does not depend on $t$ and
satisfy the conditions of Theorem \ref{T.002.1}, that is $u(x)$ is a
solution of power growth to the corresponding elliptic problem. Then
$u(x)$ is a polynomial with respect to $x'$ - variables.
\end{corollary}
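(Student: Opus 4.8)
The plan is to deduce this directly from Theorem \ref{T.002.1} by viewing a stationary function as a (trivial) time-dependent solution. Concretely, suppose $u(x)$ is independent of $t$ and solves the elliptic problem $\nabla(x_N^2\nabla\Delta u)=0$ in $R^N_+$ together with one of the boundary conditions $u(x',0)=0$ or $\partial u(x',0)/\partial x_N=0$, and assume $u$ has power growth, $|u|^{(0)}_{\{|x|\le R\}\cap R^N_+}\le CR^A$. Define $\widetilde u(x,t)\equiv u(x)$ on $Q_+=R^N_+\times(-\infty,\infty)$. Since $\partial\widetilde u/\partial t\equiv 0$, the parabolic equation \eqref{002.1} for $\widetilde u$ reduces exactly to the elliptic equation for $u$, so $\widetilde u$ solves \eqref{002.1}; and $\widetilde u$ inherits the boundary condition \eqref{002.2} (resp.\ \eqref{002.3}) from $u$.

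Next I would check that $\widetilde u$ meets the regularity and growth hypotheses of Theorem \ref{T.002.1}. Membership in $C_{2,\gamma/2}^{4+\gamma,\frac{4+\gamma}{4}}(K_R)$ for every compact $K_R=\{|x|\le R,|t|\le R\}\cap Q_+$ is immediate: since $\widetilde u$ does not depend on $t$, all the time-difference seminorms entering the norm \eqref{sem} (in particular $\langle D_t\widetilde u\rangle^{(\gamma/4)}_t$, which vanishes) contribute nothing, so the norm of $\widetilde u$ on $K_R$ reduces to the spatial weighted norm of $u$ on $\{|x|\le R\}\cap R^N_+$, which is finite by hypothesis. Likewise the power-growth bound transfers verbatim, since $|\widetilde u|^{(0)}_{K_R}=\max_{|x|\le R,\,x\in R^N_+}|u(x)|\le CR^A$.

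Having verified all hypotheses, Theorem \ref{T.002.1} applies and yields that $\widetilde u(x,t)$ is a polynomial in the variables $x'$ and $t$. But $\widetilde u$ is independent of $t$ by construction, so every monomial containing a positive power of $t$ has zero coefficient; hence $\widetilde u$, and therefore $u(x)=\widetilde u(x,0)$, is a polynomial in $x'$ alone, which is exactly the assertion. There is essentially no genuine obstacle here — the only point requiring a moment's care is the bookkeeping that passing from the elliptic problem on $R^N_+$ to the parabolic problem on $Q_+$ neither destroys the weighted-H\"older regularity on compacts nor inflates the growth rate, both of which are clear because time plays no role for a stationary $u$. (If one preferred not to invoke the parabolic Liouville theorem, one could instead rerun the local integral estimate argument of \cite{Liouville} directly in the elliptic setting, but reducing to Theorem \ref{T.002.1} is shorter.)
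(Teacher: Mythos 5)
Your proposal is correct and is exactly the argument the paper has in mind: the corollary is stated as an immediate consequence of Theorem \ref{T.002.1}, obtained by viewing the stationary $u(x)$ as a $t$-independent solution of the parabolic problem \eqref{002.1} with \eqref{002.2} or \eqref{002.3}, whose time-difference seminorms and growth bound transfer trivially, so the theorem gives a polynomial in $x'$ and $t$ that cannot actually involve $t$. Your bookkeeping of the regularity and growth hypotheses matches the paper's (implicit) reasoning, so nothing further is needed.
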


\section{\bigskip Reduction of the problem to a fixed domain.}%
\label{s3}

We will show below, that the free (unknown) boundary $S_{T}$ can be
parameterized in terms of its deviation from the given surface
$\Gamma _{T}=\Gamma\times\lbrack0,T]$. We follow to \cite{16z} to
give the exact formulation (compare \cite{16z}, \cite{D1}).

Let $\omega=(\omega_{1},...,\omega_{N-1})$ is some local curvilinear
coordinates in a domain $\Theta$ on $\Gamma$. In some small neighbourhood
$\mathcal{N}$ in $R^{N}$ of the surface $\Gamma$ we introduce the coordinates
$(\omega,\lambda)$ in the way that for any $x\in\mathcal{N}$ we have the
following unique representation%

\begin{equation}
x=x^{\prime}(x)+\overrightarrow{n}(x^{\prime}(x))\lambda\equiv x(\omega
)+\overrightarrow{n}(\omega)\lambda, \label{ab1.1}%
\end{equation}
where $x^{\prime}(x)=x(\omega)$ is the point in the domain $\Theta$ on the
surface $\Gamma$ with the coordinates $\omega$, $\overrightarrow{n}(\omega)$ -
the normal vector to $\Gamma$ at the point $x(\omega)$ with the direction into
$\Omega$. The coordinate $\lambda\in R$ means, in fact, the deviation of a
point $x$ from $\Gamma$, $\pm\lambda>0$ for $x\in\Omega$ or $x\in
R^{N}\setminus\Omega$. We assume that the neighbourhood $\mathcal{N}$ of the
surface $\Gamma$ is the set%

\begin{equation}
\mathcal{N}=\{x\in\Omega:|\lambda(x)|<\gamma_{0}\}, \label{004.000}%
\end{equation}
where $\gamma_{0}$ is sufficiently small and will be chosen below.

Let $\rho(x^{\prime},t)\equiv\rho(\omega,t)$ is a sufficiently small and
regular function and $\rho(x^{\prime},t)\equiv\rho(\omega,t)$ is defined on
the surface $\Gamma_{T}$. Let us note that here and in what follows we use the
notation $\rho(\omega,t)$ with the argument $\omega$ instead of $\rho
(x^{\prime},t)$ for all functions on the surface $\Gamma$ if it does not cause
ambiguity. We do that just for simplification of the notation, bearing in mind
that in each local domain $\Theta$ on $\Gamma$ we can introduce local
coordinates $\omega$. At the same time the coordinate $\lambda$ in
\eqref{ab1.1} does not depend on a choice of local coordinates $\omega$.

We parameterize the unknown surface $S_{T}$ with the help of the unknown
function $\rho(\omega,t)$ as follows
\begin{equation}
S_{T}\equiv\Gamma_{\rho,T}=\{(x,t)\in\Omega_{T}: \ x=x^{\prime}+\rho
(x^{\prime},t)\overrightarrow{n}(x^{\prime})=x(\omega)+\rho(\omega
,t)\overrightarrow{n}(\omega)\}, \label{1.15}%
\end{equation}
where $x^{\prime}\equiv x(\omega)\in\Gamma$. Note that this definition of the
surface $S_{T}\equiv\Gamma_{\rho,T}$ does not depend on a choice of local
coordinates $\omega$ in a particular local domain on $\Gamma$. Thus, the
unknown function $\rho(\omega,t)$ means, in fact, deviation of the surface
$\Gamma_{\rho,T}=S_{T}$ from the given surface $\Gamma_{T}$.

Along with $Q$ in \eqref{1.1} we use the notation
$\Omega_{\rho,T}=Q$. Let further $\rho(x,t)$ is an extension of the
function $\rho(\omega,t)$ from the surface $\Gamma_{T}$ to the whole
domain $\Omega_{T}$ to a function with support in the neighborhood
$\mathcal{N}_{T}=\mathcal{N}\times\lbrack0,T]$ of the surface
$\Gamma_{T}$, $\rho(x,t)=E\rho(\omega,t)$, where $E$ is some fixed
extension operator from Proposition \ref{P.2.4}. Define a mapping
$e_{\rho}(x,t)$ from $R^{N}\times\lbrack0,T]$ on itself with the
help of the formula $e_{\rho}:(x,t)\rightarrow(y,\tau)$, where,
according
to the notations of \eqref{ab1.1},%

\begin{equation}
y=
\genfrac{\{}{.}{0pt}{}{x^{\prime}(x)+\overrightarrow{n}(x^{\prime}%
(x))(\lambda(x)+\rho(x,t)), \ \ x\in\mathcal{N},}{x, \ \ \ \ x\in
\overline{\Omega}\setminus\mathcal{N},}
\label{4.1dop}%
\end{equation}
\[
\tau=t,
\]
or, with the help of the local coordinates $\omega$,
\begin{equation}
y=
\genfrac{\{}{.}{0pt}{}{x^{\prime}(\omega(x))+\overrightarrow{n}(\omega
(x))(\lambda(x)+\rho(x,t)), \ \ x\in\mathcal{N},}{x, \ \ \ \ x\in
\overline{\Omega}\setminus\mathcal{N},}
\label{4.1}%
\end{equation}
\[
\tau=t.
\]
Here $x^{\prime}(x)\in\Gamma$, $\omega(x)$, $\lambda(x)$ are
$(\omega ,\lambda)$- coordinates of a point $x$ in the neighbourhood
$\mathcal{N}$. Note that the definition of the mapping $e_{\rho}$
does not depend on a choice of local coordinates $\omega$ on the
surface $\Gamma$. We choose $\gamma_{0}$ sufficiently small so that
under the condition
\begin{equation}
|\rho|_{\Gamma}^{1+\gamma}\leq\gamma_{0}/2 \label{4.2}%
\end{equation}
the mapping $e_{\rho}$ is a diffeomorphism of $R^{N}\times\lbrack0,T]$ on
itself and also the mapping $e_{\rho}$ is a diffeomorphism of the domain
$\overline{\Omega_{T}}$ on the domains $\overline{\Omega_{\rho,T}}$. Let us
remark that the surface $\Gamma_{\rho,T}$ is exactly the image of the surface
$\Gamma_{T}$ under this mapping and the mapping $e_{\rho}(x,t)$ is the
identical mapping out of the neighbourhood $\mathcal{N}_{T}$ of $\Gamma_{T}$.
Note that since $\Gamma$ is the initial position of the unknown surface
$\Gamma_{\rho,T}$,%

\begin{equation}
\rho(\omega,0)\equiv0,\omega\in\Gamma,\quad \text{and thus}\quad\rho(x,0)=E\rho
(\omega,0)\equiv0,x\in\overline{\Omega} \label{3.0}%
\end{equation}
and thus $e_{\rho}(x,0)$ is the identical mapping of
$\overline{\Omega}$ onto itself.

We make in problem \eqref{1.1}- \eqref{1.5} the change of the
independent variables $(y,\tau)=e_{\rho}(x,t)$ and for simplicity
denote the new function $h(y,\tau)\circ e_{\rho }(x,t)=h(x,t)$ by
the same symbol.
In the new variables $(x,t)$ the problem become%

\begin{equation}
\frac{\partial h}{\partial t}-b_{h,\rho}\rho_{t}+\nabla_{\rho}(h^{2}%
\nabla_{\rho}\left(  \nabla_{\rho}^{2}h\right)  )=0,\quad(x,t)\in\Omega_{T},
\label{3.1}%
\end{equation}

\begin{equation}
h(x,t)=0,\quad\quad(x,t)\in\Gamma_{T}, \label{3.2}%
\end{equation}

\begin{equation}
\frac{\partial h}{\partial\overline{n}}(x,t)\frac{1}{(1+\rho_{\lambda}%
)}\left[  1+\sum\limits_{i,j=1}^{N-1}m_{ij}(x,\rho)\rho_{\omega_{i}}%
\rho_{\omega_{j}}\right]  ^{\frac{1}{2}}-g_{\rho}(x,t)=0,\quad(x,t)\in
\Gamma_{T}, \label{3.3}%
\end{equation}

\begin{equation}
h(x,t)>0,\quad \text{in open} \quad\Omega_{T}, \label{3.4}%
\end{equation}

\begin{equation}
h(x,0)=h_{0}(x),\quad\rho(x,0)=0,\quad x\in\overline{\Omega}. \label{3.5}%
\end{equation}

\begin{equation}
\rho(x,t)=E\rho(\omega,t),\quad(x,t)\in\overline{\Omega}_{T},(\omega
,t)\in\Gamma_{T}. \label{3.6}%
\end{equation}
Here $\nabla_{\rho}\equiv\mathcal{E}_{\rho}\nabla_{x}$,
$\mathcal{E}_{\rho}=\{e_{ij}(x,\rho,\nabla\rho):i,j=\overline{1,N}\}$
is the transition matrix from the differentiating in $y$ to the
differentiating in $x$ under the change of the variables
\eqref{4.1}, that is for any function $u(y,\tau)$ we have $\left(
\nabla_{y}u(y,\tau)\right)  \circ e_{\rho}(x,t)=\nabla_{\rho }\left(
u(y,\tau)\circ e_{\rho}(x,t)\right)  $. Note that by the definition
of $e_{\rho}(x,t)$ the elements $e_{ij}(x,\rho,\nabla\rho)$ of the
matrix $\mathcal{E}_{\rho}$ depend only on the surface $\Gamma$ and
they are given smooth functions of their arguments of the class
$C^{6+\gamma}$. Further, in
\eqref{3.1} $b_{h.\rho}\equiv$ $[\frac{\partial h}{\partial\lambda}%
/(1+\rho_{\lambda})]$, $g_{\rho}(x,t)\equiv g(y,\tau)\circ
e_{\rho}(x,t)$ in \eqref{3.3} and $m_{ij}(x,\rho)$ in \eqref{3.3}
are given functions of their coordinates of the class
$C^{6+\gamma}$. The functions $m_{ij}(x,\rho)$ depend not only on
$\Gamma$ but also on the choice of the local coordinates $\omega$ on
$\Gamma$ in the way that the expression
$\sum\limits_{i,j=1}^{N-1}m_{ij}(x,\rho)\rho
_{\omega_{i}}\rho_{\omega_{j}}$ in bracets does not depend on such
choice (because all other terms in \eqref{3.3} do not depend on
$\omega$). Note that the function $h_{0}(x)$ in \eqref{3.5} is the
original initial function from \eqref{1.5} because the mapping
$e_{\rho}(x,0)$ is identical at $t=0$.

Let us explain for completeness the obtaining of conditions
\eqref{3.1}, \eqref{3.3} from conditions \eqref{1.1}, \eqref{1.3} -
compare \cite{D1}. Denote by $(\omega_{y},\lambda_{y})$ the
$(\omega,\lambda)$- coordinates of the point $y$ and by
$(\omega_{x},\lambda_{x})$ the $(\omega,\lambda)$- coordinates of
the point $x$ in $N$. The expression $\frac{\partial h}{\partial
t}-b_{h,\rho}\rho_{t}$ is the
recalculated in the variables $(x,t)$ derivative $\frac{\partial h}%
{\partial\tau}$ after change of variables \eqref{4.1}:
\[
\frac{\partial h}{\partial\tau}=\frac{\partial h}{\partial t}\frac{\partial
t}{\partial\tau}+\sum_{i=1}^{N-1}\frac{\partial h}{\partial\omega_{xi}}%
\frac{\partial\omega_{xi}}{\partial\tau}+\frac{\partial h}{\partial\lambda
_{x}}\frac{\partial\lambda_{x}}{\partial\tau}.
\]
Here in fact
\begin{equation}
\frac{\partial t}{\partial\tau}=1,\ \ \frac{\partial\omega_{xi}}{\partial\tau
}=0. \label{4.15}%
\end{equation}
And for the value of $\frac{\partial\lambda_{x}}{\partial\tau}$ due to the
relation
\[
\lambda_{x}=\lambda_{y}-\rho(x,t)\circ e_{\rho}^{-1},
\]
and taking into account \eqref{4.15} we have
\[
\frac{\partial\lambda_{x}}{\partial\tau}=-\frac{\partial}{\partial\tau}%
[\rho(x,t)\circ e_{\rho}(x,t)^{-1}]=
\]%
\[
-\frac{\partial\rho}{\partial t}\frac{\partial t}{\partial\tau}-\frac
{\partial\rho}{\partial\lambda_{x}}\frac{\partial\lambda_{x}}{\partial\tau
}-\sum_{i=1}^{N-1}\frac{\partial\rho}{\partial\omega_{xi}}\frac{\partial
\omega_{xi}}{\partial\tau}=-\rho_{t}-\rho_{\lambda_{x}}\frac{\partial
\lambda_{x}}{\partial\tau}.
\]
So in the variables $x$ and $t$
\begin{equation}
\frac{\partial\lambda_{x}}{\partial\tau}=-\rho_{t}/(1+\rho_{\lambda_{x}}).
\label{4.16}%
\end{equation}
Thus, from \eqref{4.15} and \eqref{4.16} it follows that
\[
\frac{\partial h}{\partial\tau}\circ e_{\rho}=\frac{\partial h}{\partial
t}-[\frac{\partial h}{\partial\lambda}/(1+\rho_{\lambda})]\rho_{t}%
=\frac{\partial h}{\partial t}-b_{h.\rho}\rho_{t}.
\]

We explain further the transition from condition \eqref{1.3} to condition
\eqref{3.3} under change of variables \eqref{4.1}, as we shall need in the
future the exact explicit form of this condition. Define in the neighborhood
$\mathcal{N}_{T}$ of the surface $\Gamma_{T}$ the function
\begin{equation}
\Phi_{\rho}(y,\tau)=\lambda_{x}\circ e_{\rho}^{-1}(y,\tau)=\lambda_{y}%
-\rho(x,t)\circ e_{\rho}^{-1}(y,\tau)=\lambda(y)-\rho(y,\tau), \label{4.17}%
\end{equation}
where for simplicity we have retained for the function $\rho(x,t)\circ
e_{\rho}^{-1}(y,\tau)$ the same notation $\rho(y,\tau)$. By the definition we
have $|\Phi_{\rho}(y,\tau)|>0$ for $(y,\tau)\notin\Gamma_{\rho,T}$ and
$\Phi_{\rho}(y,\tau)=0$ for $(y,\tau)\in\Gamma_{\rho,T}$. Hence, in
\eqref{1.3}
\[
\cos(\overrightarrow{n_{\tau}},y_{i})=\frac{\Phi_{\rho y_{i}}}{|\nabla_{y}%
\Phi_{\rho}|}.
\]
Therefore, relation \eqref{1.3} can be written as follows
\begin{equation}
(\nabla_{y}h,\nabla_{y}\Phi_{\rho})=g(y,\tau)|\nabla_{y}\Phi_{\rho}|.
\label{4.18}%
\end{equation}
Under change of variables \eqref{4.1} we have
\begin{equation}
(\nabla_{y}h,\nabla_{y}\Phi_{\rho})\circ e_{\rho}(x,t)=(\nabla_{\rho}%
h,\nabla_{\rho}\lambda_{x}),\quad|\nabla_{y}\Phi_{\rho}|\circ e_{\rho
}(x,t)=|\nabla_{\rho}\lambda_{x}|. \label{4.20}%
\end{equation}
Denote by $\Lambda(x)$ the transition matrix from the gradient with
respect to the variables $x$ to the gradient with respect to the
variables $(\omega _{x},\lambda_{x})$, that is,
\begin{equation}
\nabla_{x}=\Lambda(x)\nabla_{(\lambda_{x},\omega_{x})}\ \ (\nabla_{y}%
=\Lambda(y)\nabla_{(\lambda_{y},\omega_{y})}), \label{4.21}%
\end{equation}
where
\begin{equation}
\Lambda(x)=\left(
\begin{array}
[c]{cccc}%
\frac{\partial\lambda}{\partial x_{1}} & \frac{\partial\omega_{1}}{\partial
x_{1}} & ... & \frac{\partial\omega_{N-1}}{\partial x_{1}}\\
... & ... & ... & ...\\
\frac{\partial\lambda}{\partial x_{N}} & \frac{\partial\omega_{1}}{\partial
x_{N}} & ... & \frac{\partial\omega_{N-1}}{\partial x_{N}}%
\end{array}
\right)  , \label{a4.21}%
\end{equation}
and similarly for the variables $y$. Then in the variables $(x,t)$
\[
(\nabla_{\rho}h,\nabla_{\rho}\lambda_{x})=(\mathcal{E}_{\rho}\Lambda
\nabla_{(\lambda,\omega)}h,\mathcal{E}_{\rho}\Lambda\nabla_{(\lambda,\omega
)}\lambda_{x}).
\]
Note that $\nabla_{(\lambda_{x},\omega_{x})}\lambda_{x}=\{1,0,...,0\}$, and
also $h\equiv0$ on $\Gamma_{T}$, hence $\partial h/\partial\omega_{i}=0$, and
therefore
\[
\nabla_{(\lambda_{x},\omega_{x})}h=\{\frac{\partial h}{\partial\lambda_{x}%
},0,...,0\}=\frac{\partial h}{\partial\lambda_{x}}\{1,0,...,0\}=\frac{\partial
h}{\partial\lambda_{x}}\nabla_{(\lambda_{x},\omega_{x})}\lambda_{x}.
\]
Thus we obtain
\begin{equation}
(\nabla_{y}h,\nabla_{y}\Phi_{\rho})\circ e_{\rho}(x,t)=(\nabla_{\rho}%
h,\nabla_{\rho}\lambda_{x})=\frac{\partial h}{\partial\lambda_{x}}%
(\nabla_{\rho}\lambda_{x},\nabla_{\rho}\lambda_{x}), \label{4.22}%
\end{equation}
that is in view of \eqref{4.20} in the new variables condition
\eqref{1.3} become%

\begin{equation}
\frac{\partial h}{\partial\lambda_{x}}\left[  (\nabla_{\rho}\lambda_{x}%
,\nabla_{\rho}\lambda_{x})\right]  ^{\frac{1}{2}}=g_{\rho}(x,t). \label{4.22.a}%
\end{equation}

On the other hand, due to the definition of $\Phi_{\rho}(y,\tau)$
\begin{equation}
(\nabla_{\rho}\lambda_{x},\nabla_{\rho}\lambda_{x})=(\nabla_{y}(\lambda
_{x}\circ e_{\rho}^{-1}),\nabla_{y}(\lambda_{x}\circ e_{\rho}^{-1}))\circ
e_{\rho}=(\nabla_{y}\Phi_{\rho},\nabla_{y}\Phi_{\rho})\circ e_{\rho}.
\label{a4.22}%
\end{equation}
Making use of introduced in \eqref{4.21} matrix $\Lambda(y)$, we
have
\[
(\nabla_{y}\Phi_{\rho},\nabla_{y}\Phi_{\rho})=(\Lambda(y)\nabla_{(\lambda
_{y},\omega_{y})}\Phi_{\rho},\Lambda(y)\nabla_{(\lambda_{y},\omega_{y})}%
\Phi_{\rho})=
\]%
\begin{equation}
=(\nabla_{(\lambda_{y},\omega_{y})}\Phi_{\rho},\Lambda(y)^{\ast}%
\Lambda(y)\nabla_{(\lambda_{y},\omega_{y})}\Phi_{\rho}). \label{4.23}%
\end{equation}
First, by the definition of $\Phi_{\rho}$
\[
\frac{\partial\Phi_{\rho}}{\partial\lambda_{y}}=\frac{\partial}{\partial
\lambda_{y}}(\lambda_{y}-\rho(y,\tau))=1-\rho_{\lambda_{y}},
\]%
\begin{equation}
\frac{\partial\Phi_{\rho}}{\partial\omega_{yi}}=\frac{\partial}{\partial
\omega_{yi}}(\lambda_{y}-\rho(y,\tau))=-\rho_{\omega_{yi}}. \label{4.24}%
\end{equation}
In addition, since the coordinate $\lambda_{y}$ is counted along the normal to
$\Gamma$, and $\omega_{yi}$ are coordinates on the surface $\Gamma$, we have
\[
(\nabla_{y}\lambda(y),\nabla_{y}\lambda(y))=1,\ \ (\nabla_{y}\lambda
(y),\nabla_{y}\omega_{i}(y))=0,\ i=1,...,N-1.
\]
Therefore the matrix $\Lambda^{\ast}(y)\Lambda(y)$ has the form
%==============================================================%
\begin{equation}
\Lambda^{\ast}(y)\Lambda(y)=\left(
\begin{array}
[c]{ccccc}%
1 & 0 & 0 & ... & 0\\
0 & m_{11} & m_{12} & ... & m_{1(N-1)}\\
... & ... & ... & ... & ...\\
0 & m_{(N-1)1} & m_{(N-1)2} & ... & m_{(N-1)(N-1)}%
\end{array}
\right)  , \label{4.25}%
\end{equation}
where
\begin{equation}
m_{ij}=m_{ji}=(\nabla_{y}\omega_{i}(y),\nabla_{y}\omega_{j}(y))- \label{4.26}%
\end{equation}
are some smooth functions. Thus,
\[
(\nabla_{(\lambda_{y},\omega_{y})}\Phi_{\rho},\Lambda^{\ast}(y)\Lambda
(y)\nabla_{(\lambda_{y},\omega_{y})}\Phi_{\rho})=
\]%
\begin{equation}
=(1-\rho_{\lambda_{y}})^{2}+\sum\limits_{i,j=1}^{N-1}m_{ij}(y)\rho
_{\omega_{yi}}\rho_{\omega_{yj}}. \label{4.27}%
\end{equation}
Make now in \eqref{4.27} change of variables \eqref{4.1}, and
recalculate the derivatives of $\rho$ with respect to
$(\lambda_{y},\omega_{y})$ in terms of the derivatives with respect
to $(\lambda_{x},\omega_{x})$. We have
\begin{equation}
\rho_{\lambda_{y}}\circ e_{\rho}=\rho_{t}\frac{\partial t}{\partial\lambda
_{y}}+\rho_{\lambda_{x}}\frac{\partial\lambda_{x}}{\partial\lambda_{y}}%
+\sum\limits_{i=1}^{N-1}\rho_{\omega_{xi}}\frac{\partial\omega_{xi}}%
{\partial\lambda_{y}}. \label{4.28}%
\end{equation}
From the definition of the mapping $e_{\rho}$ it follows that
\begin{equation}
\frac{\partial t}{\partial\lambda_{y}}=0,\quad\frac{\partial\omega_{xi}%
}{\partial\lambda_{y}}=0. \label{4.29}%
\end{equation}
At the same time by \eqref{4.28}, \eqref{4.29}
\[
\frac{\partial\lambda_{x}}{\partial\lambda_{y}}=1-\rho_{\lambda_{y}}%
=1-\rho_{\lambda_{x}}\frac{\partial\lambda_{x}}{\partial\lambda_{y}},
\]
that is,
\begin{equation}
\frac{\partial\lambda_{x}}{\partial\lambda_{y}}=\frac{1}{1+\rho_{\lambda_{x}}%
}. \label{4.30}%
\end{equation}
Therefore, by \eqref{4.28}, \eqref{4.29} and \eqref{4.30}
\begin{equation}
\rho_{\lambda_{y}}\circ e_{\rho}=\frac{\rho_{\lambda_{x}}}{1+\rho_{\lambda
_{x}}}. \label{4.31}%
\end{equation}
Further,
\begin{equation}
\rho_{\omega_{yi}}\circ e_{\rho}=\rho_{t}\frac{\partial t}{\partial\omega
_{yi}}+\rho_{\lambda_{x}}\frac{\partial\lambda_{x}}{\partial\omega_{yi}}%
+\sum\limits_{j=1}^{N-1}\rho_{\omega_{xi}}\frac{\partial\omega_{xj}}%
{\partial\omega_{yi}}, \label{4.32}%
\end{equation}
and
\begin{equation}
\frac{\partial t}{\partial\omega_{yi}}=0,\quad\frac{\partial\omega_{xj}%
}{\partial\omega_{yi}}=\delta_{ij},\quad i,j=1,...,N-1. \label{4.33}%
\end{equation}
At the same time
\[
\frac{\partial(\lambda_{x}\circ e_{\rho})}{\partial\omega_{yi}}=\left[
\frac{\partial}{\partial\omega_{yi}}(\lambda_{y}-\rho(y,\tau))\right]  \circ
e_{\rho}=-\rho_{\omega_{yi}}\circ e_{\rho},
\]
That is, by virtue of \eqref{4.32} and \eqref{4.33},
\begin{equation}
\rho_{\omega_{yi}}\circ e_{\rho}=\rho_{\lambda_{x}}(-\rho_{\omega_{yi}}\circ
e_{\rho})+\rho_{\omega_{xi}}, \label{4.34}%
\end{equation}
hence by \eqref{4.34},
\begin{equation}
\rho_{\omega_{yi}}\circ e_{\rho}=\frac{\rho_{\omega_{xi}}}{1+\rho_{\lambda
_{x}}}. \label{4.35}%
\end{equation}
Thus, from \eqref{4.22}, \eqref{4.27}, \eqref{4.31} and \eqref{4.35}
it follows that in \eqref{4.22}
\begin{equation}
(\nabla_{\rho}\lambda_{x},\nabla_{\rho}\lambda_{x})=\frac{1}{(1+\rho
_{\lambda_{x}})^{2}}\left[  1+\sum\limits_{i,j=1}^{N-1}m_{ij}(x,\rho
)\rho_{\omega_{xi}}\rho_{\omega_{xj}}\right]  . \label{4.36}%
\end{equation}
Taking into account \eqref{4.22.a} and condition
\eqref{4.2} with $\gamma_{0}$ sufficiently small, we arrive at%
\eqref{3.3}.

\section{A linearisation of the problem
\eqref{3.1}-
\eqref{3.5}.}%
\label{s4}

We will consider the set of left hand sides of \eqref{3.1}-
\eqref{3.5} as some nonlinear operator on the pair $(h,\rho)$.  In
this section we describe an approximate solution $(w,\sigma
)\approx(h,\rho)$ to \eqref{3.1}- \eqref{3.5} for small $T>0$ and
extract the linear parts of \eqref{3.1}- \eqref{3.5} around this
approximate solution in terms of $(h-w,\rho-\sigma)$.

From relations \eqref{3.1}, \eqref{3.2}, and \eqref{3.5} as
$\partial h/\partial t=0$ on $\Gamma_{T}$ , it follows that we can
determine the value of $\partial\rho(\omega,0)/\partial t$ on
$\Gamma$ at $t=0$. Namely, from
\eqref{3.1} it follows that%
\begin{equation}
\rho^{(1)}(\omega)\equiv\frac{\partial\rho(\omega,0)}{\partial t}=\left.
\left(  \nabla(h_{0}^{2}(x)\nabla\left(  \nabla^{2}h_{0}(x)\right)
)/\frac{\partial h_{0}(x)}{\partial\overline{n}}\right)  \right\vert _{\Gamma
}. \label{004.1}%
\end{equation}
From condition \eqref{1.8} it follows first that $h_{0}(x)\sim\nu
d(x)$ near $\Gamma$ and then it can be checked directly by the
definitions and from Lemma \ref{L.02.1} that
\begin{equation}
|\rho^{(1)}(\omega)|_{\Gamma}^{(\gamma^{\prime}/2)}\leq C(|h_{0}%
|_{\gamma^{\prime}/2,\overline{\Omega}}^{(4+\gamma^{\prime})}). \label{004.2}%
\end{equation}
From \eqref{004.2} it follows that

\begin{equation}
\rho^{(1)}(x)=E\rho^{(1)}(\omega)\in C_{\gamma^{\prime}/2}^{\gamma^{\prime}%
}(\overline{\Omega}),\quad|\rho^{(1)}(x)|_{\gamma^{\prime}/2,\overline{\Omega
}}^{(\gamma^{\prime})}\leq C(|h_{0}|_{\gamma^{\prime}/2,\overline{\Omega}%
}^{(4+\gamma^{\prime})}). \label{004.3}%
\end{equation}
Now we can determine the initial value of $\partial h/\partial t$
from equation
\eqref{3.1}. We have%

\begin{equation}
h^{(1)}(x)\equiv\frac{\partial h}{\partial t}(x,0)=\frac{\partial h_{0}%
(x)}{\partial\lambda}\rho^{(1)}(x)-\nabla(h_{0}^{2}(x)\nabla\left(  \nabla
^{2}h_{0}(x)\right)  ). \label{004.4}%
\end{equation}
From \eqref{004.3} and
\eqref{004.4} it follows that%

\begin{equation}
|h^{(1)}(x)|_{\gamma^{\prime}/2,\overline{\Omega}}^{(\gamma^{\prime})}\leq
C(|h_{0}|_{\gamma^{\prime}/2,\overline{\Omega}}^{(4+\gamma^{\prime})}).
\label{004.5}%
\end{equation}
Consider functions $w(x,t)\in C_{2,\gamma^{\prime}/2}^{4+\gamma^{\prime}%
,\frac{4+\gamma^{\prime}}{4}}(\overline{\Omega}_{T})$, $\sigma(\omega,t)\in
C^{2+\gamma^{\prime}/2,1+\gamma^{\prime}/4}(\Gamma_{T})$, $\sigma
(x,t)=E\sigma(\omega,t)\in C_{2,\gamma^{\prime}/2}^{4+\gamma^{\prime}%
,\frac{4+\gamma^{\prime}}{4}}(\overline{\Omega}_{T})$
with the properties%

\begin{equation}
|w|_{2,\gamma^{\prime}/2,\overline{\Omega}_{T}}^{(4+\gamma^{\prime})}%
+|\sigma|_{2,\gamma^{\prime}/2,\overline{\Omega}_{T}}^{(4+\gamma^{\prime}%
)}\leq C(|h_{0}|_{\gamma^{\prime}/2,\overline{\Omega}}^{(4+\gamma^{\prime}%
)}+|\rho^{(1)}(x)|_{\gamma^{\prime}/2,\overline{\Omega}}^{(\gamma^{\prime}%
)}+|h^{(1)}(x)|_{\gamma^{\prime}/2,\overline{\Omega}}^{(\gamma^{\prime})})\leq
\label{004.6}%
\end{equation}

\[
\leq C(|h_{0}|_{\gamma^{\prime}/2,\overline{\Omega}}^{(4+\gamma^{\prime})}),
\]

\begin{equation}
w(x,0)=h(x,0)=h_{0}(x),\quad\frac{\partial w}{\partial t}(x,0)=\frac{\partial
h}{\partial t}(x,0)=h^{(1)}(x),\quad x\in\overline{\Omega}, \label{004.7}%
\end{equation}

\begin{equation}
w(x,t)=0,\quad(x,t)\in\Gamma_{T}, \label{004.7.1}%
\end{equation}

\begin{equation}
\sigma(x,0)=\rho(x,0)=0,\quad\frac{\partial\sigma}{\partial t}(x,0)=\frac
{\partial\rho}{\partial t}(x,0)=\rho^{(1)}(x)\quad x\in\overline{\Omega}.
\label{004.8}%
\end{equation}
The way of constructing such functions will be given below in
Section \ref{s7}, Proposition \ref{P.7.1}.

\begin{lemma}
\label{L.4.1} By choosing the length $T$ of the time interval
sufficiently small we can assume that

\begin{equation}
\left.  \frac{\partial w(x,t)}{\partial\overline{n}}\right\vert _{\Gamma_{T}%
}\geq\nu>0,\quad w(x,t)>0,x\in\Omega,t\in\lbrack0,T]. \label{004.8.01}%
\end{equation}
\end{lemma}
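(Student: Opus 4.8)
The plan is to exploit the smoothness and the prescribed initial data of $w$ and $\sigma$ together with the smallness of $T$. The key observation is that at $t=0$ we have exact equalities: $w(x,0)=h_0(x)$, hence $\partial w(x,0)/\partial\overline n = \partial h_0(x)/\partial\overline n$, and by assumption \eqref{1.8} this equals $g(y,0)|_{\partial\Omega}\geq\nu>0$ via \eqref{1.10}; also $w(x,0)=h_0(x)>0$ in the open domain $\Omega$ by \eqref{1.8}. So both inequalities in \eqref{004.8.01} hold with strict margins at $t=0$, and the task is to propagate them to a short time interval using continuity in $t$.

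**First I would** treat the normal-derivative inequality. Since $w\in C_{2,\gamma'/2}^{4+\gamma',\frac{4+\gamma'}{4}}(\overline\Omega_T)$ and $w$ vanishes on $\Gamma_T$ by \eqref{004.7.1}, Proposition \ref{P.2.3} gives that $\partial w/\partial\overline n$ has a trace on $\Gamma_T$ in $C_{x',t}^{1+\gamma/2,1/2+\gamma/4}(\Gamma_T)$; in particular it is Hölder continuous in $t$ uniformly on $\Gamma$. Writing $w = \widetilde w + E\sigma$-type decompositions is unnecessary; one can directly estimate
\[
\left|\frac{\partial w}{\partial\overline n}(x',t) - \frac{\partial w}{\partial\overline n}(x',0)\right| \leq C t^{1/2+\gamma/4}\left\|\frac{\partial w}{\partial\overline n}\right\|_{C_{x',t}^{1+\gamma/2,1/2+\gamma/4}(\Gamma_T)} \leq C t^{1/2+\gamma/4}|h_0|_{\gamma'/2,\overline\Omega}^{(4+\gamma')},
\]
using \eqref{s1.s5.2} and the bound \eqref{004.6}. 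Since $\partial w(x',0)/\partial\overline n \geq \nu$, choosing $T$ small enough that $C T^{1/2+\gamma/4}|h_0|_{\gamma'/2,\overline\Omega}^{(4+\gamma')} \leq \nu/2$ yields $\partial w/\partial\overline n \geq \nu/2$ on all of $\Gamma_T$, which is \eqref{004.8.01} (with $\nu$ replaced by $\nu/2$, harmless since $\nu$ denotes a generic small constant).

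**Next I would** handle positivity of $w$ in the interior. The quantitative lower bound near $\Gamma$ comes from combining the normal-derivative bound just obtained with the vanishing of $w$ on $\Gamma$: by the mean value theorem along normals, $w(x,t) \geq \frac{\nu}{4} d(x)$ for $x$ in a fixed neighborhood $N_\lambda$ of $\Gamma$, uniformly in $t\in[0,T]$, once the gradient of $w$ is controlled (Proposition \ref{P.2.1} gives $D_x w$ bounded and continuous). Away from $\Gamma$, on the compact set $\{x\in\overline\Omega : d(x)\geq\lambda\}$ we have $h_0(x)\geq c_\lambda>0$ by \eqref{1.8}, while $|w(x,t)-h_0(x)| = |w(x,t)-w(x,0)| \leq Ct^{\gamma/4}\|w\|_{C_{2,\gamma'/2}^{4+\gamma',\frac{4+\gamma'}{4}}(\overline\Omega_T)}$ by \eqref{C0} (or directly by the time-Hölder control in the norm), so for $T$ small this difference is $\leq c_\lambda/2$, giving $w\geq c_\lambda/2$ there. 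Combining the two regions yields $w>0$ throughout $\Omega$ for all $t\in[0,T]$.

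**The main obstacle** — though it is really a bookkeeping point rather than a deep difficulty — is making the near-boundary lower bound $w(x,t)\gtrsim d(x)$ rigorous, because the second $x$-derivatives of $w$ are not bounded (Lemma \ref{L.2.5} shows only a logarithmic blow-up). The clean way around this is to avoid Taylor expansion to second order: use only that $w(x,t)=0$ on $\Gamma$, that $\partial w/\partial\overline n(x',t)\geq\nu/2$, and that $D_x w$ is uniformly continuous (so the normal derivative does not drop below $\nu/4$ within distance $\lambda$ of $\Gamma$ for small $\lambda$), then integrate the normal derivative from the foot point on $\Gamma$. This gives $w(x,t)\geq(\nu/4)\,\mathrm{dist}(x,\Gamma)$ in $N_\lambda$ with no use of second derivatives, and the argument closes. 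All estimates are uniform in $t$ because every norm controlling $w$ is a norm on $\overline\Omega_T$ with the time variable included, and the time-continuity moduli carry the explicit positive power of $T$ needed to absorb the fixed constants.
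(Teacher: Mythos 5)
Your argument is correct and essentially reproduces the paper's proof: the trace estimate \eqref{s1.s5.2} with the time-H\"older exponent $1/2+\gamma'/4$ gives $\partial w/\partial\overline{n}\geq\nu/2$ on $\Gamma_T$ for small $T$, and positivity follows from the same two-region splitting the paper uses (a boundary strip handled via $w|_{\Gamma_T}=0$ together with the positive normal derivative — your integration of the normal derivative just makes explicit what the paper leaves implicit — and the compact interior handled via time continuity of $w$ and $h_0\geq c_\lambda>0$). One minor remark: your parenthetical claim that $\partial h_0/\partial\overline{n}=g(y,0)\geq\nu>0$ sits awkwardly with \eqref{1.9} ($g\leq-\nu<0$) because of the paper's inward/outward normal conventions, but it is not needed, since \eqref{1.8} already supplies the lower bound directly.
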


\begin{proof}
Really, in view of properties \eqref{1.8} of $h_{0}(x)$  and
\eqref{s1.s5.2} we have for $(x,t)\in\Gamma_{T}$%

\[
\frac{\partial w(x,t)}{\partial\overline{n}}=\frac{\partial h_{0}(x)}%
{\partial\overline{n}}+\left(  \frac{\partial w(x,t)}{\partial\overline{n}%
}-\frac{\partial h_{0}(x)}{\partial\overline{n}}\right)  \geq
\]

\[
\geq\nu-|w|_{2,\gamma^{\prime}/2,\overline{\Omega}_{T}}^{(4+\gamma^{\prime}%
)}T^{\frac{2+\gamma^{\prime}}{4}}\geq\nu-C(|h_{0}|_{\gamma^{\prime
}/2,\overline{\Omega}}^{(4+\gamma^{\prime})})T^{\frac{2+\gamma^{\prime}}{4}%
}\geq\nu/2>0,
\]
that is the first relation in \eqref{004.8.01} if $T$ is
sufficiently small. Now from this and from
\eqref{004.7.1} it follows that for some sufficiently small $\mu>0$%

\[
w(x,t)>0,0<dist(x,\partial\Omega)\leq\mu,t\in\lbrack0,T].
\]
Denote the rest of $\Omega$ by $\Omega_{\mu}=\{x\in\Omega:\mu\leq
dist(x,\partial\Omega)\}$. In view of \eqref{1.8} on this compact
set $h_{0}(x)=w(x,0)\geq\nu>0$ and therefore for $x\in\Omega_{\mu}$%

\[
w(x,t)=h_{0}(x)+\left(  w(x,t)-h_{0}(x)\right)  \geq
\]

\[
\geq\nu-|w_{t}|_{\overline{\Omega}_{T}}^{(0)}T\geq\nu-C(|h_{0}|_{\gamma
^{\prime}/2,\overline{\Omega}}^{(4+\gamma^{\prime})})T\geq\nu/2>0
\]
if $T$ is sufficiently small. Thus for such $T=T(h_{0})$ we have
\eqref{004.8.01}.

\end{proof}

Denote by $\widetilde{C}_{2,\gamma^{\prime}/2,0}^{4+\gamma,\frac{4+\gamma}{4}%
}(\overline{\Omega}_{T})$ the closed subspace of $C_{2,\gamma^{\prime}%
/2,0}^{4+\gamma,\frac{4+\gamma}{4}}(\overline{\Omega}_{T})$ consisting of
function $u(x,t)$ with
the property%

\begin{equation}
u(x,t)=0,\quad(x,t)\in\Gamma_{T}. \label{004.8.1}%
\end{equation}
Define the space $\emph{H}=\widetilde{C}_{2,\gamma/2,0}^{4+\gamma
,\frac{4+\gamma}{4}}(\overline{\Omega}_{T})\times C_{0}^{2+\gamma
/2,1+\gamma/4}(\Gamma_{T})$ and define for $r>0$ a ball $\emph{B}_{r}%
=\emph{B}_{r}(0)$ in $\emph{H}$ as%

\begin{equation}
\emph{B}_{r}\equiv\{\psi\equiv(u,\delta)\in\emph{H:}\left\Vert \psi\right\Vert
\equiv|u|_{2,\gamma/2,\overline{\Omega}_{T}}^{(4+\gamma)}+\left\Vert
\delta\right\Vert _{C^{2+\gamma/2,1+\gamma/4}(\Gamma_{T})}\leq r\}.
\label{004.9}%
\end{equation}
We suppose that $r\leq\gamma_{0}$, where $\gamma_{0}$ is from
condition \eqref{4.2} and we will choose sufficiently small $r$
below.  We represent unknown functions $h(x,t)$ and $\rho
(\omega,t)$ in \eqref{3.1}- \eqref{3.6} as $h(x,t)=w(x,t)+u(x,t)$,
$\rho(\omega,t)=\sigma(\omega,t)+\delta(\omega,t)$ with new unknown
functions $u(x,t)\in C_{2,\gamma^{\prime}/2,0}^{4+\gamma
,\frac{4+\gamma}{4}}(\overline{\Omega}_{T})$ and
$\delta(\omega,t)\in C_{0}^{2+\gamma/2,1+\gamma/4}(\Gamma_{T})$. \
Such
defined functions $h$ and $\rho$ satisfy initial conditions%
\eqref{3.5} and condition \eqref{3.2} automatically. Analogously to
the proof of \eqref{004.8.01} we can choose the radius $r$ $\leq r(h_{0})$ of $\emph{B}%
_{r}$ so small that for
any $\psi=(u,\delta)\in\emph{B}_{r}$ we have%

\begin{equation}
\left.  \frac{\partial(w+u)}{\partial\overline{n}}\right\vert _{\Gamma_{T}%
}\geq\nu>0,\quad w(x,t)+u(x,t)>0,x\in\Omega,t\in\lbrack0,T]. \label{004.9.1}%
\end{equation}
The proof is similar to the proof of \eqref{004.8.01}. For example,

\[
\left.  \frac{\partial(w+u)}{\partial\overline{n}}\right\vert \geq
\nu-\left\vert \frac{\partial u}{\partial\overline{n}}\right\vert _{\Gamma
_{T}}^{(0)}\geq\nu-r\geq\nu/2>0,
\]
that is the first relation in \eqref{004.9.1}. The second relation
is also proved similar to the second relation in \eqref{004.8.01}.
Thus for

\begin{equation}
T\leq T(h_{0}),\quad r\leq r(h_{0}) \label{004.9.2}%
\end{equation}
relation \eqref{3.4} is also satisfied automatically for
$(u,\delta)\in\emph{B}_{r}$.

Write conditions \eqref{3.1}, \eqref{3.3} as
($\psi=(u,\delta)\in\emph{B}_{r}$, $h\equiv w+u$, $\rho
\equiv\sigma+\delta$)

\begin{equation}
F_{1}(\psi)\equiv\frac{\partial h}{\partial t}-[\frac{\partial h}%
{\partial\lambda}/(1+\rho_{\lambda})]\rho_{t}+\nabla_{\rho}(h^{2}\nabla_{\rho
}\left(  \nabla_{\rho}^{2}h\right)  )=0,\quad(x,t)\in\Omega_{T}, \label{004.10}%
\end{equation}

\begin{equation}
F_{2}(\psi)\equiv\frac{\partial h}{\partial\overline{n}}(x,t)\frac{1}%
{(1+\rho_{\lambda})}\left[  1+\sum\limits_{i,j=1}^{N-1}m_{ij}(x,\rho
)\rho_{\omega_{i}}\rho_{\omega_{j}}\right]  ^{\frac{1}{2}}-g_{\rho
}(x,t)=0,\quad(x,t)\in\Gamma_{T}, \label{004.12}%
\end{equation}

\begin{equation}
\delta(x,t)=E\delta(\omega,t),\quad(x,t)\in\overline{\Omega}_{T},(\omega
,t)\in\Gamma_{T}. \label{004.12+1}%
\end{equation}

\begin{lemma}
\label{L.4.2} The values $T(h_{0})$,$r(h_{0})$  in \eqref{004.9.2}
can be chosen in a way that the mapping $F(\psi
)\equiv(F_{1}(\psi),F_{2}(\psi))$ is well defined as a mapping from
$\emph{B}_{r}$ to
$C_{\gamma/2}^{\gamma,\gamma/4}(\overline{\Omega}_{T})\times
C^{1+\gamma/2,(2+\gamma)/4}(\Gamma_{T})$ and this mapping is
continuous Frechet differentiable on $\emph{B}_{r}$.
\end{lemma}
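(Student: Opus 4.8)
The plan is to treat $F_1$ and $F_2$ separately and to exploit the structural form already worked out in Section \ref{s3}: both operators are built from the fixed smooth coefficient functions $e_{ij}(x,\rho,\nabla\rho)$, $m_{ij}(x,\rho)$, $g_\rho(x,t)$ (all of class $C^{6+\gamma}$ in their arguments by the remarks following \eqref{3.6}), composed with $h=w+u$, $\rho=\sigma+\delta$ and their derivatives, together with the extension operator $E$ which is linear and bounded by Proposition \ref{P.2.4}. First I would record the bookkeeping: for $\psi=(u,\delta)\in B_r$ with $r\le\gamma_0$ the map $e_\rho$ is a diffeomorphism by \eqref{4.2}, so $\nabla_\rho=\mathcal E_\rho\nabla_x$ is well defined; the matrix entries $e_{ij}$ are smooth functions of $(x,\rho,\nabla\rho)$, and $\rho=\sigma+\delta$ has $\sigma\in C^{4+\gamma'/\ldots}_{2,\gamma'/2}$ fixed and $\delta\in C^{2+\gamma/2,1+\gamma/4}_0(\Gamma_T)$, whose extension $E\delta$ lies in $C^{4+\gamma,\frac{4+\gamma}{4}}_{2,\gamma/2}(\overline\Omega_T)$. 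In particular $\rho$, $\nabla\rho$, $\rho_\lambda$, $\rho_{\omega_i}$, $\rho_t$ and the relevant derivatives of $h$ all lie in $C^{\gamma,\gamma/4}_{\gamma/2}(\overline\Omega_T)$ (or its trace spaces on $\Gamma_T$) with norms controlled by $\|\psi\|+C(h_0)$; this is exactly what Propositions \ref{P.2.1}, \ref{P.2.2}, \ref{P.2.3} and Lemma \ref{L.2.1} provide. Since $|\rho_\lambda|$ is small, $1/(1+\rho_\lambda)$ is a smooth bounded function of $\rho_\lambda$, so it too lies in $C^{\gamma,\gamma/4}_{\gamma/2}$.

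Next I would verify $F$ maps into the asserted target space. For $F_1$: the leading term $\nabla_\rho(h^2\nabla_\rho(\nabla_\rho^2 h))$ is a sum of products of the $e_{ij}$ (and their $x$-derivatives, still $C^{6+\gamma}$ hence $C^{\gamma,\gamma/4}_{\gamma/2}$ after composition), of $h$ and $h^2$, and of derivatives $D^\alpha_x h$ up to order four; by Lemma \ref{L.2.1} the fourth derivatives enter weighted by $d(x)^2$, exactly matching the weighted seminorm \eqref{s1.8.4} defining $C^{4+\gamma,\frac{4+\gamma}{4}}_{2,\gamma/2}$ — but since we want the \emph{output} in $C^{\gamma,\gamma/4}_{\gamma/2}$, the point is that $d(x)^2 D^4_x h\in C^{\gamma,\gamma/4}_{\gamma/2}$ by definition, $h^2$ is a pointwise $C^{\gamma,\gamma/4}_{\gamma/2}$ factor, and the chain-rule coefficients absorbing the two spare $\nabla_\rho$'s carry exactly two powers of $d(x)$ worth of weight because each $\nabla_\rho$ costs one derivative and the weighted-space calculus of Lemma \ref{L.2.1} (the $d^{3/2}$, $d^{1/2}$ versions of the weights) lets us redistribute the weight so that the whole product lands in $C^{\gamma,\gamma/4}_{\gamma/2}(\overline\Omega_T)$; the multiplicativity inequality \eqref{2.5555555} then closes the estimate. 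The term $\partial h/\partial t$ is in $C^{\gamma,\gamma/4}_{\gamma/2}$ by the definition of the seminorm \eqref{sem}, and $[\partial h/\partial\lambda/(1+\rho_\lambda)]\rho_t$ is a product of three factors each in $C^{\gamma,\gamma/4}_{\gamma/2}$. For $F_2$, which lives on $\Gamma_T$: $\partial h/\partial\overline n|_{\Gamma_T}\in C^{1+\gamma/2,1/2+\gamma/4}(\Gamma_T)$ by Proposition \ref{P.2.3}, the bracketed expression $1+\sum m_{ij}\rho_{\omega_i}\rho_{\omega_j}$ is a polynomial in the first tangential derivatives $\rho_{\omega_i}$ with smooth coefficients — and $\rho_{\omega_i}|_{\Gamma_T}=(\sigma+\delta)_{\omega_i}$ lies in $C^{1+\gamma/2,1/2+\gamma/4}(\Gamma_T)$ since $\delta\in C^{2+\gamma/2,1+\gamma/4}(\Gamma_T)$ — so the square root (of a quantity close to $1$) is again in that class, and $g_\rho|_{\Gamma_T}\in C^{2}\subset C^{1+\gamma/2,(2+\gamma)/4}(\Gamma_T)$ by \eqref{1.9}. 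Products and compositions with smooth functions preserve $C^{1+\gamma/2,(2+\gamma)/4}(\Gamma_T)$, so $F_2(\psi)$ lands there. (Equation \eqref{004.12+1} merely records the constraint $\delta(x,t)=E\delta(\omega,t)$, which is built into the space $H$ and needs no estimate.)

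The main work — and the step I expect to be the genuine obstacle — is Fréchet differentiability, not mere continuity or well-definedness. The strategy is the standard one: write $F(\psi+\varphi)-F(\psi)$, formally differentiate through the compositions (the derivative $F'(\psi)$ will be the linear operator obtained by replacing, in each monomial, one factor by its linearization, with the chain rule applied to the smooth coefficient functions $e_{ij}$, $m_{ij}$, $g_\rho$ and to $1/(1+\rho_\lambda)$ and the square root), and then estimate the Taylor remainder. The delicacy is entirely in the weighted classes: differentiating $\nabla_\rho^3 h$ produces terms where a fourth derivative of $u$ (the increment) appears multiplied by coefficients depending on $\delta$ and its derivatives, and one must check the remainder is $o(\|\varphi\|)$ \emph{in the $C^{\gamma,\gamma/4}_{\gamma/2}$ norm} — i.e. with the correct $d(x)^2$ weighting and $\gamma/2$-weighted Hölder seminorm in $x$ plus $\gamma/4$ Hölder in $t$. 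This is where Lemma \ref{L.2.1} and inequality \eqref{2.5555555} are essential: they let one trade a power of the weight $d(x)$ against a derivative, so that, e.g., a product $(d^2D^4_x u)\cdot(\text{coefficient involving }D^2_x\delta)$, which naively has too little weight on the second factor, is re-estimated as $(d^{3/2}D^4_x u)\cdot(d^{1/2}D^2_x\delta)\cdot(\ldots)$ — legitimate because $u,\delta$ vanish to second order at $t=0$, so the $0$-subscript classes apply. Second-order remainder terms are quadratic in $\varphi$ and hence trivially $o(\|\varphi\|)$ once boundedness of the bilinear forms is known, which again is \eqref{2.5555555}. Continuity of $\psi\mapsto F'(\psi)$ follows from the same multiplicative estimates applied to the difference $F'(\psi_1)-F'(\psi_2)$, using that all the coefficient functions are $C^{6+\gamma}$ hence Lipschitz in the relevant arguments on the bounded set $B_r$. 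Finally, shrinking $T\le T(h_0)$ and $r\le r(h_0)$ (beyond \eqref{004.9.2}) guarantees $1+\rho_\lambda$ and the bracket in $F_2$ stay bounded away from $0$, keeping all compositions with $1/(1+\cdot)$ and $\sqrt{\cdot}$ in their domains of smoothness throughout $B_r$; this is what the statement's phrase "the values $T(h_0),r(h_0)$ can be chosen" refers to.
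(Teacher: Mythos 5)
Your treatment of $F_{2}$ matches the paper's: traces via Proposition \ref{P.2.3}, smallness of $\rho_{\lambda}$ and of the quadratic form so that $1/(1+\rho_{\lambda})$ and the square root stay in their smooth range, and smoothness of $g$, $m_{ij}$; that part is fine (modulo the sloppy claim that $g_{\rho}|_{\Gamma_{T}}\in C^{2}$ — the composition only inherits the regularity of $\rho$, but the conclusion $g_{\rho}\in C^{1+\gamma/2,(2+\gamma)/4}(\Gamma_{T})$ is what is needed and is correct). The genuine gap is in $F_{1}$, precisely at the point where the degenerate weight must be produced. In the term $h^{2}\nabla_{\rho}^{2}(\nabla_{\rho}^{2}h)$ the fourth derivatives $D_{x}^{4}h$ and $D_{x}^{4}\rho$ are only controlled with the weight $d(x)^{2}$, and the two powers of $d$ must come from the factor $h^{2}$ itself. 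The paper supplies this by the factorization $h(x,t)=d(x)A(u)$, $h^{2}=d^{2}(A(u))^{2}$ in \eqref{004.18}--\eqref{004.20}, where $A(u)=h/d$ is shown, via the Hadamard-type integral representation through $\partial h/\partial\lambda$ and a cutoff, to be a bounded \emph{affine} map of $u$ into $C_{\gamma/2}^{\gamma,\gamma/4}(\overline{\Omega}_{T})$; after that, $f_{1}(\psi)=(A(u))^{2}\sum_{i}A^{(i)}(\psi)$ in \eqref{004.22} is visibly a smooth (polynomial/affine) expression in $d^{2}D_{x}^{4}u$, $d^{2}D_{x}^{4}E\delta$, lower-order quantities, $\delta$ and $\nabla\delta$, which gives both well-definedness and continuous Fr\'echet differentiability at once. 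Your proposal never establishes that $h^{2}/d^{2}$ lies in the weighted H\"older class; instead you attribute the two powers of $d$ to ``the chain-rule coefficients absorbing the two spare $\nabla_{\rho}$'s,'' which is incorrect: the matrix $\mathcal{E}_{\rho}$ is uniformly smooth and nondegenerate and carries no weight whatsoever, so this mechanism cannot close the estimate.

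A second, related defect is that your remainder/redistribution device is not supported by the stated lemmas: you propose to re-estimate a product as $\bigl(d^{3/2}D_{x}^{4}u\bigr)\cdot\bigl(d^{1/2}D_{x}^{2}\delta\bigr)$, but membership in $C_{2,\gamma/2}^{4+\gamma,\frac{4+\gamma}{4}}$ controls only $d^{2}D_{x}^{4}u$, not $d^{3/2}D_{x}^{4}u$ (Lemma \ref{L.2.1} lowers the weight only for derivatives of order $\leq 3$, and only in the zero subspace with a $T^{\delta}$ factor). Likewise, you use $D_{x}^{2}\rho=D_{x}^{2}(\sigma+E\delta)$ as an unweighted H\"older multiplier without comment, whereas for a generic function of the class second derivatives may blow up like $|\ln d(x)|$ (Lemma \ref{L.2.5}); the paper covers this by invoking the specific property of the extension operator $E$ that $D_{x}^{2}E\delta\in C_{\gamma/2}^{\gamma,\gamma/4}(\overline{\Omega}_{T})$. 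Without the factorization $h=d(x)A(u)$ (or an equivalent argument that $h/d$ is a smooth function of $u$ with values in $C_{\gamma/2}^{\gamma,\gamma/4}$) and without justifying the regularity of $D_{x}^{2}E\delta$, the claim that $F_{1}$ maps $\emph{B}_{r}$ into $C_{\gamma/2}^{\gamma,\gamma/4}(\overline{\Omega}_{T})$ and is continuously Fr\'echet differentiable there is not proved.
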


\begin{proof}
Consider first the operator $F_{2}(\psi)$. Since
$\sigma(x,0)\equiv0$, $\delta(x,0)\equiv0$ and the functions
$m_{ij}(x,\rho)$ are smooth functions of their arguments, exactly as
at the reasonings for the proof of \eqref{004.9.1} we can choose
$T(h_{0})$,$r(h_{0})$ so small that in \eqref{004.12}

\begin{equation}
|\rho_{\lambda}|\leq1/2,\quad\left\vert \sum\limits_{i,j=1}^{N-1}m_{ij}%
(x,\rho)\rho_{\omega_{i}}\rho_{\omega_{j}}\right\vert \leq1/2,\quad
(u,\delta)\in\emph{B}_{r}, \label{004.12+2}%
\end{equation}
where $\rho\equiv\sigma+\delta$. Further, from Proposition
\ref{P.2.3} it follows that for $(u,\delta)\in\emph{B}_{r}$%

\begin{equation}
\frac{\partial h}{\partial\overline{n}}=\frac{\partial(w+u)}{\partial
\overline{n}},\rho_{\lambda}=\sigma_{\lambda}+\delta_{\lambda},\rho
_{\omega_{i}}=\sigma_{\omega_{i}}+\delta_{\omega_{i}}\in C^{1+\gamma
/2,(2+\gamma)/4}(\Gamma_{T}). \label{004.15}%
\end{equation}
In addition, since $g(y,\tau)\in C^{2}(R^{N}\times\lbrack0,T])$ and
the functions $m_{ij}(x,\rho)$ are smooth functions of their
arguments, it follows from the same proposition
that the compositions%

\begin{equation}
g_{\rho}(x,t)=g(y,\tau)\circ e_{\rho}(x,t)|_{\Gamma_{T}},m_{ij}(x,\rho)\in
C^{1+\gamma/2,(2+\gamma)/4}(\Gamma_{T}). \label{004.16}%
\end{equation}
From \eqref{004.12+2}- \eqref{004.16} it follows that the operator
$F_{2}(\psi)$ is well defined as an operator from $\emph{B}_{r}$ to
$C^{1+\gamma/2,(2+\gamma)/4}(\Gamma_{T})$. Moreover, since the
functions $g(y,\tau)$ and $m_{ij}(x,\rho)$ are smooth, under the
condition \eqref{004.12+2} the right hand side of \eqref{004.12} is
a $C^{2}$-continuous function of it's arguments $\partial
u/\partial\overline{n}$, $\delta$, $\delta _{\lambda}$,
$\delta_{\omega}$ for $(u,\delta)\in\emph{B}_{r}$. Thus,
$F_{2}(\psi)$ defines a Frechet continuously differentiable mapping
from $\emph{B}_{r}$ to $C^{1+\gamma/2,(2+\gamma )/4}(\Gamma_{T})$.

Consider now $F_{1}(\psi)$. Directly from the definition of
$\widetilde
{C}_{2,\gamma/2,0}^{4+\gamma,\frac{4+\gamma}{4}}(\overline{\Omega}_{T})$
it follows that the terms $\partial h/\partial t$ and
$[\frac{\partial h}{\partial\lambda}/(1+\rho_{\lambda})]\rho_{t}$ in
the definition of $F_{1}(\psi)$ are continuosly differentiable
mappings from $\emph{B}_{r}$ to
$C_{\gamma/2}^{\gamma,\gamma/4}(\overline{\Omega}_{T})$ (one should
take into account also the condition $|\rho_{\lambda}|\leq1/2$ in
\eqref{004.12+2}). Write the third term in
\eqref{004.10} as ($h=w+u$, $\rho=\sigma+\delta$)%

\begin{equation}
\nabla_{\rho}(h^{2}\nabla_{\rho}\left(  \nabla_{\rho}^{2}h\right)
)=h^{2}\nabla_{\rho}^{2}\left(  \nabla_{\rho}^{2}h\right)  +2h\left\langle
\nabla_{\rho}h,\nabla_{\rho}\left(  \nabla_{\rho}^{2}h\right)  \right\rangle
\equiv f_{1}(\psi)+f_{2}(\psi). \label{004.17}%
\end{equation}
Consider the term with the highest order $f_{1}(\psi)$ since the
situation with $f_{2}(\psi)$ is completely similar. Let $d(x)$ is
the function in
\eqref{s1.8.1} from the definition of the space $C_{2,\gamma/2,0}%
^{4+\gamma,\frac{4+\gamma}{4}}(\overline{\Omega}_{T})$. From
\eqref{004.9.1} it follows that we can choose sufficiently small $\mu>0$  with%

\[
|\nabla(w+u)|\geq\nu>0,\quad x\in\Omega_{\mu}=\{x\in\Omega:\mu\leq
dist(x,\partial\Omega)\},t\in\lbrack0,T].
\]

Choose a small $\mu\in(0,\gamma_{0}/4)$, where $\gamma_{0}$ is from
\eqref{004.000}, denote
$\Omega_{\mu,T}=\Omega_{\mu}\times\lbrack0,T]$,
$\Omega_{\mu}=\{x\in\Omega:\mu\leq dist(x,\partial\Omega)\}$, and
represent  the expression $h(x,t)=w(x,t)+u(x,t)$ in $\overline{\Omega}%
_{\mu,T}$ as (we use also $(\omega,\lambda)$ coordinates)%

\[
h(x,t)=\lambda(x)%
%TCIMACRO{\dint \limits_{0}^{1}}%
%BeginExpansion
{\displaystyle\int\limits_{0}^{1}}
%EndExpansion
\frac{\partial h}{\partial\lambda}(\omega(x),\theta\lambda(x),t)d\theta=
\]

\bigskip%
\begin{equation}
=d(x)\left(  \frac{\lambda(x)}{d(x)}%
{\displaystyle\int\limits_{0}^{1}} \frac{\partial
h}{\partial\lambda}(\omega(x),\theta\lambda(x),t)d\theta
\right)  \equiv d(x)\widetilde{A}(u). \label{004.18}%
\end{equation}
Then we have in $\overline{\Omega}_{T}$

\[
h(x,t)=d(x)A(u),\quad(x,t)\in\overline{\Omega}_{T},
\]
where%

\begin{equation}
A(u)\equiv\left\{
\begin{array}
[c]{c}%
\widetilde{A}(u),\quad(x,t)\in\overline{\Omega}_{\mu,T},\\
h(x,t)/d(x),\quad(x,t)\in\overline{\Omega}_{T}\setminus\overline{\Omega}%
_{\mu/2,T}%
\end{array}
\right.  \label{004.19}%
\end{equation}
and directly from the definition of $A(u)$ it follows that $A(u)$ is
a bounded linear map from $\emph{B}_{r}$ to $C_{\gamma/2}^{\gamma,\gamma/4}%
(\overline{\Omega}_{T})$. Thus

\begin{equation}
h^{2}(x,t)=d^{2}(x)\left(  A(u)\right)  ^{2}\quad(x,t)\in\overline{\Omega}_{T}
\label{004.20}%
\end{equation}
and evidently that the mapping $u\rightarrow\left(  A(u)\right)
^{2}$ is
continuously differentiable from $\emph{B}_{r}$ to $C_{\gamma/2}%
^{\gamma,\gamma/4}(\overline{\Omega}_{T})$.

Consider now $f_{1}(\psi)$ from \eqref{004.17}. Since

\[
(\nabla_{\rho})_{i}=%
{\displaystyle\sum\limits_{j=1}}
e_{ij}(x,\rho,\nabla\rho)\frac{\partial}{\partial x_{j}},
\]
we have%

\[
\nabla_{\rho}^{2}\left(  \nabla_{\rho}^{2}h\right)  =
\]

\[
=\left( {\displaystyle\sum}
\widetilde{a}_{ij}(x,\rho,\nabla\rho)\frac{\partial^{2}}{\partial
x_{i}\partial x_{j}}+\left( {\displaystyle\sum}
\widetilde{b}_{ijk}(x,\rho,\nabla\rho)\frac{\partial^{2}\rho}{\partial
x_{i}\partial x_{k}}\right)  \frac{\partial}{\partial x_{j}}\right)
^{2}h=
\]

\[
= {\displaystyle\sum\limits_{|\alpha|=4}}
a_{\alpha}^{(1)}D_{x}^{\alpha}h+%
{\displaystyle\sum\limits_{|\alpha|=3,|\beta|=2}}
a_{\alpha,\beta}^{(2)}D_{x}^{\alpha}hD_{x}^{\beta}\rho+%
{\displaystyle\sum\limits_{|\alpha|=2,|\beta|=2,|\omega|=2}}
a_{\alpha,\beta,\omega}^{(3)}D_{x}^{\alpha}hD_{x}^{\beta}\rho
D_{x}^{\omega }\rho+
\]

\[
+%
{\displaystyle\sum\limits_{|\alpha|=2,|\beta|=3}}
a_{\alpha,\beta}^{(4)}D_{x}^{\alpha}hD_{x}^{\beta}\rho+%
{\displaystyle\sum\limits_{|\alpha|=1,|\beta|=4}}
a_{\alpha,\beta}^{(5)}D_{x}^{\alpha}hD_{x}^{\beta}\rho+
\]

\[
+%
{\displaystyle\sum\limits_{|\alpha|=1,|\beta|=2,|\omega|=3}}
a_{\alpha,\beta,\omega}^{(6)}D_{x}^{\alpha}hD_{x}^{\beta}\rho
D_{x}^{\omega
}\rho+%
{\displaystyle\sum\limits_{|\alpha|=1,|\beta|=2,|\omega|=2,|\varkappa|=2}}
a_{\alpha,\beta,\omega,\varkappa}^{(7)}D_{x}^{\alpha}hD_{x}^{\beta}\rho
D_{x}^{\omega}\rho D_{x}^{\varkappa}\rho\equiv
\]

\begin{equation}
\equiv%
{\displaystyle\sum\limits_{i=1}^{7}}
\widetilde{A}^{(i)}(\psi)=%
{\displaystyle\sum\limits_{i=1}^{7}}
\widetilde{A}^{(i)}(u,\delta), \label{004.21}%
\end{equation}
where the coefficients $a^{(k)}=a^{(k)}(x,\rho.\nabla\rho)$ are some
smooth functions of their arguments.
Consequently,%

\begin{equation}
f_{1}(\psi)=h^{2}\nabla_{\rho}^{2}\left(  \nabla_{\rho}^{2}h\right)  =\left(
A(u)\right)  ^{2}%
{\displaystyle\sum\limits_{i=1}^{7}}
d^{2}(x)\widetilde{A}^{(i)}(\psi)\equiv\left(  A(u)\right)  ^{2}%
{\displaystyle\sum\limits_{i=1}^{7}}
A^{(i)}(\psi). \label{004.22}%
\end{equation}
Directly from the representation \eqref{004.22} and from the fact
that $D_{x}^{2}\rho(x,t)\in C_{\gamma
/2}^{\gamma,\gamma/4}(\overline{\Omega}_{T})$ (due to the properties
of the extension operator $E\rho(\omega,t)$) it follows that
$A^{(i)}(\psi)$ and thus $f_{1}(\psi)$ are continuously
differentiable mappings from $\emph{B}_{r}$ to
$C_{\gamma/2}^{\gamma,\gamma/4}(\overline
{\Omega}_{T})$. Really, for example for $A^{(1)}(\psi)$ we have%

\[
A^{(1)}(\psi)=%
{\displaystyle\sum\limits_{|\alpha|=4}}
a_{\alpha}^{(1)}(x,\sigma+\delta,\nabla\sigma+\nabla\delta)\left(
d^{2}(x)D_{x}^{\alpha}w+d^{2}(x)D_{x}^{\alpha}u\right)  .
\]
This expression is affine with respect to $d^{2}(x)D_{x}^{\alpha}u$
and is smooth with respect to $\delta$, $\nabla\delta$ and thus it
is smooth with respect to $\psi=(u,\delta)$ from $\emph{B}_{r}$ to
$C_{\gamma/2}^{\gamma,\gamma/4}(\overline{\Omega}_{T})$. Analogously
for
$A^{(5)}(\psi)$%

\[
A^{(1)}(\psi)=%
{\displaystyle\sum\limits_{|\alpha|=1,|\beta|=4}}
a_{\alpha,\beta}^{(5)}(x,\sigma+\delta,\nabla\sigma+\nabla\delta)D_{x}%
^{\alpha}(w+u)\left(  d^{2}(x)D_{x}^{\beta}\sigma+d^{2}(x)D_{x}^{\beta}%
\delta\right)
\]
and this mapping is also smooth with respect to $\psi=(u,\delta)$
from $\emph{B}_{r}$ to
$C_{\gamma/2}^{\gamma,\gamma/4}(\overline{\Omega}_{T})$.

Other operators in \eqref{004.22} are considered in the same way.

Completely analogous considerations for the operator $f_{2}(\psi)$
finish the proof of the lemma.
\end{proof}

Now we are going to find explicit representations for the Frechet
derivative $F^{\prime}(0)$ of the operator
$F(\psi)=(F_{1}(\psi),F_{2}(\psi))$ at $\psi=0$. For this we note
first that for a $C^{1}$-smooth function $f(x,t)$ the Frechet
derivative of the composition $f\circ e_{\rho }(x,t)=f\circ
e_{\sigma+\delta}(x,t)$ with respect to $\delta$ is
 the linear operator (\cite{BizhSol})

\begin{equation}
\left[  f\circ e_{\rho}(x,t)\right]  _{\delta}^{\prime}\left[  \delta\right]
=\frac{d}{d\varepsilon}f\circ e_{\sigma+\varepsilon\delta}(x,t)|_{\varepsilon
=0}=\frac{\partial f\circ e_{\sigma}(x,t)}{\partial\lambda}\delta(x,t).
\label{004.23}%
\end{equation}
In fact, \eqref{004.23} follows directly from the definitions with
the help of $(\omega,\lambda)$-coordinates.
We have%

\[
\frac{d}{d\varepsilon}f\circ e_{\sigma+\varepsilon\delta}(x,t)|_{\varepsilon
=0}=\frac{d}{d\varepsilon}f(\omega(x),\lambda(x)+\sigma(x,t)+\varepsilon
\delta(x,t),t)|_{\varepsilon=0}%
\]
that is \eqref{004.23}.

Consider first the operator $F_{2}(\psi)$ in \eqref{004.12}. The
Frechet derivative $F_{2}^{\prime}(0)$ can be found directly from
the definition of $F_{2}(\psi)$ and we have ($\partial
/\partial\lambda=\partial/\partial\overline{n}$ on $\Gamma_{T}$)%

\begin{equation}
F_{2}^{\prime}(0)\left[  (u,\delta)\right]  =a^{(1)}\frac{\partial u}%
{\partial\overline{n}}(x,t)-a^{(2)}\frac{\partial\delta}{\partial\overline{n}%
}(x,t)+%
{\displaystyle\sum\limits_{i=1}^{N-1}}
a_{i}^{(3)}\delta_{\omega_{i}}+a^{(4)}\delta, \label{004.24}%
\end{equation}
where%

\begin{equation}
a^{(1)}=\frac{1}{(1+\sigma_{\lambda})}\left[  1+\sum\limits_{i,j=1}%
^{N-1}m_{ij}(x,\sigma)\sigma_{\omega_{i}}\sigma_{\omega_{j}}\right]
^{\frac{1}{2}}, \label{004.25}%
\end{equation}

\begin{equation}
a^{(2)}=\frac{\partial w}{\partial\overline{n}}\frac{1}{(1+\sigma_{\lambda
})^{2}}\left[  1+\sum\limits_{i,j=1}^{N-1}m_{ij}(x,\sigma)\sigma_{\omega_{i}%
}\sigma_{\omega_{j}}\right]  ^{\frac{1}{2}}, \label{004.26}%
\end{equation}

\begin{equation}
a_{i}^{(3)}=\frac{1}{2}\frac{\partial w}{\partial\overline{n}}\frac
{1}{(1+\sigma_{\lambda})}\left[  1+\sum\limits_{i,j=1}^{N-1}m_{ij}%
(x,\sigma)\sigma_{\omega_{i}}\sigma_{\omega_{j}}\right]  ^{-\frac{1}{2}}%
\times\label{004.27}%
\end{equation}

\[
\times\left(  \sum\limits_{j=1}^{N-1}(m_{ji}(x,\sigma)+m_{ij}(x,\sigma
))\sigma_{\omega_{j}}\right)  ,
\]

\begin{equation}
a^{(4)}=\frac{1}{2}\frac{\partial w}{\partial\overline{n}}\frac{1}%
{(1+\sigma_{\lambda})}\left[  1+\sum\limits_{i,j=1}^{N-1}m_{ij}(x,\sigma
)\sigma_{\omega_{i}}\sigma_{\omega_{j}}\right]  ^{-\frac{1}{2}}\times
\label{004.28}%
\end{equation}

\[
\times\left(  \sum\limits_{i,j=1}^{N-1}\frac{\partial m_{ij}(x,\sigma
)}{\partial\sigma}\sigma_{\omega_{i}}\sigma_{\omega_{j}}\right)
+\frac{\partial g\circ e_{\sigma}(x,t)}{\partial\overline{n}}.
\]

Consider now the operator $F_{1}(\psi)$. This is a usual quasilinear
differential operator inside $\Omega_{T}$ with the subsequent closer
of the result (in the usual way) up to the closed domain
$\overline{\Omega}_{T}$. Thus it's Frechet derivative is a linear
differential operator inside $\Omega_{T}$ with the subsequent closer
of the result (in the usual way) up to the closed domain
$\overline{\Omega}_{T}$. Since all operations in the definition of $F_{1}%
(\psi)$ are local,  the coefficients of this linear differential
operator at any point $(x_{0},t_{0})\in$ $\Omega_{T}$ are completely
defined by the behavior of $w$, $u$, $\sigma$, and $\delta$ at any
small neighbourhood around $(x_{0},t_{0})$. This permits us with the
aim of calculating the explicit form of the derivative $\left[
F_{1}^{\prime }(0)\right]  [(u,\delta)]$ to fix arbitrary point
$(x_{0},t_{0})\in$ $\Omega_{T}$ and suppose that $w(x,t)$ and
$u(x,t)$ have compact supports in a small neighbourhood of
$(x_{0},t_{0})$. The goal of this is to have the compositions of the
form $w\circ e_{\rho }(x,t)$ well defined in $\Omega_{T}$.

\[
\frac{\partial h}{\partial t}-[\frac{\partial h}{\partial\lambda}%
/(1+\rho_{\lambda})]\rho_{t}+\nabla_{\rho}(h^{2}\nabla_{\rho}\left(
\nabla_{\rho}^{2}h\right)  )
\]
Consider the expression

\begin{equation}
f_{1}(\psi)\equiv\nabla_{\rho}(h^{2}\nabla_{\rho}\left(  \nabla_{\rho}%
^{2}h\right)  ) \label{004.29}%
\end{equation}
in the definition of $F_{1}(\psi)$. It can be checked directly that

\[
f_{1}^{\prime}(0)[u,\delta]=\frac{d}{d\varepsilon}\nabla_{\sigma
+\varepsilon\delta}((w+\varepsilon u)^{2}\nabla_{\sigma+\varepsilon\delta
}\left(  \nabla_{\sigma+\varepsilon\delta}^{2}(w+\varepsilon u)\right)
)|_{\varepsilon=0}=
\]

\begin{equation}
=\nabla_{\sigma}(2wu\nabla_{\sigma}\left(  \nabla_{\sigma}^{2}w\right)
)+\nabla_{\sigma}(w^{2}\nabla_{\sigma}\left(  \nabla_{\sigma}^{2}u\right)  )+
\label{004.30}%
\end{equation}

\[
+\frac{d}{d\varepsilon}\nabla_{\sigma+\varepsilon\delta}(w^{2}\nabla
_{\sigma+\varepsilon\delta}\left(  \nabla_{\sigma+\varepsilon\delta}%
^{2}w\right)  )|_{\varepsilon=0}.
\]
To calculate the last derivative, note that according to the
definition of $\nabla_{\rho}$ we have for any differential operator
$L(\nabla)$ with constant coefficients and for any
function $v$%

\begin{equation}
\left[  L(\nabla)v\right]  \circ e_{\rho}(x,t)=L(\nabla_{\rho})(v\circ
e_{\rho}(x,t)). \label{004.31}%
\end{equation}
Note also that according to the definition of the transformation
$e_{\rho }(x,t)$ (which is in fact the $\rho$-shift
along the $\lambda$- coordinate)%

\begin{equation}
e_{\rho_{1}+\rho_{2}}(x,t)=e_{\rho_{1}}(x,t)\circ e_{\rho_{2}}(x,t),\quad
e_{\rho}^{-1}=e_{-\rho}(x,t). \label{004.32}%
\end{equation}
Thus the last term in \eqref{004.30} can be represented as

\[
\nabla_{\sigma+\varepsilon\delta}(w^{2}\nabla_{\sigma+\varepsilon\delta
}\left(  \nabla_{\sigma+\varepsilon\delta}^{2}w\right)  )=\left(
\nabla((w\circ e_{-\sigma-\varepsilon\delta})^{2}\nabla\left(  \nabla
^{2}w\circ e_{-\sigma-\varepsilon\delta}\right)  )\right)  \circ
e_{\sigma+\varepsilon\delta}=
\]

\[
=\left(  \nabla_{\sigma}((w\circ e_{-\varepsilon\delta})^{2}\nabla_{\sigma
}\left(  \nabla_{\sigma}^{2}w\circ e_{-\varepsilon\delta}\right)  )\right)
\circ e_{\varepsilon\delta}.
\]
Therefore%

\[
\frac{d}{d\varepsilon}\nabla_{\sigma+\varepsilon\delta}(w^{2}\nabla
_{\sigma+\varepsilon\delta}\left(  \nabla_{\sigma+\varepsilon\delta}%
^{2}w\right)  )|_{\varepsilon=0}=
\]

\begin{equation}
=-\nabla_{\sigma}(2w\frac{\partial w}{\partial\lambda}\delta\nabla_{\sigma
}\nabla_{\sigma}^{2}w)-\nabla_{\sigma}(w^{2}\nabla_{\sigma}\nabla_{\sigma}%
^{2}\frac{\partial w}{\partial\lambda}\delta)+\left(  \frac{\partial}%
{\partial\lambda}\nabla_{\sigma}(w^{2}\nabla_{\sigma}\nabla_{\sigma}%
^{2}w)\right)  \delta. \label{004.33}%
\end{equation}
By simple algebraic transformations we obtain for the first and the second terms%

\begin{equation}
-\nabla_{\sigma}(2w\frac{\partial w}{\partial\lambda}\delta\nabla_{\sigma
}\nabla_{\sigma}^{2}w)=-\nabla_{\sigma}(2w\frac{\partial w}{\partial\lambda
}\nabla_{\sigma}\nabla_{\sigma}^{2}w)\cdot\delta+%
{\displaystyle\sum\limits_{|\alpha|=1}}
a_{\alpha}^{(0)}D_{x}^{\alpha}\delta, \label{004.34}%
\end{equation}

\begin{equation}
-\nabla_{\sigma}(w^{2}\nabla_{\sigma}\nabla_{\sigma}^{2}\frac{\partial
w}{\partial\lambda}\delta)=\nabla_{\sigma}(2w\frac{\partial w}{\partial
\lambda}\nabla_{\sigma}\nabla_{\sigma}^{2}w)\cdot\delta-\left(  \frac
{\partial}{\partial\lambda}\nabla_{\sigma}(w^{2}\nabla_{\sigma}\nabla_{\sigma
}^{2}w)\right)  \delta- \label{004.35}%
\end{equation}

\[
-\frac{\partial w}{\partial\lambda}\nabla_{\sigma}(w^{2}\nabla_{\sigma}%
\nabla_{\sigma}^{2}\delta)+R_{1}[\delta],
\]
where%

\begin{equation}
R_{1}[\delta]\equiv%
{\displaystyle\sum\limits_{\substack{|\alpha|+|\beta|=4,\\|\alpha
|<4,|\beta|<4}}}
a_{\alpha,\beta}^{(1)}w^{2}D_{x}^{\alpha}\left(  \frac{\partial w}%
{\partial\lambda}\right)  D^{\beta}\delta+%
{\displaystyle\sum\limits_{\substack{|\alpha|+|\beta|=3,\\|\alpha
|<3,|\beta|<3}}} a_{\alpha,\beta}^{(2)}wD_{x}^{\alpha}\left(
\frac{\partial w}{\partial
\lambda}\right)  D^{\beta}\delta+ \label{004.36}%
\end{equation}

\[
+%
{\displaystyle\sum\limits_{\substack{|\alpha|+|\beta|=2,\\|\alpha
|<2,|\beta|<2}}} a_{\alpha,\beta}^{(3)}wD_{x}^{\alpha}\left(
\frac{\partial w}{\partial \lambda}\right)
D^{\beta}\delta+a^{(4)}\delta
\]
and%

\begin{equation}%
{\displaystyle\sum\limits_{\alpha}}
|a^{(0)}|_{\gamma/2,\overline{\Omega}_{T}}^{(\gamma)}+%
{\displaystyle\sum\limits_{i=1}^{3}}
{\displaystyle\sum\limits_{\alpha,\beta}}
|a_{\alpha,\beta}^{(i)}|_{\gamma/2,\overline{\Omega}_{T}}^{(\gamma)}%
+|a^{(4)}|_{\gamma/2,\overline{\Omega}_{T}}^{(\gamma)}\leq C(h_{0}).
\label{004.37}%
\end{equation}

From \eqref{004.30}, \eqref{004.33}-
\eqref{004.35} it follows that%

\begin{equation}
f_{1}^{\prime}(0)[u,\delta]=\nabla_{\sigma}(w^{2}\nabla_{\sigma}\left(
\nabla_{\sigma}^{2}u\right)  )-\frac{\partial w}{\partial\lambda}%
\nabla_{\sigma}(w^{2}\nabla_{\sigma}\nabla_{\sigma}^{2}\delta)+\nabla_{\sigma
}(2wu\nabla_{\sigma}\left(  \nabla_{\sigma}^{2}w\right)  )+R_{1}[\delta].
\label{004.38}%
\end{equation}

The Frechet derivative of the rest of the operator $F_{1}(\psi)$%

\begin{equation}
f_{2}(\psi)\equiv\frac{\partial h}{\partial t}-[\frac{\partial h}%
{\partial\lambda}/(1+\rho_{\lambda})]\rho_{t} \label{004.39}%
\end{equation}
can be calculated directly and we have%

\[
f_{2}^{\prime}(0)[u,\delta]=\frac{\partial u}{\partial t}-\left(
(1+\sigma_{\lambda})^{-1}\frac{\partial w}{\partial\lambda}\right)
\frac{\partial\delta}{\partial t}-(1+\sigma_{\lambda})^{-1}\sigma_{t}%
\frac{\partial u}{\partial\lambda}+(1+\sigma_{\lambda})^{-2}\frac{\partial
w}{\partial\lambda}\sigma_{t}\frac{\partial\delta}{\partial\lambda}\equiv
\]

\begin{equation}
\equiv\frac{\partial u}{\partial t}-\left(  (1+\sigma_{\lambda})^{-1}%
\frac{\partial w}{\partial\lambda}\right)  \frac{\partial\delta}{\partial
t}+R_{2}[u,\delta]. \label{004.40}%
\end{equation}
From \eqref{004.38} and
\eqref{004.40} it follows that%

\[
F_{1}^{\prime}(0)[u,\delta]=\left[  \frac{\partial u}{\partial t}%
+\nabla_{\sigma}(w^{2}\nabla_{\sigma}\left(  \nabla_{\sigma}^{2}u\right)
)\right]  -
\]

\begin{equation}
-\left[  \left(  (1+\sigma_{\lambda})^{-1}\frac{\partial w}{\partial\lambda
}\right)  \frac{\partial\delta}{\partial t}+\frac{\partial w}{\partial\lambda
}\nabla_{\sigma}(w^{2}\nabla_{\sigma}\nabla_{\sigma}^{2}\delta)\right]
+R[u,\delta], \label{004.41}%
\end{equation}
where%

\begin{equation}
R[u,\delta]\equiv R_{2}[u,\delta]+%
{\displaystyle\sum\limits_{|\alpha|=1}}
a_{\alpha}^{(0)}D_{x}^{\alpha}\delta+\nabla_{\sigma}(2wu\nabla_{\sigma}\left(
\nabla_{\sigma}^{2}w\right)  )+R_{1}[\delta]. \label{004.42}%
\end{equation}
Note that the linear operator $R[u,\delta]$ contains only lower
order terms and we will show below that it's norm can be made
arbitrary small by choosing sufficiently small $T>0$. Thus we obtain
the following proposition.

\begin{proposition}
\label{P.004.1}

The operator $F(\psi)=(F_{1}(\psi),F_{2}(\psi))$ is Frechet-smooth on
$\emph{B}_{r}\rightarrow C_{\gamma/2}^{\gamma,\gamma/4}(\overline{\Omega}%
_{T})\times C^{1+\gamma/2,(2+\gamma)/4}(\Gamma_{T})$ and it's
Frechet- derivative
$F^{\prime}(0)[u,\delta]=(F_{1}^{\prime}(0)[u,\delta
],F_{2}^{\prime}(0)[u,\delta])$ is defined by \eqref{004.24},
\eqref{004.41}.

\end{proposition}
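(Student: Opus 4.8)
The plan is to assemble the proposition from the two lemmas and the explicit computations already carried out in this section. Lemma \ref{L.4.2} already establishes that $F=(F_1,F_2)$ is a well-defined, continuously Frechet-differentiable mapping from $\emph{B}_r$ into $C_{\gamma/2}^{\gamma,\gamma/4}(\overline{\Omega}_T)\times C^{1+\gamma/2,(2+\gamma)/4}(\Gamma_T)$, provided $T\leq T(h_0)$ and $r\leq r(h_0)$ are chosen small enough so that the nondegeneracy conditions \eqref{004.9.1} and the smallness conditions \eqref{004.12+2} hold; so the first assertion of the proposition is simply a restatement of that lemma. What remains is to verify that the Frechet derivative $F'(0)$ at $\psi=0$ is given by the formulas \eqref{004.24} and \eqref{004.41}. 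Here the key point, noted just before the statement, is that $F_1$ is a quasilinear differential operator acting locally inside $\Omega_T$ (followed by passage to the closure), so its Frechet derivative is a linear differential operator whose coefficients at a point $(x_0,t_0)\in\Omega_T$ depend only on the germ of $w,u,\sigma,\delta$ near that point; hence for the purpose of identifying $F_1'(0)$ we may freely assume $w$ and $u$ have compact support in a small ball about $(x_0,t_0)$, which makes all compositions $w\circ e_\rho$ well defined there.

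First I would treat $F_2$: since under \eqref{004.12+2} the right-hand side of \eqref{004.12} is a $C^2$ function of the arguments $\partial u/\partial\overline{n}$, $\delta$, $\delta_\lambda$, $\delta_{\omega_i}$, its derivative at $\psi=0$ is obtained by ordinary differentiation, using \eqref{004.23} to differentiate the composition $g_\rho=g\circ e_\sigma$ and $m_{ij}(x,\rho)$ with respect to $\delta$; collecting the terms that multiply $\partial u/\partial\overline{n}$, $\partial\delta/\partial\overline{n}$, $\delta_{\omega_i}$, and $\delta$ yields exactly the coefficients \eqref{004.25}--\eqref{004.28} and hence formula \eqref{004.24}. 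Next I would handle $f_1(\psi)=\nabla_\rho(h^2\nabla_\rho(\nabla_\rho^2 h))$: differentiating $\varepsilon\mapsto \nabla_{\sigma+\varepsilon\delta}((w+\varepsilon u)^2\nabla_{\sigma+\varepsilon\delta}(\nabla_{\sigma+\varepsilon\delta}^2(w+\varepsilon u)))$ at $\varepsilon=0$ gives the three terms in \eqref{004.30}; the first two come from varying $h$ and are immediate, while the last term, the variation of $\nabla_{\sigma+\varepsilon\delta}$ itself, is computed by writing it via \eqref{004.31}--\eqref{004.32} as a conjugation $\big(\nabla_\sigma((w\circ e_{-\varepsilon\delta})^2\nabla_\sigma(\nabla_\sigma^2 w\circ e_{-\varepsilon\delta}))\big)\circ e_{\varepsilon\delta}$ and differentiating, using $\tfrac{d}{d\varepsilon}(f\circ e_{\varepsilon\delta})|_{\varepsilon=0}=f_\lambda\delta$. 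This produces \eqref{004.33}, and the algebraic rearrangement \eqref{004.34}--\eqref{004.36} isolates the principal part $-\tfrac{\partial w}{\partial\lambda}\nabla_\sigma(w^2\nabla_\sigma\nabla_\sigma^2\delta)$ and absorbs everything else into a lower-order remainder $R_1[\delta]$ with coefficients bounded as in \eqref{004.37}. The derivative of $f_2(\psi)=\partial_t h-[\tfrac{\partial h}{\partial\lambda}/(1+\rho_\lambda)]\rho_t$ is elementary and gives \eqref{004.40}. Adding \eqref{004.38} and \eqref{004.40} gives \eqref{004.41} with $R[u,\delta]$ as in \eqref{004.42}, a linear operator in lower-order terms only.

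Finally, since the coefficient identifications were carried out pointwise inside $\Omega_T$ under the compact-support reduction, one passes to the closure $\overline{\Omega}_T$ exactly as in the definition of $F_1$ itself, and the resulting linear operators are continuous into the target spaces by the same estimates used in Lemma \ref{L.4.2} (together with the mapping properties of $E$ that put $D_x^2\delta\in C_{\gamma/2}^{\gamma,\gamma/4}(\overline{\Omega}_T)$). The main obstacle is purely bookkeeping: correctly tracking the variation of $\nabla_\rho$ through the conjugation identity \eqref{004.32} and verifying that, after the cancellations in \eqref{004.34}--\eqref{004.35}, the only surviving top-order $\delta$-term is the stated one while all remaining terms genuinely land in the lower-order remainder classes with the uniform bounds \eqref{004.37}; once this is checked the proposition follows by combining Lemma \ref{L.4.2} with formulas \eqref{004.24} and \eqref{004.41}.
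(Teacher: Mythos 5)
Your proposal is correct and follows essentially the same route as the paper: the paper's justification of Proposition \ref{P.004.1} is precisely Lemma \ref{L.4.2} for the Frechet smoothness together with the localization argument and the computations \eqref{004.23}--\eqref{004.42} (direct differentiation for $F_2'(0)$, the conjugation identities \eqref{004.31}--\eqref{004.32} for the variation of $\nabla_\rho$ in $f_1$, and the rearrangement isolating $-\frac{\partial w}{\partial\lambda}\nabla_\sigma(w^2\nabla_\sigma\nabla_\sigma^2\delta)$ with remainder $R[u,\delta]$). No gaps to report.
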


Now we use the special way of construction of the functions $w(x,t)$
and $\sigma(\omega,t)$ and show that the value $\left\Vert
F(0)\right\Vert $ is sufficiently small for small values of the time
interval $T$. This means that $\psi=0$ is an approximate solution of
the equation $F(\psi)=0$ for small $T>0$. Consider $F_{2}(0)$,

\[
F_{2}(0)(x,t)=\frac{\partial w}{\partial\overline{n}}(x,t)\frac{1}%
{(1+\sigma_{\lambda})}\left[  1+\sum\limits_{i,j=1}^{N-1}m_{ij}(x,\sigma
)\sigma_{\omega_{i}}\sigma_{\omega_{j}}\right]  ^{\frac{1}{2}}-g_{\sigma
}(x,t).
\]
Since $w\in
C_{2,\gamma^{\prime}/2}^{4+\gamma^{\prime},\frac{4+\gamma^{\prime
}}{4}}(\overline{\Omega}_{T})$, $\sigma(\omega,t)\in
C^{2+\gamma^{\prime }/2,1+\gamma^{\prime}/4}(\Gamma_{T})$ and
functions $m_{ij}(x,\sigma)$ are smooth, it follows from Proposition
\ref{P.2.3} about traces that $F_{2}(0)(x,t)\in$
$C^{1+\gamma^{\prime }/2,(2+\gamma^{\prime})/4}(\Gamma_{T})$.
Moreover, since
$\sigma(\omega,0)\equiv0$ and $w(x,0)=h_{0}(x)$,%

\[
F_{2}(0)(x,0)=\frac{\partial h_{0}}{\partial\overline{n}}(x)-g(x,0)\equiv
0,\quad x\in\Gamma
\]
as it follows from compatibility condition
\eqref{1.10}. Thus $F_{2}(0)(x,t)\in$ $C_{0}^{1+\gamma^{\prime}%
/2,(2+\gamma^{\prime})/4}(\Gamma_{T})$ and since
$\gamma^{\prime}>\gamma$ , it follows by \eqref{C0} that

\begin{equation}
\left\Vert F_{2}(0)\right\Vert _{C^{1+\gamma/2,(2+\gamma)/4}(\Gamma_{T})}\leq
CT^{\mu}\left\Vert F_{2}(0)\right\Vert _{C^{1+\gamma^{\prime}/2,(2+\gamma
^{\prime})/4}(\Gamma_{T})}\leq C(h_{0},g)T^{\mu}. \label{004.43}%
\end{equation}

The considerations for $F_{1}(0)$ are completely analogous. We have

\[
F_{1}(0)(x,t)=\frac{\partial w}{\partial t}-[\frac{\partial w}{\partial
\lambda}/(1+\sigma_{\lambda})]\sigma_{t}+\nabla_{\sigma}(w^{2}\nabla_{\sigma
}\left(  \nabla_{\sigma}^{2}w\right)  )\in C_{\gamma^{\prime}/2}%
^{\gamma^{\prime},\gamma^{\prime}/4}\left(  \overline{\Omega}_{T}\right)
\]
and from the way of the construction of $w$ and $\sigma$ it follows
that
$F_{1}(0)(x,0)\equiv0$ that is $F_{1}(0)(x,t)\in C_{\gamma^{\prime}%
/2,0}^{\gamma^{\prime},\gamma^{\prime}/4}\left(
\overline{\Omega}_{T}\right) $. Thus we have on the base of
\eqref{C0}

\begin{equation}
\left\Vert F_{1}(0)\right\Vert _{C_{\gamma/2}^{\gamma,\gamma/4}\left(
\overline{\Omega}_{T}\right)  }\leq CT^{\mu}\left\Vert F_{1}(0)\right\Vert
_{C_{\gamma^{\prime}/2}^{\gamma^{\prime},\gamma^{\prime}/4}\left(
\overline{\Omega}_{T}\right)  }\leq C(h_{0})T^{\mu}. \label{004.44}%
\end{equation}
From \eqref{004.43} and
\eqref{004.44} it follows that%

\begin{equation}
\left\Vert F(0)\right\Vert _{C_{\gamma/2}^{\gamma,\gamma/4}\left(
\overline{\Omega}_{T}\right)  \times C^{1+\gamma/2,(2+\gamma)/4}(\Gamma_{T}%
)}\leq C(h_{0},g)T^{\mu}. \label{004.45}%
\end{equation}

\section{The Schauder estimates for  model problems in the
half-space.} \label{s5}

Denote $H=R_{+}^{N}=\{x=(x^{\prime},x_{N})\in R^{N}:x_{N}\geq0\}$,
$Q^{+}=\{(x,t):x\in H,t\geq0\}$, $Q=\{(x,t):x\in H,-\infty<t<\infty\}$,
$G^{+}=\{(x^{\prime},t):x^{\prime}\in R^{N-1},t\geq0\}$.
For a function $u(x,t)\in C_{2,\gamma/2}^{4+\gamma,\frac{4+\gamma}%
{4}}(Q^{+})$ we define%

\begin{equation}
\left\langle \left\langle x_{N}^{2}D_{x_{i}}^{4}u\right\rangle \right\rangle
_{\gamma/2,x_{i},Q^{+}}^{(\gamma)(10)(\varepsilon+)}\equiv\sup_{(x,t)\in
Q^{+},h\geq\varepsilon x_{N}}x_{N}^{\gamma/2}\frac{|\Delta_{h,x_{i}}%
^{10}\left(  x_{N}^{2}D_{x_{i}}^{4}u(x,t)\right)  |}{h^{\gamma}}%
,i=\overline{1,N}, \label{005.1}%
\end{equation}

\[
\left\langle \left\langle x_{N}^{2}D_{x_{i}}^{4}u\right\rangle
\right\rangle
_{\gamma/2,x_{i},Q^{+}}^{(\gamma)(10)}\equiv\sup_{(x,t)\in
Q^{+},h>0}x_{N}^{\gamma/2}\frac{|\Delta_{h,x_{i}}%
^{10}\left(  x_{N}^{2}D_{x_{i}}^{4}u(x,t)\right)  |}{h^{\gamma}}%
,i=\overline{1,N}, \label{005.1.1}%
\]

\begin{equation}
\left\langle \left\langle D_{t}u\right\rangle \right\rangle _{t,Q^{+}%
}^{(\gamma/4)(10)(\varepsilon+)}\equiv\sup_{(x,t)\in Q^{+},h\geq
\varepsilon^{2}x_{N}^{2}}\frac{|\Delta_{h,t}^{10}\left(  D_{t}u(x,t)\right)
|}{h^{\gamma/4}}, \label{005.2}%
\end{equation}

\begin{equation}
\left\langle \left\langle u\right\rangle \right\rangle _{2,\gamma/2,x^{\prime
},t,Q^{+}}^{(4+\gamma)(10)(\varepsilon+)}\equiv%
{\displaystyle\sum\limits_{i=1}^{N-1}} \left\langle \left\langle
x_{N}^{2}D_{x_{i}}^{4}u\right\rangle \right\rangle
_{\gamma/2,x_{i},Q^{+}}^{(\gamma),(10),(+\varepsilon)}+\left\langle
\left\langle D_{t}u\right\rangle \right\rangle
_{t,Q^{+}}^{(\gamma/4),(10),(+\varepsilon)},
\label{005.3}%
\end{equation}

\begin{equation}
\left\langle \left\langle u\right\rangle \right\rangle
_{2,\gamma/2,x^{\prime },t,Q^{+}}^{(4+\gamma)(10)}\equiv\left\langle
\left\langle u\right\rangle \right\rangle
_{2,\gamma/2,x^{\prime},t,Q^{+}}^{(4+\gamma)(10)(0+)}
\label{005.4}
\end{equation}
and analogously with respect to all variables

\[
\left\langle \left\langle u\right\rangle \right\rangle
_{2,\gamma/2,Q^{+}}^{(4+\gamma)(10)}\equiv%
{\displaystyle\sum\limits_{i=1}^{N}} \left\langle \left\langle
x_{N}^{2}D_{x_{i}}^{4}u\right\rangle \right\rangle
_{\gamma/2,x_{i},Q^{+}}^{(\gamma),(10)}+\left\langle \left\langle
D_{t}u\right\rangle \right\rangle _{t,Q^{+}}^{(\gamma/4),(10)},
\]
where for a function $v(x,t)$ we denote $\Delta_{h,x_{i}}v(x,t)=v(x_{1}%
,...x_{i}+h,...x_{N},t)-v(x,t)$,
$\Delta_{h,t}v(x,t)=v(x,t+h)-v(x,t)$,
$\Delta_{h,x_{i}}^{n}v(x,t)=\Delta
_{h,x_{i}}\left(  \Delta_{h,x_{i}}^{n-1}v(x,t)\right)  $, $\Delta_{h,t}%
^{n}v(x,t)=\Delta_{h,t}\left(  \Delta_{h,x_{i}}^{n-1}v(x,t)\right)  $.

It is important that it was proved in \cite{SpGen} that seminorms
$\left\langle \left\langle u\right\rangle \right\rangle
_{2,\gamma/2,Q^{+}}^{(4+\gamma)(10)}$ and $\left\langle
u\right\rangle _{2,\gamma/2,Q^{+}}^{(4+\gamma)}$ (from \eqref{sem})
are equivalent

\begin{equation}
\left\langle \left\langle u\right\rangle \right\rangle
_{2,\gamma/2,Q^{+}}^{(4+\gamma)(10)}\simeq \left\langle
u\right\rangle _{2,\gamma/2,Q^{+}}^{(4+\gamma)}. \label{eqv}
\end{equation}

\begin{lemma}
\label{L.5.1}
Let functions $f(x,t)$, $g(x^{\prime},t)$,
$\varphi(x^{\prime},t)$, and
$\psi(x)$ have compact supports and%
\[
f(x,t)\in C_{\gamma/2,}^{\gamma,\gamma/4}(Q^{+}),g(x^{\prime},t)\in
C^{1+\gamma/2,1/2+\gamma/4}(G^{+}),
\]
\begin{equation}
\varphi(x^{\prime},t)\in C^{2+\gamma
/2,1+\gamma/4}(G^{+}),\psi(x)\in C_{2,\gamma/2}^{4+\gamma}(R^{N}). \label{005.5}%
\end{equation}
Let a function $u(x,t)\in C_{2,\gamma/2}^{4+\gamma,\frac{4+\gamma}%
{4}}(Q^{+})$ with a compact support  satisfy the following initial
boundary value problem in $Q^{+}$
\begin{equation}
L_{x,t}u\equiv\frac{\partial u}{\partial t}+\nabla(x_{N}^{2}\nabla\Delta
u)=f(x,t),\quad(x,t)\in Q^{+}, \label{005.6}%
\end{equation}

\begin{equation}
\frac{\partial u}{\partial x_{N}}(x^{\prime},0,t)=g(x^{\prime},t),\quad
(x^{\prime},t)\in G^{+}, \label{005.7}%
\end{equation}

\begin{equation}
u(x,0)=\psi(x),\quad x\in R^{N} \label{005.8}%
\end{equation}
and%

\begin{equation}
\left\langle \left\langle u\right\rangle \right\rangle _{2,\gamma/2,x^{\prime
},t,Q^{+}}^{(4+\gamma)(10)(\varepsilon+)}\geq\frac{1}{2}\left\langle
\left\langle u\right\rangle \right\rangle _{2,\gamma/2,x^{\prime},t,Q^{+}%
}^{(4+\gamma)(10)}. \label{005.9}%
\end{equation}
Then for any $\varepsilon$,$\mu>0$ there exists a constant
$C_{\varepsilon
,\mu}>0$ with the property%

\begin{equation}
\left\langle \left\langle u\right\rangle \right\rangle _{2,\gamma/2,x^{\prime
},t,Q^{+}}^{(4+\gamma)(10)(\varepsilon+)}\leq C_{\varepsilon,\mu}\left(
\left\langle f\right\rangle _{\gamma/2,Q^{+}}^{(\gamma)}+\left\langle
g\right\rangle _{G^{+}}^{(1+\gamma/2,1/2+\gamma/4)}+\left\langle
\psi\right\rangle _{2,\gamma/2,R^{N}}^{(4+\gamma)}\right)  +
\label{005.10}%
\end{equation}
\[
+\mu\left\langle
x_{N}^{2}D_{x_{N}}^{4}u\right\rangle _{\gamma/2,x_{N},Q^{+}}^{(\gamma)}.
\]

If instead of \eqref{005.7} the function $u(x,t)$ satisfies

\begin{equation}
u(x^{\prime},0,t)=\varphi(x^{\prime},t),\quad(x^{\prime},t)\in G^{+},
\label{005.11}%
\end{equation}
then%

\begin{equation}
\left\langle \left\langle u\right\rangle \right\rangle _{2,\gamma/2,x^{\prime
},t,Q^{+}}^{(4+\gamma)(10)(\varepsilon+)}\leq C_{\varepsilon,\mu}\left(
\left\langle f\right\rangle _{\gamma/2,Q^{+}}^{(\gamma)}+\left\langle
\varphi\right\rangle _{G^{+}}^{(2+\gamma/2,1+\gamma/4)}+\left\langle
\psi\right\rangle _{2,\gamma/2,R^{N}}^{(4+\gamma)}\right)  +
\label{005.12}%
\end{equation}
\[
+\mu\left\langle
x_{N}^{2}D_{x_{N}}^{4}u\right\rangle _{\gamma/2,x_{N},Q^{+}}^{(\gamma)}.
\]

\end{lemma}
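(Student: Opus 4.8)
The plan is to argue by contradiction by means of a blow-up (parabolic rescaling) argument, using the Liouville Theorem \ref{T.002.1} and the compactness of the weighted H\"older classes (Lemma \ref{L.02.2}). It suffices to treat the Neumann problem \eqref{005.6}--\eqref{005.8}; the Dirichlet case \eqref{005.11} is handled word for word, with the traces of $\partial u/\partial x_{N}$ replaced by traces of $u$. The basis of the argument is the scale invariance of the problem: the operator $L_{x,t}$ and the whole inequality \eqref{005.10} are invariant under the parabolic dilation $x^{\prime}\mapsto\lambda x^{\prime}$, $x_{N}\mapsto\lambda x_{N}$, $t\mapsto\lambda^{2}t$ combined with the normalization $u\mapsto\lambda^{-(2+\gamma/2)}u(\lambda x^{\prime},\lambda x_{N},\lambda^{2}t)$. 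One checks directly that the weight $x_{N}^{2}$ compensates the four spatial derivatives against $\partial_{t}$, that with this choice of the exponent $2+\gamma/2$ the seminorms of $f$, of $g$, of $\psi$, of $x_{N}^{2}D_{x_{i}}^{4}u$ and of $D_{t}u$ are all unchanged, and that the constraint $h\ge\varepsilon x_{N}$ of \eqref{005.9} is preserved (since $h$ and $x_{N}$ are rescaled by the same factor).

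Fix $\varepsilon,\mu>0$ and suppose \eqref{005.10} fails. Then there are $u_{n},f_{n},g_{n},\psi_{n}$ satisfying \eqref{005.6}--\eqref{005.9} for which, after normalizing $\langle\langle u_{n}\rangle\rangle_{2,\gamma/2,x^{\prime},t,Q^{+}}^{(4+\gamma)(10)(\varepsilon+)}=1$, one has
\[
\langle f_{n}\rangle_{\gamma/2,Q^{+}}^{(\gamma)}+\langle g_{n}\rangle_{G^{+}}^{(1+\gamma/2,1/2+\gamma/4)}+\langle\psi_{n}\rangle_{2,\gamma/2,R^{N}}^{(4+\gamma)}\to0,\qquad\mu\langle x_{N}^{2}D_{x_{N}}^{4}u_{n}\rangle_{\gamma/2,x_{N},Q^{+}}^{(\gamma)}\le1.
\]
By \eqref{005.9} the full tangential-and-time seminorm $\langle\langle u_{n}\rangle\rangle_{2,\gamma/2,x^{\prime},t,Q^{+}}^{(4+\gamma)(10)}$ is $\le2$; combining this with $\langle x_{N}^{2}D_{x_{N}}^{4}u_{n}\rangle\le1/\mu$, Proposition \ref{Ps1.1} and the equivalence \eqref{eqv}, the whole seminorm $\langle u_{n}\rangle_{2,\gamma/2,Q^{+}}^{(4+\gamma)}$ is bounded by $C(1+1/\mu)$, and the lower order quantities are controlled on compact sets by Proposition \ref{P.2.1} together with the compact support of $u_{n}$ (subtracting, when necessary, a suitable polynomial furnished by Lemma \ref{Ls1.2}). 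Pigeonholing over the finitely many terms of $\langle\langle u_{n}\rangle\rangle^{(\varepsilon+)}$, we fix, along a subsequence, either one tangential index $i_{0}<N$ or the time term, together with near-maximizing data: points $(x_{n},t_{n})\in Q^{+}$ and a step $h_{n}\ge\varepsilon x_{n,N}$ (respectively $\theta_{n}\ge\varepsilon^{2}x_{n,N}^{2}$ in the time variable) realizing the corresponding supremum up to a fixed $c_{0}>0$. Now rescale by the relevant parabolic scale $\lambda_{n}=h_{n}$ (resp. $\lambda_{n}=\sqrt{\theta_{n}}$) and normalize, centered at $(x_{n}^{\prime},t_{n})$ in the tangential and time variables only, leaving $x_{N}$ untouched: $v_{n}(\xi,s)=\lambda_{n}^{-(2+\gamma/2)}u_{n}(x_{n}^{\prime}+\lambda_{n}\xi^{\prime},\lambda_{n}\xi_{N},t_{n}+\lambda_{n}^{2}s)$. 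The crucial point is that the restriction $h_{n}\ge\varepsilon x_{n,N}$ of \eqref{005.9} forces $\xi_{n,N}:=x_{n,N}/\lambda_{n}\in[0,1/\varepsilon]$, so the observation point of $v_{n}$ stays at uniformly bounded distance from $\{\xi_{N}=0\}$; this is precisely the role of that restriction.

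Each $v_{n}$ solves $L_{\xi,s}v_{n}=\tilde f_{n}$ in $\{\xi_{N}>0\}$ with $\partial v_{n}/\partial\xi_{N}=\tilde g_{n}$ on $\{\xi_{N}=0\}$, where by the scale invariance $\langle\tilde f_{n}\rangle=\langle f_{n}\rangle\to0$, $\langle\tilde g_{n}\rangle=\langle g_{n}\rangle\to0$, $\langle v_{n}\rangle_{2,\gamma/2}^{(4+\gamma)}=\langle u_{n}\rangle_{2,\gamma/2}^{(4+\gamma)}\le C(1+1/\mu)$, and the near-maximizing quantity $\xi_{n,N}^{\gamma/2}|\Delta_{1,\xi_{i_{0}}}^{10}(\xi_{N}^{2}D_{\xi_{i_{0}}}^{4}v_{n})((0,\xi_{n,N}),0)|$ (resp. the time analogue) stays $\ge c_{0}$. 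By Lemma \ref{L.02.2} applied to the weighted fourth derivatives (and Arzel\`a--Ascoli for the lower-order ones), along a subsequence $v_{n}\to v_{\ast}$ on compacts in the class $C_{2,\gamma^{\prime}/2}^{4+\gamma^{\prime},(4+\gamma^{\prime})/4}$ for every $\gamma^{\prime}<\gamma$, and $\xi_{n,N}\to\xi_{\ast,N}\in[0,1/\varepsilon]$; the limit $v_{\ast}$ satisfies the homogeneous problem \eqref{002.1}, \eqref{002.3} (the boundary condition passes to the limit by Proposition \ref{P.2.3}), it has at most polynomial growth since $\langle v_{\ast}\rangle_{2,\gamma/2}^{(4+\gamma)}\le C$ by lower semicontinuity, and hence by Theorem \ref{T.002.1} it is a polynomial in $\xi^{\prime}$ and $s$ (plus, by Lemma \ref{Ls1.2}, possibly a $\ln^{(2)}\xi_{N}$-term, which does not affect tangential or time derivatives). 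Now the contradiction follows from a degree count: since $\langle\langle\xi_{N}^{2}D_{\xi_{i_{0}}}^{4}v_{\ast}\rangle\rangle^{(\gamma)(10)}<\infty$ while $\xi_{N}$ ranges over all of $[0,\infty)$ and $\xi_{i_{0}}$ over all of $R$, the polynomial $D_{\xi_{i_{0}}}^{4}v_{\ast}$ must have degree $<10$ in $\xi_{i_{0}}$ (otherwise $\Delta_{1,\xi_{i_{0}}}^{10}(\xi_{N}^{2}D_{\xi_{i_{0}}}^{4}v_{\ast})$ equals $\xi_{N}^{2}$ times a nonzero polynomial and the supremum defining that seminorm diverges), so $\Delta_{1,\xi_{i_{0}}}^{10}(\xi_{N}^{2}D_{\xi_{i_{0}}}^{4}v_{\ast})\equiv0$, and likewise $\Delta_{1,s}^{10}(D_{s}v_{\ast})\equiv0$. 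Passing to the limit in the near-maximizing inequality gives $0\ge c_{0}>0$, a contradiction, which proves \eqref{005.10}; \eqref{005.12} is obtained in exactly the same way.

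The step I expect to be the main obstacle is the bookkeeping around the lower order terms in the compactness step: obtaining convergence of $v_{n}$ itself (not merely of its highest weighted derivatives) on compact sets, and identifying the limiting problem as (asymptotically) homogeneous. This forces a small case distinction according to the behaviour of $\lambda_{n}$ and of $t_{n}/\lambda_{n}^{2}$: when $\lambda_{n}\to0$ with $t_{n}/\lambda_{n}^{2}\to\infty$ the limit lives on the whole time line and Theorem \ref{T.002.1} applies as stated; in the remaining cases the limit retains a finite initial time, and one must use the initial condition, subtracting the polynomial $Q_{u_{n}}$ of Lemma \ref{Ls1.2}, so that the limit acquires zero initial data and the corresponding clause of Theorem \ref{T.002.1} (or Corollary \ref{C.002.1}) yields $v_{\ast}\equiv0$ — while checking that this subtraction perturbs the equation only by lower order terms that are absorbed in the limit. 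The verification that every seminorm in \eqref{005.10} has scaling exponent zero for the normalization $2+\gamma/2$ is routine but must be carried out with care, since it is what keeps the normalization ``$\equiv1$ versus $\to0$'' alive through the blow-up.
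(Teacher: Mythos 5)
Your proposal is correct and follows essentially the same blow-up/compactness/Liouville scheme as the paper's proof: contradiction after normalizing the $(\varepsilon+)$-seminorm to one, pigeonholing over near-maximizing terms, parabolic rescaling by the step $h_{n}\ge\varepsilon x_{n,N}$ (which keeps the rescaled observation point within $1/\varepsilon$ of $\{x_{N}=0\}$), subtraction of the polynomial of Lemma \ref{Ls1.2}, the case distinction on $t_{n}/h_{n}^{2}$, and Theorem \ref{T.002.1}. The only minor bookkeeping difference is that in the paper the $Q_{w_{p}}$-subtraction is performed before the case distinction and in both cases (its role is to pin down values and give the uniform power-growth bound needed for compactness, not only to produce zero initial data in the finite-initial-time case), and the final contradiction is phrased as ``a nonconstant polynomial on an unbounded domain cannot have finite H\"older seminorms,'' which is equivalent to your tenth-difference degree count.
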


\begin{proof}

We prove only \eqref{005.10} since the proof of \eqref{005.12} is
absolutely identical to that of \eqref{005.10}. The idea of the
proof is taken from \cite{LeonSimon} and is adopted to the weighted
spaces for the degenerate equation with variable coefficients as it
was done in \cite{SpGen}.

The proof is by contradiction. Suppose that \eqref{005.10} is not
valid. Then there exist $\mu>0$ and  a sequence
$\{u_{p}(x,t)\}\subset C_{2,\gamma
/2}^{4+\gamma,\frac{4+\gamma}{4}}(\overline{Q}^{+})$, \ $p=1,2,...,$
, with the property \eqref{005.9} and with

\begin{equation}
\left\langle \left\langle u_{p}\right\rangle \right\rangle _{2,\gamma
/2,\overline{Q}^{+}}^{(4+\gamma)(10)(\varepsilon+)}\geq p\left(  \left\langle
f_{p}\right\rangle _{\gamma/2,Q^{+}}^{(\gamma)}+\left\langle g_{p}%
\right\rangle _{G^{+}}^{(1+\gamma/2,1/2+\gamma/4)}+\left\langle \psi
_{p}\right\rangle _{2,\gamma/2,R^{N}}^{(4+\gamma)}\right)  +
\label{s1.48}%
\end{equation}
\[
+\mu\left\langle
x_{N}^{2}D_{x_{N}}^{4}u_{p}\right\rangle _{\gamma/2,x_{N},Q^{+}}^{(\gamma)},
\]
where $f_{p}$, $g_{p}$, and $\psi_{p}$ correspond to $u_{p}$ in
relations \eqref{005.6}- \eqref{005.8}. From \eqref{005.9} it
follows also that

\bigskip%
\begin{equation}
\left\langle \left\langle u_{p}\right\rangle \right\rangle _{2,\gamma
/2,x^{\prime},t.\overline{Q}^{+}}^{(4+\gamma)(10)(\varepsilon+)}%
\leq\left\langle \left\langle u_{p}\right\rangle \right\rangle _{2,\gamma
/2,x^{\prime},t,\overline{Q}^{+}}^{(4+\gamma)(10)}\leq2\left\langle
\left\langle u_{p}\right\rangle \right\rangle _{2,\gamma/2,x^{\prime
},t,\overline{Q}^{+}}^{(4+\gamma)(10)(\varepsilon+)}. \label{005.12+1}%
\end{equation}
Denote $v_{p}(x,t)\equiv u_{p}(x,t)/\left\langle \left\langle u_{p}%
\right\rangle \right\rangle _{2,\gamma/2,x^{\prime},t,\overline{Q}^{+}%
}^{(4+\gamma)(10)(\varepsilon+)}$. Functions $v_{p}(x,t)$ satisfy
\eqref{005.6}- \eqref{005.8} with
the right hand sides%

\[
f_{p}^{(1)}=f_{p}/\left\langle \left\langle u_{p}\right\rangle \right\rangle
_{2,\gamma/2,x^{\prime},t,\overline{Q}^{+}}^{(4+\gamma)(10)(\varepsilon
+)},g_{p}^{(1)}=g_{p}/\left\langle \left\langle u_{p}\right\rangle
\right\rangle _{2,\gamma/2,x^{\prime},t,\overline{Q}^{+}}^{(4+\gamma
)(10)(\varepsilon+)},\psi_{p}^{(1)}=\psi_{p}/\left\langle \left\langle
u_{p}\right\rangle \right\rangle _{2,\gamma/2,x^{\prime},t,\overline{Q}^{+}%
}^{(4+\gamma)(10)(\varepsilon+)}.
\]
For the functions $\{v_{p}\}$ we have from \eqref{s1.48}%

\[
1=\left\langle \left\langle v_{p}\right\rangle \right\rangle _{2,\gamma
/2,x^{\prime},t,\overline{Q}^{+}}^{(4+\gamma)(10)(\varepsilon+)}\geq p\left(
\left\langle f_{p}^{(1)}\right\rangle _{\gamma/2,Q^{+}}^{(\gamma
)}+\left\langle g_{p}^{(1)}\right\rangle _{G^{+}}^{(1+\gamma/2,1/2+\gamma
/4)}+\left\langle \psi_{p}^{(1)}\right\rangle _{2,\gamma/2,R^{N}}^{(4+\gamma
)}\right)  +
\]
\[
+\mu\left\langle x_{N}^{2}D_{x_{N}}^{4}v_{p}\right\rangle
_{\gamma/2,x_{N},Q^{+}}^{(\gamma)}.
\]
And from the last inequality and from \eqref{005.12+1} we infer that%

\[
\left\langle f_{p}^{(1)}\right\rangle _{\gamma/2,Q^{+}}^{(\gamma
)}+\left\langle g_{p}^{(1)}\right\rangle _{G^{+}}^{(1+\gamma/2,1/2+\gamma
/4)}+\left\langle \psi_{p}^{(1)}\right\rangle _{2,\gamma/2,R^{N}}^{(4+\gamma
)}\leq\frac{1}{p},
\]

\begin{equation}
\quad1\leq\left\langle \left\langle v_{p}\right\rangle \right\rangle
_{2,\gamma/2,x^{\prime},t,\overline{Q}^{+}}^{(4+\gamma)(10)}\leq2\left\langle
\left\langle v_{p}\right\rangle \right\rangle _{2,\gamma/2,x^{\prime
},t,\overline{Q}^{+}}^{(4+\gamma)(10)(\varepsilon+)}\leq2,\quad\left\langle
x_{N}^{2}D_{x_{N}}^{4}v_{p}\right\rangle _{\gamma/2,x_{N},Q^{+}}^{(\gamma
)}\leq\frac{1}{\mu}. \label{s1.50}%
\end{equation}
The last two inequalities together with \eqref{eqv}
imply that%

\begin{equation}
1\leq\left\langle v_{p}\right\rangle _{2,\gamma/2,\overline{Q}^{+}}%
^{(4+\gamma)}\leq C(\mu). \label{005.12+2}%
\end{equation}

Since $1=\left\langle \left\langle v_{p}\right\rangle \right\rangle
_{2,\gamma/2,x^{\prime},t,\overline{Q}^{+}}^{(4+\gamma)(10)(\varepsilon+)}$\ ,
there is a term in the definition of $\left\langle \left\langle v_{p}%
\right\rangle \right\rangle _{2,\gamma/2,x^{\prime},t,\overline{Q}^{+}%
}^{(4+\gamma)(10)(\varepsilon+)}$,

which is not less than some absolute constant $\nu=\nu(N)>0$. This is valid at
least for a subsequence of indexes $\{p\}$. We suppose, for example, that for
some multiindex $\widehat{\alpha}$, $|\widehat{\alpha}|=4$, $\alpha_{N}=0$,%

\begin{equation}
\left\langle \left\langle x_{N}^{2}D_{x^{\prime}}^{\widehat{\alpha}}%
v_{p}\right\rangle \right\rangle _{\gamma/2,x^{\prime},\overline{Q}^{+}%
}^{(\gamma)(10)(\varepsilon+)}\geq\nu>0,\quad p=1,2,..... \label{s1.52}%
\end{equation}
The all reasonings below are completely the same for all other terms
in the definition of $\left\langle \left\langle v_{p}\right\rangle
\right\rangle
_{n,\omega\gamma,\overline{Q}}^{(m+\gamma)(2s)(\varepsilon+)}$. From
\eqref{s1.52} and from the definition of $\left\langle \left\langle
x_{N}^{2}D_{x^{\prime}}^{\widehat{\alpha}}v_{p}\right\rangle
\right\rangle _{\gamma/2,x^{\prime},\overline{Q}^{+}}^{(\gamma
)(10)(\varepsilon+)}$ it follows that there exist sequences of
points
$\{(x^{(p)},t^{(p)})\in\overline{Q}\}$ and vectors $\{\overline{h}^{(p)}%
\in\overline{H}\}$ with%

\begin{equation}
h_{p}\equiv|\overline{h}^{(p)}|\geq\varepsilon x_{N}^{(p)},\quad p=1,2,...
\label{s1.53}%
\end{equation}
and with%

\begin{equation}
\left(  x_{N}^{(p)}\right)  ^{\gamma/2}\frac{|\Delta_{\overline{h}%
^{(p)},x^{\prime}}^{10}\left[  (x_{N}^{(p)})^{2}D_{x^{\prime}}^{\widehat
{\alpha}}v_{p}(x^{(p)},t^{(p)})\right]  |}{h_{p}^{\gamma}}\geq\frac{\nu}{2}>0.
\label{s1.54}%
\end{equation}
We make in the functions $\{v_{p}\}$ the change of the independent
variables
$(x,t)\rightarrow(y,\tau)$%

\begin{equation}
x_{i}=x_{i}^{(p)}+y_{i}h_{p},i=\overline{1,N-1},x_{N}=y_{N}h_{p};\quad
t=t^{(p)}+h_{p}^{2}\tau\label{s1.55}%
\end{equation}
and denote%

\begin{equation}
w_{p}(y,\tau)=h_{p}^{-(2+\gamma/2)}v_{p}(x^{\prime(p)}+y^{\prime}h_{p}%
,y_{N}h_{p},t^{(p)}+\tau h_{p}^{2}), \label{s1.56}%
\end{equation}

\[
f_{p}^{(2)}(y,\tau)=h_{p}^{-\gamma/2}f_{p}^{(1)}(x^{\prime(p)}+y^{\prime}%
h_{p},y_{N}h_{p},t^{(p)}+\tau h_{p}^{2}),
\]

\[
g_{p}^{(2)}(y,\tau)=h_{p}^{-(1+\gamma/2)}g_{p}^{(1)}(x^{\prime(p)}+y^{\prime
}h_{p},y_{N}h_{p},t^{(p)}+\tau h_{p}^{2}),
\]

\[
\psi_{p}^{(2)}(y)=h_{p}^{-(2+\gamma/2)}\psi_{p}^{(1)}(x^{\prime(p)}+y^{\prime
}h_{p},y_{N}h_{p}).
\]
It can be checked directly that the rescaled functions $w^{(p)}%
(y,\tau)$ satisfy relations \eqref{005.6}- \eqref{005.8} with the
right hand sides $f_{p}^{(2)}(y,\tau)$, $g_{p}^{(2)}(y,\tau)$,
$\psi_{p}^{(2)}(y)$ in the domain

\begin{equation}
Q^{(p)}=\{(y,\tau):y_{N}\geq0,\tau\geq\tau^{(p)}\equiv-t^{(p)}/h_{p}^{2}\},
\label{005.000}%
\end{equation}

\begin{equation}
L_{y,\tau}w_{p}(y,\tau)\equiv\frac{\partial w_{p}}{\partial\tau}+\nabla
_{y}(y_{N}^{2}\nabla_{y}\Delta_{y}w_{p})=f^{(2)}(y,\tau),\quad(y,\tau)\in
Q^{(p)}, \label{005.6.1}%
\end{equation}

\begin{equation}
\frac{\partial w_{p}}{\partial y_{N}}(y^{\prime},0,\tau)=g^{(2)}(y^{\prime
},\tau),\quad(y^{\prime},\tau)\in G^{(p)}=\overline{Q}^{(p)}\cap\{y_{N}=0\},
\label{005.7.1}%
\end{equation}

\begin{equation}
w_{p}(y,\tau^{(p)})=\psi^{(2)}(y),\quad y\in R^{N}. \label{005.8.1}%
\end{equation}
And also it can be checked directly from the definitions that%

\begin{equation}
\left\langle w_{p}\right\rangle _{2,\gamma/2,\overline{Q}^{(p)}}^{(4+\gamma
)}=\left\langle v_{p}\right\rangle _{2,\gamma/2,\overline{Q}^{+}}^{(4+\gamma
)},\quad\left\langle f_{p}^{(2)}\right\rangle _{\gamma/2,\overline{Q}^{(p)}%
}^{(\gamma)}=\left\langle f_{p}^{(1)}\right\rangle _{\gamma/2,\overline{Q}%
^{+}}^{(\gamma)}, \label{s1.57}%
\end{equation}

\[
\left\langle g_{p}^{(2)}\right\rangle _{\overline{G}^{(p)}}^{(1+\gamma
/2,1/.2+\gamma/4)}=\left\langle g_{p}^{(1)}\right\rangle _{\overline{G}^{+}%
}^{(1+\gamma/2,1/.2+\gamma/4)},\quad\left\langle \psi_{p}^{(2)}\right\rangle
_{2,\gamma/2,R^{N}}^{(4+\gamma)}=\left\langle \psi_{p}^{(2)}\right\rangle
_{2,\gamma/2,R^{N}}^{(4+\gamma)}.
\]
Thus from \eqref{005.12+2} it follows that%

\begin{equation}
1\leq\left\langle w_{p}\right\rangle _{2,\gamma/2,\overline{Q}^{(p)}%
}^{(4+\gamma)}\leq C(\mu). \label{s1.58}%
\end{equation}
and from \eqref{s1.50} we have%

\begin{equation}
\left\langle f_{p}^{(2)}\right\rangle _{\gamma/2,Q^{(p)}}^{(\gamma
)}+\left\langle g_{p}^{(2)}\right\rangle _{G^{(p)}}^{(1+\gamma/2,1/2+\gamma
/4)}+\left\langle \psi_{p}^{(2)}\right\rangle _{2,\gamma/2,R^{N}}^{(4+\gamma
)}\leq\frac{1}{p}. \label{s1.59}%
\end{equation}
Besides, from \eqref{s1.54} we obtain%

\begin{equation}
\left(  y_{N}^{(p)}\right)  ^{\omega\gamma}|\Delta_{\overline{e}^{(p)}}%
^{2s}(y_{N}^{(p)})^{n}D_{y}^{\widehat{\alpha}}w_{p}(P^{(p)},0)|\geq\nu/2,
\label{s1.60}%
\end{equation}
where

\begin{equation}
y_{N}^{(p)}\equiv x_{N}^{(p)}/h_{p},\,\overline{e}^{(p)}\equiv\overline
{h}^{(p)}/h_{p},\,|\overline{e}^{(p)}|=1,\,P^{(p)}\equiv(0^{\prime}%
,y_{N}^{(p)}). \label{s1.60.1}%
\end{equation}
Denote by $Q_{p}(y,\tau)\equiv Q_{w_{p}}(y,\tau)$ the "Taylor"
function $Q_{w_{p}}(y,\tau)$ for the function $w_{p}(y,\tau)$, which
was constructed in Lemma \ref{Ls1.2}.
\[
Q_{w_{p}}(y,\tau)=Q_{p}(y,\tau)=-a_{w_{p}}\ln^{(2)}y_{N}+{\sum\limits_{|\alpha
|\leq2}}\frac{a_{\alpha}}{\alpha!}(y-\overline{e})^{\alpha}+a^{(1)}\tau
\]
Denote $r_{p}(y,\tau)\equiv w_{p}(y,\tau)-Q_{p}(y,\tau)$.
From Lemma \ref{Ls1.2} it follows that%

\begin{equation}
y_{N}^{2-j}D_{y}^{\alpha}r_{p}(y,\tau)|_{(y,\tau)=(0,0)}=0,\quad
j<2,\,|\alpha|=4-j, \label{s1.62}%
\end{equation}

\begin{equation}
D_{y}^{\alpha}r_{p}(y,\tau)|_{(y,\tau)=(\overline{e},0)}=0,\,|\alpha
|\leq2,\hspace{0.05in}D_{\tau}r_{p}(y,\tau)|_{(y,\tau)=(\overline{e},0)}=0.
\label{s1.63}%
\end{equation}
Recall that%

\begin{equation}
y_{N}^{2-j}D_{y}^{\alpha}Q_{p}(y,\tau)\equiv const,\quad|\alpha
|=4-j,j=0,1,\,\,D_{\tau}Q_{p}(y,\tau)\equiv const. \label{s1.64}%
\end{equation}
Consequently, from \eqref{s1.64} and from the definition of H\"{o}lder classes
in view of \eqref{s1.58} it follows that%

\begin{equation}
\left\langle r_{p}\right\rangle _{2,\gamma/2,\overline{Q}^{(p)}}%
^{(4+\gamma,\frac{4+\gamma}{4})}=\left\langle w_{p}-Q_{p}(y,\tau)\right\rangle
_{2,\gamma/2,\overline{Q}^{(p)}}^{(4+\gamma,\frac{4+\gamma}{4})}\leq C.
\label{s1.66}%
\end{equation}
Besides, from \eqref{s1.62}-
\eqref{s1.64} it follows that the functions $r_{p}(y,\tau)$ satisfy relations%
\eqref{005.6.1}-
\eqref{005.8.1} with the functions%

\begin{equation}
f_{p}^{(3)}(y,\tau)=f_{p}^{(2)}(y,\tau)-C_{0},\quad g_{p}^{(3)}(y,\tau
)=g_{p}^{(2)}(y,\tau)-%
{\displaystyle\sum\limits_{i=1}^{N-1}}
C_{i}y_{i}-C_{N}, \label{005.15}%
\end{equation}

\[
\psi^{(3)}(y)=\psi^{(2)}(y)-Q_{p}(y,0).
\]
Thus analogously to
\eqref{s1.66} we have%

\[
\left\langle f_{p}^{(3)}\right\rangle _{\gamma/2,\overline{Q}^{(p)}}%
^{(\gamma)}=\left\langle f_{p}^{(2)}\right\rangle _{\gamma/2,\overline
{Q}^{(p)}}^{(\gamma)},\left\langle g_{p}^{(3)}\right\rangle _{\overline
{G}^{(p)}}^{(1+\gamma/2,1/.2+\gamma/4)}=\left\langle g_{p}^{(2)}\right\rangle
_{\overline{G}^{(p)}}^{(1+\gamma/2,1/.2+\gamma/4)},
\]

\[
\left\langle \psi_{p}^{(3)}\right\rangle _{2,\gamma/2,R^{N}}^{(4+\gamma
)}=\left\langle \psi_{p}^{(2)}\right\rangle _{2,\gamma/2,R^{N}}^{(4+\gamma)}%
\]
and therefore%

\begin{equation}
\left\langle f_{p}^{(3)}\right\rangle _{\gamma/2,Q^{(p)}}^{(\gamma
)}+\left\langle g_{p}^{(3)}\right\rangle _{G^{(p)}}^{(1+\gamma/2,1/2+\gamma
/4)}+\left\langle \psi_{p}^{(3)}\right\rangle _{2,\gamma/2,R^{N}}^{(4+\gamma
)}\leq\frac{1}{p}. \label{005.16}%
\end{equation}
From \eqref{s1.60} we have also%

\begin{equation}
\left(  y_{N}^{(p)}\right)  ^{\gamma/2}|\Delta_{\overline{e}^{(p)}}^{2s}%
(y_{N}^{(p)})^{2}D_{y^{\prime}}^{\widehat{\alpha}}r_{p}(P^{(p)},0)|\geq\nu.
\label{s1.68}%
\end{equation}
Further, from \eqref{s1.66} and from the properties of
$r_{p}(y,\tau)$ in \eqref{s1.62}, \eqref{s1.63} it follows that for
any compact set $K\subset\overline{Q}^{(p)}$

\begin{equation}
\left\Vert r_{p}(y,\tau)\right\Vert _{C_{2,\gamma/2}^{4+\gamma,\frac{4+\gamma
}{4}}(K)}\leq C(K),\quad\left\Vert g_{p}^{(3)}(y^{\prime},\tau)=\frac
{r_{p}(y^{\prime},0,\tau)}{\partial x_{N}}\right\Vert _{C^{1+\gamma
,1/2+\gamma/4}(K^{\prime})}\leq C(K^{\prime}), \label{005.17}%
\end{equation}
where $K^{\prime}=K\cap\{x_{N}=0\}$. \ Moreover, from the properties
of
$r_{p}(y,\tau)$ (%
\eqref{s1.62}, \eqref{s1.63}, \eqref{s1.66}) it follows that for any
compact set $K_{R}$ of the form
$K_{R}=\{(y,\tau)\in\overline{Q}^{(p)}:|y|\leq
R,\tau\leq R\}$%

\begin{equation}
|r_{p}(y,\tau)|_{K_{R}}^{(0)}\leq CR^{4+\gamma}. \label{005.18}%
\end{equation}
Besides, in view of \eqref{005.15}, \eqref{005.16}, \eqref{005.17},
and of the properties of $\psi^{(3)}(y)$,

\[
\psi^{(3)}(y)=w_{p}(y,0)-Q_{w_{p}}(y,0),
\]
for any $K\subset Q^{(p)}$, $K^{\prime}\subset G^{(p)}$,
$K^{\prime\prime
}\subset R^{N}\cap Q^{(p)}$%

\begin{equation}
\left\Vert f_{p}^{(3)}(y,\tau)\right\Vert _{C_{\gamma/2}^{\gamma,\frac{\gamma
}{4}}(K)}+\left\Vert g_{p}^{(3)}(y^{\prime},\tau)\right\Vert _{C^{1+\gamma
,1/2+\gamma/4}(K^{\prime})}+ \label{005.19}%
\end{equation}
\[
+\left\Vert \psi_{p}^{(3)}(y)\right\Vert
_{C_{2,\gamma/2}^{4+\gamma}(K^{\prime\prime})}\leq C(K,K^{\prime}%
,K^{\prime\prime})\frac{1}{p},
\]
where $K^{\prime\prime}\subset R=K\cap\{t=0\}$.

We consider two cases of the behaviour of
$\tau^{(p)}=-t^{(p)}/h_{p}^{2}$. It can go to a finite limit or to
infinity as $p\rightarrow\infty$. Let first
$\tau^{(p)}\rightarrow-\infty$ as $p\rightarrow\infty$. Thus
$Q^{(p)}\rightarrow Q=\{(y,\tau):y_{N}\geq0,-\infty<\tau<+\infty\}$.
From \eqref{005.17}, \eqref{005.19} it follows that the sequence of
functions $\{r_{p}(y,\tau)\}$ is bounded in
$C^{4+\gamma,\frac{4+\gamma}{4}}(K_{\delta
})$ for any compact set $K_{\delta}\subset Q^{+}\cap\{\delta\leq x_{N}%
\leq\delta^{-1}\}$, $\delta\in(0,1)$. Therefore there exists a function
$r(y,\tau)\in C^{4+\gamma,\frac{4+\gamma}{4}}(Q^{+}\cap\{x_{N}>0\})$ with (at
least for a subsequence)%

\begin{equation}
r_{p}\rightarrow r\text{ in }C^{4+\gamma_{1},\frac{m+\gamma_{1}}{m}}%
(K_{\delta}),\,p\rightarrow\infty,\,\forall K_{\delta}\subset Q\cap
\{\delta\leq y_{N}\leq\delta^{-1}\},\quad\gamma_{1}<\gamma, \label{s1.69}%
\end{equation}
and also for any compact set $K\subset\overline{Q}^{+}$%

\begin{equation}
r_{p}\rightarrow r\text{ in }C^{2+\gamma_{1}/2,1+\gamma_{1}/4}%
(K),\,p\rightarrow\infty,\,\forall K\subset Q,\quad\gamma_{1}<\gamma.
\label{005.20}%
\end{equation}
Besides, for any compact sets $K\subset\overline{Q}$, $K^{\prime
}\subset G$, $K^{\prime\prime}\subset R^{N}$ (at least for a
subsequence) for $p\rightarrow\infty$

\begin{equation}
f_{p}^{(3)}\rightarrow_{C_{\gamma_{1}/2}^{\gamma_{1},\frac{\gamma_{1}}{4}}%
(K)}0,\quad g_{p}^{(3)}\rightarrow_{C^{1+\gamma_{1},1/2+\gamma_{1}%
/4}(K^{\prime})}0,\quad\psi_{p}^{(3)}(y)\rightarrow_{C_{2,\gamma_{1}%
/2}^{4+\gamma_{1}}(K^{\prime\prime})}0. \label{005.21}%
\end{equation}
At the same time, since the sequences $\{y_{N}^{(p)}\}$, $\{\overline{e}%
^{(p)}\}$, and $\{P^{(p)}\}$ are bounded (recall that $y_{N}^{(p)}=x_{N}%
^{(p)}/h_{p}\leq\varepsilon^{-1}$ since $h_{p}\geq\varepsilon x_{N}^{(p)}$)%

\begin{equation}
y_{N}^{(p)}\rightarrow y_{N}^{(0)},\quad\overline{e}^{(p)}\rightarrow
\overline{e}^{(0)},\quad P^{(p)}\rightarrow P^{(0)},\quad p\rightarrow\infty,
\label{s1.70}%
\end{equation}
where $y_{N}^{(0)}$ is a nonnegative number, $\overline{e}^{(0)}\in
\overline{H}$ is a unit vector, $P^{(0)}=(0^{\prime},y_{N}^{(0)})\in
\overline{H}$. From \eqref{s1.62} and \eqref{s1.66} (together with
\eqref{s1.01} and the Arzela theorem) it follows that the functions $y_{N}%
^{n}D_{y}^{\widehat{\alpha}}r_{p}(y,\tau)$ are uniformly \ convergent (for a
subsequence) on any compact set $K_{R}\subset\overline{Q}\cap\{0\leq y_{N}\leq
R\}$, $R>0$,%

\[
y_{N}^{2}D_{y^{\prime}}^{\widehat{\alpha}}r_{p}(y,\tau)\rightrightarrows
y_{N}^{2}D_{y^{\prime}}^{\widehat{\alpha}}r(y,\tau),\quad p\rightarrow\infty.
\]
Thus we can choose a compact set $K_{R}$ and take the limit of relation
\eqref{s1.68} on this set. This gives%

\begin{equation}
|\Delta_{\overline{e}^{(0)},y^{\prime}}^{10}(y_{N}^{(0)})^{2}D_{y^{\prime}%
}^{\widehat{\alpha}}r(P^{(0)},0)|\geq\nu>0. \label{s1.71}%
\end{equation}
Moreover, from Lemma \ref{L.02.1}
and \eqref{s1.66} it follows that uniformly in $p$%

\begin{equation}
\left\langle y_{N}^{2}D_{y^{\prime}}^{\widehat{\alpha}}r_{p}\right\rangle
_{y,\overline{Q}}^{\gamma/2}+\left\langle y_{N}^{2}D_{y}^{\widehat{\alpha}%
}r_{p}\right\rangle _{\tau,\overline{Q}}^{(\gamma/4)}\leq C. \label{s1.71.1}%
\end{equation}
Together with \eqref{s1.62} this means that the sequence $\{y_{N}%
^{2}D_{y^{\prime}}^{\widehat{\alpha}}r_{p}\}$ is bounded in the space
$C^{\gamma/2,\frac{\gamma}{4}}(K_{R})$ for any compact set $K_{R}$. Therefore
for any $\gamma_{1}<\gamma$ the sequence $\{y_{N}^{2}D_{y^{\prime}}%
^{\widehat{\alpha}}r_{p}\}$ converges to $y_{N}^{2}D_{y^{\prime}}%
^{\widehat{\alpha}}r$ in the space $C^{\gamma_{1},\frac{\gamma_{1}}{m}}%
(K_{R})$ and for the limit $y_{N}^{2}D_{y}^{\widehat{\alpha}}r$ we have with
the same exponent $\gamma$%

\begin{equation}
\left\langle y_{N}^{2}D_{y^{\prime}}^{\widehat{\alpha}}r\right\rangle
_{y,\overline{Q}}^{\gamma/2}+\left\langle y_{N}^{2}D_{y^{\prime}}%
^{\widehat{\alpha}}r\right\rangle _{\tau,\overline{Q}}^{(\gamma/4)}\leq C.
\label{s1.71.2}%
\end{equation}
Further, from \eqref{005.19} and \eqref{005.21} it follows that the
function $r(y,\tau)$ satisfies in $Q$ the homogeneous problem
without initial conditions

\begin{equation}
L_{y,\tau}r(y,\tau)\equiv\frac{\partial r}{\partial\tau}+\nabla_{y}(y_{N}%
^{2}\nabla_{y}\Delta_{y}r)=0,\quad(y,\tau)\in Q, \label{005.6.2}%
\end{equation}

\begin{equation}
\frac{\partial r}{\partial y_{N}}(y^{\prime},0,\tau)=0,\quad(y^{\prime}%
,\tau)\in G=\overline{Q}\cap\{y_{N}=0\}. \label{005.7.2}%
\end{equation}
From this together with \eqref{005.18} and from Theorem
\ref{T.002.1}  it follows that $r(y,\tau)$ is a polynomial with
respect to $y^{\prime}$ an $\tau$. And from \eqref{s1.71} it follows
that $r(y,\tau)$ is a nonconstant polynomial. But nonconstant
polynomial in unbounded domain $Q$ can not have a finite seminorms
as those in \eqref{s1.71.2}. This contradiction provers the lemma in
the case $\tau^{(p)}=-t^{(p)}/h_{p}^{2}\rightarrow-\infty$.

In the other case, If $\tau^{(p)}=-t^{(p)}/h_{p}^{2}\rightarrow-\tau
_{0}>-\infty$ the reasonings are completely the same. The difference
is only that instead of the relations \eqref{005.6.2},
\eqref{005.7.2} in view of \eqref{005.21} we obtain for the function
$r(y,\tau)$ in $Q^{(0)}=Q\cap \{\tau\geq-\tau_{0}\}$

\begin{equation}
L_{y,\tau}r(y,\tau)\equiv\frac{\partial r}{\partial\tau}+\nabla_{y}(y_{N}%
^{2}\nabla_{y}\Delta_{y}r)=0,\quad(y,\tau)\in Q^{(0)}, \label{005.6.3}%
\end{equation}

\begin{equation}
\frac{\partial r}{\partial y_{N}}(y^{\prime},0,\tau)=0,\quad(y^{\prime}%
,\tau)\in G^{(0)}=\overline{Q}^{(0)}\cap\{y_{N}=0\}, \label{005.7.3}%
\end{equation}

\begin{equation}
r(y,-\tau_{0})\equiv0,\quad y\in R^{N}. \label{005.8.3}%
\end{equation}
In this case in view of \eqref{005.18} again from the Theorem
\ref{T.002.1} it follows that $r(y,\tau)\equiv0$ and this
contradicts to \eqref{s1.71}.

Note again that all the above reasonings for the term $\left\langle
\left\langle
x_{N}^{2}D_{x^{\prime}}^{\widehat{\alpha}}v_{p}\right\rangle
\right\rangle
_{\gamma/2,x^{\prime},\overline{Q}}^{(\gamma)(10)(\varepsilon +)}$
are completely the same for $\left\langle \left\langle
D_{t}u\right\rangle \right\rangle
_{t,Q^{+}}^{(\gamma/4)(10)(\varepsilon+)}$. For this term we obtain
an analog of relations \eqref{s1.71} and \eqref{s1.71.2} with the
same contradiction.

Thus the lemma is proved for the condition \eqref{005.7}. The proof
for the condition \eqref{005.11} is absolutely similar with some
another but evident rescaling for the function
$\varphi(x^{\prime},t)$.

This finishes the proof of the lemma.

\end{proof}

Denote for a function $u(x,t)\in C_{2,\gamma/2}^{4+\gamma
,\frac{4+\gamma}{4}}(Q^{+})$%

\begin{equation}
\left\langle \left\langle x_{N}^{2}D_{x_{i}}^{4}u\right\rangle \right\rangle
_{\gamma/2,x_{i},Q^{+}}^{(\gamma)(10)(\varepsilon-)}\equiv\sup_{(x,t)\in
Q^{+},h\leq\varepsilon x_{N}}x_{N}^{\gamma/2}\frac{|\Delta_{h,x_{i}}%
^{10}\left(  x_{N}^{2}D_{x_{i}}^{4}u(x,t)\right)  |}{h^{\gamma}}%
,i=\overline{1,N}, \label{005.1.a}%
\end{equation}

\begin{equation}
\left\langle \left\langle D_{t}u\right\rangle \right\rangle _{t,Q^{+}%
}^{(\gamma/4)(10)(\varepsilon-)}\equiv\sup_{(x,t)\in Q^{+},h\leq\varepsilon
x_{N}^{2}}\frac{|\Delta_{h,t}^{10}\left(  D_{t}u(x,t)\right)  |}{h^{\gamma/4}%
}, \label{005.2.a}%
\end{equation}

\begin{equation}
\left\langle \left\langle u\right\rangle \right\rangle _{2,\gamma/2,x^{\prime
},t,Q^{+}}^{(4+\gamma)(10)(\varepsilon-)}\equiv%
{\displaystyle\sum\limits_{i=1}^{N-1}} \left\langle \left\langle
x_{N}^{2}D_{x_{i}}^{4}u\right\rangle \right\rangle
_{x_{i},Q^{+}}^{(\gamma),10,(\varepsilon-)}+\left\langle
\left\langle D_{t}u\right\rangle \right\rangle
_{t,Q^{+}}^{(\gamma/4),10,(\varepsilon-)},
\label{005.3.a}%
\end{equation}

\begin{lemma}
\label{L.5.2}

Let a function $u(x,t)\in C_{2,\gamma/2}^{4+\gamma,\frac{4+\gamma}{4}}(Q^{+})$
with a compact support satisfy in $Q^{+}$ relations%
\eqref{005.6}- \eqref{005.8} or relations \eqref{005.6},
\eqref{005.8}, \eqref{005.11}. Then for
$0<\varepsilon<\varepsilon_{0}$ , where
$\varepsilon_{0}\in(0,1)$ is an absolute constant,%

\begin{equation}
\left\langle \left\langle u\right\rangle \right\rangle _{2,\gamma/2,x^{\prime
},Q^{+}}^{(4+\gamma)(10)(\varepsilon-)}\equiv%
{\displaystyle\sum\limits_{i=1}^{N-1}} \left\langle \left\langle
x_{N}^{2}D_{x_{i}}^{4}u\right\rangle \right\rangle
_{x_{i},Q^{+}}^{(\gamma),(10),(\varepsilon-)}\leq. \label{s1.75}%
\end{equation}

\[
\leq C_{\varepsilon}\left(  \left\langle f\right\rangle _{\gamma/2,Q^{+}%
}^{(\gamma)}+\left\langle g\right\rangle _{G^{+}}^{(1+\gamma/2,1/2+\gamma
/4)}+\left\langle \psi\right\rangle _{2,\gamma/2,R^{N}}^{(4+\gamma)}\right)
+C\varepsilon^{\gamma}\left\langle \left\langle u\right\rangle \right\rangle
_{2,\gamma/2,x^{\prime}\overline{Q}^{+}}^{(4+\gamma)(10)}.
\]

\end{lemma}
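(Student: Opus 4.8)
The plan is to prove \eqref{s1.75} by the contradiction (blow‑up) argument used for Lemma \ref{L.5.1}, the only genuinely new feature being that, since now the step is \emph{small} relative to $x_N$, the rescaled boundary may recede to infinity. I will carry it out for the Neumann condition \eqref{005.7}; the Dirichlet case \eqref{005.11} is identical with the evident rescaling of $\varphi$. Suppose \eqref{s1.75} fails. Then there is a sequence $\{u_p\}$ of compactly supported solutions of \eqref{005.6}--\eqref{005.8}, with data $f_p,g_p,\psi_p$, such that
\[
\Phi_p:=\sum_{i=1}^{N-1}\left\langle\left\langle x_N^2 D_{x_i}^4 u_p\right\rangle\right\rangle_{x_i,Q^{+}}^{(\gamma)(10)(\varepsilon-)}
> p\left(\left\langle f_p\right\rangle_{\gamma/2,Q^{+}}^{(\gamma)}+\left\langle g_p\right\rangle_{G^{+}}^{(1+\gamma/2,1/2+\gamma/4)}+\left\langle\psi_p\right\rangle_{2,\gamma/2,R^{N}}^{(4+\gamma)}\right)+C\varepsilon^{\gamma}\left\langle\left\langle u_p\right\rangle\right\rangle_{2,\gamma/2,x',\overline{Q}^{+}}^{(4+\gamma)(10)}
\]
with a constant $C$ to be fixed by the argument. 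Dividing $u_p$ by $\Phi_p$ we may assume $\Phi_p=1$; then the data tend to $0$ and $\langle\langle u_p\rangle\rangle_{2,\gamma/2,x',\overline{Q}^{+}}^{(4+\gamma)(10)}\le 1/(C\varepsilon^{\gamma})$. Expressing $x_N^2 D_{x_N}^4 u_p$ from the equation \eqref{005.6} through $D_t u_p$, $f_p$, the third order terms and the mixed fourth order derivatives, and then absorbing the $x_N$‑seminorms of the latter by the interpolation Theorem \ref{Ts6.2} and the third order terms by Theorem \ref{Ts6.3} (with the small parameters there chosen suitably), we obtain a bound for the whole seminorm $\langle u_p\rangle_{2,\gamma/2,\overline{Q}^{+}}^{(4+\gamma)}$, using the equivalence \eqref{eqv}.

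Passing to a subsequence we fix $i_0<N$ and points $(x^{(p)},t^{(p)})\in Q^{+}$, steps $h_p\le\varepsilon x_N^{(p)}$, with $(x_N^{(p)})^{\gamma/2}|\Delta_{h_p,x_{i_0}}^{10}((x_N^{(p)})^2 D_{x_{i_0}}^4 u_p(x^{(p)},t^{(p)}))|\,h_p^{-\gamma}\ge c_0(N)>0$. We rescale exactly as in \eqref{s1.55}, \eqref{s1.56}: $x_i=x_i^{(p)}+y_ih_p$ $(i<N)$, $x_N=y_Nh_p$, $t=t^{(p)}+h_p^2\tau$, $v_p=h_p^{-(2+\gamma/2)}u_p$. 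Then $v_p$ solves the same (scale invariant) model relations \eqref{005.6.1}--\eqref{005.8.1} on $Q^{(p)}=\{y_N\ge 0,\ \tau\ge-t^{(p)}/h_p^2\}$ with data tending to $0$, $\langle v_p\rangle_{2,\gamma/2,\overline{Q}^{(p)}}^{(4+\gamma)}=\langle u_p\rangle_{2,\gamma/2,\overline{Q}^{+}}^{(4+\gamma)}$ is bounded, the point becomes $y^{(p)}=(0',y_N^{(p)})$ with $y_N^{(p)}=x_N^{(p)}/h_p\ge 1/\varepsilon$, and the extremal relation turns into the \emph{unit}‑step relation $(y_N^{(p)})^{\gamma/2}|\Delta_{1,y_{i_0}}^{10}((y_N^{(p)})^2 D_{y_{i_0}}^4 v_p(y^{(p)},0))|\ge c_0(N)$. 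Subtracting the ``Taylor'' function $Q_p=Q_{v_p}$ of Lemma \ref{Ls1.2} and putting $r_p=v_p-Q_p$, we have $\langle r_p\rangle_{2,\gamma/2,\overline{Q}^{(p)}}^{(4+\gamma)}=\langle v_p\rangle_{2,\gamma/2,\overline{Q}^{(p)}}^{(4+\gamma)}$, while $r_p$ with its low order derivatives vanishes at $(0,0)$ and $(\overline{e},0)$ (see \eqref{s1.62}, \eqref{s1.63}); hence by the Arzel\`a theorem $r_p$ is bounded on compacta in $C_{2,\gamma/2}^{4+\gamma,\frac{4+\gamma}{4}}$ (and precompact in the corresponding class with $\gamma_1<\gamma$), and $|r_p|_{K_R}^{(0)}\le CR^{4+\gamma}$. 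Since $Q_p$ has degree $<14$ in $y_{i_0}$, the extremal relation persists with $r_p$ in place of $v_p$.

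We now pass to the limit, distinguishing two cases by the behaviour of $y_N^{(p)}\ (\ge 1/\varepsilon)$. If $y_N^{(p)}$ stays bounded, along a subsequence $y_N^{(p)}\to y_N^{(0)}\in[1/\varepsilon,\infty)$, $Q^{(p)}$ exhausts $\{y_N\ge 0\}\times I$ with $I$ a half‑line bounded below or $\mathbb{R}$, and $r_p\to r_\infty$ locally in $C^{4+\gamma_1,\frac{4+\gamma_1}{4}}$ away from $\{y_N=0\}$ and in the weighted sense up to it (as in \eqref{s1.69}, \eqref{005.20}); $r_\infty$ solves the homogeneous problem \eqref{002.1}, \eqref{002.2} (and vanishes on the initial slice if $I$ is bounded below), with power growth $|r_\infty|_{K_R}^{(0)}\le CR^{4+\gamma}$. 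By the Liouville Theorem \ref{T.002.1}, $r_\infty$ is a polynomial in $y',\tau$ (identically $0$ in the bounded‑below case); finiteness of $\langle\langle y_N^2 D_{y_{i_0}}^4 r_\infty\rangle\rangle^{(\gamma)(10)}$ (a limit of uniformly bounded seminorms) forces its degree in $y_{i_0}$ to be $<14$, so $\Delta_{1,y_{i_0}}^{10}D_{y_{i_0}}^4 r_\infty\equiv0$; letting $p\to\infty$ in the extremal relation and using $y_N^{(0)}>0$ gives $0\ge c_0(N)$, a contradiction. If $y_N^{(p)}\to\infty$, the boundary recedes; a secondary recentering at $y^{(p)}$ together with a parabolic rescaling makes the weight $y_N^2=(y_N^{(p)})^2\bigl(1+(y_N-y_N^{(p)})/y_N^{(p)}\bigr)^2$ a constant times $1+o(1)$ on bounded sets, so that the limiting equation is the constant coefficient fourth order parabolic one $\partial_s w+c\,\Delta^2 w=0$ in $\mathbb{R}^N\times I$; the classical Liouville theorem for it again yields a polynomial of bounded degree, $\Delta^{10}D^4$ of which in the $y_{i_0}$‑direction vanishes, contradicting the (rescaled) extremal relation. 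This proves \eqref{s1.75}. The hard part is the recession case and the bookkeeping guaranteeing that the limiting rescaled seminorms are finite; the term $C\varepsilon^{\gamma}\langle\langle u\rangle\rangle^{(4+\gamma)(10)}_{2,\gamma/2,x',\overline{Q}^{+}}$ is present precisely so that, after the normalization $\Phi_p=1$, the negated inequality supplies the bound on $\langle\langle u_p\rangle\rangle^{(4+\gamma)(10)}_{2,\gamma/2,x',\overline{Q}^{+}}$ — hence, via the equation, on the whole seminorm — needed for the compactness (any positive coefficient would do for the proof; the value $C\varepsilon^{\gamma}$ is the one needed for the later absorption when this lemma is combined with Lemma \ref{L.5.1}).
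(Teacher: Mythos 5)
Your proposal takes a genuinely different route from the paper, and it has a real gap precisely at its hardest point. The paper does \emph{not} prove Lemma \ref{L.5.2} by contradiction: it writes the tenth difference as a fifth difference of $v=\Delta_{h,x_i}^{5}u$, rescales by $x_N^{0}$ (not by $h$) as in \eqref{s1.79}, observes that for $h\le\varepsilon x_N^{0}$ the whole cylinder $Q_{3/4}$ stays in the region $y_N\in(1/4,7/4)$ where the equation is uniformly parabolic, applies classical interior Schauder estimates \eqref{005.26}, and then returns to the original variables. The smallness $C\varepsilon^{4+\gamma}$ (hence the stated $C\varepsilon^{\gamma}$) is produced explicitly by the elementary bounds \eqref{005.30}, \eqref{005.32}, \eqref{005.35}: a fifth difference over a step $h\le\varepsilon x_N^{0}$ costs a factor $h^{\gamma}$ or $h^{4+\gamma}$, i.e.\ a power of $\varepsilon$. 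In your scheme this mechanism plays no role at all (you yourself note that any positive coefficient in front of $\langle\langle u\rangle\rangle^{(10)}$ would serve, since it is used only to get compactness), and the entire burden is shifted onto a blow-up/Liouville argument in a regime where the paper deliberately avoids it.

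The gap is the recession case $y_N^{(p)}=x_N^{(p)}/h_p\to\infty$, which in the small-step regime ($h_p\le\varepsilon x_N^{(p)}$, so $y_N^{(p)}\ge 1/\varepsilon$ with no upper bound) is the typical case, not an exceptional one. First, the rescaled extremal relation \eqref{s1.60} carries the diverging prefactor $(y_N^{(p)})^{2+\gamma/2}$, so it does \emph{not} give a lower bound on $|\Delta_{1,y_{i_0}}^{10}D_{y_{i_0}}^{4}v_p|$ that survives the limit; a second renormalization of the functions by $(y_N^{(p)})^{-(2+\gamma/2)}$ (and a compatible time rescaling, since the coefficient $y_N^{2}\sim(y_N^{(p)})^{2}$ blows up in the equation) is mandatory, and you do not carry out the bookkeeping showing that the twice-renormalized sequence is bounded on compacta, that the data still tend to zero, and that the unit-step extremal relation persists with a nondegenerate bound. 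Second, the Taylor-type subtraction of Lemma \ref{Ls1.2} is anchored at $(0,0)$ and $(\overline{e},0)$; the resulting growth bound \eqref{005.18} is measured from the original center and becomes useless once the distinguished point escapes to infinity, so one must re-subtract a polynomial adapted to the new center and re-derive a polynomial growth bound for the recentered limit before any Liouville theorem can be invoked. Third, the Liouville theorem you appeal to in this case is for the constant-coefficient operator $\partial_s w+c\Delta^{2}w=0$ on all of $R^{N}\times I$, which is neither Theorem \ref{T.002.1} nor Corollary \ref{C.002.1} (both concern the degenerate half-space operator); it would have to be stated and proved or cited separately, with the growth class matched to what the recentered subtraction actually yields. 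Your bounded-height subcase ($1/\varepsilon\le y_N^{(p)}\le M$) does mirror Lemma \ref{L.5.1} correctly, but without a complete treatment of the recession case the proof does not close, whereas the paper's direct scaling argument settles the small-step regime with none of this machinery and with the explicit $\varepsilon$-power that the statement requires.
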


\begin{proof}

Consider some particular index $i=1,N-1$  and consider
the derivative with respect to tangent variable $x_{i}$, $D_{x_{i}}^{4}u$,%

\[
\left\langle \left\langle x_{N}^{2}D_{x_{i}}^{4}u\right\rangle \right\rangle
_{\gamma/2,x_{i},\overline{Q}^{+}}^{(\gamma)(10)(\varepsilon-)}=
\]

\begin{equation}
=\sup_{(x,t)\in\overline{Q}^{+},h\leq\varepsilon x_{N}}x_{N}^{\gamma/2}%
\frac{|\Delta_{h,x_{i}}^{10}\left(  x_{N}^{2}D_{x_{i}}^{4}u(x,t)\right)
|}{h^{\gamma}},. \label{s1.76}%
\end{equation}
We represent the expression in
\eqref{s1.76} as%

\[
\left\langle \left\langle x_{N}^{2}D_{x_{i}}^{4}u\right\rangle \right\rangle
_{\gamma/2,x_{i},\overline{Q}^{+}}^{(\gamma)(10)(\varepsilon-)}\equiv
\sup_{(x,t)\in\overline{Q}^{+},h\leq\varepsilon x_{N}}x_{N}^{\gamma/2}%
\frac{|\Delta_{h,x_{i}}^{10}\left(  x_{N}^{2}D_{x_{i}}^{4}u(x,t)\right)
|}{h^{\gamma}}=
\]

\begin{equation}
=\sup_{(x,t)\in\overline{Q}^{+},h\leq\varepsilon x_{N}}x_{N}^{\gamma/2}%
\frac{|\Delta_{h,x_{i}}^{5}\left(  x_{N}^{2}D_{x_{i}}^{4}v(x,t)\right)
|}{h^{\gamma}}, \label{s1.77}%
\end{equation}
where $v(x,t)=\Delta_{h,x_{i}}^{5}u(x,t)$.

Note that the function $v(x,t)$ satisfies in $Q^{+}$ the equation%

\[
L_{x,t}v(x,t)\equiv\frac{\partial v}{\partial t}+\nabla_{x}(x_{N}^{2}%
\nabla_{x}\Delta_{x}v)=f_{1}(x,t)\equiv\Delta_{h,x_{i}}^{5}f(x,t),\quad
(x,t)\in Q^{+},
\]
and the initial condition%

\[
v(x,0)=\psi_{1}(x)\equiv\Delta_{h,x_{i}}^{5}\psi(x),\quad x\in\overline{H}.
\]
Let a point $(x,t)=(x_{0},t_{0})=(x_{0}^{\prime},x_{N}^{0},t_{0})$
be fixed and fix also a vector $h>0$, $h\leq\varepsilon x_{N}^{0}$.
Suppose that
$\varepsilon\in(0,1/128)$. Consider the expression%

\begin{equation}
A\equiv\left(  x_{N}^{0}\right)  ^{\gamma/2}\frac{|\left(  x_{N}^{0}\right)
^{2}\Delta_{h,x_{i}}^{5}D_{x_{i}}^{4}v(x_{0},t_{0})|}{h^{\gamma}},\quad
|\alpha|=4. \label{s1.78}%
\end{equation}
Make in the functions $u(x,t)$  and $v(x,t)$ the change of variables
$(x,t)\rightarrow(y,\tau)$, $v(x,t)\rightarrow v(y,\tau)$%

\begin{equation}
x^{\prime}=x_{0}^{\prime}+\left(  x_{N}^{0}\right)  y^{\prime},\quad
x_{N}=\left(  x_{N}^{0}\right)  y_{N},\quad t=t_{0}+\left(  x_{N}^{0}\right)
^{2}\tau\label{s1.79}%
\end{equation}
and denote $d=h/x_{N}^{0}\leq\varepsilon<1/128$, $P_{1}\equiv(y_{0},\tau
_{0})\equiv(0^{\prime},1,0)$, that is $(x_{0},t_{0})\rightarrow(y_{0},\tau
_{0})$. In the new variables the expression $A$ takes the form%

\begin{equation}
A=\left(  x_{N}^{0}\right)  ^{\gamma/2+2-4-\gamma}\frac{|\Delta_{d,y_{i}}%
^{5}D_{y}^{\alpha}v(0^{\prime},1,0)|}{d^{\gamma}}. \label{s1.80}%
\end{equation}
Denote for $\rho<1$%

\begin{equation}
Q_{\rho}\equiv\{(y,\tau)\in Q:|y^{\prime}|\leq\rho,|y_{N}-1|\leq\rho
,|\tau|\leq\rho^{2}\},Q_{\rho}^{+}=Q_{\rho}\cap\{t\geq0\}. \label{005.23}%
\end{equation}
and consider the function $v(y,\tau)$ on the cylinder \ $Q_{3/4}^{+}%
$. On this cylinder $v(y,\tau)$ satisfies
the equation%

\begin{equation}
\frac{\partial v}{\partial\tau}+\nabla_{y}(y_{N}^{2}\nabla_{y}\Delta
_{y}v)=\left(  x_{N}^{0}\right)  ^{2}f_{1}(y,\tau) \label{005.24}%
\end{equation}
and the initial condition%

\begin{equation}
v(y,\tau_{0})=\psi_{1}(y) \label{005.25}%
\end{equation}
if $\tau_{0}\equiv-t_{0}/\left(  x_{N}^{0}\right) ^{2}\geq-3/4$.
Note that since $y_{N}\in(1/4,5/4)$ on $Q_{3/4}$, the function
$v(y,\tau)$
belongs to the usual smooth class $C^{4+\gamma,1+\gamma/4}(\overline{Q}%
_{3/4})$. Consider this function on
$\overline{Q}_{1/4}\subset\overline {Q}_{3/4}$ . Applying known
local estimates for parabolic equations (see, for example,
\cite{SolParab}) we obtain

\begin{equation}
\frac{|\Delta_{d,y_{i}}^{5}D_{y_{i}}^{4}v(0^{\prime},1,0)|}{d^{\gamma}}\leq
C\left\langle D_{y_{i}}^{4}v(y,\tau)\right\rangle _{y_{i},\overline{Q}%
_{1/4}^{+}}^{(\gamma)}\leq\label{005.26}%
\end{equation}

\[
\leq C\left(  \left(  x_{N}^{0}\right)  ^{2}|f_{1}(y,\tau)|_{\overline
{Q}_{1/2}^{+}}^{(\gamma)}+|\psi_{1}(y)|_{\overline{Q}_{1/2}^{+}}^{(4+\gamma
)}+|v(y,\tau)|_{\overline{Q}_{1/2}^{+}}^{(0)}\right)  .
\]
Note that the relation of the norm $|f_{1}(y,\tau)|_{\overline
{Q}_{1/2}}^{(\gamma)}$ in variables $(y,\tau)$ an $(x,t)$ is%

\[
|f_{1}(y,\tau)|_{\overline{Q}_{1/2}^{+}}^{(\gamma)}=\left\langle f_{1}%
(y,\tau)\right\rangle _{y,\overline{Q}_{1/2}^{+}}^{(\gamma)}+\left\langle
f_{1}(y,\tau)\right\rangle _{\tau,\overline{Q}_{1/2}^{+}}^{(\gamma/4)}%
+|f_{1}(y,\tau)|_{\overline{Q}_{1/2}^{+}}^{(0)}=
\]

\begin{equation}
=\left(  x_{N}^{0}\right)  ^{\gamma}\left\langle f_{1}(x,t)\right\rangle
_{x,\overline{Q}_{\left(  1/2\right)  x_{N}^{0}}}^{(\gamma)}+\left(  x_{N}%
^{0}\right)  ^{\gamma/2}\left\langle f_{1}(x,t)\right\rangle _{t,\overline
{Q}_{\left(  1/2\right)  x_{N}^{0}}}^{(\gamma/4)}+|f_{1}(x,t)|_{\overline
{Q}_{\left(  1/2\right)  x_{N}^{0}}}^{(0)}, \label{005.27}%
\end{equation}
where%

\begin{equation}
Q_{\rho x_{N}^{0}}\equiv\{(x,t)\in Q:|x^{\prime}|\leq\rho x_{N}^{0}%
,|x_{N}-x_{N}^{0}|\leq\rho x_{N}^{0},|t-t_{0}|\leq\left(  \rho x_{N}%
^{0}\right)  ^{2}\}\cap\{t\geq0\}. \label{005.28}%
\end{equation}
Analogously,%

\[
|\psi_{1}(y)|_{\overline{Q}_{1/2}^{+}}^{(4+\gamma)}=%
{\displaystyle\sum\limits_{|\alpha|=4}}
\left\langle D_{y}^{\alpha}\psi_{1}(y)\right\rangle _{y,\overline{Q}_{1/2}%
^{+}}^{(\gamma)}+|\psi_{1}(y)|_{\overline{Q}_{1/2}^{+}}^{(0)}=
\]

\begin{equation}
=\left(  x_{N}^{0}\right)  ^{4+\gamma}%
{\displaystyle\sum\limits_{|\alpha|=4}} \left\langle
D_{x}^{\alpha}\psi_{1}(x)\right\rangle _{x,\overline{Q}_{\left(
1/2\right)
x_{N}^{0}}}^{(\gamma)}+|\psi_{1}(x)|_{\overline{Q}_{\left(
1/2\right)  x_{N}^{0}}}^{(0)}. \label{005.29}%
\end{equation}
Now we go back to the variables $(x,t)$ in estimate
\eqref{005.26} \ and obtain ($|\alpha|=4$)%

\[
\left(  x_{N}^{0}\right)  ^{\gamma+4}\frac{|\Delta_{h,x_{i}}^{5}D_{x_{i}}%
^{4}v(x_{0},t_{0})|}{h^{\gamma}}\leq
\]

\begin{equation}
\leq C\left(  \left(  x_{N}^{0}\right)  ^{2+\gamma}\left\langle f_{1}%
(x,t)\right\rangle _{x,\overline{Q}_{\left(  1/2\right)  x_{N}^{0}}}%
^{(\gamma)}+\left(  x_{N}^{0}\right)  ^{2+\gamma/2}\left\langle f_{1}%
(x,t)\right\rangle _{t,\overline{Q}_{\left(  1/2\right)  x_{N}^{0}}}%
^{(\gamma/4)}+\right.  \label{s1.82}%
\end{equation}
\[
+\left(  x_{N}^{0}\right)  ^{2}|f_{1}(x,t)|_{\overline
{Q}_{\left(  1/2\right)  x_{N}^{0}}}^{(0)}+
\]
\[
\left.  +\left(  x_{N}^{0}\right)  ^{4+\gamma}%
{\displaystyle\sum\limits_{|\alpha|=4}} \left\langle
D_{x}^{\alpha}\psi_{1}(x)\right\rangle _{x,\overline{Q}_{\left(
1/2\right)
x_{N}^{0}}}^{(\gamma)}+|\psi_{1}(x)|_{\overline{Q}_{\left(
1/2\right)  x_{N}^{0}}}^{(0)}+|v|_{\overline{Q}_{(1/2)x_{N}^{0}}}%
^{(0)}\right)  .
\]
Before proceeding further with the estimate of the expression $A$ in
\eqref{s1.78}, note that since $x_{N}\sim x_{N}^{0}$ on the set
$\overline {Q}_{(1/2)x_{N}^{0}}$ and  we have just from the
definition of the H\"{o}lder constants%

\begin{equation}
\left(  x_{N}^{0}\right)  ^{2+\gamma/2}\left\langle D_{x}^{4}\psi
_{1}\right\rangle _{x,\overline{Q}_{(1/2)x_{N}^{0}}}^{(\gamma)}\leq  \label{s1.82.1}%
\end{equation}
\[
\leq C\left(
x_{N}^{0}\right)  ^{\gamma/2}\left(  \left\langle x_{N}^{2}D_{x}^{4}\psi
_{1}\right\rangle _{x,\overline{Q}_{(1/2)x_{N}^{0}}}^{(\gamma)}+\left(
x_{N}^{0}\right)  ^{2-\gamma}|D_{x}^{4}\psi_{1}|_{\overline{Q}_{(1/2)x_{N}%
^{0}}}^{(0)}\right)  .
\]
And at the same time%

\begin{equation}
|D_{x}^{4}\psi_{1}|_{\overline{Q}_{(1/4)x_{N}^{0}}}^{(0)}=|\Delta_{h,x_{i}%
}^{5}D_{x}^{4}\psi(x)|_{\overline{Q}_{(1/4)x_{N}^{0}}}^{(0)}\leq Ch^{\gamma
}\left\langle x_{N}^{2}D_{x}^{4}\psi\right\rangle _{x^{\prime},\overline
{Q}_{(3/4)x_{N}^{0}}}^{(\gamma)}\leq \label{005.30}%
\end{equation}
\[
\leq C\varepsilon^{\gamma}\left(  x_{N}%
^{0}\right)  ^{\gamma}\left\langle x_{N}^{2}D_{x}^{4}\psi\right\rangle
_{x^{\prime},\overline{Q}_{(3/4)x_{N}^{0}}}^{(\gamma)},
\]
and since%
\[
\left(  x_{N}^{0}\right)  ^{\gamma/2}\left\langle x_{N}^{2}D_{x}^{4}%
\psi\right\rangle _{x^{\prime},\overline{Q}_{(3/4)x_{N}^{0}}}^{(\gamma)}\leq
C\left\langle \psi\right\rangle _{2,\gamma/2,\overline{Q}_{(3/4)x_{N}^{0}}%
}^{(4+\gamma)},
\]
we have%

\begin{equation}
\left(  x_{N}^{0}\right)  ^{2+\gamma/2}\left\langle D_{x}^{4}\psi
_{1}\right\rangle _{x,\overline{Q}_{(1/2)x_{N}^{0}}}^{(\gamma)}\leq
C\left\langle \psi\right\rangle _{2,\gamma/2,\overline{Q}_{(3/4)x_{N}^{0}}%
}^{(4+\gamma)}. \label{005.31}%
\end{equation}
Analogously, since the difference
$\Delta_{\overline{h}^{\prime}}^{5}$ is
taken with respect to the tangent variables $x^{\prime}$ only%

\begin{equation}
|\psi_{1}|_{\overline{Q}_{\left(  1/2\right)  x_{N}^{0}}}^{(0)}=|\Delta
_{h,x_{i}}^{5}\psi|_{\overline{Q}_{\left(  1/2\right)  x_{N}^{0}}}^{(0)}\leq
Ch^{4+\gamma}\left\langle D_{x^{\prime}}^{4}\psi\right\rangle _{x^{\prime
},\overline{Q}_{(3/4)x_{N}^{0}}}^{(\gamma)}\leq C\varepsilon^{4+\gamma
}\left\langle \psi\right\rangle _{2,\gamma/2,\overline{Q}_{(3/4)x_{N}^{0}}%
}^{(4+\gamma)}. \label{005.32}%
\end{equation}
By the exactly same reasonings we obtain%

\[
\left(  x_{N}^{0}\right)  ^{2}|f_{1}(x,t)|_{\overline{Q}_{\left(  1/2\right)
x_{N}^{0}}}^{(0)}=\left(  x_{N}^{0}\right)  ^{2}|\Delta_{h,x_{i}}%
^{5}f(x,t)|_{\overline{Q}_{\left(  1/2\right)  x_{N}^{0}}}^{(0)}\leq\left(
x_{N}^{0}\right)  ^{2}h^{\gamma}\left\langle f(x,t)\right\rangle
_{x,\overline{Q}_{\left(  3/4\right)  x_{N}^{0}}}^{(\gamma)}\leq
\]

\begin{equation}
\leq\varepsilon^{\gamma}\left(  x_{N}^{0}\right)  ^{2+\gamma/2}\left(  \left(
x_{N}^{0}\right)  ^{\gamma/2}\left\langle f(x,t)\right\rangle _{x,\overline
{Q}_{\left(  3/4\right)  x_{N}^{0}}}^{(\gamma)}\right)  \leq C\left(
x_{N}^{0}\right)  ^{2+\gamma/2}\left\langle f(x,t)\right\rangle _{\gamma
/2,x,\overline{Q}_{\left(  3/4\right)  x_{N}^{0}}}^{(\gamma)}, \label{005.33}%
\end{equation}

\begin{equation}
\left(  x_{N}^{0}\right)  ^{2+\gamma}\left\langle f_{1}(x,t)\right\rangle
_{x,\overline{Q}_{\left(  1/2\right)  x_{N}^{0}}}^{(\gamma)}\leq C\left(
x_{N}^{0}\right)  ^{2+\gamma/2}\left\langle f(x,t)\right\rangle _{x\gamma
/2,,\overline{Q}_{\left(  3/4\right)  x_{N}^{0}}}^{(\gamma)}, \label{005.34}%
\end{equation}
and, at last, as in
\eqref{005.32}, since $h\leq\varepsilon x_{N}^{0}$%

\begin{equation}
|v|_{\overline{Q}_{(1/2)x_{N}^{0}}}^{(0)}=|\Delta_{h,x_{i}}^{5}u|_{\overline
{Q}_{\left(  1/2\right)  x_{N}^{0}}}^{(0)}\leq Ch^{4+\gamma}\left\langle
D_{x_{i}}^{4}u\right\rangle _{x_{i},\overline{Q}_{(3/4)x_{N}^{0}}}^{(\gamma
)}\leq  \label{005.35}%
\end{equation}
\[
\leq C\varepsilon^{4+\gamma}\left(  x_{N}^{0}\right)  ^{2+\gamma
/2}\left\langle x_{N}^{2}D_{x_{i}}^{4}u\right\rangle _{\gamma/2,x^{\prime
},\overline{Q}_{(3/4)x_{N}^{0}}}^{(\gamma)}.
\]
Substituting estimates \eqref{005.31}- \eqref{005.35} in
\eqref{s1.82} and dividing both parts
 by $\left(  x_{N}^{0}\right)  ^{2+\gamma/2}$, we obtain%

\begin{equation}
\left(  x_{N}^{0}\right)  ^{\gamma/2}\frac{|\Delta_{h,x_{i}}^{10}\left(
\left(  x_{N}^{0}\right)  ^{2}D_{x_{i}}^{4}u(x_{0},t_{0})\right)  |}%
{h^{\gamma}}\leq\label{005.36}%
\end{equation}

\[
\leq C\left(  \left\langle f(x,t)\right\rangle _{x\gamma/2,,\overline
{Q}_{\left(  3/4\right)  x_{N}^{0}}}^{(\gamma)}+\left\langle \psi\right\rangle
_{2,\gamma/2,\overline{Q}_{(3/4)x_{N}^{0}}}^{(4+\gamma)}\right)
+
\]

\[
+C\varepsilon^{4+\gamma}%
{\displaystyle\sum\limits_{|\alpha|=4,\alpha_{N}=0}}
\left\langle x_{N}^{2}D_{x_{i}}^{4}u\right\rangle _{\gamma/2,x_{i}%
,\overline{Q}_{(3/4)x_{N}^{0}}}^{(\gamma)}\leq
\]

\[
\leq C\left(  \left\langle f(x,t)\right\rangle _{x\gamma/2,,\overline
{Q}_{\left(  3/4\right)  x_{N}^{0}}}^{(\gamma)}+\left\langle \psi\right\rangle
_{2,\gamma/2,\overline{Q}_{(3/4)x_{N}^{0}}}^{(4+\gamma)}\right)
+C\varepsilon^{4+\gamma}\left\langle \left\langle u\right\rangle \right\rangle
_{2,\gamma/2,x^{\prime}\overline{Q}^{+}}^{(4+\gamma)(10)}.
\]
Since the point $(x_{0},t_{0})$, the step $h$ and the index $i$ are
arbitrary, the last estimate proves the lemma.

\end{proof}

\begin{lemma}
\label{L.5.3}

Let a function $u(x,t)\in
C_{2,\gamma/2}^{4+\gamma,\frac{4+\gamma}{4}}(Q^{+})$ has compact
support in the set $Q_{R}^{+}=Q^{+}\cap\{x_{N}\leq R\}$, $R>0$, and
satisfy relations \eqref{005.6}, \eqref{005.8}. Then for
$0<\varepsilon<\varepsilon_{0}$, where
$\varepsilon_{0}\in(0,1)$ is an absolute constant,%

\begin{equation}
\left\langle \left\langle D_{t}u\right\rangle \right\rangle _{t,Q^{+}%
}^{(\gamma/4)(10)(\varepsilon-)}\leq C\left(  |\psi|_{2,\gamma/2,R^{N}%
}^{(4+\gamma)}+|f|_{\gamma/2,Q^{+}}^{(\gamma)}\right)  + \label{005.38}%
\end{equation}

\[
+C\varepsilon^{\delta}\left( {\displaystyle\sum\limits_{i=1}^{N-1}}
\left\langle \left\langle x_{N}^{2}D_{x_{i}}^{4}u\right\rangle
\right\rangle _{x_{i},Q^{+}}^{(\gamma),(10)}+\left\langle \left\langle
D_{t}u\right\rangle \right\rangle _{t,Q^{+}}^{(\gamma/4),(10)}\right)
+C\varepsilon^{\delta
}\left\langle x_{N}^{2}D_{x_{N}}^{4}u\right\rangle _{x_{N},Q^{+}}%
^{(\gamma),(10)}.
\]

\end{lemma}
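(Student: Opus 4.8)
I would run the scheme of Lemma~\ref{L.5.2}, but with a time increment in place of a tangential space increment. So I would fix a point $(x_{0},t_{0})=(x_{0}^{\prime},x_{N}^{0},t_{0})\in Q^{+}$ with $x_{N}^{0}>0$ and a step $h$ with $0<h\leq\varepsilon x_{N}^{2}\leq\varepsilon(x_{N}^{0})^{2}$, and estimate
\[
A\equiv\frac{|\Delta_{h,t}^{10}D_{t}u(x_{0},t_{0})|}{h^{\gamma/4}}.
\]
Writing $\Delta_{h,t}^{10}=\Delta_{h,t}^{5}\circ\Delta_{h,t}^{5}$ and setting $v(x,t)\equiv\Delta_{h,t}^{5}u(x,t)$, the function $v$ again satisfies an equation of the form \eqref{005.6} with right hand side $\Delta_{h,t}^{5}f$ and an initial condition obtained by restricting $\Delta_{h,t}^{5}u$ to $t=0$. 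Then I would perform the parabolic rescaling $x^{\prime}=x_{0}^{\prime}+x_{N}^{0}y^{\prime}$, $x_{N}=x_{N}^{0}y_{N}$, $t=t_{0}+(x_{N}^{0})^{2}\tau$, which carries $h$ into $d=h/(x_{N}^{0})^{2}\leq\varepsilon$ and the base point into $P_{1}=(0^{\prime},1,0)$. On the cylinder $Q_{3/4}^{+}$ around $P_{1}$ one has $y_{N}\in(1/4,5/4)$, so the rescaled operator $\partial_{\tau}+\nabla_{y}(y_{N}^{2}\nabla_{y}\Delta_{y}\,\cdot\,)$ is smooth and uniformly parabolic there and the face $\{y_{N}=0\}$ is never reached; this is why Lemma~\ref{L.5.3}, unlike Lemma~\ref{L.5.1}, requires no boundary condition.

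Next I would apply the classical interior Schauder estimate for fourth order parabolic equations (\cite{SolParab}) on $Q_{1/4}^{+}\subset Q_{1/2}^{+}$, supplemented, in the case $\tau_{0}\equiv-t_{0}/(x_{N}^{0})^{2}>-3/4$ when the rescaled cylinder reaches the initial plane, by the corresponding estimate up to the initial plane. This would control $\langle D_{\tau}v\rangle_{\tau,Q_{1/4}^{+}}^{(\gamma/4)}$ by the $C^{\gamma,\gamma/4}$-norm of the rescaled right hand side $(x_{N}^{0})^{2}\Delta_{h,t}^{5}f$ on $Q_{1/2}^{+}$, plus (only near the initial plane) the $C^{4+\gamma}$-norm in $y$ of $v|_{\tau=\tau_{0}}=\Delta_{h,t}^{5}u|_{t=0}$, plus $|v|_{Q_{1/2}^{+}}^{(0)}$. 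Undoing the rescaling and using that in this scaling $A\leq C(x_{N}^{0})^{-2-\gamma/2}\langle D_{\tau}v\rangle_{\tau,Q_{1/4}^{+}}^{(\gamma/4)}$, I would re-express each term over the concentric cylinder $\overline{Q}_{(3/4)x_{N}^{0}}$ of \eqref{005.28}, tracking the powers of $x_{N}^{0}$ exactly as in \eqref{s1.82}--\eqref{005.36}. The contribution of $f$ then assembles, after the weight powers are accounted for, to $C\langle f\rangle_{\gamma/2,Q^{+}}^{(\gamma)}$ with no loss of smallness: the negative power of $x_{N}^{0}$ is absorbed using $|\Delta_{h,t}^{5}f|^{(0)}\leq Ch^{\gamma/4}\langle f\rangle_{t}^{(\gamma/4)}$ together with $h\leq\varepsilon x_{N}^{2}$; and the part of the initial term inherited from $\psi$ assembles to $C|\psi|_{2,\gamma/2,R^{N}}^{(4+\gamma)}$.

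The main obstacle is to show that the two genuinely "bad" contributions --- $|v|^{(0)}=|\Delta_{h,t}^{5}u|^{(0)}$ and the part of the $C^{4+\gamma}$-norm of $v|_{\tau=\tau_{0}}$ not controlled by $\psi$ --- come out carrying a positive power $\varepsilon^{\delta}$ in front of the full seminorms $\sum_{i<N}\langle\langle x_{N}^{2}D_{x_{i}}^{4}u\rangle\rangle_{x_{i},Q^{+}}^{(\gamma)(10)}$, $\langle\langle D_{t}u\rangle\rangle_{t,Q^{+}}^{(\gamma/4)(10)}$ and $\langle x_{N}^{2}D_{x_{N}}^{4}u\rangle_{x_{N},Q^{+}}^{(\gamma)(10)}$. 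For $|v|^{(0)}$ I would write $\Delta_{h,t}^{5}u=\Delta_{h,t}^{4}(\Delta_{h,t}u)$ with $\Delta_{h,t}u=h\int_{0}^{1}D_{t}u(\cdot,t+\theta h)\,d\theta$ and use the $\gamma/4$-H\"{o}lder continuity of $D_{t}u$ in $t$ on $\overline{Q}_{(3/4)x_{N}^{0}}$, getting $|v|^{(0)}\leq Ch^{1+\gamma/4}\langle D_{t}u\rangle_{t}^{(\gamma/4)}\leq C\varepsilon^{1+\gamma/4}(x_{N}^{0})^{2+\gamma/2}\langle D_{t}u\rangle_{t}^{(\gamma/4)}$, so that after dividing by $(x_{N}^{0})^{2+\gamma/2}$ and invoking \eqref{eqv} this becomes $C\varepsilon^{1+\gamma/4}\langle\langle D_{t}u\rangle\rangle_{t,Q^{+}}^{(\gamma/4)(10)}$. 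The delicate point is the trace $v|_{\tau=\tau_{0}}$ near the initial plane: here $\Delta_{h,t}^{5}u|_{t=0}$ is \emph{not} $\psi$, since it involves $u$ at small positive times, so I would exploit the combinatorial cancellation of $\Delta_{h,t}^{5}$ (which annihilates polynomials in $t$ of degree $\leq4$, in particular the linear-in-$t$ Taylor term $t\,D_{t}u(\cdot,0)$ determined through the equation by $\psi$ and $f$) together with the $\gamma/4$-H\"{o}lder regularity in $t$ of $u$ and of the weighted derivatives $x_{N}^{2}D_{x}^{\alpha}u$, $|\alpha|\leq4$, to peel off a factor $h^{\gamma/4}\leq\varepsilon^{\gamma/4}(x_{N}^{0})^{\gamma/2}$ from each difference; converting the resulting local weighted norms into the global $(10)$-seminorms and dividing by $(x_{N}^{0})^{2+\gamma/2}$ then leaves precisely $C\varepsilon^{\gamma/4}$ times $\langle x_{N}^{2}D_{x_{N}}^{4}u\rangle^{(10)}$ and $\sum_{i<N}\langle\langle x_{N}^{2}D_{x_{i}}^{4}u\rangle\rangle^{(10)}$. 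These weighted difference manipulations are technical but of the same routine flavour as the proofs of Lemma~\ref{L.2.1} and Theorem~\ref{Ts6.3}; collecting all the estimates, taking the supremum over $(x_{0},t_{0})$ and $h$, and setting $\delta=\gamma/4$ gives \eqref{005.38}.
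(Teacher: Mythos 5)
Your treatment of base points far from the initial plane is essentially the paper's Case 1 (there the paper simply requires $t_{0}\geq 20\varepsilon^{2}(x_{N}^{(0)})^{2}$, so that all ten shifted times stay positive and a purely interior estimate applies; your bound $|\Delta_{h,t}^{5}u|^{(0)}\leq Ch^{1+\gamma/4}\langle D_{t}u\rangle_{t}^{(\gamma/4)}$ is the right analogue of \eqref{005.35}). The genuine gap is in your treatment of points whose rescaled cylinder reaches the initial plane. There you invoke a Schauder estimate up to the initial plane, which requires the full $C^{4+\gamma}$-norm of the trace $v|_{t=0}=\Delta_{h,t}^{5}u|_{t=0}$, and you claim this norm acquires a factor $h^{\gamma/4}$ by the cancellation of $\Delta_{h,t}^{5}$. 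That works only for the sup-norm part: $|\Delta_{h,t}^{5}(x_{N}^{2}D_{x}^{4}u)(\cdot,0)|\leq Ch^{\gamma/4}\langle x_{N}^{2}D_{x}^{4}u\rangle_{t}^{(\gamma/4)}$. For the H\"{o}lder-in-$x$ seminorm of the fourth derivatives of the trace there is no such gain: from the class one only has $|\Delta_{h,t}\Delta_{k,x}(x_{N}^{2}D_{x}^{4}u)|\leq C\min\{h^{\gamma/4},|k|^{\gamma}\}$, so a factor $h^{\theta\gamma/4}$ can be extracted only at the price of lowering the spatial H\"{o}lder exponent below $\gamma$, which the Schauder estimate does not tolerate (and lowering the exponent reintroduces a negative power of $d=h/(x_{N}^{0})^{2}$ when you return to the target quotient $|\Delta_{h,t}^{10}D_{t}u|/h^{\gamma/4}$). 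Consequently, that initial-data term enters your estimate as $C$ times the full seminorm $\langle x_{N}^{2}D_{x}^{4}u\rangle_{\gamma/2,x}^{(\gamma)}$ with no factor $\varepsilon^{\delta}$, and the resulting inequality is useless for the absorption performed in Proposition \ref{P.5.1}.

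The paper avoids this obstacle by a different mechanism in the near-initial case $t_{0}<20\varepsilon^{2}(x_{N}^{(0)})^{2}$: it first reduces to zero initial data and to $f(x_{0},0)=0$ (replacing $u$ by $u-\psi$ and then by $u-tf(x_{0},0)$, so no initial-data norm ever appears in the local estimate and $D_{\tau}u$ vanishes at the base point), and then works on a cylinder whose temporal height after rescaling is only $O(\varepsilon^{2})$, not $O(1)$ as in your $Q_{3/4}^{+}$. The smallness then comes from the thin slab, via $|u|^{(0)}\leq C\varepsilon^{2}|D_{\tau}u|^{(0)}$ together with $|D_{\tau}u|^{(0)}\leq C(\langle D_{\tau}u\rangle_{y}^{(\gamma)}+\langle D_{\tau}u\rangle_{\tau}^{(\gamma/4)})$, leading to \eqref{005.50}, and finally the interpolation inequality \eqref{s1.6.6} converts $\langle D_{t}u\rangle_{\gamma/2,x}^{(\gamma)}$ into the seminorms on the right of \eqref{005.38} with a positive power of $\varepsilon$. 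To repair your argument you would need to adopt this case splitting and thin-cylinder device (or otherwise eliminate the initial-data term), since the difference-cancellation alone cannot produce the required $\varepsilon^{\delta}$ in front of the top-order H\"{o}lder seminorms.
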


\begin{proof}
 Let a point $(x_{0},t_{0})\in Q^{+}$ be fixed and fix $h>0$,%

\begin{equation}
x_{0}=(x_{0}^{\prime},x_{N}^{(0)}),x_{N}^{(0)}>0,\quad0<h<\varepsilon
^{2}\left(  x_{N}^{(0)}\right)  ^{2}\text{.} \label{005.39}%
\end{equation}
We consider separately two cases of the value of $t_{0}$. Let first%

\begin{equation}
t_{0}\geq20\varepsilon^{2}\left(  x_{N}^{(0)}\right)  ^{2}. \label{005.40}%
\end{equation}
Then we can proceed exactly as in the Lemma
\ref{L.5.2} and consider the expression%

\begin{equation}
A\equiv\frac{|\Delta_{h,t}^{10}D_{t}u(x_{0},t_{0})|}{h^{\gamma/4}}%
=\frac{|\Delta_{h,t}^{5}D_{t}v(x_{0},t_{0})|}{h^{\gamma/4}}, \label{005.41}%
\end{equation}
where $v(x,t)=$ $\Delta_{h,t}^{5}u(x,t)$.  As in Lemma \ref{L.5.2},
consider $v(x,t)$ on the cylinder

$Q_{x_{N}^{(0)}}=Q_{1\cdot x_{N}^{(0)}}(x_{0},t_{0})$ from
\eqref{005.28}. Since $t_{0}\geq20\varepsilon^{2}\left(
x_{N}^{(0)}\right) ^{2}$  this cylinder is included in the set
$Q^{+}\cap\{t\geq19\varepsilon^{2}\left(  x_{N}^{(0)}\right)
^{2}\}$ . Moreover since\ $h<\varepsilon^{2}\left(
x_{N}^{(0)}\right)  ^{2}$, for
\ $(x,t)\in Q_{x_{N}^{(0)}}(x_{0},t_{0})$ we have %

\[
t_{i}=t\pm ih\geq9\varepsilon^{2}\left(  x_{N}^{(0)}\right)  ^{2}%
>0,\text{\quad}i=\overline{0,10}.
\]
Thus for  $(x,t)\in Q_{x_{N}^{(0)}}(x_{0},t_{0})$ the arguments of
the functions $u$ , $D_{t}u$, and $D_{x}^{\alpha}u$ in the
expressions $\Delta_{h,t}^{10}u(x,t)$ ,
$\Delta_{h,t}^{10}D_{t}u(x,t)$, and $\Delta
_{h,t}^{10}D_{x}^{\alpha}u$ belong to the set
$Q^{+}\cap\{t\geq9\varepsilon ^{2}\left(  x_{N}^{(0)}\right)
^{2}>0\}$. Therefore we can consider the equation \eqref{005.6} for
the function $v(x,t)=$ $\Delta_{h,t}^{5}u(x,t)$ on
$Q_{x_{N}^{(0)}}(x_{0},t_{0})$ without initial and boundary data.
This permits to estimate the expression $A$ from \eqref{005.41}
exactly as it was done in Lemma
\ref{L.5.2} and we obtain%

\begin{equation}
A\equiv\frac{|\Delta_{h,t}^{10}D_{t}u(x_{0},t_{0})|}{h^{\gamma/4}}\leq
C\left\langle f(x,t)\right\rangle _{x\gamma/2,,\overline{Q}_{\left(
3/4\right)  x_{N}^{0}}}^{(\gamma)}+C\varepsilon^{\delta}\left\langle
D_{t}u\right\rangle _{t,\overline{Q}^{+}}^{(\gamma/4)}\leq\label{005.42}%
\end{equation}

\[
\leq C\left\langle f(x,t)\right\rangle _{x\gamma/2,,\overline{Q}_{\left(
3/4\right)  x_{N}^{0}}}^{(\gamma)}+C\varepsilon^{\delta}\left\langle
\left\langle u\right\rangle \right\rangle _{2,\gamma/2,\overline{Q}^{+}%
}^{(4+\gamma)(10)}.
\]

Consider now the case%

\begin{equation}
t_{0}<20\varepsilon^{2}\left(  x_{N}^{(0)}\right)  ^{2}. \label{005.43}%
\end{equation}
Note that $\left\langle \left\langle D_{t}u\right\rangle
\right\rangle _{t,Q^{+}}^{(\gamma/4)(10)(\varepsilon-)}=\left\langle
\left\langle D_{t}\left(  u-\psi\right)  \right\rangle \right\rangle
_{t,Q^{+}}^{(\gamma/4)(10)(\varepsilon-)}$. Besides the function
$v=u-\psi$ satisfies relation \eqref{005.6} with the righthand side
$f_{1}=f-\nabla(x_{N}^{2}\nabla \Delta\psi)$ and relation
\eqref{005.8} with $\ \psi_{1}\equiv0\ $. Thus considering the
function $u-\psi$ instead of $u$ we can assume that $\psi\equiv0$ in
\eqref{005.8}.

Further, since the support of $u(x,t)\subset\{x_{N}\leq R\}$, we can
consider only such $x_{N}$ when estimate
$\left\langle \left\langle D_{t}u\right\rangle \right\rangle _{t,Q^{+}%
}^{(\gamma/4)(10)(\varepsilon-)(\mu)}$. Consider the function
$u(x,t)$ in a neibourhood of $(x_{0},t_{0})$. We can assume that
$\partial u/\partial t(x_{0},0)=0$ and $f(x_{0},0)=0$. If it is not
the case, we can consider the function $v=u(x,t)-tf(x_{0},0)$
instead of $u(x,t)$. For such function the right hand side of
\eqref{005.6} become $f_{1}=f(x,t)-f(x_{0},0)$ with the desired
property. So we assume that the function $u(x,t)$ satisfies the
relations

\begin{equation}
\frac{\partial u}{\partial t}+\nabla_{x}(x_{N}^{2}\nabla_{x}\Delta
_{x}u)=f(x,t),\quad f(x_{0},0)=0,\quad(x,t)\in\widetilde{Q}_{\frac{3}{4}%
x_{N}^{(0)}}(x_{0},t_{0}), \label{005.40.1}%
\end{equation}

\begin{equation}
u(x,0)\equiv0, \label{005.41.1}%
\end{equation}
where $\widetilde{Q}_{\rho x_{N}^{(0)}}(x_{0},t_{0})$ and $\widetilde{Q}%
_{\rho}^{(x_{0},t_{0})}$ are defined as ($\rho\in(0,1)$)%

\[
\widetilde{Q}_{\rho x_{N}^{(0)}}(x_{0},t_{0})=
\]
\begin{equation}
=\{(x,t)\in Q^{+}:|x^{\prime
}-x_{0}^{\prime}|\leq\rho x_{N}^{(0)},|x_{N}-x_{N}^{0}|\leq\rho x_{N}%
^{(0)},|t-t_{0}|\leq60\rho^{2}\varepsilon^{2}\left(  x_{N}^{(0)}\right)
^{2}\}, \label{005.42.1}%
\end{equation}

\begin{equation}
\widetilde{Q}_{\rho}^{(x_{0},t_{0})}=\{(y,\tau)\in Q^{+}:|y^{\prime}|\leq
\rho,|y_{N}-1|\leq\rho,|\tau|\leq60\varepsilon^{2}\rho^{2},\tau\geq\tau
_{0}=-t_{0}/\left(  x_{N}^{(0)}\right)  ^{2}\}. \label{005.43.1}%
\end{equation}
Make in relations \eqref{005.40.1}, \eqref{005.41.1} the change of
the variables \eqref{s1.79}. These relations
take the form%

\begin{equation}
\frac{\partial u}{\partial\tau}+\nabla_{y}(y_{N}^{2}\nabla_{y}\Delta
_{y}u)=\left(  x_{N}^{(0)}\right)  ^{2}f(y,\tau),\quad f(P_{0},-\tau
_{0})=0,\quad(y,\tau)\in\widetilde{Q}_{\frac{3}{4}}^{(x_{0},t_{0})},
\label{005.44}%
\end{equation}

\begin{equation}
u(y,-\tau_{0})\equiv0,\quad-\tau_{0}=-t_{0}/\left(  x_{N}^{(0)}\right)
^{2}\geq-20\varepsilon^{2}, \label{005.45}%
\end{equation}
where the point $P_{0}=(0^{\prime},1)$.
Denote%

\[
Q_{1/2}^{(x_{0},t_{0})}=\widetilde{Q}_{\frac{3}{4}}^{(x_{0},t_{0})}%
\cap\{|y^{\prime}|\leq1/2,|y_{N}-1|\leq1/2\}.
\]
Since $y_{N}\in\lbrack1/4,7/4]$ on \ $\widetilde{Q}_{\frac{3}{4}}%
^{(x_{0},t_{0})}$,  from classical local interior estimates for
parabolic initial value problems with respect to spatial variables
it follows that with some absolute constant $C>0$ (see, for example,
\cite{SolParab})%

\begin{equation}
\left\langle D_{\tau}u\right\rangle _{\tau,Q_{1/2}^{(x_{0},t_{0})}}%
^{(\gamma/4)}\leq C\left(  \left(  x_{N}^{(0)}\right)  ^{2}|f|_{\gamma
/2,\widetilde{Q}_{\frac{3}{4}}^{(x_{0},t_{0})}}^{(\gamma)}+|u|_{\widetilde
{Q}_{\frac{3}{4}}^{(x_{0},t_{0})}}^{(0)}\right)  \leq\label{005.46}%
\end{equation}

\[
\leq C\left(  \left(  x_{N}^{(0)}\right)  ^{2}\left\langle f\right\rangle
_{y,\widetilde{Q}_{\frac{3}{4}}^{(x_{0},t_{0})}}^{(\gamma)}+\left(
x_{N}^{(0)}\right)  ^{2}\left\langle f\right\rangle _{\tau,\widetilde
{Q}_{\frac{3}{4}}^{(x_{0},t_{0})}}^{(\gamma/4)}\right)  +C|u|_{\widetilde
{Q}_{\frac{3}{4}}^{(x_{0},t_{0})}}^{(0)}.
\]
since%

\[
|f|_{\widetilde{Q}_{\frac{3}{4}}^{(x_{0},t_{0})}}^{(0)}\leq C\left(
\left\langle f\right\rangle _{y,\widetilde{Q}_{\frac{3}{4}}^{(x_{0},t_{0})}%
}^{(\gamma)}+\left\langle f\right\rangle _{\tau,\widetilde{Q}_{\frac{3}{4}%
}^{(x_{0},t_{0})}}^{(\gamma/4)}\right)
\]
in view of the property $f(P_{0},-\tau_{0})$\bigskip$=0$.
The height $H_{Q}$ of the cylinder $\widetilde{Q}_{\frac{3}{4}}^{(x_{0}%
,t_{0})}$ is equal $H_{Q}=\tau_{0}+45\varepsilon^{2}\leq20\varepsilon
^{2}+45\varepsilon^{2}=C\varepsilon^{2}$.
Since $u(y,-\tau_{0})\equiv0$, we have%

\begin{equation}
|u|_{\widetilde{Q}_{\frac{3}{4}}^{(x_{0},t_{0})}}^{(0)}\leq|%
{\displaystyle\int\limits_{-\tau_{0}}^{\tau}}
D_{\tau}u(y,\theta)d\theta|_{\widetilde{Q}_{\frac{3}{4}}^{(x_{0},t_{0})}%
}^{(0)}\leq H_{Q}|D_{\tau}u|_{\widetilde{Q}_{\frac{3}{4}}^{(x_{0},t_{0})}%
}^{(0)}\leq C\varepsilon^{2}|D_{\tau}u|_{\widetilde{Q}_{\frac{3}{4}}%
^{(x_{0},t_{0})}}^{(0)}. \label{005.47}%
\end{equation}
On the other hand, from relations \eqref{005.44},
\eqref{005.45} it follows that%

\[
D_{\tau}u(P_{0},-\tau_{0})=\left(  x_{N}^{(0)}\right)  ^{2}f(P_{0},-\tau
_{0})=0.
\]
Thus%

\begin{equation}
|D_{\tau}u|_{\widetilde{Q}_{\frac{3}{4}}^{(x_{0},t_{0})}}^{(0)}\leq C\left(
\left\langle D_{\tau}u\right\rangle _{y,\widetilde{Q}_{\frac{3}{4}}%
^{(x_{0},t_{0})}}^{(\gamma)}+\left\langle D_{\tau}u\right\rangle
_{\tau,\widetilde{Q}_{\frac{3}{4}}^{(x_{0},t_{0})}}^{(\gamma/4)}\right)  .
\label{005.48}%
\end{equation}
Substituting \eqref{005.47}, \eqref{005.48} in
\eqref{005.46}, we obtain%

\[
\left\langle D_{\tau}u\right\rangle _{\tau,Q_{1/2}^{(x_{0},t_{0})}}%
^{(\gamma/4)}\leq C\left(  \left(  x_{N}^{(0)}\right)  ^{2}\left\langle
f\right\rangle _{y,\widetilde{Q}_{\frac{3}{4}}^{(x_{0},t_{0})}}^{(\gamma
)}+\left(  x_{N}^{(0)}\right)  ^{2}\left\langle f\right\rangle _{\tau
,\widetilde{Q}_{\frac{3}{4}}^{(x_{0},t_{0})}}^{(\gamma/4)}\right)  +
\]

\begin{equation}
+C\varepsilon^{2}\left(  \left\langle D_{\tau}u\right\rangle _{y,\widetilde
{Q}_{\frac{3}{4}}^{(x_{0},t_{0})}}^{(\gamma)}+\left\langle D_{\tau
}u\right\rangle _{\tau,\widetilde{Q}_{\frac{3}{4}}^{(x_{0},t_{0})}}%
^{(\gamma/4)}\right)  . \label{005.49}%
\end{equation}
Going back to the variables $(x,t)$ in \eqref{005.49}, dividing both
parts by $\left(  x_{N}^{(0)}\right) ^{2+\gamma/2}$, and repeating
the reasoning of Lemma
\ref{L.5.2}, we arrive at%

\begin{equation}
\left\langle D_{t}u\right\rangle _{t,Q_{x_{N}^{0}/2}(x_{0},t_{0})}%
^{(\gamma/4)}\leq C\left\langle f\right\rangle _{\gamma/2,Q^{+}}^{(\gamma
)}+C\varepsilon^{2}\left\langle D_{\tau}u\right\rangle _{\gamma/2,Q^{+}%
}^{(\gamma)}, \label{005.50}%
\end{equation}
where%

\[
Q_{x_{N}^{0}/2}(x_{0},t_{0})=\{(x,t)\in Q^{+}:|x^{\prime}-x_{0}^{\prime}|\leq
x_{N}^{(0)}/2,|x_{N}-x_{N}^{0}|\leq x_{N}^{(0)}/2,|t-t_{0}|\leq15\varepsilon
^{2}\left(  x_{N}^{(0)}\right)  ^{2}\}.
\]
Taking into account interpolation inequality
\eqref{s1.6.6} and the fact that $h\leq\varepsilon^{2}\left(  x_{N}%
^{(0)}\right)  $ in \eqref{005.41}, we obtain

\[
A\equiv\frac{|\Delta_{h,t}^{10}D_{t}u(x_{0},t_{0})|}{h^{\gamma/4}}\leq
C\left\langle f\right\rangle _{\gamma/2,Q^{+}}^{(\gamma)}+
\]

\[
+C\varepsilon^{\delta}\left( {\displaystyle\sum\limits_{i=1}^{N-1}}
\left\langle \left\langle x_{N}^{2}D_{x_{i}}^{4}u\right\rangle
\right\rangle _{x_{i},Q^{+}}^{(\gamma),10}+\left\langle \left\langle
D_{t}u\right\rangle \right\rangle _{t,Q^{+}}^{(\gamma/4),10}\right)
+C\varepsilon^{\delta
}\left\langle x_{N}^{2}D_{x_{N}}^{4}u\right\rangle _{x_{N},Q^{+}}%
^{(\gamma),10}.
\]
Since $(x_{0},t_{0})$ and $h$ are arbitrary, we infer \eqref{005.38}
from the last inequality and this proves the lemma.
\end{proof}

\begin{proposition}
\label{P.5.1}

Let functions $f(x,t)$, $g(x^{\prime},t)$, $\varphi(x^{\prime},t)$, and
$\psi(x)$ have compact supports and%

\[
f(x,t)\in C_{\gamma/2,}^{\gamma,\gamma/4}(Q^{+}),g(x^{\prime},t)\in
C^{1+\gamma/2,1/2+\gamma/4}(G^{+}),
\]
\begin{equation}
\varphi(x^{\prime},t)\in C^{2+\gamma
/2,1+\gamma/4}(G^{+}),\psi(x)\in C_{2,\gamma/2}^{4+\gamma}(R^{N}).
\label{005.51}%
\end{equation}
Let a function $u(x,t)\in C_{2,\gamma/2}^{4+\gamma,\frac{4+\gamma}%
{4}}(Q^{+})$ with a compact support  satisfy the following initial
boundary value problem in $Q^{+}$
\begin{equation}
L_{x,t}u\equiv\frac{\partial u}{\partial t}+\nabla(x_{N}^{2}\nabla\Delta
u)=f(x,t),\quad(x,t)\in Q^{+}, \label{005.52}%
\end{equation}

\begin{equation}
\frac{\partial u}{\partial x_{N}}(x^{\prime},0,t)=g(x^{\prime},t),\quad
(x^{\prime},t)\in G^{+}, \label{005.53}%
\end{equation}

\begin{equation}
u(x,0)=\psi(x),\quad x\in R^{N} \label{005.54}%
\end{equation}
Then for any $\varepsilon$,$\mu>0$ there exists a constant
$C_{\varepsilon
,\mu}>0$ with the property%

\[
\left\langle u\right\rangle _{2,\gamma/2,x^{\prime},t,Q^{+}}^{(4+\gamma)}\leq
C\left\langle \left\langle u\right\rangle \right\rangle _{2,\gamma
/2,x^{\prime},t,Q^{+}}^{(4+\gamma)(10)}\leq
\]

\begin{equation}
\leq C_{\mu}\left(  \left\langle f\right\rangle _{\gamma/2,Q^{+}}^{(\gamma
)}+\left\langle g\right\rangle _{G^{+}}^{(1+\gamma/2,1/2+\gamma/4)}%
+\left\langle \psi\right\rangle _{2,\gamma/2,R^{N}}^{(4+\gamma)}\right)
+\mu\left\langle x_{N}^{2}D_{x_{N}}^{4}u\right\rangle _{\gamma/2,x_{N},Q^{+}%
}^{(\gamma)}. \label{005.55}%
\end{equation}
If instead of \eqref{005.53} the function $u(x,t)$ satisfies

\begin{equation}
u(x^{\prime},0,t)=\varphi(x^{\prime},t),\quad(x^{\prime},t)\in G^{+},
\label{005.56}%
\end{equation}
then%

\[
\left\langle u\right\rangle _{2,\gamma/2,x^{\prime},t,Q^{+}}^{(4+\gamma)}\leq
C\left\langle \left\langle u\right\rangle \right\rangle _{2,\gamma
/2,x^{\prime},t,Q^{+}}^{(4+\gamma)(10)}\leq
\]

\begin{equation}
\leq C_{\mu}\left(  \left\langle f\right\rangle _{\gamma/2,Q^{+}}^{(\gamma
)}+\left\langle \varphi\right\rangle _{G^{+}}^{(2+\gamma/2,1+\gamma
/4)}+\left\langle \psi\right\rangle _{2,\gamma/2,R^{N}}^{(4+\gamma)}\right)
+\mu\left\langle x_{N}^{2}D_{x_{N}}^{4}u\right\rangle _{\gamma/2,x_{N},Q^{+}%
}^{(\gamma)}, \label{005.57}%
\end{equation}
where%

\begin{equation}
\left\langle u\right\rangle _{2,\gamma/2,x^{\prime},t,Q^{+}}^{(4+\gamma
)}\equiv%
{\displaystyle\sum\limits_{i=1}^{N-1}}
\left\langle x_{N}^{2}D_{x_{i}}^{4}u\right\rangle _{\gamma/2,x_{i},Q^{+}%
}^{(\gamma)}+\left\langle D_{t}u\right\rangle _{t,Q^{+}}^{(\gamma/4)}.
\label{005.58}%
\end{equation}

\end{proposition}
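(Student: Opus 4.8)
The plan is to assemble \eqref{005.55} and \eqref{005.57} out of Lemmas \ref{L.5.1}, \ref{L.5.2} and \ref{L.5.3} by a dichotomy on whether the ``far'' part of the tangential--plus--time seminorm is comparable to the whole of it, and then to pass from the tenth--difference seminorms back to the ordinary weighted H\"older seminorms. For each admissible configuration the set of steps splits as $\{h\ge\varepsilon x_N\}\cup\{h\le\varepsilon x_N\}$ (and as $\{h\ge\varepsilon^{2}x_N^{2}\}\cup\{h\le\varepsilon^{2}x_N^{2}\}$ for the $D_t$--term), so term by term, and hence
\[
\left\langle \left\langle u\right\rangle \right\rangle _{2,\gamma/2,x^{\prime},t,Q^{+}}^{(4+\gamma)(10)}\le\left\langle \left\langle u\right\rangle \right\rangle _{2,\gamma/2,x^{\prime},t,Q^{+}}^{(4+\gamma)(10)(\varepsilon+)}+\left\langle \left\langle u\right\rangle \right\rangle _{2,\gamma/2,x^{\prime},t,Q^{+}}^{(4+\gamma)(10)(\varepsilon-)},
\]
the last summand being the seminorm \eqref{005.3.a}. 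The $(\varepsilon-)$--summand will always be controlled by Lemmas \ref{L.5.2} and \ref{L.5.3}, whose estimates \eqref{s1.75} and \eqref{005.38} hold with no extra hypothesis; the $(\varepsilon+)$--summand will be controlled by Lemma \ref{L.5.1} (estimate \eqref{005.10}, or \eqref{005.12} in the Dirichlet case) precisely when its hypothesis \eqref{005.9} is available, and simply dropped otherwise.

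Concretely, I would fix $\mu>0$, choose a small $\varepsilon=\varepsilon(\mu)$ (see below), and distinguish two cases. If \eqref{005.9} holds for this $\varepsilon$, then Lemma \ref{L.5.1} applied with $\mu/2$ in place of $\mu$ bounds $\langle\langle u\rangle\rangle^{(4+\gamma)(10)(\varepsilon+)}$ by $C_{\varepsilon,\mu}(\langle f\rangle_{\gamma/2,Q^{+}}^{(\gamma)}+\langle g\rangle_{G^{+}}^{(1+\gamma/2,1/2+\gamma/4)}+\langle\psi\rangle_{2,\gamma/2,R^{N}}^{(4+\gamma)})+\tfrac{\mu}{2}\langle x_N^{2}D_{x_N}^{4}u\rangle_{\gamma/2,x_N,Q^{+}}^{(\gamma)}$, while \eqref{005.9} gives $\langle\langle u\rangle\rangle^{(4+\gamma)(10)}\le2\langle\langle u\rangle\rangle^{(4+\gamma)(10)(\varepsilon+)}$, which is already the bound we want for the left side. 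If instead \eqref{005.9} fails, i.e.\ $\langle\langle u\rangle\rangle^{(4+\gamma)(10)(\varepsilon+)}<\tfrac12\langle\langle u\rangle\rangle^{(4+\gamma)(10)}$, the displayed decomposition yields $\langle\langle u\rangle\rangle^{(4+\gamma)(10)}\le2\langle\langle u\rangle\rangle^{(4+\gamma)(10)(\varepsilon-)}$, and I would add \eqref{s1.75} (tangential $(\varepsilon-)$--terms) to \eqref{005.38} (the $D_t$ $(\varepsilon-)$--term), recalling that the grouped quantities on their right sides are precisely $\langle\langle u\rangle\rangle^{(4+\gamma)(10)}$ and $\langle x_N^{2}D_{x_N}^{4}u\rangle_{\gamma/2,x_N,Q^{+}}^{(\gamma)}$ by \eqref{eqv}. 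This produces
\[
\left\langle \left\langle u\right\rangle \right\rangle _{2,\gamma/2,x^{\prime},t,Q^{+}}^{(4+\gamma)(10)}\le C_{\varepsilon}\left(\langle f\rangle_{\gamma/2,Q^{+}}^{(\gamma)}+\langle g\rangle_{G^{+}}^{(1+\gamma/2,1/2+\gamma/4)}+\langle\psi\rangle_{2,\gamma/2,R^{N}}^{(4+\gamma)}\right)+C\varepsilon^{\delta_{0}}\left\langle \left\langle u\right\rangle \right\rangle _{2,\gamma/2,x^{\prime},t,Q^{+}}^{(4+\gamma)(10)}+C\varepsilon^{\delta}\left\langle x_N^{2}D_{x_N}^{4}u\right\rangle _{\gamma/2,x_N,Q^{+}}^{(\gamma)},
\]
with $\delta_{0}=\min\{\gamma,\delta\}>0$ and $C$ independent of $\varepsilon,\mu,u$; for compactly supported $u$ the full $f$- and $\psi$-norms appearing in \eqref{005.38} are controlled by the data seminorms above after the standard normalization of $u$ by the Taylor polynomial of Lemma \ref{Ls1.2}, an operation that alters neither side of \eqref{005.55}.

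The only delicate point is the order of the parameter choices. Given $\mu$, I would pick $\varepsilon=\varepsilon(\mu)\in(0,\varepsilon_{0})$ so small that simultaneously $2C\varepsilon^{\delta_{0}}\le\tfrac12$ and $2C\varepsilon^{\delta}\le\mu$ --- both are ``$\varepsilon$ small'' requirements with $C$ fixed, hence compatible. With this $\varepsilon$, in the failing case the term $C\varepsilon^{\delta_{0}}\langle\langle u\rangle\rangle^{(4+\gamma)(10)}$ is absorbed into the left side, and in the holding case Lemma \ref{L.5.1} (used with this same $\varepsilon(\mu)$ and $\mu/2$) gives the identical conclusion; in either case
\[
\left\langle \left\langle u\right\rangle \right\rangle _{2,\gamma/2,x^{\prime},t,Q^{+}}^{(4+\gamma)(10)}\le C_{\mu}\left(\langle f\rangle_{\gamma/2,Q^{+}}^{(\gamma)}+\langle g\rangle_{G^{+}}^{(1+\gamma/2,1/2+\gamma/4)}+\langle\psi\rangle_{2,\gamma/2,R^{N}}^{(4+\gamma)}\right)+\mu\left\langle x_N^{2}D_{x_N}^{4}u\right\rangle _{\gamma/2,x_N,Q^{+}}^{(\gamma)}.
\]
Finally I would invoke the equivalence \eqref{eqv} (equivalently Proposition \ref{Ps1.1}, relation \eqref{s1.11}) applied term by term --- to $x_N^{2}D_{x_i}^{4}u$ in the variable $x_i$ for $i=1,\dots,N-1$, and to $D_t u$ in $t$ --- which yields $\langle u\rangle_{2,\gamma/2,x^{\prime},t,Q^{+}}^{(4+\gamma)}\le C\langle\langle u\rangle\rangle_{2,\gamma/2,x^{\prime},t,Q^{+}}^{(4+\gamma)(10)}$ and hence \eqref{005.55}. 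The Dirichlet case \eqref{005.57} follows verbatim, replacing \eqref{005.10} by \eqref{005.12}, using the second alternative of Lemma \ref{L.5.2}, keeping Lemma \ref{L.5.3} (which does not involve the boundary condition), and writing $\langle\varphi\rangle_{G^{+}}^{(2+\gamma/2,1+\gamma/4)}$ for $\langle g\rangle_{G^{+}}^{(1+\gamma/2,1/2+\gamma/4)}$.
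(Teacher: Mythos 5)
Your proposal is correct and takes essentially the same route as the paper's own proof: the same decomposition of $\left\langle \left\langle u\right\rangle \right\rangle _{2,\gamma/2,x^{\prime},t,Q^{+}}^{(4+\gamma)(10)}$ into the $(\varepsilon+)$ and $(\varepsilon-)$ parts with the dichotomy on hypothesis \eqref{005.9}, the same use of Lemmas \ref{L.5.1}--\ref{L.5.3}, absorption of the $C\varepsilon^{\delta}$ term for small $\varepsilon=\varepsilon(\mu)$, and the equivalence \eqref{eqv} to pass to the ordinary seminorm; your explicit two-case bookkeeping and the ordering of the choices of $\varepsilon$ and $\mu$ merely spell out what the paper states tersely.
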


\begin{proof}
For any $\varepsilon>0$ we have

\[
\left\langle \left\langle u\right\rangle \right\rangle _{2,\gamma/2,x^{\prime
},t,Q^{+}}^{(4+\gamma)(10)}\leq\left\langle \left\langle u\right\rangle
\right\rangle _{2,\gamma/2,x^{\prime},t,Q^{+}}^{(4+\gamma)(10)(\varepsilon
+)}+\left\langle \left\langle u\right\rangle \right\rangle _{2,\gamma
/2,x^{\prime},t,Q^{+}}^{(4+\gamma)(10)(\varepsilon-)}.
\]
It is evident that%

\[
\left\langle \left\langle u\right\rangle \right\rangle _{2,\gamma/2,x^{\prime
},t,Q^{+}}^{(4+\gamma)(10)(\varepsilon+)}\geq\frac{1}{2}\left\langle
\left\langle u\right\rangle \right\rangle _{2,\gamma/2,x^{\prime},t,Q^{+}%
}^{(4+\gamma)(10)}\text{ or }\left\langle \left\langle u\right\rangle
\right\rangle _{2,\gamma/2,x^{\prime},t,Q^{+}}^{(4+\gamma)(10)(\varepsilon
-)}\geq\frac{1}{2}\left\langle \left\langle u\right\rangle \right\rangle
_{2,\gamma/2,x^{\prime},t,Q^{+}}^{(4+\gamma)(10)}.
\]
Then from lemmas \eqref{L.5.1}- \eqref{L.5.3} it follows that (in
the case of condition \eqref{005.53})

\[
\left\langle \left\langle u\right\rangle \right\rangle _{2,\gamma/2,x^{\prime
},t,Q^{+}}^{(4+\gamma)(10)}\leq
\]

\[
\leq C_{\varepsilon,\mu}\left(  \left\langle f\right\rangle _{\gamma/2,Q^{+}%
}^{(\gamma)}+\left\langle g\right\rangle _{G^{+}}^{(1+\gamma/2,1/2+\gamma
/4)}+\left\langle \psi\right\rangle _{2,\gamma/2,R^{N}}^{(4+\gamma)}\right)
+
\]

\[
+C\varepsilon^{\delta}\left\langle \left\langle u\right\rangle \right\rangle
_{2,\gamma/2,x^{\prime},t,Q^{+}}^{(4+\gamma)(10)}+\mu\left\langle x_{N}%
^{2}D_{x_{N}}^{4}u\right\rangle _{\gamma/2,x_{N},Q^{+}}^{(\gamma)}.
\]
Absorbing now the term with $\varepsilon^{\delta}$ from the
righthand side in the left hand side for sufficiently small
$\varepsilon$, we arrive at \eqref{005.55} (in view of \eqref{eqv}).
Estimate \eqref{005.57} is analogous.

\end{proof}

\begin{theorem}
\label{T.5.1}

Let functions $f(x,t)$, $g(x^{\prime},t)$, $\varphi(x^{\prime},t)$,
and $\psi(x)$ have compact supports and satisfy \eqref{005.51}.

Let a function $u(x,t)\in C_{2,\gamma/2}^{4+\gamma,\frac{4+\gamma}{4}}(Q^{+})$
with a compact support  satisfy initial boundary value problem%
\eqref{005.52}- \eqref{005.54} or problem \eqref{005.52},
\eqref{005.54},
\eqref{005.56}. Then%

\begin{equation}
\left\langle u\right\rangle _{2,\gamma/2,Q^{+}}^{(4+\gamma)}\leq C\left(
\left\langle f\right\rangle _{\gamma/2,Q^{+}}^{(\gamma)}+\left\langle
g\right\rangle _{G^{+}}^{(1+\gamma/2,1/2+\gamma/4)}+\left\langle
\psi\right\rangle _{2,\gamma/2,R^{N}}^{(4+\gamma)}\right)  \label{005.59}%
\end{equation}
or%

\begin{equation}
\left\langle u\right\rangle _{2,\gamma/2,Q^{+}}^{(4+\gamma)}\leq C\left(
\left\langle f\right\rangle _{\gamma/2,Q^{+}}^{(\gamma)}+\left\langle
\varphi\right\rangle _{G^{+}}^{(2+\gamma/2,1+\gamma/4)}+\left\langle
\psi\right\rangle _{2,\gamma/2,R^{N}}^{(4+\gamma)}\right)  , \label{005.60}%
\end{equation}
where the constants $C$ do not depend on $f$, $\psi$, $g$,
$\varphi$.

\end{theorem}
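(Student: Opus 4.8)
The estimate to be proved only refines Proposition~\ref{P.5.1}: by the norm equivalence \eqref{eqv} the full seminorm $\left\langle u\right\rangle_{2,\gamma/2,Q^+}^{(4+\gamma)}$ is comparable to $\sum_{i=1}^{N}\left\langle x_N^2 D_{x_i}^4 u\right\rangle_{\gamma/2,x_i,Q^+}^{(\gamma)}+\left\langle D_t u\right\rangle_{t,Q^+}^{(\gamma/4)}$, whose tangential ($i<N$) and temporal parts together form $\left\langle u\right\rangle_{2,\gamma/2,x',t,Q^+}^{(4+\gamma)}$, which is already estimated in \eqref{005.55} (resp. \eqref{005.57}) up to the term $\mu\left\langle x_N^2 D_{x_N}^4 u\right\rangle_{\gamma/2,x_N,Q^+}^{(\gamma)}$. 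So the plan is: first bound the purely normal seminorm $\left\langle x_N^2 D_{x_N}^4 u\right\rangle_{\gamma/2,x_N,Q^+}^{(\gamma)}$ by the right–hand side of \eqref{005.59} (resp. \eqref{005.60}), a small multiple of itself, and a constant multiple of $\left\langle u\right\rangle_{2,\gamma/2,x',t,Q^+}^{(4+\gamma)}$; then insert this into Proposition~\ref{P.5.1} with $\mu$ small and absorb.

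The bound on the normal term comes from the equation \eqref{005.52} itself. Expanding the operator one has
\[
\nabla(x_N^2\nabla\Delta u)=x_N^2\Delta^2 u+2x_N\,\partial_{x_N}\Delta u,\qquad \Delta^2 u=\partial_{x_N}^4 u+\!\!\sum_{|\alpha|=4,\ \alpha_N\le 2}\!\! c_\alpha D_x^\alpha u,
\]
so \eqref{005.52} yields the pointwise identity
\[
x_N^2\partial_{x_N}^4 u=f-\partial_t u-2x_N\,\partial_{x_N}\Delta u-x_N^2\!\!\sum_{|\alpha|=4,\ \alpha_N\le 2}\!\! c_\alpha D_x^\alpha u .
\]
Applying $\left\langle\,\cdot\,\right\rangle_{\gamma/2,x_N,Q^+}^{(\gamma)}$: the term $f$ contributes $\left\langle f\right\rangle_{\gamma/2,Q^+}^{(\gamma)}$; the mixed fourth–order terms $x_N^2 D_x^\alpha u$ with $\alpha_N\le 2$ are controlled, via interpolation \eqref{s1.6.4} in the direction $k=N$ (whose coefficient of the normal seminorm is $C\varepsilon^{4-\alpha_N}$ with $4-\alpha_N\ge 2$), by $C_\varepsilon\sum_{i<N}\left\langle x_N^2 D_{x_i}^4 u\right\rangle_{\gamma/2,x_i,Q^+}^{(\gamma)}+\varepsilon^{\delta}\left\langle x_N^2 D_{x_N}^4 u\right\rangle_{\gamma/2,x_N,Q^+}^{(\gamma)}$; and $\partial_t u$ is controlled, via \eqref{s1.6.7}, by $C_\varepsilon\bigl(\sum_{i<N}\left\langle x_N^2 D_{x_i}^4 u\right\rangle_{\gamma/2,x_i,Q^+}^{(\gamma)}+\left\langle D_t u\right\rangle_{t,Q^+}^{(\gamma/4)}\bigr)+\varepsilon^{2}\left\langle x_N^2 D_{x_N}^4 u\right\rangle_{\gamma/2,x_N,Q^+}^{(\gamma)}$. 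All of these are thus dominated by the data plus $C\left\langle u\right\rangle_{2,\gamma/2,x',t,Q^+}^{(4+\gamma)}$ plus $\varepsilon^{\delta}\left\langle x_N^2 D_{x_N}^4 u\right\rangle_{\gamma/2,x_N,Q^+}^{(\gamma)}$.

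The delicate point — and the main obstacle — is the third–order, $x_N$–weighted remainder $2x_N\,\partial_{x_N}\Delta u$, whose $x_N$–weighted Hölder seminorm must be absorbed into a small multiple of the fourth–order seminorms (plus data). This is where the weighted interpolation inequalities \eqref{2.16}–\eqref{2.22} of Theorem~\ref{Ts6.3}, together with Proposition~\ref{P.2.5}, are used; if the smallness they furnish does not by itself suffice, one supplements, exactly in the spirit of Lemmas~\ref{L.5.1}–\ref{L.5.3}, a contradiction–and–rescaling argument for the normal difference quotient of $x_N^2 D_{x_N}^4 u$ — splitting into the regimes $h\ge\varepsilon x_N$ and $h<\varepsilon x_N$ — in which, the data rescaling to zero, the rescaled functions converge to a solution of the homogeneous model problem on the half-space (or quarter-space) having finite seminorms, hence a polynomial in $x'$ and $t$ by the Liouville Theorem~\ref{T.002.1}, contradicting the lower bound on its normal difference quotient.

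Granting the bound on $2x_N\,\partial_{x_N}\Delta u$, collecting the estimates above gives
\[
\left\langle x_N^2 D_{x_N}^4 u\right\rangle_{\gamma/2,x_N,Q^+}^{(\gamma)}\le C\,\Sigma+C\left\langle u\right\rangle_{2,\gamma/2,x',t,Q^+}^{(4+\gamma)}+C\varepsilon^{\delta}\left\langle x_N^2 D_{x_N}^4 u\right\rangle_{\gamma/2,x_N,Q^+}^{(\gamma)},
\]
where $\Sigma$ is the right–hand side of \eqref{005.59} (resp. \eqref{005.60}). Choosing $\varepsilon$ small absorbs the last term; substituting the result into \eqref{005.55} (resp. \eqref{005.57}) and choosing $\mu$ with $C\mu<1/2$ absorbs the normal term there, yielding $\left\langle u\right\rangle_{2,\gamma/2,x',t,Q^+}^{(4+\gamma)}\le C\Sigma$ and hence $\left\langle x_N^2 D_{x_N}^4 u\right\rangle_{\gamma/2,x_N,Q^+}^{(\gamma)}\le C\Sigma$; by \eqref{eqv} this is the full estimate \eqref{005.59} (resp. \eqref{005.60}). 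The Dirichlet case is identical, using \eqref{005.56} in place of \eqref{005.53} and the corresponding halves of Proposition~\ref{P.5.1} and the lemmas.
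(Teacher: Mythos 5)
Your overall skeleton is the paper's: bound the purely normal seminorm $\left\langle x_{N}^{2}D_{x_{N}}^{4}u\right\rangle _{\gamma/2,x_{N},Q^{+}}^{(\gamma)}$ from the equation, then feed it into Proposition \ref{P.5.1} with $\mu$ small and absorb. But the way you extract that bound has a genuine gap. Expanding $\nabla(x_{N}^{2}\nabla\Delta u)=x_{N}^{2}\Delta^{2}u+2x_{N}\partial_{x_{N}}\Delta u$ and taking seminorms of the resulting pointwise identity forces you to control $\left\langle x_{N}D_{x_{N}}^{3}u\right\rangle _{\gamma/2,x_{N},Q^{+}}^{(\gamma)}$ (the pure-normal part of $2x_{N}\partial_{x_{N}}\Delta u$). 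None of the tools you invoke gives this with a small coefficient in front of the normal fourth-order seminorm: the inequalities \eqref{2.16}--\eqref{2.22} of Theorem \ref{Ts6.3} involve sup-norms and the weight $d^{2}$ for third derivatives, not $\left\langle d\,D_{x}^{\alpha}u\right\rangle _{\gamma/2}^{(\gamma)}$, and Proposition \ref{P.2.5} (see \eqref{s1.7}) controls $\left\langle x_{N}D_{x_{N}}^{3}u\right\rangle _{\gamma/2}^{(\gamma,\gamma/4)}$ only by a \emph{fixed} constant times the full principal seminorm, which already contains $\left\langle x_{N}^{2}D_{x_{N}}^{4}u\right\rangle _{\gamma/2,x_{N}}^{(\gamma)}$ -- so there is nothing to absorb. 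In the weighted scale of $C_{2,\gamma/2}^{4+\gamma,\frac{4+\gamma}{4}}$ the term $x_{N}D_{x_{N}}^{3}u$ is of top order, not lower order. Your fallback -- ``supplement a contradiction-and-rescaling argument as in Lemmas \ref{L.5.1}--\ref{L.5.3}'' -- is not worked out and is not a routine extension: those lemmas are deliberately restricted to the tangential indices $i=1,\dots,N-1$ and to $D_{t}u$; a blow-up argument for normal difference quotients is exactly what the structure of Section \ref{s5} avoids, and you give no indication of how the two regimes $h\gtrless\varepsilon x_{N}$ and the Liouville step would go for $i=N$.

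The paper sidesteps the problem by never producing the term $x_{N}D_{x_{N}}^{3}u$ at all: it keeps the pure normal derivatives in divergence form, $D_{x_{N}}\left(x_{N}^{2}D_{x_{N}}^{3}u\right)=f_{1}$ as in \eqref{005.61}, where $f_{1}$ contains only $f$, $D_{t}u$, and fourth/third-order terms with at least one tangential derivative. Since $x_{N}^{2}D_{x_{N}}^{3}u$ vanishes at $x_{N}=0$, integrating the ODE in $x_{N}$ gives the explicit representation $x_{N}^{2}D_{x_{N}}^{4}u=-2\int_{0}^{1}f_{1}(x^{\prime},x_{N}\omega,t)\,d\omega+f_{1}(x^{\prime},x_{N},t)$, whence \eqref{005.62}, i.e. $\left\langle x_{N}^{2}D_{x_{N}}^{4}u\right\rangle _{\gamma/2,x_{N}}^{(\gamma)}\leq C\left\langle f_{1}\right\rangle _{\gamma/2,x_{N}}^{(\gamma)}$; the terms of $f_{1}$ are then interpolated as in \eqref{005.64}, where the pure normal fourth derivative enters only with an $\varepsilon$. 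If you replace your expansion by this divergence-form integration, the rest of your argument (insertion into \eqref{005.55}/\eqref{005.57}, absorption, and \eqref{eqv}) goes through and coincides with the paper's proof.
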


\begin{proof}

We show only estimate \eqref{005.59} since \eqref{005.60} is
completely similar.

Consider equation \eqref{005.52}. We leave only the pure derivatives
with respect to the
variable $x_{N}$ in the left hand side and write this equation in the form%

\begin{equation}
D_{x_{N}}\left(  x_{N}^{2}D_{x_{N}}^{3}u\right)  =f_{1}(x,t), \label{005.61}%
\end{equation}
where%

\[
f_{1}(x,t)=f(x,t)-\frac{\partial u}{\partial t}-%
{\displaystyle\sum\limits_{\substack{|\alpha|=4,\\\alpha_{N}<4}}}
\delta_{\alpha}x_{N}^{2}D_{x}^{\alpha}u-%
{\displaystyle\sum\limits_{\substack{|\alpha|=3,\\\alpha_{N}<3}}}
\delta_{\alpha}x_{N}D_{x}^{\alpha}u
\]
and $\delta_{\alpha}$ are some absolute constants. Let us show that%

\begin{equation}
\left\langle x_{N}^{2}D_{x_{N}}^{4}u\right\rangle _{\gamma/2,x_{N},Q^{+}%
}^{(\gamma)}\leq C\left\langle f_{1}\right\rangle _{\gamma/2,x_{N},Q^{+}%
}^{(\gamma)}. \label{005.62}%
\end{equation}
Since $\left(  x_{N}^{2}D_{x_{N}}^{3}u\right) |_{x_{N}=0}=0$\ \ \ we
have from
\eqref{005.61}%

\[
D_{x_{N}}^{3}u(x,t)=\frac{1}{x_{N}^{2}}%
{\displaystyle\int\limits_{0}^{x_{N}}} f_{1}(x^{\prime},\xi,t)d\xi.
\]
Thus,%

\[
x_{N}^{2}D_{x_{N}}^{4}u(x,t)=-\frac{2}{x_{N}}%
{\displaystyle\int\limits_{0}^{x_{N}}}
f_{1}(x^{\prime},\xi,t)d\xi+f_{1}(x^{\prime},x_{N},t)=
\]

\[
=-2%
{\displaystyle\int\limits_{0}^{1}}
f_{1}(x^{\prime},x_{N}\omega,t)d\xi+f_{1}(x^{\prime},x_{N},t),
\]
where we made the change of the variable $\xi=\omega x_{N}$ in the
integral. From this representation the obtaining of estimate
\eqref{005.62} is straightforward. Therefore, we have the estimate%

\begin{equation}
\left\langle x_{N}^{2}D_{x_{N}}^{4}u\right\rangle _{\gamma/2,x_{N},Q^{+}%
}^{(\gamma)}\leq C\left\langle f\right\rangle _{\gamma/2,x_{N},Q^{+}}%
^{(\gamma)}+ \label{005.63}%
\end{equation}

\[
+C\left(  \left\langle D_{t}u\right\rangle _{\gamma/2,x_{N},Q^{+}}^{(\gamma)}+%
{\displaystyle\sum\limits_{\substack{|\alpha|=4,\\\alpha_{N}<4}}}
\left\langle x_{N}^{2}D_{x}^{\alpha}u\right\rangle _{\gamma/2,x_{N},Q^{+}%
}^{(\gamma)}+%
{\displaystyle\sum\limits_{\substack{|\alpha|=3,\\\alpha_{N}<3}}}
\left\langle x_{N}D_{x}^{\alpha}u\right\rangle _{\gamma/2,x_{N},Q^{+}%
}^{(\gamma)}\right)  .
\]
Let use now the interpolation inequalities of Theorem \ref{Ts6.2}
This gives ($\varepsilon\in(0,1)$)%

\[
\left\langle D_{t}u\right\rangle _{\gamma/2,x_{N},Q^{+}}^{(\gamma)}+%
{\displaystyle\sum\limits_{\substack{|\alpha|=4,\\\alpha_{N}<4}}}
\left\langle x_{N}^{2}D_{x}^{\alpha}u\right\rangle _{\gamma/2,x_{N},Q^{+}%
}^{(\gamma)}+%
{\displaystyle\sum\limits_{\substack{|\alpha|=3,\\\alpha_{N}<3}}}
\left\langle x_{N}D_{x}^{\alpha}u\right\rangle _{\gamma/2,x_{N},Q^{+}%
}^{(\gamma)}\leq
\]

\begin{equation}
\leq C_{\varepsilon}\left\langle u\right\rangle _{2,\gamma/2,x^{\prime
},t,Q^{+}}^{(4+\gamma)}+\varepsilon\left\langle x_{N}^{2}D_{x_{N}}%
^{4}u\right\rangle _{\gamma/2,x_{N},Q^{+}}^{(\gamma)}. \label{005.64}%
\end{equation}
Substituting estimate \eqref{005.64} in \eqref{005.63} and absorbing
the term with $\varepsilon$ in the
left hand side, we obtain%

\begin{equation}
\left\langle x_{N}^{2}D_{x_{N}}^{4}u\right\rangle _{\gamma/2,x_{N},Q^{+}%
}^{(\gamma)}\leq C\left\langle f\right\rangle _{\gamma/2,x_{N},Q^{+}}%
^{(\gamma)}+C\left\langle u\right\rangle _{2,\gamma/2,x^{\prime},t,Q^{+}%
}^{(4+\gamma)}. \label{005.65}%
\end{equation}
Thus, making use of estimate \eqref{005.55} of Proposition
\ref{P.5.1}, for the full highest seminorm of the function $u$ we have%

\[
\left\langle u\right\rangle _{2,\gamma/2,Q^{+}}^{(4+\gamma)}=\left\langle
x_{N}^{2}D_{x_{N}}^{4}u\right\rangle _{\gamma/2,x_{N},Q^{+}}^{(\gamma
)}+\left\langle u\right\rangle _{2,\gamma/2,x^{\prime},t,Q^{+}}^{(4+\gamma
)}\leq
\]

\[
\leq C_{\mu}\left(  \left\langle f\right\rangle _{\gamma/2,Q^{+}}^{(\gamma
)}+\left\langle g\right\rangle _{G^{+}}^{(1+\gamma/2,1/2+\gamma/4)}%
+\left\langle \psi\right\rangle _{2,\gamma/2,R^{N}}^{(4+\gamma)}\right)
+\mu\left\langle x_{N}^{2}D_{x_{N}}^{4}u\right\rangle _{\gamma/2,x_{N},Q^{+}%
}^{(\gamma)}.
\]
Absorbing now the term with $\mu$ in the left hand side, we arrive
at estimate \eqref{005.59}.

Estimate \eqref{005.60} is completely similar and this finishes the
proof of the theorem.

\end{proof}

Consider now the elliptic variant of the problems of Theorem
\ref{T.5.1}.

\begin{theorem}
\label{T.5.2}

Let functions $f(x)$, $g(x^{\prime})$, and $\varphi(x^{\prime})$ have compact supports and%

\begin{equation}
f(x)\in C_{\gamma/2}^{\gamma}(\overline{R^{N}_{+}}),g(x^{\prime})\in
C^{1+\gamma/2}(R^{N-1}),\varphi(x^{\prime},t)\in C^{2+\gamma
/2}(R^{N-1}).
\label{005.51.001}%
\end{equation}
Let a function $u(x)\in
C_{2,\gamma/2}^{4+\gamma}(\overline{R^{N}_{+}})$ with a compact
support  satisfy the following boundary value problem in $R^{N}_{+}$
\begin{equation}
L_{x}u\equiv\nabla(x_{N}^{2}\nabla\Delta
u)=f(x),\quad x \in R^{N}_{+}, \label{005.52.001}%
\end{equation}

\begin{equation}
\frac{\partial u}{\partial x_{N}}(x^{\prime},0)=g(x^{\prime}),\quad
x_{N}=0. \label{005.53.001}%
\end{equation}
Then
%-------------------
\begin{equation}
\left\langle u\right\rangle
_{2,\gamma/2,\overline{R^{N}_{+}}}^{(4+\gamma)}\leq C( \left\langle
f\right\rangle
_{\gamma/2,\overline{R^{N}_{+}}}^{(\gamma)}+\left\langle
g\right\rangle _{R^{N-1}}^{(1+\gamma/2)} ).
\label{005.59.001}%
\end{equation}
%-----------------------
If instead of \eqref{005.53.001} the function $u(x)$ satisfies

\begin{equation}
u(x^{\prime},0)=\varphi(x^{\prime}),\quad x_{N}=0,
\label{005.56.001}%
\end{equation}
then%

\begin{equation}
\left\langle u\right\rangle
_{2,\gamma/2,\overline{R^{N}_{+}}}^{(4+\gamma)}\leq C( \left\langle
f\right\rangle
_{\gamma/2,\overline{R^{N}_{+}}}^{(\gamma)}+\left\langle
\varphi\right\rangle _{R^{N-1}}^{(2+\gamma/2)} ).
\label{005.60.001}%
\end{equation}
where%

\begin{equation}
\left\langle u\right\rangle
_{2,\gamma/2,\overline{R^{N}_{+}}}^{(4+\gamma
)}\equiv%
{\displaystyle\sum\limits_{i=1}^{N}}
\left\langle x_{N}^{2}D_{x_{i}}^{4}u\right\rangle _{\gamma/2,x_{i},\overline{R^{N}_{+}}%
}^{(\gamma)}.
\label{005.58.001}%
\end{equation}

\end{theorem}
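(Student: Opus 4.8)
The plan is to follow the scheme used for the parabolic Theorem \ref{T.5.1}; in fact the argument is somewhat simpler here, since there is no time variable and no analogue of Lemma \ref{L.5.3} is needed. The first step is to establish the elliptic counterparts of Lemmas \ref{L.5.1} and \ref{L.5.2} for the tangential top-order seminorms $\left\langle x_N^2 D_{x_i}^4 u\right\rangle_{\gamma/2,x_i}^{(\gamma)}$, $i<N$. For the near-boundary range $h\geq\varepsilon x_N$ one reruns the compactness/contradiction argument of Lemma \ref{L.5.1}: assuming the estimate fails, one produces a normalized sequence, subtracts the Taylor polynomial of Lemma \ref{Ls1.2} (in its $t$-independent form), rescales via $x_i=x_i^{(p)}+y_ih_p$, $x_N=y_Nh_p$, and passes to a locally convergent subsequence whose limit $r$ is a function of power growth, with finite rescaled weighted seminorms, solving the homogeneous degenerate elliptic problem $\nabla(y_N^2\nabla\Delta r)=0$ in $R_+^N$ together with $\partial r/\partial y_N=0$ (resp. $r=0$) on $\{y_N=0\}$; here Corollary \ref{C.002.1} replaces Theorem \ref{T.002.1} and forces $r$ to be a polynomial in $y'$, which is incompatible with the lower bound inherited from the normalization together with the finiteness of those seminorms. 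For the interior range $h\leq\varepsilon x_N$ (analogue of Lemma \ref{L.5.2}) one freezes $x_N^2\sim(x_N^{(0)})^2$, rescales to a unit cube, and applies classical interior elliptic Schauder estimates to the resulting nondegenerate equation, picking up an $\varepsilon^{\delta}$-small multiple of the full top seminorm.

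Combining the two ranges exactly as in Proposition \ref{P.5.1} (using the dichotomy that one of the ``$(\varepsilon+)$'' and ``$(\varepsilon-)$'' parts is at least half of the full seminorm, and absorbing the $\varepsilon^\delta$-term) yields, for every $\mu>0$,
\[
\sum_{i=1}^{N-1}\left\langle x_N^2 D_{x_i}^4 u\right\rangle_{\gamma/2,x_i,\overline{R^N_+}}^{(\gamma)}\leq C_\mu\left(\left\langle f\right\rangle_{\gamma/2,\overline{R^N_+}}^{(\gamma)}+\left\langle g\right\rangle_{R^{N-1}}^{(1+\gamma/2)}\right)+\mu\left\langle x_N^2 D_{x_N}^4 u\right\rangle_{\gamma/2,x_N,\overline{R^N_+}}^{(\gamma)},
\]
and the analogue with $\left\langle\varphi\right\rangle^{(2+\gamma/2)}_{R^{N-1}}$ for the Dirichlet condition \eqref{005.56.001}. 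It then remains to control the purely normal seminorm. For this one rewrites \eqref{005.52.001} as $D_{x_N}(x_N^2 D_{x_N}^3 u)=f_1$ with $f_1=f-\sum_{|\alpha|=4,\alpha_N<4}\delta_\alpha x_N^2 D_x^\alpha u-\sum_{|\alpha|=3,\alpha_N<3}\delta_\alpha x_N D_x^\alpha u$, uses $(x_N^2 D_{x_N}^3 u)|_{x_N=0}=0$ to integrate, changes variables $\xi=\omega x_N$, and reads off $\left\langle x_N^2 D_{x_N}^4 u\right\rangle_{\gamma/2,x_N}^{(\gamma)}\leq C\left\langle f_1\right\rangle_{\gamma/2,x_N}^{(\gamma)}$ precisely as in \eqref{005.61}--\eqref{005.65}; the lower-order terms making up $f_1$ are absorbed by the $t$-independent versions of the interpolation inequalities of Theorem \ref{Ts6.2} against the tangential seminorm already estimated. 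Substituting this back into the previous display and absorbing the $\mu$-term on the left gives \eqref{005.59.001}; estimate \eqref{005.60.001} follows in the same way starting from the Dirichlet version of the model inequalities.

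The main obstacle is the compactness step in the elliptic analogue of Lemma \ref{L.5.1}: one must verify that the uniform-convergence statements of the type \eqref{s1.69}--\eqref{s1.71.2} and the polynomial-growth bound of the type \eqref{005.18} for the blown-up functions $r_p$ carry over with the time derivative simply deleted, and that the two sub-cases governed by $\tau^{(p)}$ in the parabolic proof collapse to the single conclusion that the limit $r$ solves the homogeneous elliptic boundary value problem on all of $R_+^N$, to which Corollary \ref{C.002.1} applies. Once this is in place, everything else is a routine transcription of the arguments of Section \ref{s5}, the weight $d(x)^2=x_N^2$ and the degenerate fourth-order structure being exactly the same as in the parabolic case.
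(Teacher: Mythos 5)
Your proposal is correct and follows essentially the same route as the paper, which disposes of Theorem \ref{T.5.2} by declaring it an evident simplification of the proof of Theorem \ref{T.5.1} with Corollary \ref{C.002.1} replacing Theorem \ref{T.002.1} in the blow-up/Liouville step. Your elaboration of the two seminorm ranges, the normal-derivative estimate via \eqref{005.61}--\eqref{005.65}, and the interpolation absorption is exactly the transcription the paper has in mind.
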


The proof of this theorem is an evident simplification of the proof
of Theorem \ref{T.5.1} on the base of Corollary \ref{C.002.1}.

\section{Solvability of model problems.}
\label{s6}

In this section we consider two model problems in simple special
domains for the model linearized thin film equation with two
different boundary conditions at $\{x_{N}=0\}$. We will use these
problems to prove the solvability of boundary value problems for the
linearized thin film equation in arbitrary smooth domain by \ the
standard way of the regularizator (near inverse operator)\
constructing. Throughout this section we denote $I=[0,1]$.

\subsection{A model problem with the Newman condition at
$\{x_{N}=0\}$.}%
\label{ss6.1}

We first consider an axillary model problem for an elliptic
equation.

Let $P=\{x=(x^{\prime},x_{N}):0\leq
x_{N}\leq1,|x_{i}|<\pi,i=\overline {1,N-1}\}$,
$P^{\prime}=\overline{P}\cap\{x_{N}=0\}$. Let a function $f(x)\in
C_{\gamma/2}^{\gamma}(\overline{P})$ and let also $f(x)$ be
$2\pi$-periodic in each variable $x_{i}$, $i=\overline{1,N-1}$.
Consider the following problem for the unknown $2\pi$- periodic with
respect to the variables $x_{i}$, $i=\overline{1,N-1}$,
function $u(x)$:%

\begin{equation}
\nabla(x_{N}^{2}\nabla\Delta u)=f(x),\quad x\in P, \label{6.1}%
\end{equation}

\begin{equation}
\frac{\partial u}{\partial x_{N}}(x^{\prime},0)=0,\quad x^{\prime}\in
P^{\prime}, \label{6.2}%
\end{equation}

\begin{equation}
\frac{\partial}{\partial x_{N}}\Delta u(x^{\prime},1)+\Delta u(x^{\prime
},1)=0, \label{6.3}%
\end{equation}

\begin{equation}
u(x^{\prime},1)=0, \label{6.3.1}%
\end{equation}
and the periodicity conditions%

\begin{equation}
\left.  \frac{\partial^{n}u}{\partial x_{i}^{n}}(x)\right\vert _{x_{i}=-\pi
}=\left.  \frac{\partial^{n}u}{\partial x_{i}^{n}}(x)\right\vert _{x_{i}=\pi
},n=0,1,2,3,\quad i=\overline{1,N-1}. \label{6.4}%
\end{equation}
Thus we consider in fact the periodic functions $f(x)$ and $u(x)$.
Note that the boundary conditions at $\{x_{N}=1\}$ are chosen just
from technical reasons. They do not play any special role when we
construct the regularisation of the problem in an arbitrary smooth
domain. The all we need that such conditions at $\{x_{N}=1\}$ make
the problem well posed.

\begin{lemma}
\label{L.6.1}

Problem \eqref{6.1} -
\eqref{6.4} has the unique solution $u(x)$ with%

\begin{equation}
|u|_{2,\gamma/2,\overline{P}}^{(4+\gamma)}\leq C|f|_{\gamma/2,\overline{P}%
}^{(\gamma)}. \label{6.5}%
\end{equation}

\end{lemma}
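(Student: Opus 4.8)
The natural approach is separation of variables in the periodic tangential directions, reducing the PDE to a family of one-dimensional fourth-order ODEs in $x_N$ on $I=[0,1]$ indexed by the Fourier frequency $\xi\in\mathbb{Z}^{N-1}$. First I would expand $f(x)=\sum_\xi \widehat f_\xi(x_N)e^{i\xi\cdot x'}$ and seek $u(x)=\sum_\xi \widehat u_\xi(x_N)e^{i\xi\cdot x'}$; plugging into \eqref{6.1} and writing $\Delta = D_{x_N}^2 - |\xi|^2$ on the $\xi$-th mode, equation \eqref{6.1} becomes the degenerate ODE $\big(D_{x_N}(x_N^2 D_{x_N}) - |\xi|^2 x_N^2\big)\big(D_{x_N}^2 - |\xi|^2\big)\widehat u_\xi = \widehat f_\xi$ on $(0,1)$, subject to the Neumann-type condition $\widehat u_\xi'(0)=0$ and the conditions $\widehat u_\xi(1)=0$, $\big(D_{x_N}+1\big)(D_{x_N}^2-|\xi|^2)\widehat u_\xi\big|_{x_N=1}=0$ from \eqref{6.2}--\eqref{6.3.1}. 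The point $x_N=0$ is a regular singular point of this operator, so I would analyze the indicial equation there to pin down the admissible behavior (the $x_N^2 D_{x_N}^4$ weighting in the norm \eqref{s1.8.3} is exactly calibrated to the singular exponents; cf. the role of $\ln^{(2)}x_N$ in Lemma \ref{Ls1.2}).

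The solvability part: for each fixed $\xi$ I would set up the weak formulation of the ODE BVP in a suitable weighted Sobolev space on $I$ and prove existence and uniqueness by a Lax–Milgram / Fredholm argument, using the boundary conditions at $x_N=1$ to kill the null space — this is precisely why the problem was engineered with those conditions, as the authors remark. Uniqueness of the full $u$: if $f\equiv 0$ then every $\widehat u_\xi\equiv 0$, hence $u\equiv 0$; this also follows more softly from the Schauder estimate \eqref{6.5} once it is established, or from an energy identity (multiply \eqref{6.1} by $u$, integrate over $P$, integrate by parts using \eqref{6.2}--\eqref{6.4} so that the boundary terms at $x_N=0$ vanish by the factor $x_N^2$ and the boundary terms at $x_N=1$ vanish by \eqref{6.3}--\eqref{6.3.1}, obtaining $\int_P x_N^2|\nabla\Delta u|^2 = 0$, then propagating). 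Assembling the Fourier series back into a genuine function requires the a priori estimate to control convergence, so the estimate and existence are proved together: first construct the solution for smooth (finitely-supported-in-$\xi$, or rapidly decaying) $f$, prove \eqref{6.5} for it, then pass to the limit by density.

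The a priori estimate \eqref{6.5} is the heart of the matter, and it is where I would lean hardest on the machinery already developed. The strategy is a partition of unity plus the half-space Schauder estimates: away from $x_N=0$ the equation is uniformly elliptic and classical interior/boundary Schauder estimates (near $x_N=1$, with the boundary conditions \eqref{6.3}--\eqref{6.3.1}, and using periodicity \eqref{6.4} which removes lateral boundaries) apply; near $x_N=0$ one freezes nothing in the tangential directions but localizes and compares with the model half-space problem \eqref{005.52}--\eqref{005.54} with the Neumann condition \eqref{005.53}, for which Theorem \ref{T.5.1} gives $\langle u\rangle_{2,\gamma/2,Q^+}^{(4+\gamma)} \le C(\langle f\rangle + \langle g\rangle + \langle \psi\rangle)$ with no lower-order term on the right. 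Here there is no time variable and no initial data, so one uses instead the elliptic version, Theorem \ref{T.5.2}, which gives $\langle u\rangle_{2,\gamma/2,\overline{R^N_+}}^{(4+\gamma)} \le C(\langle f\rangle + \langle g\rangle)$ directly. Combining the local pieces yields control of the full seminorm $\langle u\rangle_{2,\gamma/2,\overline P}^{(4+\gamma)}$ by $\langle f\rangle_{\gamma/2,\overline P}^{(\gamma)}$ plus lower-order norms of $u$; then one absorbs the lower-order terms using the compactness/interpolation results (Lemma \ref{L.02.2}, Theorem \ref{Ts6.3}) together with the already-established uniqueness — the standard "estimate up to lower order + uniqueness $\Rightarrow$ full estimate" compactness argument. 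The main obstacle, as always with degenerate equations, is the behavior at the degeneracy locus $x_N=0$: making the localization there rigorous, checking that the cutoff commutators are genuinely lower-order in the weighted scale, and verifying that the Neumann condition \eqref{6.2} transcribes correctly to the model condition \eqref{005.53} so that Theorem \ref{T.5.2} is applicable. Everything else is routine bookkeeping in the weighted Hölder norms.
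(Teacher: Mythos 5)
Your overall route is the paper's: Fourier decomposition in the periodic tangential variables reducing \eqref{6.1}--\eqref{6.4} to a family of degenerate ODE boundary value problems on $[0,1]$ (your factored operator is exactly \eqref{6.9}), the Schauder localization of the estimate using the elliptic half-space Theorem \ref{T.5.2} near $x_{N}=0$ and classical elliptic estimates elsewhere, giving first an estimate with a lower-order term as in \eqref{6.41}, uniqueness by an energy identity, absorption of the lower-order term via uniqueness plus compactness, and finally a mollification/density argument to remove the auxiliary smoothness of $f$ in $x^{\prime}$. The genuine gap is in the mode-wise solvability step. You propose ``weak formulation in a suitable weighted Sobolev space and a Lax--Milgram/Fredholm argument,'' but (i) the bilinear form of this degenerate operator is not obviously coercive, nor is a Fredholm alternative in the relevant scale available without a compactness input you do not supply --- the paper's own integral identity requires the non-obvious multiplier $\overline{v}^{\prime}/x_{N}$ and even then only yields the $L_{2}$ bound \eqref{6.37}, which is far from an existence theorem; and (ii) even granting a weak solution, you would still need a regularity result carrying it into $C_{2,\gamma/2}^{4+\gamma}(I)$ for the operator degenerating at the regular singular point $x_{N}=0$, and no such weighted elliptic regularity theory is developed in the paper. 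This is precisely what the paper's continuation-in-$\lambda$ argument avoids: at $\lambda=0$ the problem $(x_{N}^{2}v^{\prime\prime\prime})^{\prime}=h$ is solved explicitly, with the membership in the weighted class replacing a boundary condition at $x_{N}=0$ (forcing $C_{1}=0$ in \eqref{6.20}), and the uniform a priori estimate \eqref{6.38}, polynomial in $\omega$, then gives solvability at $\lambda=1$ and, together with the rapid decay \eqref{6.8}, the convergence of the Fourier series in the weighted H\"{o}lder class. Without either this mechanism or an independently proved weighted regularity theorem, your existence step does not close.

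A smaller but real slip: in the uniqueness argument, multiplying \eqref{6.1} by $u$ and integrating by parts does not produce $\int_{P}x_{N}^{2}|\nabla\Delta u|^{2}dx$; it produces $-\int_{P}x_{N}^{2}\nabla u\cdot\nabla\Delta u\,dx$ plus boundary terms, which is not sign-definite. The correct multiplier is $\Delta u$, which with \eqref{6.2}--\eqref{6.4} yields $-\int_{P}x_{N}^{2}|\nabla\Delta u|^{2}dx-\int_{\overline{P}\cap\{x_{N}=1\}}(\Delta u)^{2}dx^{\prime}=0$, hence $\Delta u\equiv0$, and then $u\equiv0$ from \eqref{6.2}, \eqref{6.3.1}. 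This is easily repaired, but as written your energy identity is wrong, and the uniqueness is what you later rely on to remove the lower-order term from the Schauder estimate.
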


\begin{proof}

Let first $f(x)=f(x^{\prime},x_{N})$ be of the class $C^{\infty}$
with respect to the variables $x^{\prime}$.
We are going to find the smooth periodic solution of the problem in the form%

\begin{equation}
u(x^{\prime},x_{N})=%
%TCIMACRO{\dsum \limits_{\omega\in Z^{N-1}}}%
%BeginExpansion
{\displaystyle\sum\limits_{\omega\in Z^{N-1}}}
%EndExpansion
v(\omega,x_{N})e^{-i\omega x^{\prime}}, \label{6.6}%
\end{equation}
where $\omega=(\omega_{1},...,\omega_{N-1})$,
$\omega_{i}=0,\pm1,\pm2,...$, $\omega
x^{\prime}=\omega_{1}x_{1}+...+\omega_{N-1}x_{N-1}$. and
$v(\omega,x_{N})$ are unknown functions. Correspondingly, we
represent the function $f(x)$ as

\begin{equation}
f(x^{\prime},x_{N})=%
%TCIMACRO{\dsum \limits_{\omega\in Z^{N-1}}}%
%BeginExpansion
{\displaystyle\sum\limits_{\omega\in Z^{N-1}}}
%EndExpansion
h(\omega,x_{N})e^{-i\omega x^{\prime}}. \label{6.7}%
\end{equation}
Here in fact $v(\omega,x_{N})$ and $h(\omega,x_{N})$ are discrete
Fourier transforms of $u(x)$ and $f(x)$ correspondingly.
Since $f(x)\in C_{0}^{\infty}(P)$, it is well known that for any $K>0$%

\begin{equation}
|h(\omega,x_{N})|_{x_{N},\gamma/2,I}^{(\gamma)}\leq C_{K}(1+\omega^{2})^{-K},
\label{6.8}%
\end{equation}
where $\omega^{2}=\omega_{1}^{2}+...+\omega_{N-1}^{2}$. Substituting
representations \eqref{6.6}, \eqref{6.7} in relations \eqref{6.1}-
\eqref{6.4}, we in standard way arrive at the following problem for
an ordinary differential equation on $x_{N}\in I=[0,1]$ with the
parameter $\omega$ for the unknown function $v(\omega,x_{N})$

\begin{equation}
(x_{N}^{2}v^{\prime\prime\prime}(\omega,x_{N}))^{\prime}-2\omega^{2}x_{N}%
^{2}v^{\prime\prime}-2\omega^{2}x_{N}v^{\prime}+\left(  \omega^{2}\right)
^{2}x_{N}^{2}v(\omega,x_{N})=h(\omega,x_{N}),x_{N}\in I, \label{6.9}%
\end{equation}

\begin{equation}
v^{\prime}(\omega,0)=0, \label{6.10}%
\end{equation}

\begin{equation}
v(\omega,1)=0, \label{6.11}%
\end{equation}

\begin{equation}
v^{\prime\prime\prime}(\omega,1)+v^{^{\prime\prime}}(\omega,1)-\omega
^{2}v^{\prime}(\omega,1)=0. \label{6.12}%
\end{equation}
Note that, in the author's opinion, it is not so easy to solve ODE
\eqref{6.9}  explicitly. Therefore we are going to use the method of
the extension with respect to a parameter (see, for example,
\cite{LadUr}). For this we consider the following problem with the
parameter $\lambda\in\lbrack0,1]$

\begin{equation}
(x_{N}^{2}v^{\prime\prime\prime}(\omega,x_{N}))^{\prime}-2\lambda\omega
^{2}x_{N}^{2}v^{\prime\prime}-2\lambda\omega^{2}x_{N}v^{\prime}+\lambda\left(
\omega^{2}\right)  ^{2}x_{N}^{2}v(\omega,x_{N})=h(\omega,x_{N}),x_{N}\in I,
\label{6.12+1}%
\end{equation}

\begin{equation}
v^{\prime}(\omega,0)=0, \label{6.12+2}%
\end{equation}

\begin{equation}
v(\omega,1)=0, \label{6.15}%
\end{equation}

\begin{equation}
v^{\prime\prime\prime}(\omega,1)+v^{^{\prime\prime}}(\omega,1)-\lambda
\omega^{2}v^{\prime}(\omega,1)=0. \label{6.16}%
\end{equation}
Consider first this problem for the initial value of the parameter
$\lambda =0$. Then equation \eqref{6.12+1} and boundary condition
\eqref{6.16} became%

\begin{equation}
(x_{N}^{2}v^{\prime\prime\prime}(\omega,x_{N}))^{\prime})=h(\omega
,x_{N}),x_{N}\in I, \label{6.17}%
\end{equation}

\begin{equation}
v^{\prime\prime\prime}(\omega,1)+v^{^{\prime\prime}}(\omega,1)=a, \label{6.18}%
\end{equation}
where $a$ is a prescribed complex constant. We can find the solution
of this simplified problem explicitly. Taking in mind that due to
\eqref{6.5} we must have

\begin{equation}
|x_{N}v^{\prime\prime\prime}(\omega,x_{N})|\leq C,\quad x_{N}\in I, \label{6.19}%
\end{equation}
we obtain\ from
\eqref{6.17} with arbitrary constant $C_{1}$%

\begin{equation}
v^{\prime\prime\prime}(\omega,x_{N})=\frac{1}{x_{N}^{2}}%
%TCIMACRO{\dint \limits_{0}^{x_{N}}}%
%BeginExpansion
{\displaystyle\int\limits_{0}^{x_{N}}}
%EndExpansion
h(\omega,\xi)d\xi+\frac{C_{1}}{x_{N}^{2}}=\frac{1}{x_{N}^{2}}%
%TCIMACRO{\dint \limits_{0}^{x_{N}}}%
%BeginExpansion
{\displaystyle\int\limits_{0}^{x_{N}}}
%EndExpansion
h(\omega,\xi)d\xi, \label{6.20}%
\end{equation}
where from \eqref{6.19} it follows that we must have $C_{1}=0$. This
is exactly the place, where the class of the solution serves instead
of additional boundary condition at $\{x_{N}=0\}$. Then we find from
\eqref{6.20}%

\[
v^{\prime\prime}(\omega,x_{N})=-%
%TCIMACRO{\dint \limits_{x_{N}}^{1}}%
%BeginExpansion
{\displaystyle\int\limits_{x_{N}}^{1}}
%EndExpansion
v^{\prime\prime\prime}(\omega,\eta)d\eta+C_{2}=
\]

\begin{equation}
=-\left(  \frac{1}{x_{N}}-1\right)
%TCIMACRO{\dint \limits_{0}^{x_{N}}}%
%BeginExpansion
{\displaystyle\int\limits_{0}^{x_{N}}}
%EndExpansion
h(\omega,\xi)d\xi-%
%TCIMACRO{\dint \limits_{x_{N}}^{1}}%
%BeginExpansion
{\displaystyle\int\limits_{x_{N}}^{1}}
%EndExpansion
\left(  \frac{1}{\xi}-1\right)  h(\omega,\xi)d\xi+C_{2}. \label{6.21}%
\end{equation}
From \eqref{6.20}, \eqref{6.21} and from boundary condition
\eqref{6.18} it follows that%

\[
C_{2}=a-%
%TCIMACRO{\dint \limits_{0}^{1}}%
%BeginExpansion
{\displaystyle\int\limits_{0}^{1}}
%EndExpansion
h(\omega,\xi)d\xi
\]
and hence%

\begin{equation}
v^{\prime\prime}(\omega,x_{N})=-\left(  \frac{1}{x_{N}}-1\right)
%TCIMACRO{\dint \limits_{0}^{x_{N}}}%
%BeginExpansion
{\displaystyle\int\limits_{0}^{x_{N}}}
%EndExpansion
h(\omega,\xi)d\xi-%
%TCIMACRO{\dint \limits_{x_{N}}^{1}}%
%BeginExpansion
{\displaystyle\int\limits_{x_{N}}^{1}}
%EndExpansion
\left(  \frac{1}{\xi}-1\right)  h(\omega,\xi)d\xi-%
%TCIMACRO{\dint \limits_{0}^{1}}%
%BeginExpansion
{\displaystyle\int\limits_{0}^{1}}
%EndExpansion
h(\omega,\xi)d\xi+a. \label{6.21.1}%
\end{equation}
Now from representation \eqref{6.20} analogously to
\eqref{005.62} it follows that%

\begin{equation}
|x_{N}v^{\prime\prime\prime}(\omega,x_{N})|_{\gamma/2,I}^{(\gamma)}\leq
C|h(\omega,x_{N})|_{\gamma/2,I}^{(\gamma)} \label{6.22}%
\end{equation}
and then from equation
\eqref{6.17} we have%

\begin{equation}
|x_{N}^{2}v^{\prime\prime\prime\prime}(\omega,x_{N})|_{\gamma/2,I}^{(\gamma
)}\leq C|h(\omega,x_{N})|_{\gamma/2,I}^{(\gamma)}. \label{6.23}%
\end{equation}
From representation
\eqref{6.21.1} we directly infer that%

\[
|v^{\prime\prime}(\omega,x_{N})|\leq C\left(  |h(\omega,x_{N})|_{\gamma
/2,I}^{(\gamma)}+a\right)  (1+\ln\frac{1}{x_{N}})
\]
and then from boundary conditions $v^{\prime}(\omega,0)=0$ and
$v(\omega,1)=0$
we can find $v^{\prime}(\omega,x_{N})$, $v(\omega,x_{N})$ and obtain%

\begin{equation}
|v^{\prime}(\omega,x_{N})|_{\gamma/2,I}^{(\gamma)}+|v(\omega,x_{N}%
)|_{\gamma/2,I}^{(\gamma)}\leq C\left(  |h(\omega,x_{N})|_{\gamma
/2,I}^{(\gamma)}+a\right)  . \label{6.24}%
\end{equation}
Estimates \eqref{6.22}- \eqref{6.24} mean that we find the solution
$v(\omega,x_{N})\in
C_{2,\gamma/2}^{4+\gamma}(I)$ and%

\begin{equation}
|v(\omega,\cdot)|_{2,\gamma/2,I}^{(4+\gamma)}\leq C\left(  |h(\omega
,\cdot)|_{\gamma/2,I}^{(\gamma)}+a\right)  , \label{6.25}%
\end{equation}
where the constant $C$ does not depend on $\omega$ and
$h(\omega,x_{N})$, $a$.

Denote by $\widetilde{C}_{2,\gamma/2}^{4+\gamma}(I)$ the subspace of
$C_{2,\gamma/2}^{4+\gamma}(I)$ with boundary conditions
\eqref{6.12+2}, \eqref{6.15}. Then \eqref{6.25} means that the
operator $L_{0}:\widetilde{C}_{2,\gamma /2}^{4+\gamma}(I)\rightarrow
C_{\gamma/2}^{\gamma}(I)\times\mathbf{C}$ of problem \eqref{6.17},
\eqref{6.18} is an invertible operator. Now we consider the equation
($\lambda\in\lbrack0,1]$)%

\begin{equation}
\left(  L_{0}+\lambda T\right)  v=(h(\omega,x_{N}),a),\quad v\in\widetilde
{C}_{2,\gamma/2}^{4+\gamma}(I). \label{6.26}%
\end{equation}
Here operator $T$ is defined by the terms with $\lambda$ in
expressions in \eqref{6.12+1} and \eqref{6.16}, that is, in
particular, instead of
\eqref{6.16} we have nonhomogeneous condition%

\begin{equation}
v^{\prime\prime\prime}(\omega,1)+v^{^{\prime\prime}}(\omega,1)-\lambda
\omega^{2}v^{\prime}(\omega,1)=a. \label{6.27}%
\end{equation}
We first obtain uniformly in $\lambda\in\lbrack0,1]$ an a-priory
estimate of $L_{2}(I)$ norm of the possible solution
$v(\omega,x_{N})$ to equation \eqref{6.26}. So let
$v\in\widetilde{C}_{2,\gamma/2}^{4+\gamma}(I)$ and satisfy
\eqref{6.12+1}, \eqref{6.27}.
Since $v^{\prime}(0,\omega)=0$, the function $v^{\prime}(x_{N}%
,\omega)/x_{N}$ is bounded. Let
$\overline{v}^{\prime}(x_{N},\omega)$ is the complex conjugate of
$\overline{v}^{\prime}(x_{N},\omega)$. Multiply \eqref{6.12+1} by
$\overline{v}^{\prime}(x_{N},\omega)/x_{N}$ and integrate by parts
over $I$. We have for each term in
\eqref{6.12+1} the following expressions.%

\[
J_{1}\equiv%
%TCIMACRO{\dint \limits_{0}^{1}}%
%BeginExpansion
{\displaystyle\int\limits_{0}^{1}}
%EndExpansion
(x_{N}^{2}v^{\prime\prime\prime}(\omega,x_{N}))^{\prime}\frac{\overline
{v}^{\prime}(x_{N},\omega)}{x_{N}}dx_{N}=v^{\prime\prime\prime}(\omega
,1)\overline{v}^{\prime}(1,\omega)-
\]

\[
-%
%TCIMACRO{\dint \limits_{0}^{1}}%
%BeginExpansion
{\displaystyle\int\limits_{0}^{1}}
%EndExpansion
x_{N}v^{\prime\prime\prime}(\omega,x_{N})\overline{v}^{\prime\prime}%
(x_{N},\omega)dx_{N}+%
%TCIMACRO{\dint \limits_{0}^{1}}%
%BeginExpansion
{\displaystyle\int\limits_{0}^{1}}
%EndExpansion
v^{\prime\prime\prime}(\omega,x_{N})\overline{v}^{\prime}(x_{N},\omega
)dx_{N}=
\]

\[
=v^{\prime\prime\prime}(\omega,1)\overline{v}^{\prime}(1,\omega)-%
%TCIMACRO{\dint \limits_{0}^{1}}%
%BeginExpansion
{\displaystyle\int\limits_{0}^{1}}
%EndExpansion
x_{N}v^{\prime\prime\prime}(\omega,x_{N})\overline{v}^{\prime\prime}%
(x_{N},\omega)dx_{N}+
\]

\[
+v^{\prime\prime}(\omega,1)\overline{v}^{\prime}(1,\omega)-%
%TCIMACRO{\dint \limits_{0}^{1}}%
%BeginExpansion
{\displaystyle\int\limits_{0}^{1}}
%EndExpansion
|v^{\prime\prime}(\omega,x_{N})|^{2}dx_{N}.
\]
Adding up $J_{1}$ and it's complex conjugate $\overline{J}_{1}$, we
obtain, integrating by parts again,%

\[
J_{1}+\overline{J}_{1}=-2%
%TCIMACRO{\dint \limits_{0}^{1}}%
%BeginExpansion
{\displaystyle\int\limits_{0}^{1}}
%EndExpansion
|v^{\prime\prime}(\omega,x_{N})|^{2}dx_{N}-
\]

\[
-%
%TCIMACRO{\dint \limits_{0}^{1}}%
%BeginExpansion
{\displaystyle\int\limits_{0}^{1}}
%EndExpansion
x_{N}\left(  |v^{\prime\prime}(\omega,x_{N})|^{2}\right)  ^{\prime}%
dx_{N}+2\operatorname{Re}\left[  (v^{\prime\prime\prime}(\omega,1)+v^{\prime
\prime}(\omega,1))\overline{v}^{\prime}(\omega,1)\right]  =
\]

\[
-%
%TCIMACRO{\dint \limits_{0}^{1}}%
%BeginExpansion
{\displaystyle\int\limits_{0}^{1}}
%EndExpansion
|v^{\prime\prime}(\omega,x_{N})|^{2}dx_{N}-|v^{\prime\prime}(\omega
,1)|^{2}+2\operatorname{Re}\left[  (v^{\prime\prime\prime}(\omega
,1)+v^{\prime\prime}(\omega,1))\overline{v}^{\prime}(\omega,1)\right]  .
\]
Making use of boundary condition
\eqref{6.27}, we arrive at%

\begin{equation}
J_{1}+\overline{J}_{1}=-%
%TCIMACRO{\dint \limits_{0}^{1}}%
%BeginExpansion
{\displaystyle\int\limits_{0}^{1}}
%EndExpansion
|v^{\prime\prime}(\omega,x_{N})|^{2}dx_{N}-|v^{\prime\prime}(\omega
,1)|^{2}+2\lambda\omega^{2}|v^{\prime}(\omega,1)|^{2}+2\operatorname{Re}%
[a\overline{v}^{\prime}(\omega,1)]. \label{6.28}%
\end{equation}
Further,%

\[
J_{2}\equiv-2\lambda\omega^{2}%
%TCIMACRO{\dint \limits_{0}^{1}}%
%BeginExpansion
{\displaystyle\int\limits_{0}^{1}}
%EndExpansion
x_{N}^{2}v^{\prime\prime}\frac{\overline{v}^{\prime}(x_{N},\omega)}{x_{N}%
}dx_{N}.
\]
Therefore,%

\[
J_{2}+\overline{J}_{2}=-2\lambda\omega^{2}%
%TCIMACRO{\dint \limits_{0}^{1}}%
%BeginExpansion
{\displaystyle\int\limits_{0}^{1}}
%EndExpansion
x_{N}(\overline{v}^{\prime\prime}v^{\prime}+v^{\prime\prime}\overline
{v}^{\prime})=-2\lambda\omega^{2}%
%TCIMACRO{\dint \limits_{0}^{1}}%
%BeginExpansion
{\displaystyle\int\limits_{0}^{1}}
%EndExpansion
x_{N}\left(  |v^{\prime}|^{2}\right)  ^{\prime}dx_{N}=
\]

\begin{equation}
=-2\lambda\omega^{2}|v^{\prime}(\omega,1)|^{2}+2\lambda\omega^{2}%
%TCIMACRO{\dint \limits_{0}^{1}}%
%BeginExpansion
{\displaystyle\int\limits_{0}^{1}}
%EndExpansion
|v^{\prime}|^{2}dx_{N}. \label{6.29}%
\end{equation}
For the next term in
\eqref{6.12+1} we have%

\[
J_{3}\equiv-2\lambda\omega^{2}%
%TCIMACRO{\dint \limits_{0}^{1}}%
%BeginExpansion
{\displaystyle\int\limits_{0}^{1}}
%EndExpansion
x_{N}v^{\prime}\frac{\overline{v}^{\prime}(x_{N},\omega)}{x_{N}}dx_{N},
\]
and thus%

\begin{equation}
J_{3}+\overline{J}_{3}=-4\lambda\omega^{2}%
%TCIMACRO{\dint \limits_{0}^{1}}%
%BeginExpansion
{\displaystyle\int\limits_{0}^{1}}
%EndExpansion
|v^{\prime}|^{2}dx_{N}. \label{6.30}%
\end{equation}
Now,%

\[
J_{4}\equiv\lambda\left(  \omega^{2}\right)  ^{2}%
%TCIMACRO{\dint \limits_{0}^{1}}%
%BeginExpansion
{\displaystyle\int\limits_{0}^{1}}
%EndExpansion
x_{N}^{2}v(\omega,x_{N})\frac{\overline{v}^{\prime}(x_{N},\omega)}{x_{N}%
}dx_{N}=\lambda\left(  \omega^{2}\right)  ^{2}%
%TCIMACRO{\dint \limits_{0}^{1}}%
%BeginExpansion
{\displaystyle\int\limits_{0}^{1}}
%EndExpansion
x_{N}v(\omega,x_{N})\overline{v}^{\prime}(x_{N},\omega)dx_{N}.
\]
Therefore,%

\begin{equation}
J_{4}+\overline{J}_{4}=\lambda\left(  \omega^{2}\right)  ^{2}%
%TCIMACRO{\dint \limits_{0}^{1}}%
%BeginExpansion
{\displaystyle\int\limits_{0}^{1}}
%EndExpansion
x_{N}\left(  |v|^{2}\right)  ^{\prime}dx_{N}=-\lambda\left(  \omega
^{2}\right)  ^{2}%
%TCIMACRO{\dint \limits_{0}^{1}}%
%BeginExpansion
{\displaystyle\int\limits_{0}^{1}}
%EndExpansion
|v|^{2}dx_{N}. \label{6.31}%
\end{equation}
At last,%

\[
J_{5}\equiv%
%TCIMACRO{\dint \limits_{0}^{1}}%
%BeginExpansion
{\displaystyle\int\limits_{0}^{1}}
%EndExpansion
h(\omega,x_{N})\frac{\overline{v}^{\prime}(x_{N},\omega)}{x_{N}}dx_{N},
\]
and thus%

\begin{equation}
J_{5}+\overline{J}_{5}=2\operatorname{Re}%
%TCIMACRO{\dint \limits_{0}^{1}}%
%BeginExpansion
{\displaystyle\int\limits_{0}^{1}}
%EndExpansion
h(\omega,x_{N})\frac{\overline{v}^{\prime}(x_{N},\omega)}{x_{N}}dx_{N}.
\label{6.32}%
\end{equation}
Taking into account that $J_{1}+J_{2}+J_{3}+J_{4}=J_{5}$\bigskip\
and adding up relations \eqref{6.28}- \eqref{6.1},
we obtain%

\[
-%
%TCIMACRO{\dint \limits_{0}^{1}}%
%BeginExpansion
{\displaystyle\int\limits_{0}^{1}}
%EndExpansion
|v^{\prime\prime}(\omega,x_{N})|^{2}dx_{N}-|v^{\prime\prime}(\omega
,1)|^{2}+2\operatorname{Re}[a\overline{v}^{\prime}(\omega,1)]-
\]

\[
-2\lambda\omega^{2}%
%TCIMACRO{\dint \limits_{0}^{1}}%
%BeginExpansion
{\displaystyle\int\limits_{0}^{1}}
%EndExpansion
|v^{\prime}|^{2}dx_{N}-\lambda\left(  \omega^{2}\right)  ^{2}%
%TCIMACRO{\dint \limits_{0}^{1}}%
%BeginExpansion
{\displaystyle\int\limits_{0}^{1}}
%EndExpansion
|v|^{2}dx_{N}=
\]

\[
=2\operatorname{Re}%
%TCIMACRO{\dint \limits_{0}^{1}}%
%BeginExpansion
{\displaystyle\int\limits_{0}^{1}}
%EndExpansion
h(\omega,x_{N})\frac{\overline{v}^{\prime}(x_{N},\omega)}{x_{N}}dx_{N}.
\]
Thus we infer from the last relation

\[%
%TCIMACRO{\dint \limits_{0}^{1}}%
%BeginExpansion
{\displaystyle\int\limits_{0}^{1}}
%EndExpansion
|v^{\prime\prime}(\omega,x_{N})|^{2}dx_{N}+2\lambda\omega^{2}%
%TCIMACRO{\dint \limits_{0}^{1}}%
%BeginExpansion
{\displaystyle\int\limits_{0}^{1}}
%EndExpansion
|v^{\prime}|^{2}dx_{N}+\lambda\left(  \omega^{2}\right)  ^{2}%
%TCIMACRO{\dint \limits_{0}^{1}}%
%BeginExpansion
{\displaystyle\int\limits_{0}^{1}}
%EndExpansion
|v|^{2}dx_{N}\leq
\]

\begin{equation}
\leq2%
%TCIMACRO{\dint \limits_{0}^{1}}%
%BeginExpansion
{\displaystyle\int\limits_{0}^{1}}
%EndExpansion
|h(\omega,x_{N})|\left\vert \frac{\overline{v}^{\prime}(x_{N},\omega)}{x_{N}%
}\right\vert dx_{N}+2|a||v^{\prime}(\omega,1)|\equiv I_{1}+I_{2}. \label{6.33}%
\end{equation}
Due to the Hardy inequality we have the following estimates with an
arbitrary small $\varepsilon>0$ for the terms in the right hand side
of the last inequality.

\[
I_{1}\leq\varepsilon%
%TCIMACRO{\dint \limits_{0}^{1}}%
%BeginExpansion
{\displaystyle\int\limits_{0}^{1}}
%EndExpansion
\left\vert \frac{\overline{v}^{\prime}(x_{N},\omega)}{x_{N}}\right\vert
^{2}dx_{N}+\frac{4}{\varepsilon}%
%TCIMACRO{\dint \limits_{0}^{1}}%
%BeginExpansion
{\displaystyle\int\limits_{0}^{1}}
%EndExpansion
|h(\omega,x_{N})|^{2}dx_{N}\leq
\]

\begin{equation}
\leq\varepsilon C%
%TCIMACRO{\dint \limits_{0}^{1}}%
%BeginExpansion
{\displaystyle\int\limits_{0}^{1}}
%EndExpansion
|v^{\prime\prime}(\omega,x_{N})|^{2}dx_{N}+C_{\varepsilon}\left(
|h(\omega,\cdot)|_{\gamma/2,I}^{(\gamma)}\right)  ^{2}, \label{6.34}%
\end{equation}

\[
I_{2}\leq\varepsilon|v^{\prime}(\omega,1)|^{2}+C_{\varepsilon}|a|^{2}%
\]
and since%

\[
|v^{\prime}(\omega,1)|=\left\vert
%TCIMACRO{\dint \limits_{0}^{1}}%
%BeginExpansion
{\displaystyle\int\limits_{0}^{1}}
%EndExpansion
v^{\prime\prime}(\omega,x_{N})dx_{N}\right\vert \leq\left(
%TCIMACRO{\dint \limits_{0}^{1}}%
%BeginExpansion
{\displaystyle\int\limits_{0}^{1}}
%EndExpansion
|v^{\prime\prime}(\omega,x_{N})|^{2}dx_{N}\right)  ^{\frac{1}{2}},
\]

\begin{equation}
I_{2}\leq\varepsilon%
%TCIMACRO{\dint \limits_{0}^{1}}%
%BeginExpansion
{\displaystyle\int\limits_{0}^{1}}
%EndExpansion
|v^{\prime\prime}(\omega,x_{N})|^{2}dx_{N}+C_{\varepsilon}|a|^{2}. \label{6.35}%
\end{equation}
Substituting these estimates in \eqref{6.33}, choosing sufficiently
small $\varepsilon$, and absorbing the terms with $\varepsilon$ by
the left hand side of \eqref{6.33}, we obtain

\begin{equation}%
%TCIMACRO{\dint \limits_{0}^{1}}%
%BeginExpansion
{\displaystyle\int\limits_{0}^{1}}
%EndExpansion
|v^{\prime\prime}(\omega,x_{N})|^{2}dx_{N}+2\lambda\omega^{2}%
%TCIMACRO{\dint \limits_{0}^{1}}%
%BeginExpansion
{\displaystyle\int\limits_{0}^{1}}
%EndExpansion
|v^{\prime}|^{2}dx_{N}+\lambda\left(  \omega^{2}\right)  ^{2}%
%TCIMACRO{\dint \limits_{0}^{1}}%
%BeginExpansion
{\displaystyle\int\limits_{0}^{1}}
%EndExpansion
|v|^{2}dx_{N}\leq\label{6.36}%
\end{equation}

\[
\leq C\left(  \left(  |h(\omega,\cdot)|_{\gamma/2,I}^{(\gamma)}\right)
^{2}+|a|^{2}\right)  .
\]
Taking into account that $v^{\prime}(\omega,0)=0$ , $v(\omega,1)=0$,
and making use of the Poincare inequality, we arrive at the estimate%

\begin{equation}%
%TCIMACRO{\dint \limits_{0}^{1}}%
%BeginExpansion
{\displaystyle\int\limits_{0}^{1}}
%EndExpansion
|v|^{2}dx_{N}\leq C%
%TCIMACRO{\dint \limits_{0}^{1}}%
%BeginExpansion
{\displaystyle\int\limits_{0}^{1}}
%EndExpansion
|v^{\prime}|^{2}dx_{N}\leq C%
%TCIMACRO{\dint \limits_{0}^{1}}%
%BeginExpansion
{\displaystyle\int\limits_{0}^{1}}
%EndExpansion
|v^{\prime\prime}|^{2}\leq C\left(  \left(  |h(\omega,\cdot)|_{\gamma
/2,I}^{(\gamma)}\right)  ^{2}+|a|^{2}\right)  . \label{6.37}%
\end{equation}
This is uniform in $\lambda$ estimate for the $L_{2}(I)$ - norm of
the possible solution of \eqref{6.26}. Now we obtain uniform in
$\lambda$ estimate for the $C_{2,\gamma/2}^{4+\gamma}(I)$ - norm of
the possible solution of \eqref{6.26}. For this we just move all
terms with $\lambda$ to the right hand sides of relations
\eqref{6.12+1}, \eqref{6.16}. Then making use of estimate
\eqref{6.25} for the simplest problem and applying interpolation
inequalities \eqref{2.15}- \eqref{2.22},
we obtain%

\[
|v(\omega,\cdot)|_{2,\gamma/2,I}^{(4+\gamma)}\leq C\left(  |h(\omega
,\cdot)|_{\gamma/2,I}^{(\gamma)}+a\right)  +
\]

\[
+\varepsilon(1+\omega^{2})^{A_{1}}|v(\omega,\cdot)|_{2,\gamma/2,I}%
^{(4+\gamma)}+C\varepsilon^{-A_{2}}(1+\omega^{2})^{A_{1}}\left(
%TCIMACRO{\dint \limits_{0}^{1}}%
%BeginExpansion
{\displaystyle\int\limits_{0}^{1}}
%EndExpansion
|v|^{2}dx_{N}\right)  ^{\frac{1}{2}},
\]
where $A_{1}$ and $A_{2}$ some positive exponents. Substituting here
instead $\varepsilon$ the expression
$\varepsilon/(1+\omega^{2})^{A_{1}}$, making use of \eqref{6.37},
and absorbing the terms with $\varepsilon$ by the left hand side,
we obtain finally%

\begin{equation}
|v(\omega,\cdot)|_{2,\gamma/2,I}^{(4+\gamma)}\leq C(1+\omega^{2})^{A}\left(
|h(\omega,\cdot)|_{\gamma/2,I}^{(\gamma)}+a\right)  , \label{6.38}%
\end{equation}
where $A$ is some fixed positive exponent and constants $C$, $A$ do
not depend on $\omega$, $\lambda$.

Thus, problem \eqref{6.26} has the unique solution for $\lambda=0$
with estimate \eqref{6.25} and for $\lambda\in(0,1]$ the possible
solution of this problem has uniform in $\lambda$ a priori estimate
\eqref{6.38}. According to the method of the extension along a
parameter, this means that problem \eqref{6.26} has the unique
solution for any $\lambda\in\lbrack0,1]$, including $\lambda=1$,
with the estimate \eqref{6.38}. Therefore we infer that problem
\eqref{6.9}- \eqref{6.12} has the unique solution
$v(\omega,x_{N})\in C_{2,\gamma /2}^{4+\gamma}(\overline{I})$ for
any $\omega\in Z^{N-1}$ and%

\begin{equation}
|v(\omega,\cdot)|_{2,\gamma/2,I}^{(4+\gamma)}\leq C(1+\omega^{2})^{A}%
|h(\omega,\cdot)|_{\gamma/2,I}^{(\gamma)}. \label{6.39}%
\end{equation}
From this estimate and from
\eqref{6.8} we have also for any $K>0$%

\begin{equation}
|v(\omega,\cdot)|_{2,\gamma/2,I}^{(4+\gamma)}\leq C_{K}(1+\omega^{2})^{-K}.
\label{6.40}%
\end{equation}
From \eqref{6.39}, \eqref{6.40} and from the way of construction of
function $v(\omega,x_{N})$ it directly follows that the function
$u(x^{\prime},x_{N})$ from \eqref{6.6} gives a solution to problem
\eqref{6.1}- \eqref{6.4}. This solution is infinitely differentiable
with respect to $x^{\prime}$ and it is of the class
$C_{2,\gamma/2}^{4+\gamma}(\overline{I})$ with respect
to $x_{N}$. Thus $u(x^{\prime},x_{N})\in C_{2,\gamma/2}^{4+\gamma}%
(\overline{P})$.

Let us turn now to the estimate \eqref{6.5}. This estimate is
obtained in completely standard way of the Schauder technique on the
base of estimate \eqref{005.59.001}. We multiply equation
\eqref{6.1} by cut-off functions $\eta_{x^{0}}(x)$ with the small
supports in a neighbourhood of a point $x^{0}\in\overline{P}$ and
obtain a simple model problems in whole space $R^{N}$ (for the inner
points of $\overline{P}$ or for the points with $x_{i}=\pm\pi$,
$i=\overline{1,N-1}$) or in the half space (for points with
$x_{N}=0$ or $x_{N}=1$) for the function $u(x)\eta_{x^{0}}(x)$. For
points $x^{0}$ with $x_{N}^{0}=0$ we use estimate
\eqref{005.59.001}.  Other points correspond to non-degenerate case
($x_{N}\geq\nu>0$) and for them we use well known results for
elliptic problems - see, for example, \cite{SolEllipt}. To estimate
emerging lower order terms we use interpolation inequalities
\eqref{2.15}- \eqref{2.22}. and the standard interpolation
inequalities. This process is completely standard to nowdays  and we
omit it.

As a result for any $f\in C_{\gamma/2}^{\gamma}(\overline{P})$ we obtain the
estimate with the lower order term%

\begin{equation}
|u|_{2,\gamma/2,\overline{P}}^{(4+\gamma)}\leq C|f|_{\gamma/2,\overline{P}%
}^{(\gamma)}+C|u|_{\overline{P}}^{(0)}. \label{6.41}%
\end{equation}
is known also that if we have the uniqueness for problem
\eqref{6.1}- \eqref{6.4} then the lower order term
$C|u|_{\overline{P}}^{(0)}$ can be omitted. The proof of this fact
is by contradiction on the base of \eqref{6.41} and the fact of
uniqueness and is standard. The proof can be found, for example, in
\cite{Shimak} or in \cite{My}. Thus it is enough to show the
uniqueness of the solution to problem \eqref{6.1}- \eqref{6.4}. So
let $u(x)\in C_{2,\gamma/2}^{4+\gamma}(\overline{P})$ satisfy
problem \eqref{6.1}- \eqref{6.4} with $f\equiv0$.

Multiply equation \eqref{6.1} by $\Delta u(x)$ and integrate by
parts over $P$. With the taking into account of the boundary
conditions and $u(x)\in C_{2,\gamma/2}^{4+\gamma}(\overline{P})$,
we obtain%

\[
-%
%TCIMACRO{\dint \limits_{P}}%
%BeginExpansion
{\displaystyle\int\limits_{P}}
%EndExpansion
x_{N}^{2}(\nabla\Delta u)^{2}dx-%
%TCIMACRO{\dint \limits_{\overline{P}\cap\{x_{N}=1\}}}%
%BeginExpansion
{\displaystyle\int\limits_{\overline{P}\cap\{x_{N}=1\}}}
%EndExpansion
(\Delta u)^{2}dx^{\prime}=0.
\]
Since both integrals in this equality have the same sign, we
conclude that, $\Delta u(x)\equiv const$ and $\Delta u\equiv0$ at
$x_{N}=0$, that is $\Delta u\equiv0$ in $\overline{P}$. Taking into
account boundary conditions
\eqref{6.2} and%
\eqref{6.3.1} we infer in standard way that $u(x)\equiv0$ in
$\overline{P}$. This proves the uniqueness for the
problem and thus we have the estimate%

\[
|u|_{2,\gamma/2,\overline{P}}^{(4+\gamma)}\leq C|f|_{\gamma/2,\overline{P}%
}^{(\gamma)},
\]
where the constant $C$ depends only on $N$ and $\gamma$.

Free ourselves now from the assumption that $f(x^{\prime},x_{N})$ is
of class $C^{\infty}$ with respect to $x^{\prime}$. Let
$\omega_{\varepsilon}(x^{\prime})\in C^{\infty}(R^{N-1})$ be a
nonnegative mollifier kernel with the parameter $\varepsilon$ and
with the support in a set
$\{|x^{\prime}|\leq C\varepsilon\}$. Denote%

\begin{equation}
f_{\varepsilon}(x)=\omega_{\varepsilon}(x^{\prime})\ast_{x^{\prime}%
}f(x^{\prime},x_{N})=%
%TCIMACRO{\dint \limits_{R^{N-1}}}%
%BeginExpansion
{\displaystyle\int\limits_{R^{N-1}}}
%EndExpansion
\omega_{\varepsilon}(x^{\prime}-y^{\prime})f(y^{\prime},x_{N})dy^{\prime}=
\label{6.42}%
\end{equation}

\[
=%
%TCIMACRO{\dint \limits_{R^{N-1}}}%
%BeginExpansion
{\displaystyle\int\limits_{R^{N-1}}}
%EndExpansion
\omega_{\varepsilon}(y^{\prime})f(x^{\prime}-y^{\prime},x_{N})dy^{\prime}.
\]
Evidently, $f_{\varepsilon}(x)$ is $2\pi$-periodic, $f_{\varepsilon
}(x)\in C^{\infty}$ with respect to $x^{\prime}$, and it is
straightforward to
check that%

\begin{equation}
|f_{\varepsilon}|_{\gamma/2,\overline{P}}^{(\gamma)}\leq|f|_{\gamma
/2,\overline{P}}^{(\gamma)},\quad\varepsilon>0. \label{6.43}%
\end{equation}
From this estimate and from Lemma \ref{L.02.1} and Lemma
\ref{L.02.1}
it follows that at least for a subsequence%

\begin{equation}
|f-f_{\varepsilon}|_{\overline{P}}^{(\gamma_{1}/2)}\rightarrow0,\quad
\varepsilon\rightarrow0,\quad\gamma_{1}\in(0,\gamma). \label{6.44}%
\end{equation}
By above, problem \eqref{6.1}- \eqref{6.4} with $f_{\varepsilon}(x)$
instead of $f(x)$ has the unique solution $u_{\varepsilon}(x)$
with the estimate%

\begin{equation}
|u_{\varepsilon}|_{2,\gamma/2,\overline{P}}^{(4+\gamma)}\leq C|f_{\varepsilon
}|_{\gamma/2,\overline{P}}^{(\gamma)}\leq C|f|_{\gamma/2,\overline{P}%
}^{(\gamma)}. \label{6.45}%
\end{equation}
By \eqref{6.45} and by Lemma \ref{L.02.2}
there exists a function $u(x)\in C_{2,\gamma/2}^{4+\gamma}(\overline{P})$ with%

\begin{equation}
|u-u_{\varepsilon}|_{2,\gamma_{1}/2,\overline{P}}^{(4+\gamma_{1})}%
\rightarrow0,\varepsilon\rightarrow0,\quad\gamma_{1}\in(0,\gamma) \label{6.46}%
\end{equation}
and%

\[
|u|_{2,\gamma/2,\overline{P}}^{(4+\gamma)}\leq C|f|_{\gamma/2,\overline{P}%
}^{(\gamma)}.
\]
Now \eqref{6.44} and \eqref{6.46} permit us to go to the limit in
problem \eqref{6.1}- \eqref{6.4} and infer that $u(x)$ is the
solution of this problem with estimate \eqref{6.5}.

This completes the proof of the lemma.
\end{proof}

Before to proceed to a parabolic problem we present some simple
variant of well known Hardy's inequality in $P$. The difference is
that the function in the inequality does not vanish at $x_{N}=1$.

\begin{lemma}
\label{L.6.2}
Let complex valued $u(x)$ be defined on $P$ and let $\Delta u$ and $x_{N}%
\nabla\Delta u$ be square integrable over $P$.

Then%

\begin{equation}%
%TCIMACRO{\dint \limits_{P}}%
%BeginExpansion
{\displaystyle\int\limits_{P}}
%EndExpansion
|\Delta u|^{2}dx\leq C\left(
%TCIMACRO{\dint \limits_{P}}%
%BeginExpansion
{\displaystyle\int\limits_{P}}
%EndExpansion
x_{N}^{2}|\nabla\Delta u|^{2}dx+%
%TCIMACRO{\dint \limits_{\overline{P}\cap\{x_{N}=1\}}}%
%BeginExpansion
{\displaystyle\int\limits_{\overline{P}\cap\{x_{N}=1\}}}
%EndExpansion
|\Delta u(x^{\prime},1)|^{2}dx^{\prime}\right)  . \label{6.47.01}%
\end{equation}
\end{lemma}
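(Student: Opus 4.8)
The plan is to reduce \eqref{6.47.01} to a one–dimensional weighted inequality in the variable $x_{N}$ and then integrate over the bounded cross–section $\{|x_{i}|<\pi,\ i=\overline{1,N-1}\}$ of $P$. Put $v(x^{\prime},x_{N})\equiv\Delta u(x^{\prime},x_{N})$, so that by hypothesis $v\in L_{2}(P)$ and $x_{N}\nabla v\in L_{2}(P)$. On the strip $\{1/2<x_{N}<1\}$ one has $x_{N}\geq1/2$, hence $\nabla v\in L_{2}$ there and $v\in H^{1}$ of that strip; consequently $v$ has an $L_{2}$ trace on $\{x_{N}=1\}$ and the second term on the right of \eqref{6.47.01} is automatically finite. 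Similarly, writing $P_{\delta}=\{|x_{i}|<\pi,\ \delta<x_{N}<1\}$ we get $\nabla v\in L_{2}(P_{\delta})$ for every $\delta>0$, so that for a.e.\ fixed $x^{\prime}$ the slice $v(x^{\prime},\cdot)$ is locally absolutely continuous on $(0,1]$, lies in $L_{2}(0,1)$, and satisfies $x_{N}\partial_{x_{N}}v(x^{\prime},\cdot)\in L_{2}(0,1)$ (all by Fubini). It therefore suffices to prove the scalar estimate
\[
\int_{0}^{1}|w(x_{N})|^{2}\,dx_{N}\leq C\Big(\int_{0}^{1}x_{N}^{2}|w^{\prime}(x_{N})|^{2}\,dx_{N}+|w(1)|^{2}\Big)
\]
for every such $w$, and then to take $w=v(x^{\prime},\cdot)$, use $|\partial_{x_{N}}\Delta u|\leq|\nabla\Delta u|$, and integrate in $x^{\prime}$.

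For the scalar inequality the idea is a single integration by parts based on $1=\dfrac{d}{dx_{N}}x_{N}$, performed on $[\varepsilon,1]$ so as to stay away from the degeneracy at $x_{N}=0$:
\[
\int_{\varepsilon}^{1}|w|^{2}\,dx_{N}=|w(1)|^{2}-\varepsilon|w(\varepsilon)|^{2}-2\int_{\varepsilon}^{1}x_{N}\,\operatorname{Re}\big(\overline{w}\,w^{\prime}\big)\,dx_{N}.
\]
The crucial point — and the reason no vanishing hypothesis on $w$ (at either endpoint) is needed — is that the boundary term $-\varepsilon|w(\varepsilon)|^{2}$ has the favourable sign and may simply be dropped. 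Bounding the remaining integral by Cauchy--Schwarz, $2\int_{\varepsilon}^{1}x_{N}|w^{\prime}|\,|w|\,dx_{N}\leq2\big(\int_{\varepsilon}^{1}x_{N}^{2}|w^{\prime}|^{2}\big)^{1/2}\big(\int_{\varepsilon}^{1}|w|^{2}\big)^{1/2}$, and absorbing the $L_{2}$ norm of $w$ by Young's inequality, one obtains $\int_{\varepsilon}^{1}|w|^{2}\,dx_{N}\leq2|w(1)|^{2}+4\int_{\varepsilon}^{1}x_{N}^{2}|w^{\prime}|^{2}\,dx_{N}\leq2|w(1)|^{2}+4\int_{0}^{1}x_{N}^{2}|w^{\prime}|^{2}\,dx_{N}$, uniformly in $\varepsilon\in(0,1)$. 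Letting $\varepsilon\to0$ by monotone convergence gives the scalar inequality with $C=4$.

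Reassembling the slices, $\int_{P}|\Delta u|^{2}\,dx=\int\!\!\int_{0}^{1}|v(x^{\prime},x_{N})|^{2}\,dx_{N}\,dx^{\prime}\leq2\int_{\overline{P}\cap\{x_{N}=1\}}|\Delta u(x^{\prime},1)|^{2}\,dx^{\prime}+4\int_{P}x_{N}^{2}|\partial_{x_{N}}\Delta u|^{2}\,dx$, and since $|\partial_{x_{N}}\Delta u|\leq|\nabla\Delta u|$ this is exactly \eqref{6.47.01}. The only mildly delicate point is the measurable–slice reduction (absolute continuity of $v(x^{\prime},\cdot)$ for a.e.\ $x^{\prime}$ and existence of the trace at $x_{N}=1$), which is handled as above by exploiting that the weight $x_{N}^{2}$ is bounded below on $\{x_{N}>\delta\}$; alternatively one may first prove \eqref{6.47.01} for $u$ smooth up to $\partial P$ and pass to the limit using the density already used in Lemma \ref{L.6.1}. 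I do not expect any genuine obstacle — the entire content of the lemma is the sign of the endpoint term at $x_{N}=\varepsilon$.
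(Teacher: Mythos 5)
Your proof is correct and takes essentially the same route as the paper: the slice identity $(x_{N}|v|^{2})^{\prime}=|v|^{2}+2x_{N}\operatorname{Re}(\overline{v}v^{\prime})$, absorption of the $L_{2}$ norm of $v$ via Cauchy/Young, and integration in $x^{\prime}$ to get \eqref{6.47.01}. Your integration over $[\varepsilon,1]$ with the observation that the endpoint term $-\varepsilon|w(\varepsilon)|^{2}$ has the favourable sign is only a slightly more careful handling of the degeneracy at $x_{N}=0$ than the paper's direct integration over $[0,1]$.
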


\begin{proof}
Let complex valued function $v(x_{N})$ be defined on $[0,1]$ and let
$v(x_{N})$ and $x_{N}v^{\prime}(x_{N})$ be square integrable
on $[0,1]$. Consider the equality%

\[
(x_{N}|v|^{2})^{^{\prime}}=|v|^{2}+x_{N}v^{\prime}\overline{v}+x_{N}%
v\overline{v}^{\prime}=|v|^{2}+2x_{N}\operatorname{Re}v^{\prime}\overline{v}.
\]
Integrating this equality over $[0,1]$, we obtain%

\[%
%TCIMACRO{\dint \limits_{0}^{1}}%
%BeginExpansion
{\displaystyle\int\limits_{0}^{1}}
%EndExpansion
|v|^{2}dx_{N}=|v|^{2}(1)-%
%TCIMACRO{\dint \limits_{0}^{1}}%
%BeginExpansion
{\displaystyle\int\limits_{0}^{1}}
%EndExpansion
2x_{N}\operatorname{Re}v^{\prime}\overline{v}dx_{N}.
\]
Estimating the last integral by the Cauchy inequality with
$\varepsilon$, we
get%

\[%
%TCIMACRO{\dint \limits_{0}^{1}}%
%BeginExpansion
{\displaystyle\int\limits_{0}^{1}}
%EndExpansion
|v|^{2}dx_{N}\leq|v|^{2}(1)+\varepsilon%
%TCIMACRO{\dint \limits_{0}^{1}}%
%BeginExpansion
{\displaystyle\int\limits_{0}^{1}}
%EndExpansion
|v|^{2}dx_{N}+\frac{C}{\varepsilon}%
%TCIMACRO{\dint \limits_{0}^{1}}%
%BeginExpansion
{\displaystyle\int\limits_{0}^{1}}
%EndExpansion
x_{N}^{2}|v^{\prime}|^{2}dx_{N}%
\]
and we conclude that with some absolute $C$%

\begin{equation}%
%TCIMACRO{\dint \limits_{0}^{1}}%
%BeginExpansion
{\displaystyle\int\limits_{0}^{1}}
%EndExpansion
|v|^{2}dx_{N}\leq C\left(  |v|^{2}(1)+%
%TCIMACRO{\dint \limits_{0}^{1}}%
%BeginExpansion
{\displaystyle\int\limits_{0}^{1}}
%EndExpansion
x_{N}^{2}\left\vert \frac{dv}{dx_{N}}\right\vert ^{2}dx_{N}\right)  .
\label{6.47.02}%
\end{equation}
Now we substitute $\Delta u(x^{\prime},x_{N})$ in eqref\{6.47.02\}
instead of $v(x_{N})$ and integrate the result with respect to
$x^{\prime}$ to obtain \eqref{6.47.01}.

\end{proof}

We consider now a parabolic problem of the kind \eqref{6.1}-
\eqref{6.4}. Let $P_{T}=P\times\lbrack0,T]$, $T>0$, and let
$P_{\infty}=P\times\lbrack0,+\infty)$. Let a function $f(x,t)$ be
$2\pi$- periodic with respect to the variables $x_{i}$, $i=\overline{1,N-1}$,%

\begin{equation}
f(x,t)\in C_{\gamma/2}^{\gamma,\frac{\gamma}{4}}\bigskip(\overline{P}%
_{T})\ \text{with \ }f(x,0)\equiv0, \label{6.47.1}%
\end{equation}
and with the support in $P_{T_{1}}$, $0<T_{1}<T$\bigskip. Consider
the following problem for a unknown $2\pi$- periodic with respect to
the variables $x_{i}$, $i=\overline{1,N-1}$, function $u(x,t)$:

\begin{equation}
\frac{\partial u}{\partial t}+\nabla(x_{N}^{2}\nabla\Delta u)=f(x,t),\quad
x\in P_{T}, \label{6.47}%
\end{equation}

\begin{equation}
\frac{\partial u}{\partial x_{N}}(x^{\prime},0,t)=0,\quad x^{\prime}\in
P^{\prime}\times\lbrack0,T], \label{6.48}%
\end{equation}
\bigskip

\bigskip%
\begin{equation}
\frac{\partial}{\partial x_{N}}\Delta u(x^{\prime},1,t)+\Delta u(x^{\prime
},1,t)=0,\quad x^{\prime}\in P^{\prime}\times\lbrack0,T], \label{6.49}%
\end{equation}

\begin{equation}
u(x^{\prime},1,t)=0,\quad x^{\prime}\in P^{\prime}\times\lbrack0,T],
\label{6.50}%
\end{equation}
the initial condition%

\begin{equation}
u(x,0)\equiv0,,\quad x\in\overline{P} \label{6.51}%
\end{equation}
and the periodicity conditions%

\begin{equation}
\left.  \frac{\partial^{n}u}{\partial x_{i}^{n}}(x,t)\right\vert _{x_{i}=-\pi
}=\left.  \frac{\partial^{n}u}{\partial x_{i}^{n}}(x,t)\right\vert _{x_{i}%
=\pi},n=0,1,2,3,\quad i=\overline{1,N-1}. \label{6.52}%
\end{equation}

\begin{lemma}
\label{L.6.3} Problem \eqref{6.47}- \eqref{6.52} has the unique
solution $u(x,t)\in C_{2,\gamma/2}^{4+\gamma
,\frac{4+\gamma}{4}}(\overline{P}_{T})$ and%

\begin{equation}
|u(x,t)|_{2,\gamma/2,\overline{P}_{T}}^{(4+\gamma,\frac{4+\gamma}{4})}\leq
C_{T}|f|_{\gamma/2,\overline{P}_{T}}^{(\gamma,\frac{\gamma}{4})}. \label{6.53}%
\end{equation}

\end{lemma}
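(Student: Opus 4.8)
The plan is to follow closely the scheme of Lemma~\ref{L.6.1}, adjoining the time variable. First I would assume $f(x',x_N,t)$ is $C^{\infty}$ with respect to $x'$ and expand in a discrete Fourier series $u(x',x_N,t)=\sum_{\omega\in Z^{N-1}}v(\omega,x_N,t)e^{-i\omega x'}$, $f=\sum_{\omega}h(\omega,x_N,t)e^{-i\omega x'}$, with the rapid decay $|h(\omega,\cdot,\cdot)|^{(\gamma,\gamma/4)}_{\gamma/2,\overline{I}\times[0,T]}\le C_K(1+\omega^2)^{-K}$ inherited from $f\in C_0^{\infty}$ in $x'$ and $f(\cdot,0)\equiv0$. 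Substituting into \eqref{6.47}--\eqref{6.52} reduces the problem, for each fixed $\omega$, to the initial--boundary value problem on $(x_N,t)\in I\times[0,T]$ for the degenerate parabolic ODE $v_t+(x_N^2v''')'-2\omega^2x_N^2v''-2\omega^2x_Nv'+(\omega^2)^2x_N^2v=h(\omega,x_N,t)$ with $v'(\omega,0,t)=0$, $v(\omega,1,t)=0$, $v'''(\omega,1,t)+v''(\omega,1,t)-\omega^2v'(\omega,1,t)=0$, and $v(\omega,x_N,0)\equiv0$.

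Next I would solve this $\omega$-family. Writing it as $v_t+\mathcal{L}_{\omega}v=h$, where $\mathcal{L}_{\omega}$ is exactly the elliptic operator shown invertible in Lemma~\ref{L.6.1}, I would obtain solvability either by the extension-along-a-parameter method used there (now carrying the $v_t$ term through every step) or, equivalently, by a Galerkin construction in $L_2(I)$. The needed a priori bound starts from an $L_2$ energy estimate: multiplying the equation by $\overline{v''-\omega^2 v}$ (the Fourier analogue of $\Delta u$) and integrating over $I\times[0,t]$ produces $\tfrac12\tfrac{d}{dt}(\|v'\|^2+\omega^2\|v\|^2)$ from the time term and, after integration by parts, the coercive quantity $\int_0^t\!\!\int_0^1 x_N^2|(v''-\omega^2v)'|^2$ plus a boundary term at $x_N=1$ controlled by Lemma~\ref{L.6.2}; the right-hand side is absorbed by Cauchy's inequality and Gronwall, giving a uniform-in-$t$ bound and hence control of $\|v\|_{L_2(I\times[0,T])}$ with polynomial dependence on $\omega$. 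Upgrading this to the weighted H\"older norm is done exactly as in \eqref{6.38}--\eqref{6.39}: move the $\omega$-terms to the right side, apply the one-dimensional (parabolic) analogue of \eqref{005.59.001}, i.e. the half-line estimate underlying Theorem~\ref{T.5.1}, together with the interpolation inequalities of Theorems~\ref{Ts6.2}, \ref{Ts6.3}, to arrive at $|v(\omega,\cdot,\cdot)|^{(4+\gamma,\frac{4+\gamma}{4})}_{2,\gamma/2,\overline{I}\times[0,T]}\le C(1+\omega^2)^A|h(\omega,\cdot,\cdot)|^{(\gamma,\gamma/4)}_{\gamma/2,\overline{I}\times[0,T]}$. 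Combined with the rapid decay of $h(\omega,\cdot,\cdot)$ this makes the series for $u$ converge together with all its $x'$-derivatives, so $u\in C_{2,\gamma/2}^{4+\gamma,\frac{4+\gamma}{4}}(\overline{P}_T)$ and solves \eqref{6.47}--\eqref{6.52}.

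Then I would prove the Schauder estimate \eqref{6.53}. By the standard cut-off/localization argument --- the half-space parabolic estimate of Theorem~\ref{T.5.1} near points of $\{x_N=0\}$, classical parabolic Schauder theory near $\{x_N=1\}$, in the interior, and on the periodic faces $x_i=\pm\pi$, plus interpolation inequalities for the emerging lower-order terms --- one first gets $|u|^{(4+\gamma,\frac{4+\gamma}{4})}_{2,\gamma/2,\overline{P}_T}\le C|f|^{(\gamma,\gamma/4)}_{\gamma/2,\overline{P}_T}+C|u|^{(0)}_{\overline{P}_T}$; here $f(\cdot,0)\equiv0$, hence $u(\cdot,0)\equiv u_t(\cdot,0)\equiv0$, places $u$ in $C_{2,\gamma/2,0}^{4+\gamma,\frac{4+\gamma}{4}}$ so that the $T^{\delta}$-smallness of lower-order terms (Proposition~\ref{P.2.2}) is at our disposal. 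To remove the $|u|^{(0)}$ term I would establish uniqueness: for $f\equiv0$ multiply \eqref{6.47} by $\Delta u$, integrate over $P\times[0,t]$, use the boundary conditions and $u\in C_{2,\gamma/2}^{4+\gamma,\frac{4+\gamma}{4}}$ to get $\tfrac12\tfrac{d}{dt}\int_P|\nabla u|^2+\int_P x_N^2|\nabla\Delta u|^2+\int_{\overline{P}\cap\{x_N=1\}}|\Delta u|^2=0$, whence $\nabla u\equiv0$ and, with \eqref{6.48} and \eqref{6.50}, $u\equiv0$; the standard contradiction argument (as cited from \cite{Shimak}, \cite{My}) then drops the lower-order term. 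Finally, to discard the auxiliary assumption that $f$ is smooth in $x'$, I would mollify $f$ in $x'$ exactly as in \eqref{6.42}--\eqref{6.46}, use the uniform bound \eqref{6.53} for the mollified problems and the compactness Lemmas~\ref{L.02.1}, \ref{L.02.2} to pass to the limit. The main obstacle is the middle step: producing, for the $\omega$-parametrized degenerate parabolic problem, an a priori estimate that is simultaneously an $L_2$ energy bound (to run the solvability machinery) and a weighted-H\"older bound with merely polynomial growth in $\omega$, handling the degeneracy at $x_N=0$, the nonstandard conditions at $x_N=1$, and the time derivative together; once this is in hand the remaining steps are routine adaptations of Lemma~\ref{L.6.1} and the Schauder technique.
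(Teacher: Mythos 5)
Your overall architecture (reduce by a transform to a one-parameter family, get $L_2$ bounds by energy, upgrade to weighted H\"older bounds with polynomial growth in the parameter, sum/invert, then Schauder localization, uniqueness, and mollification to remove the auxiliary smoothness) parallels the paper, but you transform in the wrong variable, and this creates a genuine gap at the central step. The paper takes the Laplace transform in $t$, which turns $\partial_t u$ into the zero-order term $pv$ in \eqref{6.57}; existence for each $p$ then comes for free from Lemma \ref{L.6.1} plus the Fredholm alternative ($A$ invertible, $pK$ compact), with triviality of the kernel supplied by the energy identity \eqref{6.62}--\eqref{6.63}. You instead expand in Fourier series in $x'$, which leaves, for each $\omega$, a genuinely degenerate \emph{parabolic} problem in $(x_N,t)$ whose solvability in $C^{4+\gamma,\frac{4+\gamma}{4}}_{2,\gamma/2}$ is nowhere established and is exactly the issue the whole section is trying to settle. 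The two devices you offer do not close it: (i) continuation in a parameter needs an explicitly solvable base case, and here the $v_t$ term cannot be switched off (setting $\lambda=0$ on $v_t$ changes the type of the problem and makes the initial condition meaningless, while keeping $v_t$ in the base operator leaves you with $v_t+(x_N^2v''')'=h$, which, unlike \eqref{6.17}, is not solvable by quadrature -- the natural way to solve it is a Laplace transform in $t$, i.e. the paper's argument); (ii) a Galerkin construction in $L_2(I)$ produces only a weak solution, and nothing in the paper (the estimates of Section \ref{s5} and Theorem \ref{T.5.1} are a priori bounds for solutions already in the weighted H\"older class) provides the regularity theory needed to promote a weak $L_2$ solution of the degenerate equation to $C^{4+\gamma,\frac{4+\gamma}{4}}_{2,\gamma/2}$; Lemma \ref{L.02.2} cannot bridge that distance either. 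Since you yourself flag this middle step as "the main obstacle", the proposal as written is an estimate scheme without an existence mechanism.

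The remaining ingredients are essentially sound and close to the paper's: the energy multiplier $\overline{v''-\omega^2 v}$ is the Fourier-mode analogue of the paper's multiplier $\Delta\overline{v}$, the boundary terms at $x_N=0,1$ vanish as you say, and Lemma \ref{L.6.2} controls the trace term; the Schauder localization combining Theorem \ref{T.5.1} near $\{x_N=0\}$ with classical parabolic estimates elsewhere, and the mollification in $x'$ as in \eqref{6.42}--\eqref{6.46}, match the paper (which, note, mollifies in $t$ instead, precisely because its transform variable is $t$). One further difference: to remove the lower-order term $|u|^{(0)}_{\overline{P}_T}$ you invoke uniqueness plus the elliptic-style contradiction argument, whereas the paper uses the parabolic route -- smallness in $T$ via the spaces with zeros (inequalities \eqref{2.3}--\eqref{2.4.0}, \eqref{C0}) on a short interval and then time-stepping to reach an arbitrary $[0,T]$ with a constant $C_T$; your variant is plausibly workable but should be spelled out if you keep it. To repair the proof along your lines, the cleanest fix is to solve your $\omega$-mode problems by a Laplace transform in $t$, which reduces them to two-parameter ODE problems of exactly the type \eqref{6.9}--\eqref{6.12} -- at which point you may as well transform first in $t$, as the paper does, and dispense with the Fourier series.
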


\begin{proof}
Since the support of $f(x,t)$ is included in $P_{T_{1}}$, we can
extend this function by the identical zero over $T$ and have
$f(x,t)\in C_{\gamma/2}^{\gamma,\frac{\gamma
}{4}}\bigskip(\overline{P}_{\infty})$. We suppose first that
$f(x,t)$ is of the class $C^{\infty}$ with respect to the variable
$t$ and has the
property%

\begin{equation}
\left.  \frac{\partial^{n}f(x,t)}{\partial t^{n}}\right\vert _{t=0}%
\equiv0,\quad n=0,1,2,.... \label{6.54}%
\end{equation}
Denote for a complex number $p$ with $\operatorname{Re}p>0$%

\begin{equation}
v(x,p)\equiv%
%TCIMACRO{\dint \limits_{0}^{\infty}}%
%BeginExpansion
{\displaystyle\int\limits_{0}^{\infty}}
%EndExpansion
u(x,t)e^{-pt}dt,\quad h(x,p)\equiv%
%TCIMACRO{\dint \limits_{0}^{\infty}}%
%BeginExpansion
{\displaystyle\int\limits_{0}^{\infty}}
%EndExpansion
f(x,t)e^{-pt}dt- \label{6.55}%
\end{equation}
the Laplace transforms of the functions $u(x,t)$ and $f(x,t)$
respectively.  Because of the properties of the function $f(x,t)$
including \eqref{6.54} we have for the function $h(x,p)$ for each
$p$ and for an arbitrary $K>0$

\begin{equation}
|h(\cdot,p)|_{\gamma/2,\overline{P}}^{(\gamma)}\leq C_{K}(1+|p|)^{-K}.
\label{6.56}%
\end{equation}
Making in problem \eqref{6.47}- \eqref{6.52} the Laplace transform
\eqref{6.55} we arrive at the following elliptic problem with the
parameter $p$ for the unknown function $v(x,p)$

\begin{equation}
Av+pKv\equiv\nabla(x_{N}^{2}\nabla\Delta v)+pv=h(x,p),\quad x\in P, \label{6.57}%
\end{equation}

\begin{equation}
\frac{\partial v}{\partial x_{N}}(x^{\prime},0,p)=0,\quad x^{\prime}\in
P^{\prime}, \label{6.58}%
\end{equation}

\begin{equation}
\frac{\partial}{\partial x_{N}}\Delta v(x^{\prime},1,p)+\Delta u(x^{\prime
},1,p)=0,\quad x^{\prime}\in P^{\prime}, \label{6.59}%
\end{equation}

\begin{equation}
v(x^{\prime},1,p)=0,\quad x^{\prime}\in P^{\prime}, \label{6.60}%
\end{equation}
and the periodicity conditions%

\begin{equation}
\left.  \frac{\partial^{n}v}{\partial x_{i}^{n}}(x,p)\right\vert _{x_{i}=-\pi
}=\left.  \frac{\partial^{n}v}{\partial x_{i}^{n}}(x,p)\right\vert _{x_{i}%
=\pi},n=0,1,2,3,\quad i=\overline{1,N-1}. \label{6.61}%
\end{equation}
Denote by $\widetilde{C}_{2,\gamma/2}^{4+\gamma}(\overline{P})$ the
closed subspace of periodic functions from $C_{2,\gamma/2}^{4+\gamma
}(\overline{P})$, defined by homogeneous boundary conditions
\eqref{6.58}- \eqref{6.60}. It was proved in Lemma \ref{L.6.1} that
the operator $Av\equiv\nabla(x_{N}^{2}\nabla\Delta v)$ from
\eqref{6.57} is a bounded linear operator
$A:\widetilde{C}_{2,\gamma/2}^{4+\gamma}(\overline{P})\rightarrow\widetilde
{C}_{\gamma/2}^{\gamma}(\overline{P})$, where $\widetilde{C}_{\gamma
/2}^{\gamma}(\overline{P})$ is the space of periodic functions from
$C_{\gamma/2}^{\gamma}(\overline{P})$. Moreover, the operator
$pKv\equiv pv$ is evidently a compact operator from
$\widetilde{C}_{2,\gamma/2}^{4+\gamma}(\overline{P})$ to $\widetilde
{C}_{\gamma/2}^{\gamma}(\overline{P})$. Thus equation \eqref{6.57}
is a Fredholm equation and it is uniquely solvable for the all right
hand sides from $\widetilde{C}_{\gamma/2}^{\gamma}(\overline{P})$ if
and only if the kernel of the operator $A+pK$ consists from zero
only. We will prove that for $\operatorname{Re}p>0$ the kernel of
the operator $A+pK$ consists from zero only by obtaining an a-priory
estimate for a possible solution $v(x,p)$ for \eqref{6.57} from
$\widetilde{C}_{2,\gamma/2}^{4+\gamma}(\overline{P})$. Similar to
Lemma \ref{L.6.1}, we start with the estimate of $L_{2}(P)$- norm of
$v(x,p)$.

Multiply equation \eqref{6.57} by $\Delta\overline{v}(x,p)$ and
integrate by parts over $P$ with the taking into account the
boundary conditions. Considering only real part of the obtaining
expression and
changing it's sign, we get%

\begin{equation}%
%TCIMACRO{\dint \limits_{P}}%
%BeginExpansion
{\displaystyle\int\limits_{P}}
%EndExpansion
x_{N}^{2}|\nabla\Delta v|^{2}dx+%
%TCIMACRO{\dint \limits_{\overline{P}\cap\{x_{N}=1\}}}%
%BeginExpansion
{\displaystyle\int\limits_{\overline{P}\cap\{x_{N}=1\}}}
%EndExpansion
|\Delta v(x^{\prime},1,p)|^{2}dx^{\prime}+\operatorname{Re}p%
%TCIMACRO{\dint \limits_{P}}%
%BeginExpansion
{\displaystyle\int\limits_{P}}
%EndExpansion
|\nabla v|^{2}dx=-\operatorname{Re}%
%TCIMACRO{\dint \limits_{P}}%
%BeginExpansion
{\displaystyle\int\limits_{P}}
%EndExpansion
h\Delta\overline{v}dx. \label{6.62}%
\end{equation}
We estimate the right hand side by the Cauchy inequality with $\varepsilon$%

\[
\left\vert
%TCIMACRO{\dint \limits_{P}}%
%BeginExpansion
{\displaystyle\int\limits_{P}}
%EndExpansion
h\Delta\overline{v}dx\right\vert \leq\varepsilon%
%TCIMACRO{\dint \limits_{P}}%
%BeginExpansion
{\displaystyle\int\limits_{P}}
%EndExpansion
|\Delta v|^{2}dx+\frac{C}{\varepsilon}%
%TCIMACRO{\dint \limits_{P}}%
%BeginExpansion
{\displaystyle\int\limits_{P}}
%EndExpansion
|h|^{2}dx.
\]
On the base of \eqref{6.47.01} we can absorb the term with
$\varepsilon$ by the left hand
side of%
\eqref{6.62} and we get from this inequality%

\[%
%TCIMACRO{\dint \limits_{P}}%
%BeginExpansion
{\displaystyle\int\limits_{P}}
%EndExpansion
|\nabla v|^{2}dx\leq\frac{C}{\operatorname{Re}p}%
%TCIMACRO{\dint \limits_{P}}%
%BeginExpansion
{\displaystyle\int\limits_{P}}
%EndExpansion
|h|^{2}dx\leq\frac{C}{\operatorname{Re}p}\left(  |h|_{\gamma/2,\overline{P}%
}^{(\gamma)}\right)  ^{2}.
\]
Finally, since $v(x^{\prime},1,p)\equiv0$, we obtain by the Poincare inequality%

\begin{equation}%
%TCIMACRO{\dint \limits_{P}}%
%BeginExpansion
{\displaystyle\int\limits_{P}}
%EndExpansion
|v|^{2}dx\leq\frac{C}{\operatorname{Re}p}\left(  |h|_{\gamma/2,\overline{P}%
}^{(\gamma)}\right)  ^{2}. \label{6.63}%
\end{equation}
This inequality means that the operator $A+pK$ has zero kernel for
$\operatorname{Re}p>0$ and thus equation \eqref{6.57} has the unique
solution $v(x,p)\in\widetilde{C}_{2,\gamma
/2}^{4+\gamma}(\overline{P})$ for any
$h(x,p)\in\widetilde{C}_{\gamma /2}^{\gamma}(\overline{P})$.

To obtain estimate for the
$\widetilde{C}_{2,\gamma/2}^{4+\gamma}(\overline {P})$- norm of
$v(x,p)$ we proceed exactly as in Lemma \ref{L.6.1}. That is we move
the term $pv$ to the right hand side of \eqref{6.57}, use estimate
\eqref{6.5}, interpolation inequalities \eqref{2.15}- \eqref{2.22},
and estimate
\eqref{6.63}. As a result we obtain for $\operatorname{Re}p>1$%

\begin{equation}
|v(x,p)|_{2,\gamma/2,\overline{P}}^{(4+\gamma)}\leq C(1+|p|)^{A}%
|h(x,p)|_{\gamma/2,\overline{P}}^{(\gamma)}, \label{6.64}%
\end{equation}
where $A$ is some positive exponent. Thus in view of
\eqref{6.56} we have also for any $K>0$%

\begin{equation}
|v(x,p)|_{2,\gamma/2,\overline{P}}^{(4+\gamma)}\leq C_{K}(1+|p|)^{-K}.
\label{6.65}%
\end{equation}
The last estimate permits us to take the inverse Laplace transform
$u(x,t)$ from $v(x,p)$ and thus to obtain the solution to problem
\eqref{6.47}-
\eqref{6.51}. Note that in view of%
\eqref{6.65} and the properties of the inverse Laplace transform,
initial condition \eqref{6.51} is also satisfied. The solution
$u(x,t)\in C_{2,\gamma/2}^{4+\gamma
,\frac{4+\gamma}{4}}(\overline{P}_{\infty})$ and moreover this
solution is infinitely differentiable in $t$.

Estimate \eqref{6.53} is obtained in standard way by the Schauder
technique on the base of estimate \eqref{005.59} for the problem in
the half space, estimates for parabolic problems for non-degenerate
equations (see, for example,
\cite{SolParab}), and interpolations inequalities%
\eqref{2.15}- \eqref{2.22}, \eqref{2.3}- \eqref{2.4.0}. In this way,
due to inequalities \eqref{2.3}- \eqref{2.4.0}, we first prove
estimate \eqref{6.53} on a sufficiently small time interval $[0,T]$,
which does not depend on $f$. Then (and this is also standard way of
reasonings - \cite{SolParab}, \cite{LSU} ) we consider on the
interval $[T/2,3T/2]$ the function $u(x,t)-u(x,T/2)$ with zero
initial value at $t=T/2$ and repeat the estimates. In this way we
obtain \eqref{6.53} on an arbitrary interval $[0,T]$ but with time
dependent constant $C_{T}$.

Thus we have proved that if $f(x,t)$ is infinitely differentiable in
$t$ and if it satisfies condition \eqref{6.54}, then problem
\eqref{6.47}- \eqref{6.51} has the unique solution and estimate
\eqref{6.53} is valid. Let now $f(x,t)$ satisfy \eqref{6.47.1}.
Since $f(x,0)\equiv0$, we can extend $f(x,t)$ by the identical zero
in the domain $P\times(-\infty,0]$ with the preserving of $C_{\gamma
/2}^{\gamma,\gamma/4}(\overline{P}_{T})$ - norm. \ Let
$\omega_{\varepsilon }(t)\in C^{\infty}(R^{1})$ be a nonnegative
mollifier kernel with the parameter $\varepsilon$ and with the
support in $[-\varepsilon,\varepsilon
]$.  We define%

\[
f_{\varepsilon}(x,t)\equiv\omega_{\varepsilon}(t)\ast_{t}f(x,t-2\varepsilon)=
\]

\begin{equation}
=%
%TCIMACRO{\dint \limits_{-\infty}^{\infty}}%
%BeginExpansion
{\displaystyle\int\limits_{-\infty}^{\infty}}
%EndExpansion
\omega_{\varepsilon}(t-\tau)f(x,\tau-2\varepsilon)d\tau=%
%TCIMACRO{\dint \limits_{-\infty}^{\infty}}%
%BeginExpansion
{\displaystyle\int\limits_{-\infty}^{\infty}}
%EndExpansion
\omega_{\varepsilon}(\tau)f(x,t-\tau-2\varepsilon)d\tau. \label{6.66}%
\end{equation}
It can be checked directly that the functions $f_{\varepsilon}(x,t)$
posses the properties:%

\begin{equation}
|f_{\varepsilon}|_{\gamma/2,\overline{P}_{T}}^{(\gamma,\gamma/4)}%
\leq|f|_{\gamma/2,\overline{P}_{T}}^{(\gamma,\gamma/4)},\quad|f-f_{\varepsilon
}|_{\gamma_{1}/2,\overline{P}_{T}}^{(\gamma_{1},\gamma_{1}/4)}\rightarrow
0,\varepsilon\rightarrow0,\gamma_{1}\in(0,\gamma), \label{6.67}%
\end{equation}

\[
\left.  \frac{\partial^{n}f_{\varepsilon}(x,t)}{\partial t^{n}}\right\vert
_{t=0}\equiv0,n=0,1,2,....
\]
By the proved above, problem \eqref{6.47}- \eqref{6.51} with $
f_{\varepsilon}(x,t)$ instead of $f(x,t)$ has the unique solution
$u_{\varepsilon}(x,t)$ and by \eqref{6.53}, \eqref{6.67}

\begin{equation}
|u_{\varepsilon}(x,t)|_{2,\gamma/2,\overline{P}_{T}}^{(4+\gamma,\frac
{4+\gamma}{4})}\leq C_{T}|f_{\varepsilon}|_{\gamma/2,\overline{P}_{T}%
}^{(\gamma,\gamma/4)}\leq C_{T}|f|_{\gamma/2,\overline{P}_{T}}^{(\gamma
,\gamma/4)}, \label{6.68}%
\end{equation}

\begin{equation}
|u_{\varepsilon_{1}}-u_{\varepsilon_{2}}|_{2,\gamma_{1}/2,\overline{P}_{T}%
}^{(4+\gamma_{1},\frac{4+\gamma_{1}}{4})}\rightarrow0,\quad\varepsilon
_{1},\varepsilon_{2}\rightarrow0. \label{6.69}%
\end{equation}
From \eqref{6.67}- \eqref{6.69} on the base of Lemma \ref{L.02.2} it
follows that $u_{\varepsilon}(x,t)$ converges (at least for a
subsequence) as $\varepsilon\rightarrow0$ to the solution $u(x,t)$
of problem \eqref{6.47}- \eqref{6.51} and for this solution estimate
\eqref{6.53} is valid.

This completes the proof of the lemma.

\end{proof}

Let a function $f(x,t)\in$\bigskip$C_{\gamma/2}^{\gamma,\gamma
/4}(\overline{P}_{T})$ and periodic in $x^{\prime}$ and let at $(x_{N}=1,t=0)$%

\begin{equation}
f(x^{\prime},1,0)\equiv0,\quad x\in\overline{P}^{\prime}. \label{6.69.00}%
\end{equation}
We extend $f(x,t)$ over $\{x_{N}=1\}$ by the constant with respect
to $x_{N}$, and over $\{x_{N}=0\}$ and $\{t=0\}$ in the even way by
($x_{N}\in\lbrack
0,1],t\in\lbrack0,T]$) up to the function $\widetilde{f}$%

\begin{equation}
\widetilde{f}(x^{\prime},-x_{N},t)=f(x^{\prime},x_{N},t),\widetilde
{f}(x^{\prime},1+x_{N},t)=f(x^{\prime},1,t),\widetilde{f}(x,-t)=f(x,t).
\label{6.69.01}%
\end{equation}
Denote
$\widehat{P}_{T}=\{(x,t):|x_{i}|\leq\pi,i=1,\overline{N-1},0\leq
x_{N}\leq2,0\leq t\leq T\}$.\ From the definition of $C_{\gamma/2}%
^{\gamma,\gamma/4}(\overline{P}_{T})$ it directly follows that%

\begin{equation}
|\widetilde{f}|_{\gamma/2,\widehat{P}_{T}}^{(\gamma,\gamma/4)}\leq
C|f|_{\gamma/2,\overline{P}_{T}}^{(\gamma,\gamma/4)}. \label{6.69.01.01}%
\end{equation}
Let $\omega(x)\in C^{\infty}(R^{N})$ with the support in
$\{|x|\leq1\}$ and the unit integral, $\omega(x)\geq0$. Let also
$\chi(t)\in C^{\infty}(R^{1})$ with the support in $\{|t|\leq1\}$
and the unit integral, $\chi(t)\geq0$. Denote the mollifier kernels
$\omega_{\varepsilon}(x)\equiv\varepsilon^{-N}\omega(x/\varepsilon)$,
$\chi_{\varepsilon}(t)=\varepsilon^{-1}\chi(t/\varepsilon)$,
$\varepsilon
\in(0,1/2)$. We consider in $\overline{P}_{T}$ the mollified function%

\[
\widetilde{f}_{\varepsilon}(x,t)=\chi_{\varepsilon}(t)\ast_{t}\left(
\omega_{\varepsilon}(x)\ast_{x}\widetilde{f}(x,t)\right)  =
\]

\begin{equation}
=%
%TCIMACRO{\dint \limits_{-\infty}^{\infty}}%
%BeginExpansion
{\displaystyle\int\limits_{-\infty}^{\infty}}
%EndExpansion
d\tau\chi_{\varepsilon}(t-\tau)%
%TCIMACRO{\dint \limits_{R^{N}}}%
%BeginExpansion
{\displaystyle\int\limits_{R^{N}}}
%EndExpansion
\omega_{\varepsilon}(x-\xi)\widetilde{f}(\xi,\tau)d\xi=%
%TCIMACRO{\dint \limits_{-\infty}^{\infty}}%
%BeginExpansion
{\displaystyle\int\limits_{-\infty}^{\infty}}
%EndExpansion
d\tau\chi_{\varepsilon}(t-\tau)%
%TCIMACRO{\dint \limits_{R^{N}}}%
%BeginExpansion
{\displaystyle\int\limits_{R^{N}}}
%EndExpansion
\omega_{\varepsilon}(\xi)\widetilde{f}(x-\xi,\tau)d\xi. \label{6.69.02}%
\end{equation}
And we define%

\begin{equation}
f_{\varepsilon}(x,t)\equiv\widetilde{f}_{\varepsilon}(x^{\prime}%
,x_{N},t)-\widetilde{f}_{\varepsilon}(x^{\prime},1,0). \label{6.69.02.01}%
\end{equation}

\begin{lemma}
\label{L.6.4}
For the function $f_{\varepsilon}(x,t)\in
C^{\infty}(\overline{P}_{T})$ in
\eqref{6.69.02} we have uniformly in $f$ and $\varepsilon$%

\begin{equation}
|f_{\varepsilon}|_{\gamma/2,\overline{P}_{T}}^{(\gamma,\gamma/4)}\leq
C|f|_{\gamma/2,\overline{P}_{T}}^{(\gamma,\gamma/4)} \label{6.69.03}%
\end{equation}
and, at least for a subsequence,%

\begin{equation}
f_{\varepsilon}(x^{\prime},1,0)\equiv0,x^{\prime}\in\overline{P}^{\prime
},\quad|f_{\varepsilon}-f|_{\gamma_{1}/2,\overline{P}_{T}}^{(\gamma_{1}%
,\gamma_{1}/4)}\rightarrow0,\varepsilon\rightarrow0,\gamma_{1}\in(0,\gamma).
\label{6.69.03.01}%
\end{equation}

\end{lemma}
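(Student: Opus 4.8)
The plan is to realise $f_\varepsilon$ through the three operations already set up before the statement — even extension, mollification, subtraction of a trace — and then to verify the two claims: the uniform bound \eqref{6.69.03} and the convergence in \eqref{6.69.03.01}. First I would record the easy structural facts. Since $\widetilde f$ is built from $f$ by even reflections across $\{x_N=0\}$ and $\{t=0\}$ and by the constant extension across $\{x_N=1\}$, it is continuous and $2\pi$-periodic in $x'$ on a box $\widehat P_T$ containing a neighbourhood of $\overline P_T$, and it satisfies \eqref{6.69.01.01}; its mollification $\widetilde f_\varepsilon$ from \eqref{6.69.02} is $C^\infty$ and periodic, and so is $f_\varepsilon$ in \eqref{6.69.02.01}, being $\widetilde f_\varepsilon$ minus a $C^\infty$ function of $x'$ alone. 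The identity $f_\varepsilon(x',1,0)\equiv 0$ is then immediate from \eqref{6.69.02.01}, so the word "subsequence" in \eqref{6.69.03.01} attaches only to the convergence assertion.

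For the uniform estimate \eqref{6.69.03} I would bound separately the three ingredients of $|\cdot|_{\gamma/2,\overline P_T}^{(\gamma,\gamma/4)}$. The sup-norm and the unweighted $C^{\gamma/4}$-seminorm in $t$ do not increase under convolution with the probability kernels $\omega_\varepsilon$, $\chi_\varepsilon$, hence are controlled via \eqref{6.69.01.01}. For the weighted $x$-seminorm $\langle\cdot\rangle_{\gamma/2}^{(\gamma)}$ (weight $x_N^{\gamma/2}$, distance $d(x,\bar x)=\max(x_N,\bar x_N)$), the convolutions in $t$ and in the tangential variables $x'$ displace no point in the $x_N$-direction and so cannot enlarge it; the genuine issue is the one-dimensional $x_N$-convolution near the degenerate face $\{x_N=0\}$. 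There I would split pairs of points: if both $x_N$-heights are $\geq c\varepsilon$, the weight $x_N^{\gamma/2}$ is comparable throughout the $\varepsilon$-window and the classical non-expansiveness of a Hölder seminorm under mollification applies; if one height is $O(\varepsilon)$, I would use that the extension $\widetilde f$ is even across $\{x_N=0\}$, so the $1$-Lipschitz folding $x_N\mapsto|x_N|$ leaves $\widetilde f$ invariant, reducing the difference quotient to one for $\widetilde f$ at heights $O(\varepsilon)$ and letting the factor $(C\varepsilon)^{\gamma/2}$ from the weight absorb the loss — this is exactly the half-space mollification estimate of \cite{SpGen}. Finally the correction term $\widetilde f_\varepsilon(x',1,0)$ depends only on $x'$ and lives on the non-degenerate face $x_N=1$, where $d(x)$ is bounded below, so its weighted seminorm over $\overline P_T$ is dominated by its ordinary $C^\gamma$-seminorm in $x'$, which by Lemma \ref{L.02.1} and \eqref{6.69.01.01} is $\leq C|f|_{\gamma/2,\overline P_T}^{(\gamma,\gamma/4)}$. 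Adding the pieces gives \eqref{6.69.03}.

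For the convergence I would first observe that $\widetilde f$ is continuous on the compact $\widehat P_T$ (the reflections and the constant extension glue continuously), so $\widetilde f_\varepsilon\to\widetilde f$ uniformly there and hence $\widetilde f_\varepsilon\to f$ uniformly on $\overline P_T$; moreover $\widetilde f_\varepsilon(x',1,0)\to \widetilde f(x',1,0)=f(x',1,0)=0$ uniformly in $x'$ by the compatibility \eqref{6.69.00}. Thus $f_\varepsilon=\widetilde f_\varepsilon-\widetilde f_\varepsilon(\cdot,1,0)\to f$ uniformly on $\overline P_T$. Combining this uniform convergence with the uniform bound \eqref{6.69.03} and applying Lemma \ref{L.02.2} to the family $\{f_\varepsilon-f\}$ — bounded in $C_{\gamma/2}^{\gamma,\gamma/4}(\overline P_T)$ and tending to $0$ uniformly — yields, along a subsequence, $|f_\varepsilon-f|_{\gamma_1/2,\overline P_T}^{(\gamma_1,\gamma_1/4)}\to 0$ for each $\gamma_1\in(0,\gamma)$, the remaining assertion of \eqref{6.69.03.01}.

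The hard part is the $x_N$-mollification near $\{x_N=0\}$: showing that convolution in $x_N$ at scale $\varepsilon$ keeps the weighted seminorm $\langle\cdot\rangle_{\gamma/2}^{(\gamma)}$ bounded uniformly in $\varepsilon$. The even extension across $\{x_N=0\}$ is engineered precisely for this, letting one fall back on the corresponding half-space statement of \cite{SpGen}; everything else — the behaviour under the $t$- and $x'$-mollifications, the trace correction on the non-degenerate face, and the passage to the limit via Lemma \ref{L.02.2} — is routine.
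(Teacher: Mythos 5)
Your proposal is correct and follows essentially the same route as the paper: reduce \eqref{6.69.03} to the weighted $x$-H\"older seminorm of the spatial mollification of the even extension, treat separately the heights large and small compared with $\varepsilon$, use the evenness across $\{x_N=0\}$ (your folding argument is the paper's reflection analysis on the sets $B_1,B_2,B_3$) together with the factor $\varepsilon^{\gamma/2}$ from the weight, and then obtain \eqref{6.69.03.01} from the uniform bound, the compatibility \eqref{6.69.00}, and a compactness/interpolation passage of the type of Lemma \ref{L.02.2}. The only cosmetic difference is that you invoke the half-space mollification estimate of \cite{SpGen} where the paper carries out the case-by-case integral estimate explicitly, but your sketch of that step contains the same mechanism.
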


\begin{proof}

From the properties of the function $f(x,t)$ and from the definition
of mollifiers It follows that for the proof of
\eqref{6.69.03} it is enough to prove the estimate%

\begin{equation}
\left\langle k_{\varepsilon}(x,t)\right\rangle _{\gamma/2,x,\overline{P}_{T}%
}^{(\gamma)}\leq C|f|_{\gamma/2,\overline{P}_{T}}^{(\gamma,\gamma/4)},
\label{6.69.04}%
\end{equation}
where%

\begin{equation}
k_{\varepsilon}(x,t)\equiv%
%TCIMACRO{\dint \limits_{R^{N}}}%
%BeginExpansion
{\displaystyle\int\limits_{R^{N}}}
%EndExpansion
\omega_{\varepsilon}(\xi)\widetilde{f}(x-\xi,t)d\xi. \label{6.69.05}%
\end{equation}
Let $x,\overline{x}\in P$, $x_{N}\leq\overline{x}_{N}$. Consider two
cases. Let first $x_{N}>2\varepsilon$. \ Then, since in
\eqref{6.69.05} in fact $x_{N}/2<x_{N}-\xi_{N}<3x_{N}/2$,

\[
x_{N}^{\gamma/2}\frac{|k_{\varepsilon}(x,t)-k_{\varepsilon}(\overline{x}%
,t)|}{|x-\overline{x}|^{\gamma}}\leq
\]

\begin{equation}
\leq%
%TCIMACRO{\dint \limits_{R^{N}}}%
%BeginExpansion
{\displaystyle\int\limits_{R^{N}}}
%EndExpansion
\omega_{\varepsilon}(\xi)\left[  \frac{x_{N}^{\gamma/2}}{(x_{N}-\xi
_{N})^{\gamma/2}}\right]  (x_{N}-\xi_{N})^{\gamma/2}\frac{|\widetilde{f}%
(x-\xi,t)-\widetilde{f}(\overline{x}-\xi,t)|}{|x-\overline{x}|^{\gamma}}%
d\xi\leq\label{6.69.06}%
\end{equation}

\[
\leq C|\widetilde{f}|_{\gamma/2,\widehat{P}_{T}}^{(\gamma,\gamma/4)}\leq
C|f|_{\gamma/2,\overline{P}_{T}}^{(\gamma,\gamma/4)}.
\]
Let now $x_{N}<2\varepsilon$. Then%

\[
x_{N}^{\gamma/2}\frac{|k_{\varepsilon}(x,t)-k_{\varepsilon}(\overline{x}%
,t)|}{|x-\overline{x}|^{\gamma}}\leq
\]

\[
\leq%
%TCIMACRO{\dsum \limits_{j=1}^{3}}%
%BeginExpansion
{\displaystyle\sum\limits_{j=1}^{3}}
%EndExpansion%
%TCIMACRO{\dint \limits_{B_{j}}}%
%BeginExpansion
{\displaystyle\int\limits_{B_{j}}}
%EndExpansion
\omega_{\varepsilon}(\xi)x_{N}^{\gamma/2}\frac{|\widetilde{f}(x-\xi
,t)-\widetilde{f}(\overline{x}-\xi,t)|}{|x-\overline{x}|^{\gamma}}d\xi
=I_{1}+I_{2}+I_{2},
\]
where%

\[
B_{1}=\{|\xi|\leq\varepsilon:\overline{x}_{N}-\xi_{N}\geq x_{N}-\xi_{N}%
\geq0\},
\]

\[
B_{2}=\{|\xi|\leq\varepsilon:x_{N}-\xi_{N}<0,\overline{x}_{N}-\xi_{N}\geq0\},
\]

\[
B_{3}=\{|\xi|\leq\varepsilon:x_{N}-\xi_{N}<\overline{x}_{N}-\xi_{N}<0\}.
\]
For the set $B_{1}$ we have ($x_{N}<2\varepsilon$)%

\[
I_{1}\leq C\varepsilon^{\gamma/2}%
%TCIMACRO{\dint \limits_{B_{1}}}%
%BeginExpansion
{\displaystyle\int\limits_{B_{1}}}
%EndExpansion
\omega_{\varepsilon}(\xi)(x_{N}-\xi_{N})^{-\gamma/2}\left[  (x_{N}-\xi
_{N})^{\gamma/2}\frac{|f(x-\xi,t)-f(\overline{x}-\xi,t)|}{|x-\overline
{x}|^{\gamma}}\right]  d\xi\leq
\]

\[
\leq C\varepsilon^{\gamma/2}|f|_{\gamma/2,\overline{P}_{T}}^{(\gamma
,\gamma/4)}\left[  C\varepsilon^{-N}%
%TCIMACRO{\dint \limits_{|\xi|\leq\varepsilon}}%
%BeginExpansion
{\displaystyle\int\limits_{|\xi|\leq\varepsilon}}
%EndExpansion
(x_{N}-\xi_{N})^{-\gamma/2}d\xi\right]  \leq
\]

\begin{equation}
\leq C\varepsilon^{\gamma/2}|f|_{\gamma/2,\overline{P}_{T}}^{(\gamma
,\gamma/4)}\cdot C\varepsilon^{-N+(N-\gamma/2)}=C|f|_{\gamma/2,\overline
{P}_{T}}^{(\gamma,\gamma/4)}. \label{6.69.07}%
\end{equation}
On the set $B_{3}$ we have%

\[
\widetilde{f}(x-\xi,t)=f((x-\xi)^{\ast},t),\widetilde{f}(\overline{x}%
-\xi,t)=f((\overline{x}-\xi)^{\ast},t),
\]
where $(x-\xi)^{\ast}=(x^{\prime}-\xi^{\prime},-(x_{N}-\xi_{N}))$ ,
$(\overline{x}-\xi)^{\ast}=(\overline{x}^{\prime}-\xi^{\prime},-(\overline
{x}_{N}-\xi_{N}))$. Since a shift and a reflection are isometries,
$|(x-\xi)^{\ast}-(\overline{x}-\xi)^{\ast}|=|x-\overline{x}|$.
Therefore the integral $I_{3}$ is estimated exactly as it was
done for $I_{1}$ and we have%

\begin{equation}
I_{3}\leq C|f|_{\gamma/2,\overline{P}_{T}}^{(\gamma,\gamma/4)}. \label{6.69.08}%
\end{equation}
On the set $B_{3}$%

\[
\widetilde{f}(x-\xi,t)=f((x-\xi)^{\ast},t),\widetilde{f}(\overline{x}%
-\xi,t)=f(\overline{x}-\xi,t).
\]
According to the triangle inequality we have after the shift and reflaction%

\[
|(x-\xi)^{\ast}-(\overline{x}-\xi)|\leq|x-\overline{x}|.
\]
Therefore, as above, denoting
$\widehat{|x_{N}-\xi_{N}|}=\min\{|x_{N}-\xi
_{N}|,|\overline{x}_{N}-\xi_{N}|\}$,%

\[
I_{2}\leq C\varepsilon^{\gamma/2}%
%TCIMACRO{\dint \limits_{B_{2}}}%
%BeginExpansion
{\displaystyle\int\limits_{B_{2}}}
%EndExpansion
\omega_{\varepsilon}(\xi)\widehat{|x_{N}-\xi_{N}|}^{-\gamma/2}\left[
\widehat{|x_{N}-\xi_{N}|}^{\gamma/2}\frac{|f((x-\xi)^{\ast},t)-f(\overline
{x}-\xi,t)|}{|(x-\xi)^{\ast}-(\overline{x}-\xi)|^{\gamma}}\right]  d\xi\leq
\]
\[
\leq C\varepsilon^{\gamma/2-N}|f|_{\gamma/2,\overline{P}_{T}}^{(\gamma
,\gamma/4)}%
%TCIMACRO{\dint \limits_{|\xi|\leq\varepsilon}}%
%BeginExpansion
{\displaystyle\int\limits_{|\xi|\leq\varepsilon}}
%EndExpansion
\widehat{|x_{N}-\xi_{N}|}^{-\gamma/2}d\xi \leq
\]
\begin{equation}
\leq C\varepsilon^{\gamma
/2-N}|f|_{\gamma/2,\overline{P}_{T}}^{(\gamma,\gamma/4)}%
%TCIMACRO{\dint \limits_{|\eta|\leq4\varepsilon}}%
%BeginExpansion
{\displaystyle\int\limits_{|\eta|\leq4\varepsilon}}
%EndExpansion
|\eta_{N}|^{-\gamma/2}d\eta=C|f|_{\gamma/2,\overline{P}_{T}}^{(\gamma
,\gamma/4)}. \label{6.69.09}%
\end{equation}
Thus we have in the case $x_{N}<2\varepsilon$%

\begin{equation}
x_{N}^{\gamma/2}\frac{|k_{\varepsilon}(x,t)-k_{\varepsilon}(\overline{x}%
,t)|}{|x-\overline{x}|^{\gamma}}\leq C|f|_{\gamma/2,\overline{P}_{T}}%
^{(\gamma,\gamma/4)}. \label{6.69.10}%
\end{equation}
Estimate \eqref{6.69.04} follows now from \eqref{6.69.06} and
\eqref{6.69.10} thus%
\eqref{6.69.03} is proved. Relations \eqref{6.69.03.01} follow now
from \eqref{6.69.03} and from \eqref{6.69.00} by construction of
$f_{\varepsilon}(x,t)$.

This finishes the proof of the lemma.

\end{proof}

Consider now the following nonhomogeneous problem for a unknown
$2\pi$- periodic with respect to the variables $x_{i}$,
$i=\overline{1,N-1}$, function $u(x,t)$:

\begin{equation}
\frac{\partial u}{\partial t}+\nabla(x_{N}^{2}\nabla\Delta u)=f(x,t),\quad
x\in P_{T}, \label{6.70}%
\end{equation}

\begin{equation}
\frac{\partial u}{\partial x_{N}}(x^{\prime},0,t)=g(x^{\prime},t),\quad
x^{\prime}\in P_{T}^{\prime}\equiv P^{\prime}\times\lbrack0,T], \label{6.71}%
\end{equation}
\bigskip

\bigskip%
\begin{equation}
\frac{\partial}{\partial x_{N}}\Delta u(x^{\prime},1,t)+\Delta u(x^{\prime
},1,t)=0,\quad x^{\prime}\in P_{T}^{\prime}\equiv P^{\prime}\times\lbrack0,T],
\label{6.72}%
\end{equation}

\begin{equation}
u(x^{\prime},1,t)=0,\quad x^{\prime}\in P_{T}^{\prime}\equiv P^{\prime}%
\times\lbrack0,T], \label{6.73}%
\end{equation}
the initial condition%

\begin{equation}
u(x,0)\equiv\psi(x),\quad x\in\overline{P} \label{6.74}%
\end{equation}
and the periodicity conditions%

\begin{equation}
\left.  \frac{\partial^{n}u}{\partial x_{i}^{n}}(x,t)\right\vert _{x_{i}=-\pi
}=\left.  \frac{\partial^{n}u}{\partial x_{i}^{n}}(x,t)\right\vert _{x_{i}%
=\pi},n=0,1,2,3,\quad i=\overline{1,N-1}. \label{6.75}%
\end{equation}
Here $f$, $g$, \bigskip$\psi$ are given $2\pi$- periodic with
respect to the
variables $x_{i}$, $i=\overline{1,N-1}$, functions with%

\begin{equation}
f(x,t)\in C_{\gamma/2}^{\gamma,\gamma/4}(\overline{P}_{T}),\quad g(x^{\prime
},t)\in C_{x^{\prime},t}^{1+\gamma/2,1/2+\frac{\gamma}{4}}(P_{T}^{\prime
}),\quad\psi(x)\in C_{2,\gamma/2}^{4+\gamma}(\overline{P}). \label{6.76}%
\end{equation}
Without loss of generality we can suppose that the functions
$f(x,t)$ and $g(x^{\prime},t)$ vanish for $t\geq T-\delta$ with some
small $\delta$. In the general case we can extend $f(x,t)$ and
$g(x^{\prime},t)$ over $t=T$ with the preserving of their classes
and then we can cut them off to obtain finite in $t$ functions on a
new interval $[0,T^{\prime}]$, $T^{\prime}\in(T,2T)$. The way of
extending over $t=T$ is described in, for example, \cite{LSU}.
We assume also the compatibility condition at $(t=0,x_{N}=0)$%

\begin{equation}
\left.  \frac{\partial\psi(x^{\prime},x_{N})}{\partial x_{N}}\right\vert
_{x_{N}=0}=g(x^{\prime},0),\quad x^{\prime}\in\overline{P^{\prime}} \label{6.77}%
\end{equation}
and at $(t=0,x_{N}=1)$

\begin{equation}
\left.  \left(  \frac{\partial\Delta\psi(x^{\prime},x_{N})}{\partial x_{N}%
}+\Delta\psi(x^{\prime},x_{N})\right)  \right\vert _{x_{N}=1}=0, \label{6.77.1}%
\end{equation}

\[
\psi(x^{\prime},1)=0,\quad f(x^{\prime},1,0)-\nabla(x_{N}^{2}\nabla\Delta
\psi)|_{(x^{\prime},1,0)}=0,\quad x^{\prime}\in\overline{P^{\prime}}.
\]

\begin{proposition}
\label{P.6.1}

Under conditions \eqref{6.76}- \eqref{6.77.1} problem \eqref{6.70}-
\eqref{6.75} has the unique
periodic solution $u(x,t)\in C_{2,\gamma/2}^{4+\gamma,\frac{4+\gamma}{4}%
}(\overline{P}_{T})$ and%

\begin{equation}
|u(x,t)|_{2,\gamma/2,\overline{P}_{T}}^{(4+\gamma,\frac{4+\gamma}{4})}\leq
C_{T}\left(  |f|_{\gamma/2,\overline{P}_{T}}^{(\gamma,\gamma/4)}%
+|g|_{C_{x^{\prime},t}^{1+\gamma/2,1/2+\frac{\gamma}{4}}(P_{T}^{\prime}%
)}+|\psi|_{2,\gamma/2,\overline{P}}^{(4+\gamma)}\right)  . \label{6.78}%
\end{equation}

\end{proposition}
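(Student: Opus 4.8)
The plan is to reduce problem \eqref{6.70}--\eqref{6.75} to the homogeneous problem \eqref{6.47}--\eqref{6.52} of Lemma \ref{L.6.3} by subtracting off a lifting of the data. First I would construct a $2\pi$-periodic in $x'$ function $w(x,t)\in C_{2,\gamma/2}^{4+\gamma,\frac{4+\gamma}{4}}(\overline{P}_{T})$, satisfying the periodicity conditions \eqref{6.75}, with
\[
w(x,0)=\psi(x),\qquad\frac{\partial w}{\partial t}(x,0)=h^{(1)}(x):=f(x,0)-\nabla(x_{N}^{2}\nabla\Delta\psi)(x)\quad\text{in }\overline{P},
\]
with $\partial w/\partial x_{N}(x',0,t)=g(x',t)$ on $P'\times[0,T]$, with $w(x',1,t)=0$ and $(\partial/\partial x_{N})\Delta w(x',1,t)+\Delta w(x',1,t)=0$ on $P'\times[0,T]$, and with the lifting bound $\|w\|_{C_{2,\gamma/2}^{4+\gamma,\frac{4+\gamma}{4}}(\overline{P}_{T})}\le C(|f|_{\gamma/2,\overline{P}_{T}}^{(\gamma,\gamma/4)}+|g|_{C_{x',t}^{1+\gamma/2,1/2+\gamma/4}(P_{T}')}+|\psi|_{2,\gamma/2,\overline{P}}^{(4+\gamma)})$. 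Granting such a $w$, set $v:=u-w$; then $u$ solves \eqref{6.70}--\eqref{6.75} if and only if $v$ solves \eqref{6.47}--\eqref{6.52} with right-hand side $\widetilde{f}:=f-\partial_{t}w-\nabla(x_{N}^{2}\nabla\Delta w)$, and $\nabla(x_{N}^{2}\nabla\Delta w)=x_{N}^{2}\Delta^{2}w+2x_{N}\partial_{x_{N}}\Delta w$. By the weighted-space properties recorded in Section \ref{s2} (Propositions \ref{P.2.1}, \ref{P.2.5}) the terms $\partial_{t}w$, $x_{N}^{2}\Delta^{2}w$, $x_{N}\partial_{x_{N}}\Delta w$ all lie in $C_{\gamma/2}^{\gamma,\gamma/4}(\overline{P}_{T})$ with norms controlled by $\|w\|$, so $\widetilde{f}\in C_{\gamma/2}^{\gamma,\gamma/4}(\overline{P}_{T})$; and the prescription $\partial_{t}w(x,0)=h^{(1)}$ together with $w(x,0)=\psi$ gives $\widetilde{f}(x,0)\equiv0$, which is precisely the hypothesis needed for Lemma \ref{L.6.3} (after the standard extension over $t=T$ and cutoff, $\widetilde{f}$ may moreover be taken with support in $P_{T_{1}}$, $T_{1}<T$).

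The construction of $w$ is the heart of the proof, and is where the compatibility conditions \eqref{6.77}, \eqref{6.77.1} are used; I expect it to be the main obstacle. I would build $w$ as a sum of three pieces produced by bounded extension operators into the weighted parabolic classes, of the same type as the operator $E$ of Proposition \ref{P.2.4} (their existence being established exactly as in \cite{SpGen}): (a) an extension of the initial datum $\psi$, taken $t$-independent away from $\{x_{N}=1\}$ and corrected near the spatial boundaries so that $w=0$ and $(\partial_{x_{N}}\Delta+\Delta)w=0$ hold on $\{x_{N}=1\}$ for all $t$ — consistent at $t=0$ because $\psi(x',1)=0$ and $(\partial_{x_{N}}\Delta\psi+\Delta\psi)|_{x_{N}=1}=0$ by \eqref{6.77.1}; (b) a piece supported in a neighbourhood of $\{x_{N}=0\}$, vanishing together with its $t$-derivative at $t=0$, whose normal trace at $\{x_{N}=0\}$ equals $g(x',t)-g(x',0)$ — legitimate because $\partial\psi/\partial x_{N}(x',0)=g(x',0)$ by \eqref{6.77}, so piece (a) already carries the $t=0$ value of $g$; and (c) a piece vanishing at $t=0$ with prescribed $t$-derivative $h^{(1)}-\partial_{t}(\mathrm{(a)}+\mathrm{(b)})|_{t=0}\in C_{\gamma/2}^{\gamma}(\overline{P})$, which can be chosen to vanish at $\{x_{N}=1\}$ since $h^{(1)}(x',1)=f(x',1,0)-\nabla(x_{N}^{2}\nabla\Delta\psi)|_{(x',1,0)}=0$, again by \eqref{6.77.1}. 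The only nontrivial point is this matching of corner data; no higher-order (mixed) corner identities are required, because the time regularity of $g$ and the mere $C_{\gamma/2}^{\gamma}$ spatial regularity of $h^{(1)}$ fall below the thresholds at which such conditions appear. Boundedness of each extension yields the lifting estimate.

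With $w$ in hand, Lemma \ref{L.6.3} produces a unique $2\pi$-periodic $v\in C_{2,\gamma/2}^{4+\gamma,\frac{4+\gamma}{4}}(\overline{P}_{T})$ solving \eqref{6.47}--\eqref{6.52} with right-hand side $\widetilde{f}$, and $|v|_{2,\gamma/2,\overline{P}_{T}}^{(4+\gamma,\frac{4+\gamma}{4})}\le C_{T}|\widetilde{f}|_{\gamma/2,\overline{P}_{T}}^{(\gamma,\gamma/4)}\le C_{T}(|f|_{\gamma/2,\overline{P}_{T}}^{(\gamma,\gamma/4)}+|g|_{C_{x',t}^{1+\gamma/2,1/2+\gamma/4}(P_{T}')}+|\psi|_{2,\gamma/2,\overline{P}}^{(4+\gamma)})$. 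Then $u:=w+v$ belongs to $C_{2,\gamma/2}^{4+\gamma,\frac{4+\gamma}{4}}(\overline{P}_{T})$, is periodic, solves \eqref{6.70}--\eqref{6.75}, and satisfies \eqref{6.78} by the triangle inequality combined with the lifting bound. For uniqueness, the difference of two solutions solves \eqref{6.47}--\eqref{6.52} with $f\equiv0$, so the uniqueness part of Lemma \ref{L.6.3} forces it to vanish; alternatively one repeats the energy identity from the proof of Lemma \ref{L.6.1}, multiplying the homogeneous equation by $\Delta\overline{u}$, integrating over $P$, and using Lemma \ref{L.6.2} for the lower-order term, to obtain $\frac{d}{dt}\int_{P}|\nabla u|^{2}dx\le0$, whence $u\equiv0$ since $u(x,0)\equiv0$.
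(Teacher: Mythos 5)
Your overall strategy (subtract a lifting and invoke Lemma \ref{L.6.3}, prove uniqueness by the energy identity or by the uniqueness part of Lemma \ref{L.6.3}) is the same as the paper's, and your uniqueness argument and the algebra of the reduction ($\widetilde f(x,0)\equiv 0$ forced by prescribing $\partial_t w(x,0)=f(x,0)-\nabla(x_N^2\nabla\Delta\psi)$) are fine. The genuine gap is the construction of the lifting $w$ itself, which you dispose of by appealing to "bounded extension operators of the same type as $E$ in Proposition \ref{P.2.4}". No such operators are available at this point. Proposition \ref{P.2.4} extends only a Dirichlet trace; it does not produce a function of the class $C_{2,\gamma/2}^{4+\gamma,\frac{4+\gamma}{4}}(\overline P_T)$ with a prescribed \emph{Neumann} trace, and the naive product lifting for your piece (b), $x_N\bigl(g(x',t)-g(x',0)\bigr)\eta_0(x_N)$, is not in that class: membership requires $D_t w$ to exist and be $C^{\gamma/4}$ in $t$ (see the seminorm \eqref{sem}), whereas $g$ has only $C^{1/2+\gamma/4}$ regularity in $t$, so $\partial_t g$ does not exist. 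A genuine gain-of-regularity extension (smoothing at the anisotropic scale $t\sim x_N^2$) would have to be constructed and estimated; that is precisely the kind of work the proposal skips. Your piece (c) is even more problematic: producing $w_3\in C_{2,\gamma/2}^{4+\gamma,\frac{4+\gamma}{4}}$ with $w_3(x,0)\equiv 0$, $\partial_t w_3(x,0)=h^{(1)}$ for $h^{(1)}$ merely in $C_{\gamma/2}^{\gamma}(\overline P)$ (and not destroying the homogeneous conditions \eqref{6.71}--\eqref{6.73}) is exactly the content of Proposition \ref{P.7.1}, which in this paper is proved \emph{later} by solving the model Dirichlet problem of Proposition \ref{P.6.2}, whose proof in turn rests on Proposition \ref{P.6.1}. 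Invoking such a lifting here is therefore circular within the paper's architecture, and citing \cite{SpGen} does not help, since that reference supplies the trace/extension facts of Section \ref{s2} (Dirichlet-type only), not an initial-data-plus-Neumann lifting.

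The paper's proof is organized precisely to avoid this obstacle: it first derives the a priori estimate \eqref{6.78} for a possible solution by the Schauder patching technique (so no lifting is needed for the estimate), then reduces to $\psi\equiv0$ by the elementary substitution $u=u_1+\psi$, mollifies the data — $f_1$ via Lemma \ref{L.6.4} preserving the corner condition \eqref{6.78.01}, and $g_2$, $h_2$ by time-shifted mollification so that they vanish for small $t$ — so that the \emph{explicit} liftings $t f_1(x,0)$, $G=x_N g_2\eta_0$, $H=\tfrac{(x_N-1)^2}{2}h_2\eta_0$ are smooth and hence admissible, applies Lemma \ref{L.6.3} to the reduced problem, and finally removes the smoothings by passing to the limit with the a priori estimate and the compactness Lemma \ref{L.02.2}. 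To repair your argument you would either have to reproduce this approximation scheme, or actually construct (with proofs) the weighted Neumann-trace and $(u,u_t)|_{t=0}$ lifting operators you postulate — and the latter cannot be borrowed from Proposition \ref{P.7.1} without circularity.
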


\begin{proof}
The proof is just by reduction to the conditions of Lemma
\ref{L.6.3}. First, the change of the unknown $u=u_{1}+\psi(x)$
reduces the problem to the case $\psi\equiv0$. For the function
$u_{1}$ we have problem \eqref{6.70}- \eqref{6.75} with the right
hand side in
\eqref{6.70} $f\rightarrow f_{1}$%

\[
f_{1}(x,t)=f(x,t)-\nabla(x_{N}^{2}\nabla\Delta\psi(x))\in C_{\gamma/2}%
^{\gamma,\gamma/4}(\overline{P}_{T}),
\]

\begin{equation}
f_{1}(x^{\prime},1,0)\equiv0, \label{6.78.01}%
\end{equation}
with the boundary condition in
\eqref{6.71} $g\rightarrow g_{1}$%

\begin{equation}
g_{1}(x^{\prime},t)=g(x^{\prime},t)-\frac{\partial\psi(x^{\prime},0)}{\partial
x_{N}},\quad g_{1}(x^{\prime},0)\equiv0, \label{6.78.02}%
\end{equation}
and with the same boundary conditions \eqref{6.72}, \eqref{6.73}
(because of conditions
\eqref{6.77.1}). Note that%

\[
|f_{1}|_{\gamma/2,\overline{P}_{T}}^{(\gamma,\gamma/4)}\leq C\left(
|f|_{\gamma/2,\overline{P}_{T}}^{(\gamma,\gamma/4)}+|\psi|_{2,\gamma
/2,\overline{P}}^{(4+\gamma)}\right)  ,
\]

\begin{equation}
|g_{1}|_{C_{x^{\prime},t}^{1+\gamma/2,1/2+\frac{\gamma}{4}}(P_{T}^{\prime}%
)}\leq\left(  |g|_{C_{x^{\prime},t}^{1+\gamma/2,1/2+\frac{\gamma}{4}}%
(P_{T}^{\prime})}+|\psi|_{2,\gamma/2,\overline{P}}^{(4+\gamma)}\right)  .
\label{6.79}%
\end{equation}
After this we can obtain estimate \eqref{6.78} for a possible
solution by the standard Schauder's technique as it was described in
lemmas \ref{L.6.1}, \ref{L.6.3}.

Now we apply several steps of smoothings and changes of unknown to
reduce the problem to the conditions of Lemma \ref{L.6.3} and to
prove the existence of the solution. Let
$f_{1\varepsilon}(x,t)$\bigskip$\in C^{\infty}(\overline{P}_{T})$ be
constructed on the base of $f_{1}(x,t)$ as in Lemma \ref{L.6.4}. At
least for a subsequence we have%

\[
|f_{1\varepsilon}(x,t)-f_{1}(x,t)|_{\gamma_{1}/2,\overline{P}_{T}}%
^{(\gamma_{1},\gamma_{1}/4)}\rightarrow0,\varepsilon\rightarrow0,\quad
\gamma_{1}\in(0,\gamma).
\]
and also at $(x_{N}=1,t=0)$%

\begin{equation}
f_{1\varepsilon}(x^{\prime},1,0)\equiv0 \label{6.79.01}%
\end{equation}
so that the compatibility conditions of the kind \eqref{6.77.1} at
$(x_{N}=1,t=0)$ are satisfied. On the base of estimate \eqref{6.78}
we will prove the existence of the solution for $f_{1}(x,t)$ if we
have the solution for $f_{1\varepsilon}(x,t)$, as it was done in
lemmas \ref{L.6.1}, \ref{L.6.3}. So we can assume that
$f_{1}(x,t)$\bigskip$\in C^{\infty}(\overline{P}_{T})$. Make now the
change of the unknown $u_{1}=u_{2}+tf_{1}(x,0)$. Then we obtain the
problem for $u_{2}$, where $f_{1}(x,t)$ in
\eqref{6.70} is replaced by%

\[
f_{2}(x,t)\equiv f_{1}(x,t)-f_{1}(x,0)-t\nabla(x_{N}^{2}\nabla\Delta
f_{1}(x,0)).
\]
The boundary conditions become%

\begin{equation}
\frac{\partial u_{2}}{\partial x_{N}}(x^{\prime},0,t)=g_{2}(x^{\prime
},t),\quad x^{\prime}\in P_{T}^{\prime}\equiv P^{\prime}\times\lbrack0,T],
\label{6.80}%
\end{equation}

\begin{equation}
\frac{\partial}{\partial x_{N}}\Delta u_{2}(x^{\prime},1,t)+\Delta
u_{2}(x^{\prime},1,t)=h_{2}(x^{\prime},t),\quad x^{\prime}\in P_{T}^{\prime
}\equiv P^{\prime}\times\lbrack0,T], \label{6.81}%
\end{equation}

\begin{equation}
u_{2}(x^{\prime},1,t)=k_{2}(x^{\prime},t),\quad x^{\prime}\in P_{T}^{\prime
}\equiv P^{\prime}\times\lbrack0,T], \label{6.82}%
\end{equation}
where%

\begin{equation}
g_{2}(x^{\prime},t)\equiv g_{1}(x^{\prime},t)-t\left.  \frac{\partial
f_{1}(x,0)}{\partial x_{N}}\right\vert _{x_{N}=0}, \label{6.83}%
\end{equation}

\begin{equation}
h_{2}(x^{\prime},t)\equiv-t\left.  \left(  \frac{\partial}{\partial x_{N}%
}\Delta f_{1}(x,0)+\Delta f_{1}(x,0)\right)  \right\vert _{x_{N}=1},
\label{6.84}%
\end{equation}

\begin{equation}
k_{2}(x^{\prime},t)\equiv-tf_{1}(x,0)|_{x_{N}=1}\equiv0. \label{6.85}%
\end{equation}
We also have zero initial condition%

\begin{equation}
u_{2}(x.0)\equiv0,\quad x\in\overline{P}. \label{6.86}%
\end{equation}
From \eqref{6.78.01}, \eqref{6.78.02} , and \eqref{6.79.01} it
follows that the compatibility conditions up to the first order at
$(x_{N}=0,t=0)$ and at $(x_{N}=1,t=0)$ are
satisfied. In particular%

\begin{equation}
f_{2}(x,0)\equiv0,x\in\overline{P},\quad g_{2}(x^{\prime},0)\equiv0,\quad
h_{2}(x^{\prime},0)\equiv0,x^{\prime}\in\overline{P^{\prime}}. \label{6.87}%
\end{equation}
Besides, we have%

\[
f_{2}(x,t)\in C^{\infty}(\overline{P}_{T}),\quad g_{2}(x^{\prime},t)\in
C_{x^{\prime},t,0}^{1+\gamma/2,1/2+\frac{\gamma}{4}}(P_{T}^{\prime}),\quad
h_{2}(x^{\prime},t)\in C^{1+\gamma,\frac{1+\gamma}{4}}(\overline{P}%
_{T}^{\prime}).
\]
since $g_{1}(x^{\prime},0)\equiv0$, $h_{2}(x^{\prime},0)$ we can
extend these functions to the domain $P^{\prime}\times\{t\leq0\}$ by
the identical zero with the preservation of the classes $C_{x^{\prime}%
,t}^{1+\gamma/2,1/2+\frac{\gamma}{4}}(\overline{P}_{T}^{\prime})$ and
$C^{1+\gamma,\frac{1+\gamma}{4}}(\overline{P}_{T}^{\prime})$ correspondingly.

Then we define the shifted and smoothed functions%

\[
g_{2\varepsilon}(x^{\prime},t)\equiv\theta_{\varepsilon}(x^{\prime},t)\ast
g_{2}(x^{\prime},t-2\varepsilon)=
\]

\[
=%
%TCIMACRO{\dint \limits_{R^{N-1}\times R^{1}}}%
%BeginExpansion
{\displaystyle\int\limits_{R^{N-1}\times R^{1}}}
%EndExpansion
\theta_{\varepsilon}(x^{\prime}-\xi^{\prime},t-\tau)g_{2}(\xi^{\prime}%
,\tau-2\varepsilon)d\xi^{\prime}d\tau,
\]

\[
h_{2\varepsilon}(x^{\prime},t)\equiv\theta_{\varepsilon}(x^{\prime},t)\ast
h_{2}(x^{\prime},t-2\varepsilon)=
\]

\[
=%
%TCIMACRO{\dint \limits_{R^{N-1}\times R^{1}}}%
%BeginExpansion
{\displaystyle\int\limits_{R^{N-1}\times R^{1}}}
%EndExpansion
\theta_{\varepsilon}(x^{\prime}-\xi^{\prime},t-\tau)h_{2}(\xi^{\prime}%
,\tau-2\varepsilon)d\xi^{\prime}d\tau,
\]
where $\theta_{\varepsilon}(x^{\prime},t)$ is a mollifier kernel
with the support in
$\{|x^{\prime}|\leq\varepsilon,|t|\leq\varepsilon\}$. The functions
$g_{2\varepsilon}(x^{\prime},t)$ and $h_{2\varepsilon}(x^{\prime},t)$ have properties%

\[
|g_{2\varepsilon}(x^{\prime},t)-g_{2}(x^{\prime},t)|_{C_{x^{\prime}%
,t}^{1+\gamma_{1}/2,1/2+\frac{\gamma_{1}}{4}}(\overline{P}_{T}^{\prime}%
)}\rightarrow0,\varepsilon\rightarrow0,\quad\gamma_{1}\in(0,\gamma),
\]

\[
|h_{2\varepsilon}(x^{\prime},t)-h_{2}(x^{\prime},t)|_{C^{1+\gamma
,\frac{1+\gamma}{4}}(\overline{P}_{T}^{\prime})}\rightarrow0,\varepsilon
\rightarrow0,\quad\gamma_{1}\in(0,\gamma)
\]
And besides,%

\begin{equation}
g_{2\varepsilon}(x^{\prime},t)\equiv0,\quad h_{2\varepsilon}(x^{\prime
},t)\equiv0,\quad0\leq t\leq\varepsilon. \label{6.88}%
\end{equation}
On the base of estimate
\eqref{6.78} we will prove the existence of the solution for $g_{2}%
(x^{\prime},t)$ and $h_{2}(x^{\prime},t)$ if we have the solution
for $\ \ g_{2}(x^{\prime},t)$ and $h_{2}(x^{\prime},t)$, as it was
done in lemmas \ref{L.6.1}, \ref{L.6.3}. So we can assume that
$g_{2}(x^{\prime},t)$\bigskip$\in
C^{\infty}(\overline{P^{\prime}}_{T})$,
$h_{2}(x^{\prime},t)$\bigskip$\in
C^{\infty}(\overline{P^{\prime}}_{T})$, and condition \eqref{6.88}
is satisfied for these functions.

Denote%

\[
G(x,t)\equiv x_{N}g_{2}(x^{\prime},t)\eta_{0}(x_{N}),
\]

\[
H(x,t)\equiv\frac{(x_{N}-1)^{2}}{2}h_{2}(x^{\prime},t)\eta_{0}(x_{N}),
\]
where $\eta_{0}(x_{N})$,$\eta_{1}(x_{N})\in C^{\infty}([0,1])$,
$\eta _{0}(x_{N})\equiv1$ on $[0,1/4]$, $\eta_{0}(x_{N})\equiv0$ on
$[3/4,1]$, $\eta_{1}(x_{N})\equiv1$ on $[3/4,1]$,
$\eta_{1}(x_{N})\equiv0$ on $[0,1/4]$. Now it can be checked
directly that the change of the unknown
$u_{2}(x,t)=u_{3}(x,t)+G(x,t)+H(x,t)$ reduces the problem to the
problem for the function $u_{3}(x,t)$ with exactly the conditions of
Lemma \ref{L.6.3}.

This finishes the proof of the proposition.

\end{proof}

\subsection{A model problem with the Dirichlet condition at $\{x_{N}=0\}$.}%
\label{ss6.2}

Let $P$, $P^{\prime}$, $P_{T}$, $P_{T}^{\prime}$ be defined in the
previous subsection. Consider the following problem for the unknown
$2\pi$- periodic with respect to the variables $x_{i}$,
$i=\overline{1,N-1}$, function $u(x,t)$:

\begin{equation}
L_{x,t}u\equiv\frac{\partial u}{\partial t}+\nabla(x_{N}^{2}\nabla\Delta
u)=f(x,t),\quad(x,t)\in P_{T}, \label{006.89}%
\end{equation}

\begin{equation}
u(x^{\prime},0,t)=\varphi(x^{\prime},t),\quad(x^{\prime},t)\in P_{T}^{\prime},
\label{006.90}%
\end{equation}

\begin{equation}
u(x^{\prime},1,t)=0,\quad\frac{\partial^{2}u(x^{\prime},1,t)}{\partial
x_{N}^{2}}=0,\quad(x^{\prime},t)\in P_{T}^{\prime}, \label{006.91}%
\end{equation}

\begin{equation}
u(x,0)=\psi(x),\quad x\in\overline{P}, \label{006.92}%
\end{equation}
and the periodicity conditions%

\begin{equation}
\left.  \frac{\partial^{n}u}{\partial x_{i}^{n}}(x,t)\right\vert _{x_{i}=-\pi
}=\left.  \frac{\partial^{n}u}{\partial x_{i}^{n}}(x,t)\right\vert _{x_{i}%
=\pi},n=0,1,2,3,\quad i=\overline{1,N-1}. \label{6.93}%
\end{equation}
We soppose that%

\begin{equation}
f(x,t)\in C_{\gamma/2}^{\gamma,\gamma/4}(\overline{P}_{T}),\varphi(x^{\prime
},t)\in C_{x^{\prime},t}^{2+\gamma/2,1+\gamma/4}(P_{T}^{\prime}),\psi(x)\in
C_{2,\gamma/2}^{4+\gamma}(\overline{P}) \label{6.94}%
\end{equation}
and all these functions are $2\pi$-periodic in each variable
$x_{i}$, $i=\overline{1,N-1}$. We suppose also that the given
functions satisfy the following compatibility conditions at
$(t=0,x_{N}=0)$ and $(t=0,x_{N}=1)$

\begin{equation}
\varphi(x^{\prime},0)=\psi(x^{\prime},0),\frac{\partial\varphi}{\partial
t}(x^{\prime},0)=-\left[  \nabla(x_{N}^{2}\nabla\Delta\psi)\right]
|_{x_{N}=0}+f(x^{\prime},0,0),x^{\prime}\in\overline{P}^{\prime}, \label{6.95}%
\end{equation}

\begin{equation}
\psi(x^{\prime},1)\equiv0,-\left[  \nabla(x_{N}^{2}\nabla\Delta\psi)\right]
|_{x_{N}=1}+f(x^{\prime},1,0)\equiv0,\frac{\partial^{2}\psi(x^{\prime}%
,1)}{\partial x_{N}^{2}}\equiv0,x^{\prime}\in\overline{P}^{\prime}. \label{6.96}%
\end{equation}

\begin{proposition}
\label{P.6.2}

Under conditions \eqref{6.94}- \eqref{6.96} problem \eqref{006.89}-
\eqref{6.93} has the unique
periodic solution $u(x,t)\in C_{2,\gamma/2}^{4+\gamma,\frac{4+\gamma}{4}%
}(\overline{P}_{T})$ and%

\begin{equation}
|u(x,t)|_{2,\gamma/2,\overline{P}_{T}}^{(4+\gamma,\frac{4+\gamma}{4})}\leq
C_{T}\left(  |f|_{\gamma/2,\overline{P}_{T}}^{(\gamma,\gamma/4)}%
+|\varphi|_{C_{x^{\prime},t}^{2+\gamma/2,1+\frac{\gamma}{4}}(\overline{P}%
_{T}^{\prime})}+|\psi|_{2,\gamma/2,\overline{P}}^{(4+\gamma)}\right)  .
\label{6.97}%
\end{equation}

\end{proposition}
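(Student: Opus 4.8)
The plan is to mirror the proof of Proposition~\ref{P.6.1} almost verbatim, reducing problem \eqref{006.89}--\eqref{6.93} to the homogeneous situation handled by an analogue of Lemma~\ref{L.6.3} for the Dirichlet condition at $\{x_N=0\}$. First I would subtract off the initial data: the substitution $u=u_1+\psi(x)$ reduces the problem to $\psi\equiv0$, replacing $f$ by $f_1=f-\nabla(x_N^2\nabla\Delta\psi)\in C_{\gamma/2}^{\gamma,\gamma/4}(\overline P_T)$ and $\varphi$ by $\varphi_1=\varphi-\psi(x',0)$, with $f_1(x',1,0)\equiv0$, $\varphi_1(x',0)\equiv0$ and the estimates
\[
|f_1|_{\gamma/2,\overline P_T}^{(\gamma,\gamma/4)}\le C\left(|f|_{\gamma/2,\overline P_T}^{(\gamma,\gamma/4)}+|\psi|_{2,\gamma/2,\overline P}^{(4+\gamma)}\right),\quad
|\varphi_1|_{C_{x',t}^{2+\gamma/2,1+\gamma/4}(P_T')}\le C\left(|\varphi|_{C_{x',t}^{2+\gamma/2,1+\gamma/4}(P_T')}+|\psi|_{2,\gamma/2,\overline P}^{(4+\gamma)}\right),
\]
while the compatibility conditions \eqref{6.95}, \eqref{6.96} pass to $f_1$, $\varphi_1$ in homogenized form. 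The a priori estimate \eqref{6.97} for a possible solution then follows by the same Schauder localization argument as in Lemmas~\ref{L.6.1}, \ref{L.6.3}: localize with cut-off functions, using \eqref{005.60.001} (the half-space Dirichlet estimate, via Theorem~\ref{T.5.2} and its parabolic counterpart Theorem~\ref{T.5.1}, estimate \eqref{005.60}) near $\{x_N=0\}$ and classical non-degenerate elliptic/parabolic estimates elsewhere, absorbing lower order terms by interpolation inequalities \eqref{2.15}--\eqref{2.22} and \eqref{2.3}--\eqref{2.4.0}.

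Next I would establish the homogeneous result: problem \eqref{006.89}--\eqref{6.93} with $\varphi\equiv0$, $\psi\equiv0$ and $f(x,0)\equiv0$ is uniquely solvable in $C_{2,\gamma/2}^{4+\gamma,\frac{4+\gamma}{4}}(\overline P_T)$ with $|u|_{2,\gamma/2,\overline P_T}^{(4+\gamma,\frac{4+\gamma}{4})}\le C_T|f|_{\gamma/2,\overline P_T}^{(\gamma,\gamma/4)}$. This is the Dirichlet analogue of Lemma~\ref{L.6.3} and is proved the same way: first assume $f$ smooth in $t$ with all $t$-derivatives vanishing at $t=0$, take the Laplace transform, and obtain the elliptic problem $\nabla(x_N^2\nabla\Delta v)+pv=h(x,p)$ with $v(x',0,p)=0$ and the conditions \eqref{006.91} at $\{x_N=1\}$. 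For the $L_2$ estimate I would multiply by $\Delta\overline v$ and integrate by parts; the Dirichlet condition $v|_{x_N=0}=0$ kills the boundary terms at $\{x_N=0\}$ even more easily than the Neumann case (and Lemma~\ref{L.6.2}, with $v|_{x_N=0}=0$ giving the usual Hardy inequality, controls $\int_P|\Delta v|^2$), yielding $\int_P|v|^2dx\le C(\operatorname{Re}p)^{-1}(|h|_{\gamma/2,\overline P}^{(\gamma)})^2$, hence trivial kernel for $\operatorname{Re}p>0$; Fredholm solvability then gives $v(x,p)$. For the Schauder-norm bound one moves $pv$ to the right, uses the stationary Dirichlet estimate (the $P$-domain analogue of Lemma~\ref{L.6.1}, established identically using \eqref{005.60.001}) plus interpolation to get $|v|_{2,\gamma/2,\overline P}^{(4+\gamma)}\le C(1+|p|)^A|h|_{\gamma/2,\overline P}^{(\gamma)}$, hence rapid decay in $p$ by \eqref{6.56}, and the inverse Laplace transform produces $u(x,t)$ with zero initial data. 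The general $f$ satisfying only \eqref{6.47.1} is reached by the mollification-in-$t$ argument of Lemma~\ref{L.6.3} together with Lemma~\ref{L.02.2}.

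Finally I would carry out the same chain of changes of unknown as in the proof of Proposition~\ref{P.6.1} to pass from the a priori estimate to existence for \eqref{006.89}--\eqref{6.93}: smooth $f_1$ via Lemma~\ref{L.6.4} (the mollification preserving $f_{1\varepsilon}(x',1,0)\equiv0$, which is exactly the compatibility at $\{x_N=1\}$ needed here), then subtract $tf_1(x,0)$ to make $f_2(x,0)\equiv0$, smooth and shift the resulting boundary datum $\varphi_2(x',t)$ (using $\varphi_2(x',0)\equiv0$) to get it $C^\infty$ and vanishing for small $t$, and finally lift the inhomogeneous boundary conditions at $\{x_N=0,1\}$ by an explicit correction built from cut-off functions $\eta_0,\eta_1$ — here one uses a term like $\varphi_2(x',t)\eta_0(x_N)$ for the Dirichlet datum at $\{x_N=0\}$ and a term $\propto(x_N-1)^2$ (or the appropriate polynomial in $x_N-1$) times a cut-off to absorb the conditions $u|_{x_N=1}=0$, $\partial_{x_N}^2u|_{x_N=1}=0$ in \eqref{006.91}, all constructed so as not to disturb the homogenized interior equation beyond an explicitly smooth forcing term. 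What remains is then exactly an instance of the homogeneous Dirichlet lemma, and \eqref{6.97} follows. The main obstacle, as usual in these arguments, is bookkeeping: one must check that each reduction keeps the data in the correct weighted H\"older class with the stated norm bound and, crucially, that the compatibility conditions \eqref{6.95}--\eqref{6.96} are preserved (in homogenized form) through every substitution and mollification, since a mismatch at the corners $(t=0,x_N=0)$ and $(t=0,x_N=1)$ would destroy the $C_{2,\gamma/2}^{4+\gamma,\frac{4+\gamma}{4}}$ regularity of the solution; the analytic content is already contained in Theorem~\ref{T.5.1}, Theorem~\ref{T.5.2}, Lemma~\ref{L.6.2} and the interpolation inequalities.
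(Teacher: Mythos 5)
Your plan is correct in substance and shares the paper's overall architecture (reduce the data, prove the Schauder estimate \eqref{6.97} by localization to the half-space results, solve the transformed problem for smoothed data, then pass to the limit via Lemma \ref{L.02.2}), but it deviates from the paper's proof in two genuine ways. First, the paper reduces the data by solving an \emph{auxiliary Neumann problem} via Proposition \ref{P.6.1} with initial datum $\psi$ and right-hand side $f(x,0)$, which in one stroke achieves $\psi\equiv0$, $f(x,0)\equiv0$ and $\varphi(x',0)\equiv\varphi_t(x',0)\equiv0$; you instead subtract $\psi$ and later $tf_1(x,0)$ and then lift the boundary data with cut-offs, i.e.\ you replay the reduction chain of Proposition \ref{P.6.1} rather than exploit its statement — this works (your compatibility bookkeeping is right, and the naive cut-off lift of the Dirichlet datum is admissible only after the mollification in $(x',t)$, which you do include), it is just longer. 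Second, and more substantively, for the solvability after the Laplace transform the paper deliberately avoids any auxiliary elliptic lemma: it goes down to the ODE \eqref{6.98}--\eqref{6.101} in $x_N$ with parameters $(\omega,p)$, runs the continuation in $\lambda$ from $L_0v=(x_N^2v''')'$, and obtains the uniform $L_2(I)$ bound by multiplying by $\overline v/x_N$ (legitimate because $v(\omega,p,0)=0$). You stay at the PDE level in $P$, argue Fredholm-wise (stationary Dirichlet operator invertible plus the compact perturbation $pv$) and get the trivial kernel by testing with $\Delta\overline v$ and invoking Lemma \ref{L.6.2}. That energy argument is sound, but note two points: (i) what actually kills the boundary term at $\{x_N=1\}$ in the integration by parts, and the boundary term in \eqref{6.47.01}, is $\Delta v(x',1)=0$, which follows from \emph{both} conditions in \eqref{006.91} ($v=0$ forces the tangential Laplacian to vanish and $v_{x_Nx_N}=0$ does the rest); your parenthetical attributing the Hardy step to $v|_{x_N=0}=0$ is off target, though harmless. (ii) Your route requires the stationary Dirichlet analogue of Lemma \ref{L.6.1} as a separate ingredient (explicit $\lambda=0$ ODE solution with the new boundary conditions, continuation in $\lambda$ with the multiplier $\overline v/x_N$); this is parallel work, not a gap, but it is precisely the extra lemma the paper's formulation ("we need not consider an auxiliary elliptic problem") is designed to avoid. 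With these caveats your proposal yields \eqref{6.97} and the unique solvability just as the paper does.
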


\begin{proof}

We give only outline of the proof because it is very similar (and
even simpler) to the proof of Proposition \ref{P.6.1} . We emphasize
only the principal moment of obtaining the analog of estimates
\eqref{6.37}, \eqref{6.63}. Note that in the case of the Dirichlet
condition \eqref{006.90} we need not to consider auxiliary elliptic
problem.

First of all, on the base of Proposition \ref{P.6.1} we can consider
the auxiliary problem with the Neumann condition at $x_{N}=0$ and
with the given initial datum $\psi(x)$ and with the right hand side
$f(x,0)$\bigskip\ in equation \eqref{006.89}. This reduces the
problem to the case $\psi(x)\equiv0$ , $f(x,0)\equiv0$. At the same
time the boundary condition at $x_{N}=0$ is reduced to the case
$\varphi(x^{\prime},0)\equiv\varphi_{t}(x^{\prime},0)\equiv0$. And
the same is applied to the boundary conditions at $x_{N}=1$. After
this by the standard Schauder technique we obtain estimate
\eqref{6.97} as it was done in Proposition \ref{P.6.1}. We can also
reduce the problem to the zero boundary conditions on the base on
well known results on extensions of functions from standard
H\"{o}lder classes and on the base of Proposition \ref{P.2.4}.

The further aim is to prove the existence of smooth solution for the
smoothed right hand side $f(x,t)$ in \eqref{006.89}. For this we
make in problem \eqref{006.89}- \eqref{6.93} the Laplace transform
and then represent as the Fourier series as it was done in the
previous subsection.
Thus we denote%

\[
\widetilde{u}(x,p)=%
%TCIMACRO{\dint \limits_{0}^{\infty}}%
%BeginExpansion
{\displaystyle\int\limits_{0}^{\infty}}
%EndExpansion
e^{-pt}u(x,t)dt,\quad\widetilde{f}(x,p)=%
%TCIMACRO{\dint \limits_{0}^{\infty}}%
%BeginExpansion
{\displaystyle\int\limits_{0}^{\infty}}
%EndExpansion
e^{-pt}f(x,t)dt,
\]

\[
\widetilde{u}(x,p)=\widetilde{u}(x^{\prime},x_{N},p)=%
%TCIMACRO{\dsum \limits_{\omega\in Z^{N-1}}}%
%BeginExpansion
{\displaystyle\sum\limits_{\omega\in Z^{N-1}}}
%EndExpansion
v(\omega,p,x_{N})e^{-i\omega x^{\prime}},
\]

\[
\widetilde{f}(x,p)=\widetilde{f}(x^{\prime},x_{N},p)=%
%TCIMACRO{\dsum \limits_{\omega\in Z^{N-1}}}%
%BeginExpansion
{\displaystyle\sum\limits_{\omega\in Z^{N-1}}}
%EndExpansion
h(\omega,p,x_{N})e^{-i\omega x^{\prime}},
\]
where%

\[
|h(\omega,p,x_{N})|_{x_{N},\gamma/2,I}^{(\gamma)}\leq C_{K}(1+\omega
^{2}+|p|^{2})^{-K},\quad K>0.
\]
For the unknown function $v(\omega,p,x_{N})$ the original problem
become the following boundary value problem for an ordinary
differential equation with the parameters $p$
and $\omega$%

\begin{equation}
(x_{N}^{2}v^{\prime\prime\prime})^{\prime}+pv-2\omega^{2}x_{N}^{2}%
v^{\prime\prime}-2\omega^{2}x_{N}v^{\prime}+\left(  \omega^{2}\right)
^{2}x_{N}^{2}v=h(\omega,p,x_{N}),x_{N}\in I=[0,1], \label{6.98}%
\end{equation}

\begin{equation}
v(\omega,p,0)=0, \label{6.99}%
\end{equation}

\begin{equation}
v(\omega,p,1)=0, \label{6.100}%
\end{equation}

\begin{equation}
v^{^{\prime\prime}}(\omega,p,1)=0. \label{6.101}%
\end{equation}
As before, we define $\widetilde{C}_{2,\gamma/2}^{4+\gamma}(I)$ as
the closed subspace of $C_{2,\gamma/2}^{4+\gamma}(I)$ with boundary conditions%
\eqref{6.99}- \eqref{6.101}. We also consider instead of
\eqref{6.98} the equation with the parameter $\lambda\in\lbrack0,1]$

\[
(L_{0}+\lambda T)v\equiv(x_{N}^{2}v^{\prime\prime\prime})^{\prime}+
\]%

\begin{equation}
+\lambda pv-\lambda2\omega^{2}x_{N}^{2}v^{\prime\prime}-\lambda2\omega
^{2}x_{N}v^{\prime}+\lambda\left(  \omega^{2}\right)  ^{2}x_{N}^{2}%
v=h(\omega,p,x_{N}),x_{N}\in I. \label{6.102}%
\end{equation}
It can be checked directly, as it was done in the previous
subsection, that for $\lambda=0$ the operator
$L_{0}v\equiv(x_{N}^{2}v^{\prime\prime\prime})^{\prime}$ has a
bounded inverse operator
$L_{0}^{-1}:C_{\gamma/2}^{\gamma}(I)\rightarrow\widetilde
{C}_{2,\gamma/2}^{4+\gamma}(I)$. Thus, it is enough to obtain a
uniform in $\lambda$ estimate for possible solution of \eqref{6.102}
in the space $\widetilde{C}_{2,\gamma/2}^{4+\gamma}(I)$. This is
also done completely analogous to the previous subsection. The key
ingredient of such estimate is the uniform estimate of
$L_{2}(I)$-norm of the solution $v(\omega,p,x_{N})$, an analog of
estimates \eqref{6.37}, \eqref{6.63}. We now demonstrate this
estimate.

Since $v(\omega,p,0)=0$, we can multiply \eqref{6.102} by
$\overline{v}/x_{N}$ and integrate by parts over $I$. We consider
each term in
\eqref{6.102} separately. We have%

\[
J_{1}\equiv%
%TCIMACRO{\dint \limits_{0}^{1}}%
%BeginExpansion
{\displaystyle\int\limits_{0}^{1}}
%EndExpansion
(x_{N}^{2}v^{\prime\prime\prime})^{\prime}\frac{\overline{v}}{x_{N}}dx_{N}=-%
%TCIMACRO{\dint \limits_{0}^{1}}%
%BeginExpansion
{\displaystyle\int\limits_{0}^{1}}
%EndExpansion
x_{N}v^{\prime\prime\prime}\overline{v}^{\prime}dx_{N}+%
%TCIMACRO{\dint \limits_{0}^{1}}%
%BeginExpansion
{\displaystyle\int\limits_{0}^{1}}
%EndExpansion
v^{\prime\prime\prime}\overline{v}dx_{N}=
\]

\[
=%
%TCIMACRO{\dint \limits_{0}^{1}}%
%BeginExpansion
{\displaystyle\int\limits_{0}^{1}}
%EndExpansion
x_{N}v^{\prime\prime}\overline{v}^{\prime\prime}dx_{N}+%
%TCIMACRO{\dint \limits_{0}^{1}}%
%BeginExpansion
{\displaystyle\int\limits_{0}^{1}}
%EndExpansion
v^{\prime\prime}\overline{v}^{\prime}dx_{N}-%
%TCIMACRO{\dint \limits_{0}^{1}}%
%BeginExpansion
{\displaystyle\int\limits_{0}^{1}}
%EndExpansion
v^{\prime\prime}\overline{v}^{\prime}dx_{N}=%
%TCIMACRO{\dint \limits_{0}^{1}}%
%BeginExpansion
{\displaystyle\int\limits_{0}^{1}}
%EndExpansion
x_{N}|v^{\prime\prime}|^{2}dx_{N},
\]

\[
J_{2}\equiv%
%TCIMACRO{\dint \limits_{0}^{1}}%
%BeginExpansion
{\displaystyle\int\limits_{0}^{1}}
%EndExpansion
\lambda pv\frac{\overline{v}}{x_{N}}dx_{N}=\lambda p%
%TCIMACRO{\dint \limits_{0}^{1}}%
%BeginExpansion
{\displaystyle\int\limits_{0}^{1}}
%EndExpansion
\frac{|v|^{2}}{x_{N}}dx_{N},
\]

\[
J_{3}\equiv-\lambda2\omega^{2}%
%TCIMACRO{\dint \limits_{0}^{1}}%
%BeginExpansion
{\displaystyle\int\limits_{0}^{1}}
%EndExpansion
x_{N}^{2}v^{\prime\prime}\frac{\overline{v}}{x_{N}}dx_{N}=\lambda2\omega^{2}%
%TCIMACRO{\dint \limits_{0}^{1}}%
%BeginExpansion
{\displaystyle\int\limits_{0}^{1}}
%EndExpansion
x_{N}|v^{\prime}|^{2}dx_{N}+\lambda2\omega^{2}%
%TCIMACRO{\dint \limits_{0}^{1}}%
%BeginExpansion
{\displaystyle\int\limits_{0}^{1}}
%EndExpansion
v^{\prime}\overline{v}dx_{N},
\]

\[
J_{4}\equiv-\lambda2\omega^{2}%
%TCIMACRO{\dint \limits_{0}^{1}}%
%BeginExpansion
{\displaystyle\int\limits_{0}^{1}}
%EndExpansion
x_{N}v^{\prime}\frac{\overline{v}}{x_{N}}dx_{N}=-\lambda2\omega^{2}%
%TCIMACRO{\dint \limits_{0}^{1}}%
%BeginExpansion
{\displaystyle\int\limits_{0}^{1}}
%EndExpansion
v^{\prime}\overline{v}dx_{N},
\]

\[
J_{5}\equiv\lambda\left(  \omega^{2}\right)  ^{2}%
%TCIMACRO{\dint \limits_{0}^{1}}%
%BeginExpansion
{\displaystyle\int\limits_{0}^{1}}
%EndExpansion
x_{N}^{2}v\frac{\overline{v}}{x_{N}}dx_{N}=\lambda\left(  \omega^{2}\right)
^{2}%
%TCIMACRO{\dint \limits_{0}^{1}}%
%BeginExpansion
{\displaystyle\int\limits_{0}^{1}}
%EndExpansion
x_{N}|v|^{2}dx_{N},
\]

\[
J_{6}\equiv%
%TCIMACRO{\dint \limits_{0}^{1}}%
%BeginExpansion
{\displaystyle\int\limits_{0}^{1}}
%EndExpansion
h(\omega,p,x_{N})\frac{\overline{v}}{x_{N}}dx_{N}.
\]
Since $%
%TCIMACRO{\dsum \limits_{j=1}^{5}}%
%BeginExpansion
{\displaystyle\sum\limits_{j=1}^{5}}
%EndExpansion
J_{j}=J_{6},$ adding up the above integrals and taking the real part, we
obtain the relation%

\[%
%TCIMACRO{\dint \limits_{0}^{1}}%
%BeginExpansion
{\displaystyle\int\limits_{0}^{1}}
%EndExpansion
x_{N}|v^{\prime\prime}|^{2}dx_{N}+\lambda2\omega^{2}%
%TCIMACRO{\dint \limits_{0}^{1}}%
%BeginExpansion
{\displaystyle\int\limits_{0}^{1}}
%EndExpansion
x_{N}|v^{\prime}|^{2}dx_{N}+\lambda%
%TCIMACRO{\dint \limits_{0}^{1}}%
%BeginExpansion
{\displaystyle\int\limits_{0}^{1}}
%EndExpansion
\left[  \left(  \omega^{2}\right)  ^{2}x_{N}^{2}+\operatorname{Re}p\right]
\frac{|v|^{2}}{x_{N}}dx_{N}=
\]
\[
=\operatorname{Re}%
%TCIMACRO{\dint \limits_{0}^{1}}%
%BeginExpansion
{\displaystyle\int\limits_{0}^{1}}
%EndExpansion
h(\omega,p,x_{N})\frac{\overline{v}}{x_{N}}dx_{N}.
\]
Applying the Cauchy inequality with $\varepsilon$ to the right hand
side and making use of the Hardy
inequality, we get%

\begin{equation}%
%TCIMACRO{\dint \limits_{0}^{1}}%
%BeginExpansion
{\displaystyle\int\limits_{0}^{1}}
%EndExpansion
x_{N}|v^{\prime\prime}|^{2}dx_{N}\leq\varepsilon%
%TCIMACRO{\dint \limits_{0}^{1}}%
%BeginExpansion
{\displaystyle\int\limits_{0}^{1}}
%EndExpansion
\frac{|v|^{2}}{x_{N}^{2}}dx_{N}+C_{\varepsilon}%
%TCIMACRO{\dint \limits_{0}^{1}}%
%BeginExpansion
{\displaystyle\int\limits_{0}^{1}}
%EndExpansion
|h|^{2}dx_{N}\leq\varepsilon C%
%TCIMACRO{\dint \limits_{0}^{1}}%
%BeginExpansion
{\displaystyle\int\limits_{0}^{1}}
%EndExpansion
|v^{\prime}|^{2}dx_{N}+C_{\varepsilon}%
%TCIMACRO{\dint \limits_{0}^{1}}%
%BeginExpansion
{\displaystyle\int\limits_{0}^{1}}
%EndExpansion
|h|^{2}dx_{N}. \label{6.103}%
\end{equation}
Note now that from boundary conditions \eqref{6.99}-
\eqref{6.101} it follows that%

\begin{equation}%
%TCIMACRO{\dint \limits_{0}^{1}}%
%BeginExpansion
{\displaystyle\int\limits_{0}^{1}}
%EndExpansion
|v^{\prime}|^{2}dx_{N}\leq C%
%TCIMACRO{\dint \limits_{0}^{1}}%
%BeginExpansion
{\displaystyle\int\limits_{0}^{1}}
%EndExpansion
x_{N}|v^{\prime\prime}|^{2}dx_{N}. \label{6.104}%
\end{equation}
Really, integrating over $I$ the identity $\ (x_{N}v^{\prime
})^{\prime}=v^{\prime}+x_{N}v^{\prime\prime}$ and taking into
account that the integral from
$v^{\prime}$ over $I$ is equal to $v(1)-v(0)=0$, we obtain%

\[
|v^{\prime}(1)|=\left\vert
%TCIMACRO{\dint \limits_{0}^{1}}%
%BeginExpansion
{\displaystyle\int\limits_{0}^{1}}
%EndExpansion
x_{N}v^{\prime\prime}dx_{N}\right\vert \leq\left(
%TCIMACRO{\dint \limits_{0}^{1}}%
%BeginExpansion
{\displaystyle\int\limits_{0}^{1}}
%EndExpansion
x_{N}|v^{\prime\prime}|^{2}dx_{N}\right)  ^{\frac{1}{2}}.
\]
Now we make use of \eqref{6.47.02} and arrive at \eqref{6.104}.
Applying \eqref{6.104} to
\eqref{6.103} we get for a sufficiently small $\varepsilon$%

\[%
%TCIMACRO{\dint \limits_{0}^{1}}%
%BeginExpansion
{\displaystyle\int\limits_{0}^{1}}
%EndExpansion
|v^{\prime}|^{2}dx_{N}\leq C%
%TCIMACRO{\dint \limits_{0}^{1}}%
%BeginExpansion
{\displaystyle\int\limits_{0}^{1}}
%EndExpansion
|h|^{2}dx_{N},
\]
and thus%

\[%
%TCIMACRO{\dint \limits_{0}^{1}}%
%BeginExpansion
{\displaystyle\int\limits_{0}^{1}}
%EndExpansion
|v|^{2}dx_{N}\leq C%
%TCIMACRO{\dint \limits_{0}^{1}}%
%BeginExpansion
{\displaystyle\int\limits_{0}^{1}}
%EndExpansion
|h|^{2}dx_{N}\leq C\left(  |h|_{\gamma/2,I}^{(\gamma)}\right)  ^{2}.
\]
The further reasoning of the proof of the present proposition are
completely analogous to the previous subsection. By this we finish
the proof.

\end{proof}

\section{The Neumann and the Dirichlet problems for a linearized thin film
equation in an arbitrary smooth domain.}
\label{s7}

In this section we formulate theorems on solvability and estimates
of the solution for a linearized thin film equation in an arbitrary
smooth domain. But first we need an important proposition on
constructing a function $u(x,t)$ from an appropriate class\ with
given initial values of $u(x,0)$ and $u_{t}(x,0)$.

Let the domains $\Omega\in C^{4+\gamma}$, $\Omega_{T}\in
C^{4+\gamma}$, the function $d(x)\in
C^{1+\gamma}(\overline{\Omega})$, and the spaces
$C_{2,\gamma/2}^{4+\gamma}(\overline{\Omega})$,
$C_{2,\gamma/2}^{4+\gamma
,\frac{4+\gamma}{4}}(\overline{\Omega}_{T})$,
$C_{\gamma/2}^{\gamma}(\overline{\Omega})$,
$C_{\gamma/2}^{\gamma,\gamma /4}(\overline{\Omega}_{T})$ be defined
in Section ref{s1}. Let we are given functions

\begin{equation}
u_{0}(x)\in C_{2,\gamma/2}^{4+\gamma}(\overline{\Omega}),\quad u_{1}(x)\in
C_{\gamma/2}^{\gamma}(\overline{\Omega}). \label{7.1}%
\end{equation}

\begin{proposition}
\label{P.7.1}

For any functions $u_{0}(x)$ and $u_{1}(x)$ in \eqref{7.1} there
exists a function $w(x,t)\in C_{2,\gamma/2}^{4+\gamma
,\frac{4+\gamma}{4}}(\overline{\Omega}_{T})$ with%

\begin{equation}
w(x,0)\equiv u_{0}(x),\quad\frac{\partial w}{\partial t}(x,0)\equiv
u_{1}(x),\quad x\in\overline{\Omega}, \label{7.2}%
\end{equation}

\begin{equation}
|w|_{2,\gamma/2,\overline{\Omega}_{T}}^{(4+\gamma,\frac{4+\gamma}{4})}\leq
C_{T}\left(  |u_{0}|_{2,\gamma/2,\overline{\Omega}}^{(4+\gamma)}%
+|u_{1}|_{\gamma/2,\overline{\Omega}}^{(\gamma)}\right)  , \label{7.3}%
\end{equation}
where the constant $C_{T}$ does not depend on $u_{0}(x)$ and
$u_{1}(x)$.

Moreover, if%

\begin{equation}
u_{0}(x)|_{\partial\Omega}\equiv u_{1}(x)|_{\partial\Omega}\equiv0,
\label{7.3.1}%
\end{equation}
then%

\begin{equation}
w(x,t)\equiv0,\quad x\in\partial\Omega. \label{7.3.2}%
\end{equation}

\end{proposition}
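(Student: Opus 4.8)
The plan is to build $w(x,t)$ by combining two pieces: a piece that handles the value $u_0(x)$ at $t=0$ and a piece that handles the derivative $u_1(x)$ at $t=0$. First I would treat the simplest situation of the half-space model $\Omega=R^N_+$, $d(x)=x_N$, where the space $C_{2,\gamma/2}^{4+\gamma,\frac{4+\gamma}{4}}(R^N_{+,T})$ has the explicit description given in Proposition \ref{P.2.5}; the general smooth domain then follows by the usual partition of unity subordinate to a finite cover of $\overline{\Omega}$ by coordinate charts, using the equivalence of all the weighted norms built from different admissible $d(x)$ and the fact that multiplication by smooth cut-offs and composition with the chart diffeomorphisms preserve all the relevant classes (this is exactly the same localization argument used to deduce Propositions \ref{P.2.3}, \ref{P.2.4} from their half-space versions). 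So the real work is the half-space case with compactly supported data.

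For the half-space piece, the natural candidate is a parabolic-type extension. Since $u_0\in C_{2,\gamma/2}^{4+\gamma}(R^N)$ (restricted to $R^N_+$) and $u_1\in C_{\gamma/2}^\gamma(R^N_+)$, I would set
\[
w(x,t) = U_0(x,t) + t\, U_1(x,t),
\]
where $U_0$ is a smoothing-in-time extension of $u_0$ with $U_0(x,0)=u_0(x)$ and $\partial_t U_0(x,0)=0$, and $U_1$ is a smoothing-in-time extension of $u_1$ with $U_1(x,0)=u_1(x)$. Concretely one can take $U_0(x,t)=\int \rho(s)\, u_0^{*}(x,\,\cdot\,)\,ds$-type mollifications in the parabolic scaling $t\sim x_N^2$ near $\{x_N=0\}$ and ordinary-parabolic scaling $t\sim \ell^2$ in the tangential/interior directions, exactly as such extension operators are constructed for the weighted classes in \cite{SpGen}; the point is that the required anisotropic extension operator from the trace classes to $C_{2,\gamma/2}^{4+\gamma,\frac{4+\gamma}{4}}$ is already available there, and the initial-time trace is structurally the same kind of object as the lateral trace handled by Proposition \ref{P.2.4}. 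The bound $|w|_{2,\gamma/2,\overline{\Omega}_T}^{(4+\gamma,\frac{4+\gamma}{4})}\le C_T(|u_0|_{2,\gamma/2,\overline{\Omega}}^{(4+\gamma)}+|u_1|_{\gamma/2,\overline{\Omega}}^{(\gamma)})$ then comes from the continuity of that extension operator together with the trivial estimate $|t\,U_1|\le C_T|U_1|$ in the relevant seminorms (here one uses that $D_t(tU_1)=U_1+tD_tU_1$, and $U_1$ is only as smooth as $C_{\gamma/2}^{\gamma,\gamma/4}$, which is exactly the regularity the $D_t u$ term of the norm \eqref{sem} demands).

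For the final assertion \eqref{7.3.2}: when $u_0$ and $u_1$ vanish on $\partial\Omega$, I would arrange the construction so that $w$ inherits this. In the half-space model $\partial\Omega=\{x_N=0\}$, if the mollifications defining $U_0$ and $U_1$ are taken only in the tangential variables $x'$ and in $t$ (never in $x_N$), then $U_i(x',0,t)$ is a mollification of $u_i(x',0)\equiv 0$, hence vanishes; so $w(x',0,t)\equiv 0$. Alternatively one writes $w = d(x)\cdot \widetilde{w}$ after extracting the factor $d(x)$ as in \eqref{004.18}, applying the previous part to the quotients, which are again in the right classes by the quotient-structure lemmas of Section \ref{s2}. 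Pulling this back through the charts, and noting the cut-offs are supported away from where charts overlap in a way that could spoil the boundary vanishing, gives $w\equiv 0$ on $\partial\Omega$.

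The main obstacle I expect is not conceptual but bookkeeping: one must check that the anisotropic time-extension genuinely lands in the weighted class $C_{2,\gamma/2}^{4+\gamma,\frac{4+\gamma}{4}}$ with the correct weights $d(x)^2$ on fourth $x$-derivatives and $D_tu\in C_{\gamma/2}^{\gamma,\gamma/4}$, rather than in some class with the wrong power of $d(x)$ — this is where the parabolic scaling $t\sim d(x)^2$ is essential and where one leans most heavily on the machinery of \cite{SpGen}. Once the half-space extension operator is in hand with the right mapping properties, the localization to a general $C^{4+\gamma}$ domain and the verification of \eqref{7.3.2} are routine.
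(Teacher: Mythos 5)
Your overall skeleton (reduce to a model situation near the boundary, treat the interior separately, patch with a partition of unity) matches the paper's, but the core analytic step is different and, as written, has a genuine gap. The paper does not construct $w$ by an explicit extension formula: following \cite{LSU}, Ch.~IV, it first builds an unweighted auxiliary function $\varphi\in C^{2+\gamma/2,1+\gamma/4}(\overline\Omega_T)$ with $\varphi(x,0)\equiv0$, $\varphi_t(x,0)\equiv u_1$, and then obtains the boundary-chart pieces of $w$ as \emph{solutions} of the degenerate model Dirichlet problem \eqref{7.10}--\eqref{7.12+2} (solvable by Proposition \ref{P.6.2}) with data $\psi^{(k)}=u_0^{(k)}\circ E_k^{-1}$ and right-hand side $f^{(k)}=u_1^{(k)}\circ E_k^{-1}+\nabla(y_N^2\nabla\Delta\psi^{(k)})$, and the interior pieces as solutions of the Cauchy problem for $\partial_t+\Delta^2$ with the analogous right-hand side; the prescribed value of $\partial_t w(x,0)$ is then read off from the equations, and membership in $C^{4+\gamma,\frac{4+\gamma}{4}}_{2,\gamma/2}$ together with \eqref{7.3} comes for free from the solvability estimates \eqref{7.9} and \eqref{6.97}. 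Your substitute for this, $w=U_0+tU_1$ with $U_1$ a ``smoothing-in-time extension'' of $u_1$, is exactly where the difficulty sits and is not supplied by the tools you cite: $u_1$ lies only in $C^{\gamma}_{\gamma/2}(\overline\Omega)$, so $tU_1$ belongs to $C^{4+\gamma,\frac{4+\gamma}{4}}_{2,\gamma/2}$ only if $U_1(\cdot,t)$ gains four weighted spatial derivatives at a quantified rate for $t>0$. The constant-in-time choice $U_1=u_1$ fails outright (no fourth derivatives at all), a mollification at parabolic scale must be checked against every seminorm in \eqref{sem} with the degenerate weight and the Carnot--Caratheodory geometry near $\partial\Omega$, and Proposition \ref{P.2.4} cannot be invoked: it extends \emph{lateral} traces from $\Gamma_T$, not initial-time data $\bigl(u(\cdot,0),u_t(\cdot,0)\bigr)$ with the mismatched regularities $C^{4+\gamma}_{2,\gamma/2}$ and $C^{\gamma}_{\gamma/2}$. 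That verification is the entire content of the proposition, not bookkeeping; the paper avoids it precisely by routing the rough datum $u_1$ through parabolic smoothing, i.e.\ through the solvability results of Section \ref{s6}.

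Two further points. Your tangential-only mollification, introduced to secure \eqref{7.3.2}, is incompatible with the regularity gain you need: away from $\partial\Omega$ one has $d(x)\sim1$, so the fourth derivatives of $tU_1$ must be genuine and smoothing in the normal variable cannot be dispensed with. The paper obtains \eqref{7.3.2} differently: under \eqref{7.3.1} it takes $\varphi$ as the solution of the heat problem \eqref{7.5.1} with zero Dirichlet data, so that the Dirichlet data $\varphi^{(k)}+\psi^{(k)}$ of the boundary model problems vanish identically and hence $w$ vanishes on $\partial\Omega$. If you insist on your route, you would in effect have to prove a weighted analogue of the initial-extension lemma of \cite{LSU} adapted to \eqref{sem}; that is a result of the same order of difficulty as Proposition \ref{P.6.2} itself, and nothing in \cite{SpGen} as quoted in Section \ref{s2} provides it.
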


\begin{proof}

The way of constructing $u(x,t)$ is similar to the corresponding
reasoning from \cite{LSU}. From Lemma \ref{L.02.1} it follows that
$u_{1}(x)$ belongs to the usual unweighted space
$C^{\gamma/2}(\overline{\Omega})$ and%

\[
|u_{1}|_{\overline{\Omega}}^{(\gamma/2)}\leq C|u_{1}|_{\gamma/2,\overline
{\Omega}}^{(\gamma)}.
\]
It was proved in \cite{LSU}, Ch.IV that there exists a function
$\varphi(x,t)\in C^{2+\gamma/2,1+\gamma/4}(\overline{\Omega}_{T})$
with

\begin{equation}
\varphi(x,0)\equiv0,\quad\frac{\partial\varphi}{\partial t}(x,0)\equiv
u_{1}(x),\quad x\in\overline{\Omega}, \label{7.4}%
\end{equation}

\begin{equation}
|\varphi|_{\overline{\Omega}_{T}}^{(2+\gamma/2,1+\frac{\gamma}{4})}\leq
C|u_{1}|_{\overline{\Omega}}^{(\gamma/2)}\leq C|u_{1}|_{\gamma/2,\overline
{\Omega}}^{(\gamma)}. \label{7.5}%
\end{equation}
Moreover, if \eqref{7.3.1} is satisfied, we can take $\varphi(x,t)$
as the solution of the initial boundary value
problem%

\[
\frac{\partial\varphi(x,t)}{\partial t}-\Delta\varphi(x,t)=u_{1}%
(x),\quad(x,t)\in\Omega_{T},
\]

\begin{equation}
\varphi(x,t)|_{x\in\partial\Omega}\equiv0. \label{7.5.1}%
\end{equation}

\[
\varphi(x,0)\equiv0,\quad x\in\overline{\Omega}.
\]
And thus we have \eqref{7.5.1} for $\varphi(x,t)$. Analogous to
\cite{LSU}, Ch.IV, let a collection of functions $\{\eta_{k}(x)\in
C^{\infty }(\overline{\Omega}),k=\overline{1,M}\}$ be a partition of
unity on $\overline{\Omega}$
with sufficiently small supports and in the sense%

\begin{equation}%
%TCIMACRO{\dsum \limits_{k=1}^{M}}%
%BeginExpansion
{\displaystyle\sum\limits_{k=1}^{M}}
%EndExpansion
\eta_{k}^{2}(x)\equiv1,\quad x\in\overline{\Omega}. \label{7.6}%
\end{equation}
We suppose (analogous to \cite{LSU}, Ch.IV) that the diameters
$d_{k}$ of the supports of $\eta _{k}(x)$ satisfy $\nu\lambda\leq
d_{k}\leq\nu^{-1}\lambda,\lambda>0$, and if $supp(\eta
_{k})\cap\partial\Omega=\varnothing$ (the set of the corresponding
numbers $k$ we denote by $\mathbf{M}_{2}$, the rest we denote by
$\mathbf{M}_{1}$), then $dist(supp(\eta_{k}),\partial
\Omega)\geq\nu\lambda$. Denote%

\[
u_{0}^{(k)}(x)=u_{0}\eta_{k},\quad u_{1}^{(k)}(x)=u_{1}\eta_{k},\quad
\varphi^{(k)}(x,t)=\varphi\eta_{k}.
\]
Note that%

\[%
%TCIMACRO{\dsum \limits_{k=1}^{M}}%
%BeginExpansion
{\displaystyle\sum\limits_{k=1}^{M}}
%EndExpansion
\eta_{k}(x)u_{0}^{(k)}(x)\equiv u_{0}(x),\quad%
%TCIMACRO{\dsum \limits_{k=1}^{M}}%
%BeginExpansion
{\displaystyle\sum\limits_{k=1}^{M}}
%EndExpansion
\eta_{k}(x)u_{1}^{(k)}(x)\equiv u_{1}(x).
\]
For $k\in\mathbf{M}_{1}$ (that is when
$supp(\eta_{k})\cap\partial\Omega \neq\varnothing$) we denote by
$y=E_{k}(x)\in C^{4+\gamma}(R^{N})$ a mapping from a neighborhood of
$supp(\eta_{k})$ to the half space $R_{+}^{N}=\{y\in
R^{N}:y_{N}\geq0\}$ with the straightening of the boundary
$\partial\Omega$, that is $\partial\Omega\cap supp(\eta_{k})$ is
mapped into $\{y_{N}=0\}$. For $k\in\mathbf{M}_{2}$ we denote by
$u^{(k)}(x,t)$ the solution of the
Cauchy problem%

\begin{equation}
\frac{\partial u^{(k)}(x,t)}{\partial t}+\Delta^{2}u^{(k)}(x,t)=u_{1}%
^{(k)}(x)+\Delta^{2}u_{0}^{(k)}(x),\quad x\in R^{N},t\geq0, \label{7.7}%
\end{equation}

\begin{equation}
u^{(k)}(x,0)=u_{0}^{(k)}(x),\quad x\in R^{N}. \label{7.8}%
\end{equation}
It is well known (see, for example,
\cite{SolParab}) that in usual unweighted spaces%

\begin{equation}
|u^{(k)}|_{R^{N}\times\lbrack0,T]}^{(4+\gamma,\frac{4+\gamma}{4})}\leq
C_{T}(|u_{0}^{(k)}|_{R^{N}}^{(4+\gamma)}+|u_{1}^{(k)}|_{R^{N}}^{(\gamma)})\leq
C_{T,\lambda}\left(  |u_{0}|_{2,\gamma/2,\overline{\Omega}}^{(4+\gamma
)}+|u_{1}|_{\gamma/2,\overline{\Omega}}^{(\gamma)}\right)  . \label{7.9}%
\end{equation}

\bigskip For $k\in\mathbf{M}_{1}$ we denote by $u^{(k)}(x,t)$ the functions
$u^{(k)}(x,t)=u^{(k)}(y,t)\circ E_{k}(x)$, where $u^{(k)}(y,t)$ is
the solution of the model initial boundary problem corresponding to
\eqref{006.89}- \eqref{6.93}

\begin{equation}
\frac{\partial u^{(k)}(y,t)}{\partial t}+\nabla(y_{N}^{2}\nabla\Delta
u^{(k)}(y,t))=f^{(k)}(y,t),\quad(y,t)\in P_{T}, \label{7.10}%
\end{equation}

\begin{equation}
u^{(k)}(y^{\prime},0,t)=\varphi^{(k)}(y^{\prime},t),\quad(y^{\prime},t)\in
P_{T}^{\prime}, \label{7.11}%
\end{equation}

\begin{equation}
u^{(k)}(y^{\prime},1,t)=0,\quad\frac{\partial^{2}u^{(k)}(y^{\prime}%
,1,t)}{\partial y_{N}^{2}}=0,\quad(y^{\prime},t)\in P_{T}^{\prime}, \label{7.12}%
\end{equation}

\begin{equation}
u^{(k)}(y,0)=\psi^{(k)}(y),\quad y\in\overline{P}, \label{7.12+1}%
\end{equation}

\begin{equation}
\left.  \frac{\partial^{n}u^{(k)}}{\partial y_{i}^{n}}(y,t)\right\vert
_{x_{i}=-\pi}=\left.  \frac{\partial^{n}u^{(k)}}{\partial y_{i}^{n}%
}(y,t)\right\vert _{x_{i}=\pi},n=0,1,2,3,\quad i=\overline{1,N-1}.
\label{7.12+2}%
\end{equation}
Here we denote%

\[
\psi^{(k)}(y)\equiv u_{0}^{(k)}(x)\circ E_{k}^{-1}(y),\quad\varphi
^{(k)}(y^{\prime},t)\equiv\left[  \varphi^{(k)}(x,t)\circ E_{k}^{-1}%
(y)+\psi^{(k)}(y)\right]  |_{y_{N}=0},
\]

\begin{equation}
f^{(k)}(y,t)\equiv u_{1}^{(k)}(x)\circ E_{k}^{-1}(y)+\nabla(y_{N}^{2}%
\nabla\Delta\psi^{(k)}(y)). \label{7.15}%
\end{equation}
Note that we choose $\lambda$ in the definition of $\{\eta_{k}\}$ so
small that supports of all functions $\psi^{(k)}(y)$,
$\varphi^{(k)}(y^{\prime},t)$, $f^{(k)}(y)$ are included in $P_{T}$
or $P_{T}^{\prime}$. From the way of the construction
of the function $\varphi(x,t)$ it follows that for problem%
\eqref{7.10}- \eqref{7.12+2} compatibility conditions \eqref{6.95},
\eqref{6.96} are satisfied. Then from Proposition \ref{P.6.2} it
follows that

\begin{equation}
|u^{(k)}(y,t)|_{2,\gamma/2,\overline{P}_{T}}^{(4+\gamma,\frac{4+\gamma}{4}%
)}\leq C_{T}\left(  |f^{(k)}|_{\gamma/2,\overline{P}_{T}}^{(\gamma,\gamma
/4)}+|\varphi^{(k)}|_{C_{x^{\prime},t}^{2+\gamma/2,1+\frac{\gamma}{4}%
}(\overline{P}_{T}^{\prime})}+|\psi^{(k)}|_{2,\gamma/2,\overline{P}%
}^{(4+\gamma)}\right)  \leq\label{7.16}%
\end{equation}

\[
\leq C_{T,\lambda}\left(  |u_{0}|_{2,\gamma/2,\overline{\Omega}}^{(4+\gamma
)}+|u_{1}|_{\gamma/2,\overline{\Omega}}^{(\gamma)}\right)  .
\]
Finally, we define%

\[
w(x,t)=%
%TCIMACRO{\dsum \limits_{k=1}^{M}}%
%BeginExpansion
{\displaystyle\sum\limits_{k=1}^{M}}
%EndExpansion
\eta_{k}(x)u^{(k)}(x,t).
\]
It can be checked directly by the definition that such defined
$w(x,t)$ satisfies \eqref{7.2}, \eqref{7.3}. Moreover, if
\eqref{7.3.1} is satisfied, then we have \eqref{7.3.2} for $w(x,t)$.
This completes the proof of the proposition.

\end{proof}

Let $\sigma(x,t),$ $\nabla_{\sigma}$, and $w(x,t)$ be as in Section
\ref{s4}.
That is, in particular%

\[
\sigma(x,t),w(x,t)\in C_{2,\gamma/2}^{4+\gamma,\frac{4+\gamma}{4}}%
(\overline{\Omega}_{T}),\quad\sigma(x,0)\equiv0,x\in\overline{\Omega},
\]

\[
w(x,t)|_{\partial\Omega}\equiv0,\quad\left.  \frac{\partial w(x,t)}%
{\partial\overrightarrow{n}}\right\vert _{\partial\Omega}\leq-\nu<0,\quad
w(x,t)>0,x\in\Omega,
\]
where $\overrightarrow{n}$ is the outward normal to
$\partial\Omega.$ Consider the following initial boundary value
problem for an unknown function $u(x,t)$

\begin{equation}
\frac{\partial u(x,t)}{\partial t}+\nabla_{\sigma}(w^{2}\nabla_{\sigma}%
\nabla_{\sigma}^{2}u(x,t))=f(x,t),\quad(x,t)\in\Omega_{T}, \label{7.17}%
\end{equation}

\begin{equation}
u(x,0)=\psi(x),\quad x\in\overline{\Omega}, \label{7.19}%
\end{equation}

\begin{equation}
\frac{\partial u(x,t)}{\partial\overrightarrow{n}}=g(x,t),\quad x\in\Gamma
_{T}\equiv\partial\Omega\times\lbrack0,T], \label{7.18}%
\end{equation}
where $\overrightarrow{n}$ is the outward normal to
$\partial\Omega$,
$f$, $g$, and $\psi$ are given functions,%

\begin{equation}
f(x,t)\in C_{\gamma/2}^{\gamma,\gamma/4}(\overline{\Omega}_{T}),\quad
g(x,t)\in C^{1+\gamma/2,1/2+\gamma/4}(\Gamma_{T}),\quad\psi(x)\in
C_{2,\gamma/2}^{4+\gamma}(\overline{\Omega}). \label{7.20}%
\end{equation}
We suppose that the functions $g(x,t)$ and $\psi(x)$ satisfy the compatibility condition%

\begin{equation}
\frac{\partial\psi(x)}{\partial\overrightarrow{n}}=g(x,0),\quad x\in
\Gamma\equiv\partial\Omega. \label{7.21}%
\end{equation}

\begin{theorem}
\label{T.7.1}

Under conditions \eqref{7.20}, \eqref{7.21} problem \eqref{7.17}-
\eqref{7.19} has the unique
solution $u(x,t)\in C_{2,\gamma/2}^{4+\gamma,\frac{4+\gamma}{4}}%
(\overline{\Omega}_{T})$ for some $T\leq T_{0}(\sigma)$ and%

\begin{equation}
|u|_{2,\gamma/2,\overline{\Omega}_{T}}^{(4+\gamma,\frac{4+\gamma}{4})}\leq
C_{T}\left(  |f|_{\gamma/2,\overline{\Omega}_{T}}^{(\gamma,\gamma
/4)}+|g|_{C^{1+\gamma/2,1/2+\gamma/4}(\Gamma_{T})}+|\psi|_{2,\gamma
/2,\overline{\Omega}}^{(4+\gamma)}\right)  . \label{7.22}%
\end{equation}

\end{theorem}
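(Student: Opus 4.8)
The plan is to prove this theorem by the standard method of the regularizator (construction of an approximate inverse operator), using the model problems of Section \ref{s6} as the local building blocks, together with the a~priori Schauder estimate that must be established first. I would proceed as follows.

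First I would reduce to the case of homogeneous data on a small time interval. Using Proposition \ref{P.7.1} with $u_0=\psi$ and $u_1=u_1(x)$ determined from the equation \eqref{7.17} at $t=0$ (note that $w(x,0)>0$ in the interior, so $\nabla_\sigma(w^2\nabla_\sigma\nabla_\sigma^2\psi)$ makes sense in $\Omega$ and, because of the compatibility condition \eqref{7.21}, everything fits together on $\Gamma$), I construct a function $W\in C_{2,\gamma/2}^{4+\gamma,\frac{4+\gamma}{4}}(\overline{\Omega}_T)$ with $W(x,0)=\psi$, $W_t(x,0)=u_1$; subtracting $W$ reduces the problem to one with $\psi\equiv0$, $f(x,0)\equiv0$, $g(x,0)\equiv0$ in the subspace $C^{4+\gamma,\frac{4+\gamma}{4}}_{2,\gamma/2,0}$. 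Because $\sigma(x,0)\equiv0$, the operator $\nabla_\sigma$ at $t=0$ is the ordinary gradient, so near $t=0$ equation \eqref{7.17} is a small perturbation of $u_t+\nabla(w^2\nabla\Delta u)=f$, and by Lemma \ref{L.2.1} (inequalities \eqref{2.3}--\eqref{2.4.0}) and \eqref{2.5555555} the lower-order and the $\nabla_\sigma-\nabla$ discrepancy terms carry a factor $T^\delta$.

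The core of the argument is the a~priori estimate \eqref{7.22} (with possibly an added lower-order term $C|u|^{(0)}_{\overline\Omega_T}$, later removed via uniqueness). I would obtain it by a partition of unity $\{\eta_k\}$ on $\overline\Omega$ with small supports, writing $u=\sum_k\eta_k u$. For $k$ with $\mathrm{supp}\,\eta_k\cap\partial\Omega=\varnothing$ one lands on an interior non-degenerate parabolic problem (classical Schauder estimates, e.g. \cite{SolParab}); for $k$ with $\mathrm{supp}\,\eta_k$ meeting $\partial\Omega$ one straightens the boundary by a $C^{4+\gamma}$ diffeomorphism and freezes the leading coefficients of $\nabla_\sigma(w^2\nabla_\sigma\nabla_\sigma^2\cdot)$ at the center point; since $w\sim d(x)\sim y_N$ after straightening and $\sigma(x,0)\equiv0$ makes $\nabla_\sigma$ close to $\nabla$ on the short time interval, the frozen principal part is exactly the half-space operator $L_{y,\tau}$ of \eqref{005.52} with the Neumann condition \eqref{005.53}. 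Here Theorem \ref{T.5.1} supplies the needed half-space Schauder estimate \eqref{005.59}. The passages from variable to frozen coefficients, the coordinate-change errors, and the commutators $[\eta_k,\nabla_\sigma(w^2\nabla_\sigma\nabla_\sigma^2\cdot)]$ are all lower order; they are controlled by the interpolation inequalities of Theorems \ref{Ts6.2}, \ref{Ts6.3} and Lemma \ref{L.2.1}, with small constants coming either from small support diameter $\lambda$ or from small $T$. Summing over $k$ and absorbing gives \eqref{7.22} with a lower-order term; the uniqueness argument (multiply \eqref{7.17} by a suitable test function / or use the contradiction-compactness scheme as in the proof of Lemma \ref{L.5.1}) then removes it in the standard way (\cite{Shimak}, \cite{My}).

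For existence I would build a regularizator. Cover $\overline\Omega$ by finitely many charts, in each of which the problem is modeled by the periodic model problem of Proposition \ref{P.6.1} (Neumann at $x_N=0$) in the box $P_T$, after straightening the boundary and localizing; for interior charts one uses the solvability of the non-degenerate parabolic Cauchy problem. Patching these local solution operators with the partition of unity yields a bounded operator $\mathcal{R}:C_{\gamma/2}^{\gamma,\gamma/4}\times C^{1+\gamma/2,1/2+\gamma/4}(\Gamma_T)\times C_{2,\gamma/2}^{4+\gamma}(\overline\Omega)\to C_{2,\gamma/2}^{4+\gamma,\frac{4+\gamma}{4}}(\overline\Omega_T)$ such that $L\mathcal{R}=\mathrm{Id}+\mathcal{K}$ where $L$ is the full operator of \eqref{7.17}--\eqref{7.19} and $\mathcal{K}$ is a lower-order operator; by the interpolation inequalities and Lemma \ref{L.2.1} the norm of $\mathcal{K}$ is $\le C T^\delta + C\lambda^\delta$, hence $<1$ for $T\le T_0(\sigma)$ and $\lambda$ small, so $\mathrm{Id}+\mathcal{K}$ is invertible by Neumann series and $L$ is surjective; injectivity is the uniqueness already proved. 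Once existence and \eqref{7.22} hold on $[0,T_0]$ with $T_0$ independent of the data, the estimate is propagated to an arbitrary interval $[0,T]$ by the usual step-by-step continuation (working with $u(x,t)-u(x,kT_0)$ on successive slabs, as in \cite{SolParab}, \cite{LSU}), at the cost of a $T$-dependent constant $C_T$. The main obstacle is the boundary Schauder step — verifying that after straightening and freezing coefficients the degenerate boundary operator genuinely reduces, up to controllably small errors, to the model half-space operator covered by Theorem \ref{T.5.1}, and that the weighted norms transform correctly under the diffeomorphism; everything else is routine interpolation and bookkeeping.
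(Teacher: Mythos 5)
Your proposal is correct and follows essentially the same route as the paper's (only sketched) proof: reduction to zero initial data via Proposition \ref{P.7.1}, then a regularizator built by localization from the model problems (Proposition \ref{P.6.1} for the boundary charts with the Neumann condition, classical non-degenerate parabolic theory for interior charts), with the coefficient-variation and lower-order errors absorbed through the interpolation inequalities \eqref{2.3}--\eqref{2.4.0}, \eqref{2.15}--\eqref{2.22} and the smallness of $T$ (using $\sigma(x,0)\equiv 0$ so that $\nabla_{\sigma}$ is close to $\nabla$). The extra details you supply (freezing coefficients against Theorem \ref{T.5.1}, Neumann-series inversion of $\mathrm{Id}+\mathcal{K}$, step-by-step continuation) are exactly the standard steps the paper delegates to \cite{SolParab}, \cite{LSU}.
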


 Instead of boundary condition
\eqref{7.18} we also consider the Dirichlet condition%

\begin{equation}
u(x,t)=\varphi(x,t),\quad x\in\Gamma_{T}\equiv\partial\Omega\times\lbrack0,T],
\label{7.23}%
\end{equation}
where $\varphi(x,t)$ is a given function and%

\begin{equation}
\varphi(x,t)\in C^{2+\gamma/2,1+\gamma/4}(\Gamma_{T}). \label{7.24}%
\end{equation}
We suppose the following compatibility conditions at $t=0$,
$x\in\partial\Omega$%

\begin{equation}
\varphi(x,0)=\psi(x),\quad x\in\partial\Omega, \label{7.25}%
\end{equation}

\begin{equation}
\frac{\partial\varphi}{\partial t}(x,0)=-\nabla(w^{2}(x,0)\nabla\Delta
\psi(x))+f(x,0),\quad x\in\partial\Omega. \label{7.26}%
\end{equation}

\begin{theorem}
\label{T.7.2}

Under conditions \eqref{7.24}- \eqref{7.26} problem \eqref{7.17},
\eqref{7.19}, \eqref{7.23} has the unique solution $u(x,t)\in
C_{2,\gamma/2}^{4+\gamma
,\frac{4+\gamma}{4}}(\overline{\Omega}_{T})$ and%

\begin{equation}
|u|_{2,\gamma/2,\overline{\Omega}_{T}}^{(4+\gamma,\frac{4+\gamma}{4})}\leq
C_{T}\left(  |f|_{\gamma/2,\overline{\Omega}_{T}}^{(\gamma,\gamma/4)}%
+|\varphi|_{C^{2+\gamma/2,1+\gamma/4}(\Gamma_{T})}+|\psi|_{2,\gamma
/2,\overline{\Omega}}^{(4+\gamma)}\right)  . \label{7.27}%
\end{equation}

\end{theorem}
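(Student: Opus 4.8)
The plan is to follow the same scheme as for the Neumann problem in Theorem \ref{T.7.1}, replacing the Neumann model problem of Proposition \ref{P.6.1} by the Dirichlet model problem of Proposition \ref{P.6.2} in the boundary coordinate charts, and replacing the Neumann lift of the boundary data by a Dirichlet extension. First I would reduce the problem to homogeneous data. Using the extension operator $E$ of Proposition \ref{P.2.4} I lift $\varphi(x,t)$ from $\Gamma_{T}$ to a function $E\varphi\in C_{2,\gamma/2}^{4+\gamma,\frac{4+\gamma}{4}}(\overline{\Omega}_{T})$ and subtract it; then I subtract a function $w_{0}\in C_{2,\gamma/2}^{4+\gamma,\frac{4+\gamma}{4}}(\overline{\Omega}_{T})$ produced by Proposition \ref{P.7.1} with prescribed initial data $u_{0}=\psi$, $u_{1}=-\nabla(w^{2}(x,0)\nabla\Delta\psi)+f(x,0)$. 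Compatibility conditions \eqref{7.25}, \eqref{7.26} are exactly what is needed so that after these subtractions the new unknown has zero initial value, zero time derivative at $t=0$, zero Dirichlet data on $\Gamma_{T}$, and the new right-hand side still lies in $C_{\gamma/2}^{\gamma,\gamma/4}(\overline{\Omega}_{T})$ and vanishes at $t=0$ (and on $\Gamma_{T}$ near $t=0$); as in Section \ref{s6} one may then cut it off so it vanishes for small $t$. Thus it suffices to treat problem \eqref{7.17}, \eqref{7.19}, \eqref{7.23} with $\psi\equiv0$, $\varphi\equiv0$.

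Next I would construct a regularizator. Cover $\overline{\Omega}$ by balls of a small radius $\lambda$: to interior balls ($d(x)\geq\nu\lambda>0$) there correspond non-degenerate fourth order parabolic problems, for which solvability and Schauder estimates are classical (\cite{SolParab}, \cite{LSU}); to boundary balls there correspond, after straightening $\partial\Omega$ into $\{x_{N}=0\}$ and freezing the leading coefficients of $\nabla_{\sigma}(w^{2}\nabla_{\sigma}\nabla_{\sigma}^{2}\cdot)$ at the boundary point, so that the principal part becomes $\nabla(x_{N}^{2}\nabla\Delta\cdot)$ up to a lower order perturbation (using $w\sim\nu d(x)$ near $\Gamma$), the periodic model Dirichlet problem solved in Proposition \ref{P.6.2}. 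Patching the local solution operators by the partition of unity $\{\eta_{k}^{2}\}$ as in \eqref{7.6} gives a bounded operator $R$ from the data space into $C_{2,\gamma/2}^{4+\gamma,\frac{4+\gamma}{4}}(\overline{\Omega}_{T})$ with $LR=I+S$, where $S$ collects the commutators $[\eta_{k},L]$ and the coefficient-freezing errors; by Proposition \ref{P.2.1}, Lemma \ref{L.2.5} and the interpolation inequalities \eqref{2.15}--\eqref{2.22}, \eqref{2.3}--\eqref{2.4.0} the operator $S$ has norm $o(1)$ as $\lambda\to0$ (with an extra factor $T^{\delta}$ on a short time interval), hence $I+S$ is invertible and the problem is solvable; solvability on an arbitrary $[0,T]$ then follows by stepping in time on intervals of fixed length, exactly as in Lemma \ref{L.6.3}.

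For the a priori estimate one argues again by localization: the half-space Schauder estimate \eqref{005.60} of Theorem \ref{T.5.1} handles the boundary charts, interior charts are non-degenerate, and the emerging lower order terms are absorbed by interpolation, first on a short time interval with the help of the $T^{\delta}$-smallness in \eqref{2.3}--\eqref{2.4.0}, then globally by time-stepping. This yields the estimate \eqref{7.27} up to a lower order term $C|u|_{\overline{\Omega}_{T}}^{(0)}$, which is removed by the standard contradiction argument of \cite{Shimak}, \cite{My} once uniqueness is available. Uniqueness for the homogeneous problem follows from an energy identity: multiplying \eqref{7.17} with $f\equiv0$ by $\nabla_{\sigma}^{2}u$ (the analogue of $\Delta u$ in the straightened variables), integrating over $\Omega_{T}$ and using $u|_{\Gamma_{T}}=0$, $u(\cdot,0)=0$, $w>0$, $w\sim d(x)$ near $\Gamma$, together with a weighted Hardy inequality in the spirit of Lemma \ref{L.6.2}, forces $u\equiv0$.

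The hard part will be, as in the model problems of Section \ref{s6}, this energy/uniqueness step: the operators $\nabla_{\sigma}$ are not literal gradients, so integration by parts produces additional first- and second-order terms that must be controlled by the degenerate weight, and the degeneration $w^{2}\sim d(x)^{2}$ at $\Gamma$ means the boundary contributions and these extra interior terms can be absorbed only after a Hardy-type estimate near $\Gamma$ combined with the interpolation inequalities of Theorem \ref{Ts6.3}. A secondary technical point is verifying that the commutator and coefficient-freezing errors in the regularizator genuinely lie in $C_{\gamma/2}^{\gamma,\gamma/4}(\overline{\Omega}_{T})$ with small norm, which rests on the fact (Proposition \ref{P.2.1}, Lemma \ref{L.2.5}) that functions of $C_{2,\gamma/2}^{4+\gamma,\frac{4+\gamma}{4}}(\overline{\Omega}_{T})$ have bounded unweighted first derivatives and only mildly (logarithmically) singular second derivatives.
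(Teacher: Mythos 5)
Your proposal follows essentially the same route as the paper: reduction to the spaces ``with zeros'' at $t=0$ by means of Proposition \ref{P.7.1}, a regularizator built from the periodic Dirichlet model problem of Proposition \ref{P.6.2} in boundary charts and from the classical non-degenerate theory in interior charts, smallness of the commutator and coefficient-freezing errors via the interpolation inequalities \eqref{2.3}--\eqref{2.4.0}, \eqref{2.15}--\eqref{2.22} together with the $T^{\delta}$ factors, and time-stepping to reach an arbitrary $T$; your additional preliminary subtraction of $E\varphi$ is a harmless variant, since Proposition \ref{P.6.2} already accepts nonzero Dirichlet data.

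The one place where you genuinely depart from the paper is the treatment of uniqueness and of the lower order term $C|u|^{(0)}_{\overline{\Omega}_{T}}$: you propose the elliptic-style contradiction argument of \cite{Shimak}, \cite{My} plus a separate energy identity obtained by multiplying by $\nabla_{\sigma}^{2}u$, and you yourself flag the integration by parts with the non-constant $\nabla_{\sigma}$ and the degenerate weight $w^{2}\sim d(x)^{2}$ as the hard part. This detour is unnecessary and is the only fragile link in your plan. Since after the reduction the unknown lies in $C_{2,\gamma/2,0}^{4+\gamma,\frac{4+\gamma}{4}}(\overline{\Omega}_{T})$, Proposition \ref{P.2.2} (estimate \eqref{0000001.1}) gives $|u|^{(0)}_{\overline{\Omega}_{T}}\leq CT^{\delta}\Vert u\Vert$, so on a sufficiently small time interval the zeroth order term is absorbed directly into the left-hand side of the localized Schauder estimate, and uniqueness then follows at once by applying the resulting estimate \eqref{7.27} to the difference of two solutions (equivalently, from the invertibility of $I+S$ in your regularizator scheme); iterating in time as in Lemma \ref{L.6.3} extends both the estimate and uniqueness to all of $[0,T]$. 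With that replacement your argument matches the paper's scheme and no weighted energy identity for $\nabla_{\sigma}$ is needed.
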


The proof of theorems \ref{T.7.1} and \ref{T.7.2} is standard - see,
for example, \cite{SolParab}, \cite{LSU}. It is based on
propositions \ref{P.6.1}, \ref{P.6.2} about corresponding model
problems and on Proposition \ref{P.7.1}.  Therefore we give only the
schema of the proof.
First we construct a function $w(x,t)$ with the properties%

\[
w(x,0)=u(x,0)=\psi(x),
\]

\[
\frac{\partial w}{\partial t}(x,0)=\frac{\partial u}{\partial t}%
(x,0)=-\nabla(d(x)^{2}\nabla\Delta\psi(x))+f(x,0).
\]
Then the change of the unknown $u(x,t)=v(x,t)+w(x,t)$ reduces the
problem to a problem
for the unknown $v(x,t)\in$ $C_{2,\gamma/2,0}^{4+\gamma,\frac{4+\gamma}{4}%
}(\overline{\Omega}_{T})$, that is the operator of the problem is
considered in the spaces with zeros, where the all functions and all
their possible derivatives with respect to $t$ vanish at $t=0$. In
these spaces we construct the regularizator (near inverse operator)
of the problem on the base of propositions \ref{P.6.1}, \ref{P.6.2}
and on the base of inequalities \eqref{2.3}- \eqref{2.4.0},
\eqref{2.15}- \eqref{2.22}. Note that the model problems for
strictly inner points of the domain $\Omega$, where the equation is
not a degenerate one, are well studied (see, for example,
\cite{SolParab}). This process is completely standard and can be
found in details in, for example,%
n \cite{LSU}, Ch.IV or in \cite{SolParab}. Note that we still need
to have the sufficiently small time interval
$[0,T]$ because for small $T$ condition%
\eqref{7.18} is close to the more natural condition%

\[
(\nabla_{\sigma}u(x,t),\overrightarrow{n})=g(x,t),\quad x\in\Gamma_{T}%
\equiv\partial\Omega\times\lbrack0,T].
\]
By this we finish the outline of the proof.

\section{\bigskip The linear problem, corresponding to the Frechet derivative
of the operator of the original problem $F(\psi)$ from section
\ref{s4}.}%
\label{s8}

In this section we show the invertibility of the Frechet derivative
$F^{\prime}(0)[u,\delta]=(F_{1}^{\prime}(0)[u,\delta],F_{2}^{\prime
}(0)[u,\delta])$, where $F_{1}^{\prime}(0)[u,\delta]$ and
$F_{2}^{\prime}(0)[u,\delta]$ are defined by relations
\eqref{004.24}, \eqref{004.41}. We start with the corresponding
model problem. Consider in $R_{+,T}^{N}$ the following model problem
for
the unknown functions $u(x,t)\in C_{2,\gamma/2,0}^{4+\gamma,\frac{4+\gamma}%
{4}}(\overline{R_{+,T}^{N}})$ and $\delta(x^{\prime},t)\in C_{0}%
^{2+\gamma/2,1+\frac{\gamma}{4}}(\overline{R_{T}^{N-1}})$%

\begin{equation}
\frac{\partial u(x,t)}{\partial t}+\nabla(x_{N}^{2}\nabla\Delta u)-A\left(
\frac{\partial\delta(x,t)}{\partial t}+\nabla(x_{N}^{2}\nabla\Delta
\delta(x,t))\right)  =f(x,t),(x,t)\in R_{+,T}^{N}, \label{8.1}%
\end{equation}

\begin{equation}
\left.  \left(  \frac{\partial u(x,t)}{\partial x_{N}}-A\frac{\partial
\delta(x,t)}{\partial x_{N}}\right)  \right\vert _{x_{N}=0}=g(x^{\prime
},t),\quad x^{\prime}\in R^{N-1}, \label{8.2}%
\end{equation}

\begin{equation}
u(x^{\prime},0,t)=\varphi(x^{\prime},t),\quad x^{\prime}\in R^{N-1}, \label{8.3}%
\end{equation}

\begin{equation}
u(x,0)\equiv0,x\in R_{+}^{N},\quad\delta(x^{\prime},0)\equiv0,x^{\prime}\in
R^{N-1}, \label{8.4}%
\end{equation}

\begin{equation}
\delta(x,t)=\delta(x^{\prime},x_{N},t)\equiv E\delta(x^{\prime},t). \label{8.5}%
\end{equation}
Here $A$ is a constant, $A+A^{-1}\leq C$, $E$ is some extension
operator from
$C_{0}^{2+\gamma/2,1+\frac{\gamma}{4}}(\overline{R_{T}^{N-1}})$ to
$C_{2,\gamma/2,0}^{4+\gamma,\frac{4+\gamma}{4}}(\overline{R_{+,T}^{N}})$,
$f$, $g$, and $\varphi$ are given functions with compact supports and%

\begin{equation}
f\in C_{\gamma/2,0}^{\gamma,\frac{\gamma}{4}}(\overline{R_{+,T}^{N}}),\quad
g\in C_{0}^{1+\gamma/2,1/2+\frac{\gamma}{4}}(\overline{R_{T}^{N-1}}%
),\quad\varphi\in C_{0}^{2+\gamma/2,1+\frac{\gamma}{4}}(\overline{R_{T}^{N-1}%
}). \label{8.6}%
\end{equation}
Recall that zero at the bottom of the designation of a space means
that all functions with all their possible derivatives vanish at
$t=0$.

\begin{lemma}
\label{L.8.1}

Let functions $u(x,t)\in C_{2,\gamma/2,0}^{4+\gamma,\frac{4+\gamma}{4}%
}(\overline{R_{+,T}^{N}})$ and $\delta(x^{\prime},t)\in
C_{0}^{2+\gamma /2,1+\frac{\gamma}{4}}(\overline{R_{T}^{N-1}})$ with
compact supports satisfy problem \eqref{8.1}-
\eqref{8.5}. Then%

\begin{equation}
|u|_{2,\gamma/2,\overline{R_{+,T}^{N}}}^{(4+\gamma,\frac{4+\gamma}{4}%
)}+|\delta|_{C^{2+\gamma/2,1+\gamma/4}(\overline{R_{T}^{N-1}})}\leq
\label{8.7}%
\end{equation}
\[
\leq C_{T}\left(  |f|_{\gamma/2,\overline{R_{+,T}^{N}}}^{(\gamma,\gamma
/4)}+|\varphi|_{C^{2+\gamma/2,1+\gamma/4}(\overline{R_{T}^{N-1}}%
)}+|g|_{C^{1+\gamma/2,1/2+\gamma/4}(\overline{R_{T}^{N-1}})}\right)  .
\]
\end{lemma}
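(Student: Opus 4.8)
The plan is to decouple the system \eqref{8.1}--\eqref{8.5} by the substitution $v:=u-A\delta$, where $\delta=\delta(x,t)=E\delta(x',t)$ is the extension in \eqref{8.5}. By Proposition \ref{P.2.4} the function $\delta$ lies in $C_{2,\gamma/2,0}^{4+\gamma,\frac{4+\gamma}{4}}(\overline{R_{+,T}^{N}})$ and (the extension operator being local and $\delta(x',t)$ compactly supported) has compact support, hence so does $v$. Subtracting $A$ times the $\delta$-terms in \eqref{8.1} and using linearity of $\nabla(x_{N}^{2}\nabla\Delta\,\cdot\,)$, I would check that $v$ solves the scalar half-space problem
\[
\frac{\partial v}{\partial t}+\nabla(x_{N}^{2}\nabla\Delta v)=f,\quad (x,t)\in R_{+,T}^{N},\qquad \left.\frac{\partial v}{\partial x_{N}}\right|_{x_{N}=0}=g,\qquad v(x,0)\equiv0 ,
\]
the Neumann datum coming from $\partial v/\partial x_{N}=\partial u/\partial x_{N}-A\,\partial\delta/\partial x_{N}$ and the zero initial data from \eqref{8.4}, so that in fact $v\in C_{2,\gamma/2,0}^{4+\gamma,\frac{4+\gamma}{4}}(\overline{R_{+,T}^{N}})$, i.e.\ $v(x,0)\equiv v_{t}(x,0)\equiv0$.

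Next I would estimate $v$ alone. Invoking the Schauder estimate \eqref{005.59} of Theorem \ref{T.5.1} for the model Neumann problem in the half-space (with $\psi\equiv0$) bounds the top seminorm $\langle v\rangle_{2,\gamma/2,R_{+,T}^{N}}^{(4+\gamma)}$ by $\langle f\rangle_{\gamma/2}^{(\gamma)}+\langle g\rangle^{(1+\gamma/2,1/2+\gamma/4)}$; the zeroth order term is then absorbed using the zero initial data, $|v|^{(0)}\le T^{1+\gamma/4}\langle D_{t}v\rangle_{t}^{(\gamma/4)}\le T^{1+\gamma/4}\langle v\rangle_{2,\gamma/2}^{(4+\gamma)}$. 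Passing from the seminorm bound on $Q^{+}$ to the full norm on the finite cylinder by the standard time-localization argument (as for Lemma \ref{L.6.3}) yields
\[
|v|_{2,\gamma/2,\overline{R_{+,T}^{N}}}^{(4+\gamma,\frac{4+\gamma}{4})}\le C_{T}\Big(|f|_{\gamma/2,\overline{R_{+,T}^{N}}}^{(\gamma,\gamma/4)}+|g|_{C^{1+\gamma/2,1/2+\gamma/4}(\overline{R_{T}^{N-1}})}\Big).
\]

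Finally I would recover $\delta$ and $u$. Restricting $v$ to $\{x_{N}=0\}$ and using $\delta(x',0,t)=\delta(x',t)$ (the trace property of $E$ from Proposition \ref{P.2.4}), the Dirichlet condition \eqref{8.3} forces the algebraic identity $A\,\delta(x',t)=\varphi(x',t)-v(x',0,t)$. By the trace estimate of Proposition \ref{P.2.3} and $A+A^{-1}\le C$,
\[
\|\delta\|_{C^{2+\gamma/2,1+\gamma/4}(\overline{R_{T}^{N-1}})}\le C_{T}\Big(|f|_{\gamma/2,\overline{R_{+,T}^{N}}}^{(\gamma,\gamma/4)}+|\varphi|_{C^{2+\gamma/2,1+\gamma/4}(\overline{R_{T}^{N-1}})}+|g|_{C^{1+\gamma/2,1/2+\gamma/4}(\overline{R_{T}^{N-1}})}\Big).
\]
Since $u=v+A\,E\delta(x',t)$, the extension bound \eqref{s1.s5.3} of Proposition \ref{P.2.4} gives the same bound for $|u|_{2,\gamma/2,\overline{R_{+,T}^{N}}}^{(4+\gamma,\frac{4+\gamma}{4})}$, and adding the two estimates produces \eqref{8.7}. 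The essential structural observation is the substitution $v=u-A\delta$, which turns the coupled problem into a scalar half-space Neumann problem for $v$ plus an algebraic relation determining $\delta$ from the Dirichlet trace of $v$; the only genuinely technical point is the upgrade from the pure seminorm estimate of Theorem \ref{T.5.1} on $Q^{+}$ to the full-norm bound on the finite cylinder $R_{+,T}^{N}$ with a $T$-dependent constant, which is routine established machinery.
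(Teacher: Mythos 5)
Your proposal is correct and takes essentially the same route as the paper: the paper likewise sets $v=u-A\delta$, estimates the decoupled Neumann problem for $v$ (quoting Theorem \ref{T.7.1} with $\sigma\equiv0$, $w\equiv x_{N}$ instead of re-deriving the finite-cylinder bound from Theorem \ref{T.5.1}), recovers $\delta=(\varphi-v|_{x_{N}=0})/A$ via the trace estimate, and concludes with $u=v+A\,E\delta$. Your only deviation is the hands-on upgrade from the seminorm estimate to the full norm on $[0,T]$, a step the paper bypasses by citing the already-packaged estimate \eqref{7.22}.
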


\begin{proof}

Denote $v(x,t)\equiv u(x,t)-A\delta(x,t)$. Then the function $v(x,t)$
satisfies the problem%

\[
\frac{\partial v(x,t)}{\partial t}+\nabla(x_{N}^{2}\nabla\Delta
v)=f(x,t),\quad(x,t)\in R_{+,T}^{N},
\]

\[
\left.  \frac{\partial v(x,t)}{\partial x_{N}}\right\vert _{x_{N}%
=0}=g(x^{\prime},t),\quad x^{\prime}\in R^{N-1},
\]

\[
v(x,0)\equiv0,\quad x\in R_{+}^{N}.
\]
From Theorem
\ref{T.7.1} with $\sigma\equiv0$ and $w\equiv x_{N}$ it follows that%

\begin{equation}
|v|_{2,\gamma/2,\overline{R_{+,T}^{N}}}^{(4+\gamma,\frac{4+\gamma}{4})}\leq
C_{T}\left(  |f|_{\gamma/2,\overline{R_{+,T}^{N}}}^{(\gamma,\gamma
/4)}+|g|_{C^{1+\gamma/2,1/2+\gamma/4}(\overline{R_{T}^{N-1}})}\right)  .
\label{8.8}%
\end{equation}
But then from
\eqref{8.3} it follows that $\delta(x^{\prime},t)=(-v(x^{\prime}%
,0,t)+\varphi(x^{\prime},t))/A$ and therefore%

\[
|\delta|_{C^{2+\gamma/2,1+\gamma/4}(\overline{R_{T}^{N-1}})}\leq C\left(
|v|_{2,\gamma/2,\overline{R_{+,T}^{N}}}^{(4+\gamma,\frac{4+\gamma}{4}%
)}+|\varphi|_{C^{2+\gamma/2,1+\gamma/4}(\overline{R_{T}^{N-1}})}\right)  \leq
\]

\begin{equation}
\leq C_{T}\left(  |f|_{\gamma/2,\overline{R_{+,T}^{N}}}^{(\gamma,\gamma
/4)}+|\varphi|_{C^{2+\gamma/2,1+\gamma/4}(\overline{R_{T}^{N-1}}%
)}+|g|_{C^{1+\gamma/2,1/2+\gamma/4}(\overline{R_{T}^{N-1}})}\right)  .
\label{8.9}%
\end{equation}
Finally, taking into account that $u(x,t)=v(x,t)+A\cdot E\delta(x^{\prime}%
,t)$, we obtain
\eqref{8.7} from%
\eqref{8.8} and \eqref{8.9}.

\end{proof}

Let $\sigma(x,t)$, $w(x,t)$  be defined in \eqref{004.6}- \eqref{004.8}
Consider now the following linear problem for
the unknown functions $u(x,t)\in C_{2,\gamma/2,0}^{4+\gamma,\frac{4+\gamma}%
{4}}(\overline{\Omega}_{T})$ and $\delta(x,t)\in C_{0}^{2+\gamma
/2,1+\frac{\gamma}{4}}(\Gamma_{T})$%
\[
\frac{\partial u(x,t)}{\partial t}+\nabla_{\sigma}(w^{2}\nabla_{\sigma}%
\nabla_{\sigma}^{2}u)-
\]
\begin{equation}
-A(x,t)\left(  \frac{\partial\delta(x,t)}{\partial
t}+\nabla_{\sigma}(w^{2}\nabla_{\sigma}\nabla_{\sigma}^{2}\delta(x,t))\right)
+Q_{1}[u,\delta]=f(x,t),(x,t)\in\Omega_{T}, \label{8.10}%
\end{equation}

\begin{equation}
\frac{\partial u}{\partial\overrightarrow{n}}-A(x,t)\frac{\partial\delta
}{\partial\overrightarrow{n}}+Q_{2}[u,\delta]=g(x,t),\quad(x,t)\in\Gamma_{T},
\label{8.11}%
\end{equation}

\begin{equation}
u(x,t)=\varphi(x,t),\quad(x,t)\in\Gamma_{T}, \label{8.12}%
\end{equation}

\begin{equation}
u(x,0)\equiv0,x\in\overline{\Omega},\quad\delta(\omega,0)\equiv0,\omega
\in\Gamma,\quad\label{8.12+1}%
\end{equation}

\begin{equation}
\delta(x,t)=E\delta(\omega,t). \label{8.12+2}%
\end{equation}
Here $\overrightarrow{n}$ is the outward normal to $\Gamma
$,\ $E:C_{0}^{2+\gamma/2,1+\frac{\gamma}{4}}(\Gamma_{T})\rightarrow
C_{2,\gamma/2,0}^{4+\gamma,\frac{4+\gamma}{4}}(\overline{\Omega}_{T})$ is some
fixed extension operator, $Q_{1}[u,\delta]$ and $Q_{2}[u,\delta]$ are linear
expressions of the form%

\begin{equation}
Q_{1}[u,\delta]=q(x,t)\frac{\partial\delta(x,t)}{\partial t}+%
%TCIMACRO{\dsum \limits_{|\beta|=3}}%
%BeginExpansion
{\displaystyle\sum\limits_{|\beta|=3}}
%EndExpansion
q_{\beta}^{(1)}(x,t)d^{\frac{3}{2}}(x)D_{x}^{\beta}\delta+%
%TCIMACRO{\dsum \limits_{|\beta|=2}}%
%BeginExpansion
{\displaystyle\sum\limits_{|\beta|=2}}
%EndExpansion
q_{\beta}^{(2)}(x,t)d^{\frac{1}{2}}(x)D_{x}^{\beta}\delta+ \label{8.15}%
\end{equation}

\[
+%
%TCIMACRO{\dsum \limits_{|\beta|\leq1}}%
%BeginExpansion
{\displaystyle\sum\limits_{|\beta|\leq1}}
%EndExpansion
q_{\beta}^{(3)}(x,t)D_{x}^{\beta}\delta+%
%TCIMACRO{\dsum \limits_{|\beta|\leq1}}%
%BeginExpansion
{\displaystyle\sum\limits_{|\beta|\leq1}}
%EndExpansion
q_{\beta}^{(4)}(x,t)D_{x}^{\beta}u,
\]

\begin{equation}
Q_{2}[u,\delta]=%
%TCIMACRO{\dsum \limits_{|\beta|=1}}%
%BeginExpansion
{\displaystyle\sum\limits_{|\beta|=1}}
%EndExpansion
b_{\beta}^{(1)}(x,t)D_{x}^{\beta}u+%
%TCIMACRO{\dsum \limits_{|\beta|=1}}%
%BeginExpansion
{\displaystyle\sum\limits_{|\beta|=1}}
%EndExpansion
b_{\beta}^{(2)}(x,t)D_{x}^{\beta}\delta+b^{(3)}\delta. \label{8.16}%
\end{equation}
The coefficients in expressions
\eqref{8.15},
\eqref{8.16} have the properties%

\begin{equation}
q,q_{\beta}^{(i)}\in C_{\gamma/2}^{\gamma,\gamma/4}(\overline{\Omega}%
_{T}),\quad q(x,0)\equiv0,x\in\overline{\Omega}, \label{8.17}%
\end{equation}

\begin{equation}
b_{\beta}^{(i)},b^{(3)}\in C^{1+\gamma/2,1/2+\gamma/4}(\Gamma_{T}),\quad
b_{\beta}^{(i)}(x,0)\equiv0,i=1,2,x\in\Gamma, \label{8.18}%
\end{equation}
and the coefficient $A(x)$ satisfies%

\begin{equation}
A(x,t)\in C_{\gamma/2}^{\gamma,\gamma/4}(\overline{\Omega}_{T}),\quad0<\nu\leq
A(x,t). \label{8.19}%
\end{equation}
About the given functions $f$, $g$, $\varphi$ we suppose that%

\begin{equation}
f\in C_{\gamma/2,0}^{\gamma,\frac{\gamma}{4}}(\overline{\Omega}_{T}),\quad
g\in C_{0}^{1+\gamma/2,1/2+\frac{\gamma}{4}}(\Gamma_{T}),\quad\varphi\in
C_{0}^{2+\gamma/2,1+\frac{\gamma}{4}}(\Gamma_{T}). \label{8.20}%
\end{equation}
It is important that the Frechet derivatives $F_{1}^{\prime
}(0)[u,\delta]$ and $F_{2}^{\prime}(0)[u,\delta]$ from relations
\eqref{004.24},
\eqref{004.41} have exactly the form of the left hand sides of
\eqref{8.10},
\eqref{8.11}. As it is applied
to these derivatives, we have, for example,%

\[
A(x,t)=\frac{\partial w(x,t)}{\partial\lambda},q(x,t)=\frac{\partial
w(x,t)}{\partial\lambda}\left[  (1+\sigma_{\lambda})^{-1}-1\right]
\]
and analogously for other coefficients with the taking into account that
$\sigma(x,0)\equiv0$.

We explain also the factors $d^{\frac{3}{2}}(x)$ and $d^{\frac{1}{2}}(x)$ in
\eqref{8.15}. Consider, for example, a term
from the definition of $R_{1}[\delta]$ in
\eqref{004.36} for $|\beta|=3$, $|\alpha|=1$%

\[%
%TCIMACRO{\dsum \limits_{|\alpha|=1}}%
%BeginExpansion
{\displaystyle\sum\limits_{|\alpha|=1}}
%EndExpansion
a_{\alpha,\beta}^{(1)}w^{2}D_{x}^{\alpha}\left(  \frac{\partial w}%
{\partial\lambda}\right)  D^{\beta}\delta=
\]

\[
=%
%TCIMACRO{\dsum \limits_{|\alpha|=1}}%
%BeginExpansion
{\displaystyle\sum\limits_{|\alpha|=1}}
%EndExpansion
a_{\alpha,\beta}^{(1)}\left(  \frac{w}{d(x)}\right)  ^{2}\left[  d^{\frac
{1}{2}}(x)D_{x}^{\alpha}\left(  \frac{\partial w}{\partial\lambda}\right)
\right]  d^{\frac{3}{2}}(x)D^{\beta}\delta\equiv q_{\beta}^{(1)}%
(x,t)d^{\frac{3}{2}}(x)D_{x}^{\beta}\delta.
\]
Here the expression $w/d(x)$ is considered as in \eqref{004.18}, \eqref{004.19} and the terms
$d^{\frac{1}{2}}(x)D_{x}^{\alpha}\left(  \frac{\partial w}{\partial\lambda
}\right)  $ are
considered on the base of Lemma \ref{L.2.1} and this gives
\eqref{8.17}.

\begin{lemma}
\label{L.8.1}

Expressions $Q_{1}[u,\delta]$ and $Q_{2}[u,\delta]$ satisfy with some
$\delta>0$%

\begin{equation}
|Q_{1}[u,\delta]|_{\gamma/2,\overline{\Omega}_{T}}^{(\gamma,\gamma/4)}\leq
CT^{\delta}\left(  |u|_{2,\gamma/2,\overline{\Omega}_{T}}^{(4+\gamma
,\frac{4+\gamma}{4})}+|\delta|_{C^{2+\gamma/2,1+\gamma/4}(\Gamma_{T})}\right)
, \label{8.21}%
\end{equation}

\begin{equation}
|Q_{2}[u,\delta]|_{C^{1+\gamma/2,1/2+\gamma/4}(\Gamma_{T})}\leq CT^{\delta
}\left(  |u|_{2,\gamma/2,\overline{\Omega}_{T}}^{(4+\gamma,\frac{4+\gamma}%
{4})}+|\delta|_{C^{2+\gamma/2,1+\gamma/4}(\Gamma_{T})}\right)  . \label{8.22}%
\end{equation}
\end{lemma}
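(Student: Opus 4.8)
The plan is to estimate each term in the explicit expressions \eqref{8.15}, \eqref{8.16} separately, using the structural properties \eqref{8.17}--\eqref{8.19} of the coefficients (in particular the vanishing at $t=0$) together with the interpolation/smallness-in-$T$ inequalities already established, namely \eqref{2.3}--\eqref{2.4.0} of Lemma \ref{L.2.1}, the $C_0$-type inequalities \eqref{0000001}, \eqref{0000001.1} of Proposition \ref{P.2.2}, the inequality \eqref{C0} for standard H\"older classes with zero initial data, the multiplicative inequality \eqref{2.5555555}, and the trace estimates of Proposition \ref{P.2.3}. The guiding observation is that $Q_1$ and $Q_2$ contain only \emph{lower order} terms relative to the leading operators in \eqref{8.10}, \eqref{8.11}: every derivative of $u$ appearing in $Q_1$ has order $\le 1$, every derivative of $\delta$ appearing in $Q_1$ is either $\partial_t\delta$ with a coefficient $q$ vanishing at $t=0$, or a spatial derivative of order $\le 3$ carrying the compensating weight $d^{3/2}(x)$ or $d^{1/2}(x)$, and $Q_2$ involves only first order traces with coefficients $b_\beta^{(i)}$ that vanish at $t=0$ (except $b^{(3)}\delta$, which is of order zero). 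So in each term there is a factor that can be made small by shrinking $T$.

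Concretely, for $Q_1[u,\delta]$ I would argue term by term. For $q(x,t)\partial_t\delta$: since $q(x,0)\equiv 0$ and $q\in C_{\gamma/2}^{\gamma,\gamma/4}(\overline{\Omega}_T)$, the inequality \eqref{C0} (applied to $q$, using $q(x,0)\equiv0$) gives $|q|_{\gamma/2,\overline{\Omega}_T}^{(\gamma,\gamma/4)}\le CT^{\delta}$, and $\partial_t\delta\in C^{\gamma/2,\gamma/4}(\Gamma_T)$ with norm controlled by $|\delta|_{C^{2+\gamma/2,1+\gamma/4}(\Gamma_T)}$ after extension by $E$ and the composition/trace considerations; then \eqref{2.5555555} yields the product estimate with the extra $T^{\gamma/4}$ factor. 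For the terms $q_\beta^{(1)}(x,t)\,d^{3/2}(x)D_x^\beta\delta$ with $|\beta|=3$ and $q_\beta^{(2)}(x,t)\,d^{1/2}(x)D_x^\beta\delta$ with $|\beta|=2$: here $\delta=E\delta(\omega,t)$ lies in $C_{2,\gamma/2,0}^{4+\gamma,\frac{4+\gamma}{4}}(\overline{\Omega}_T)$ with norm $\le C|\delta|_{C^{2+\gamma/2,1+\gamma/4}(\Gamma_T)}$ by Proposition \ref{P.2.4}, and Lemma \ref{L.2.1} (inequalities \eqref{2.3}, \eqref{2.4}) gives precisely $|d^{3/2}D_x^\beta\delta|_{\gamma/2,\overline{\Omega}_T}^{(\gamma,\gamma/4)}\le CT^{\delta}\|\delta\|$ and $|d^{1/2}D_x^\beta\delta|_{\gamma/2,\overline{\Omega}_T}^{(\gamma,\gamma/4)}\le CT^{\delta}\|\delta\|$; multiplying by the bounded coefficients $q_\beta^{(i)}\in C_{\gamma/2}^{\gamma,\gamma/4}$ via \eqref{2.5555555} preserves the $T$-power. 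The remaining terms $q_\beta^{(3)}D_x^\beta\delta$ and $q_\beta^{(4)}D_x^\beta u$ with $|\beta|\le 1$ are handled by \eqref{2.4.0} of Lemma \ref{L.2.1} applied to $\delta$ and to $u$ (both in the spaces with zero initial data), again giving $CT^{\delta}(\|u\|+\|\delta\|)$. Summing gives \eqref{8.21}.

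For $Q_2[u,\delta]$ the strategy is the same but on $\Gamma_T$. The terms $b_\beta^{(1)}D_x^\beta u$ and $b_\beta^{(2)}D_x^\beta\delta$ with $|\beta|=1$: by Proposition \ref{P.2.3} the first order traces of $u$ (and of $\delta$ after extension) lie in $C^{1+\gamma/2,1/2+\gamma/4}(\Gamma_T)$ with norm bounded by $|u|_{2,\gamma/2,\overline{\Omega}_T}^{(4+\gamma)}$ respectively $|\delta|_{C^{2+\gamma/2,1+\gamma/4}(\Gamma_T)}$; the coefficients $b_\beta^{(i)}$ vanish at $t=0$, so \eqref{C0} on $\Gamma_T$ gives $|b_\beta^{(i)}|_{C^{1+\gamma/2,1/2+\gamma/4}(\Gamma_T)}\le CT^{\delta}$, and the product estimate in $C^{1+\gamma/2,1/2+\gamma/4}(\Gamma_T)$ (the standard unweighted analogue of \eqref{2.5555555}) yields the bound with a $T$-power. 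The term $b^{(3)}\delta$ of order zero is estimated using $\delta(\omega,0)\equiv0$: applying \eqref{C0} to $\delta$ itself, $|\delta|_{C^{1+\gamma/2,1/2+\gamma/4}(\Gamma_T)}\le CT^{\delta}|\delta|_{C^{2+\gamma/2,1+\gamma/4}(\Gamma_T)}$, while $b^{(3)}\in C^{1+\gamma/2,1/2+\gamma/4}(\Gamma_T)$ is bounded; multiplying gives the claim. Adding up yields \eqref{8.22}.

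The main obstacle, and the only place requiring genuine care rather than bookkeeping, is the bound for the weighted third-order term $q_\beta^{(1)}d^{3/2}D_x^\beta\delta$: one must check that the factor $d^{3/2}(x)$ (rather than the $d^2(x)$ attached to the fourth derivatives in the definition of the space) is exactly enough to land $D_x^\beta\delta$, $|\beta|=3$, in the weighted H\"older class $C_{\gamma/2}^{\gamma,\gamma/4}(\overline{\Omega}_T)$ with a gain of a positive power of $T$. This is precisely what inequality \eqref{2.3} of Lemma \ref{L.2.1} provides, so the obstacle is resolved by invoking that lemma; the point is simply to recognize that the factors $d^{3/2}$ and $d^{1/2}$ in \eqref{8.15} were inserted (as explained in the paragraph preceding the lemma) so that Lemma \ref{L.2.1} applies verbatim. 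A secondary routine point is that all products must be taken in the weighted space $C_{\gamma/2}^{\gamma,\gamma/4}$, for which one uses \eqref{2.5555555}; no new difficulty arises there. Once these observations are in place the two estimates follow by summing finitely many terms, each of the form (bounded coefficient) $\times$ ($T^{\delta}$-small quantity).
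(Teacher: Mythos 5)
Your proposal is correct and follows essentially the same route as the paper, whose proof is just a two-line sketch citing exactly the tools you deploy: term-by-term estimates via Lemma \ref{L.2.1} (inequalities \eqref{2.3}--\eqref{2.4.0}), the product inequality \eqref{2.5555555}, and the vanishing of the coefficients at $t=0$ from \eqref{8.17}, \eqref{8.18}, with the weights $d^{3/2}$, $d^{1/2}$ inserted precisely so that Lemma \ref{L.2.1} applies. The only cosmetic remark is that the smallness in $T$ for the products with $q$, $b_\beta^{(i)}$ comes from \eqref{2.5555555} (and the sup-norm smallness implied by vanishing at $t=0$) rather than from \eqref{C0} applied to the coefficient in its own class, but since you invoke \eqref{2.5555555} as well, this does not affect the argument.
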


\begin{proof}

The proof is obtained by the direct estimates of each term in the definitions
of $Q_{1}[u,\delta]$
and $Q_{2}[u,\delta]$ on the base of inequalities of Lemma \ref{L.2.1} and
\eqref{2.5555555} with the taking into account%
\eqref{8.17},
\eqref{8.18}.

\end{proof}

\begin{proposition}
\label{P.8.1}

Let in relations
\eqref{8.10} and
\eqref{8.10} $Q_{1}[u,\delta]\equiv0$ and $Q_{2}[u,\delta]\equiv0$. Then problem%
\eqref{8.10}-
\eqref{8.12+2} has the unique solution $(u,\delta)$ for some $T\leq T_{0}$ and
\[
|u|_{2,\gamma/2,\overline{\Omega}_{T}}^{(4+\gamma,\frac{4+\gamma}{4})}%
+|\delta|_{C^{2+\gamma/2,1+\gamma/4}(\Gamma_{T})}\leq
\]
\begin{equation}
\leq C_{T}\left(
|f|_{\gamma/2,\overline{\Omega}_{T}}^{(\gamma,\gamma/4)}+|\varphi
|_{C^{2+\gamma/2,1+\gamma/4}(\Gamma_{T})}+|g|_{C^{1+\gamma/2,1/2+\gamma
/4}(\Gamma_{T})}\right)  . \label{8.23}%
\end{equation}

\end{proposition}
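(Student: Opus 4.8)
The plan is to treat \eqref{8.10}--\eqref{8.12+2} (with $Q_{1}\equiv Q_{2}\equiv0$) as a coupled version of the scalar problems solved in Theorems \ref{T.7.1} and \ref{T.7.2}, and to establish its unique solvability by the standard Schauder parametrix (regularizator) technique adapted to the weighted classes, exactly as those theorems are derived from the model problems of Section \ref{s6}. The organizing observation is the one already used in the half-space model problem: if $A$ were constant, the auxiliary function $v:=u-A\delta$ would solve the scalar problem
\[
\frac{\partial v}{\partial t}+\nabla_{\sigma}(w^{2}\nabla_{\sigma}\nabla_{\sigma}^{2}v)=f\ \text{in}\ \Omega_{T},\qquad \frac{\partial v}{\partial\overrightarrow{n}}=g\ \text{on}\ \Gamma_{T},\qquad v(x,0)\equiv0,
\]
which is solved by Theorem \ref{T.7.1}; then $\delta$ is recovered from the Dirichlet condition $u|_{\Gamma_{T}}=\varphi$ via $\delta|_{\Gamma_{T}}=(\varphi-v|_{\Gamma_{T}})/A$, extended by $\delta=E(\delta|_{\Gamma_{T}})$, and $u=v+A\delta$. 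For variable $A$ this computation produces, in addition, commutator terms $\nabla_{\sigma}(w^{2}\nabla_{\sigma}\nabla_{\sigma}^{2}(A\delta))-A\nabla_{\sigma}(w^{2}\nabla_{\sigma}\nabla_{\sigma}^{2}\delta)+(\partial_{t}A)\delta$ in the equation and $(\partial A/\partial\overrightarrow{n})\delta$ in the boundary condition. Because in the application $A=\partial w/\partial\lambda$ inherits the weighted Hölder regularity of a first derivative of $w\in C^{4+\gamma,\frac{4+\gamma}{4}}_{2,\gamma/2}(\overline{\Omega}_{T})$, and because the weights $d^{3/2}$, $d^{1/2}$ displayed in \eqref{8.15} appear precisely here (compare the discussion after \eqref{8.19}, using Lemma \ref{L.2.5} and Lemma \ref{L.2.1}), these are terms of exactly the $Q_{1}$-, $Q_{2}$-type, hence by \eqref{8.21}, \eqref{8.22} they have norms bounded by $CT^{\delta}\big(|u|^{(4+\gamma,\frac{4+\gamma}{4})}_{2,\gamma/2,\overline{\Omega}_{T}}+|\delta|_{C^{2+\gamma/2,1+\gamma/4}(\Gamma_{T})}\big)$.

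Concretely I would first reduce to homogeneous boundary data: subtracting from $u$ an extension (Proposition \ref{P.2.4}, with the commuting property \eqref{00002.0001}) of $\varphi$, and from $g$ a further extension, and absorbing the resulting terms into $f$ and $g$, one may assume $\varphi\equiv0$ and $g\equiv0$, all functions lying in the spaces with zero initial trace. Next, fix a finite partition of unity $\{\eta_{k}\}$ subordinate to small coordinate patches. In a patch meeting $\Gamma$, straighten the boundary and freeze the coefficients ($A$ at a point, $w^{2}$ and the entries of $\mathcal{E}_{\sigma}$ at their values, $\sigma\equiv0$, $w\equiv x_{N}$), obtaining precisely the half-space model problem \eqref{8.1}--\eqref{8.5}; by Lemma \ref{L.8.1} — whose proof is constructive and therefore also yields existence — this model problem is uniquely solvable with the uniform estimate \eqref{8.7}. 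In an interior patch the operator is non-degenerate and one uses the classical solvability and estimates for fourth-order parabolic systems (cf. \cite{SolParab}, \cite{LSU}). Patching the local solution operators produces a regularizator $\mathcal{R}$, bounded from the data space into $C^{4+\gamma,\frac{4+\gamma}{4}}_{2,\gamma/2,0}(\overline{\Omega}_{T})\times C^{2+\gamma/2,1+\gamma/4}_{0}(\Gamma_{T})$, with $\mathcal{L}\mathcal{R}=\mathrm{Id}+\mathcal{K}$, where $\mathcal{L}$ denotes the operator of \eqref{8.10}--\eqref{8.12+2}. The remainder $\mathcal{K}$ collects (i) the commutators of $\mathcal{L}$ with the cut-offs $\eta_{k}$, which are of lower order in the weighted scale, and (ii) the differences between the frozen and the true coefficients together with the commutator and boundary terms described above. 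By the interpolation and $T^{\delta}$-estimates cited (Proposition \ref{P.2.2}, Lemma \ref{L.2.1}, \eqref{2.3}--\eqref{2.4.0}, \eqref{2.15}--\eqref{2.22}, \eqref{C0}), taking the patches fine and $T\le T_{0}$ small makes $\|\mathcal{K}\|\le1/2$; then $\mathrm{Id}+\mathcal{K}$ is invertible by a Neumann series, and $\mathcal{R}(\mathrm{Id}+\mathcal{K})^{-1}$ solves the problem, with \eqref{8.23} following from the boundedness of $\mathcal{R}$ and of $(\mathrm{Id}+\mathcal{K})^{-1}$.

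Uniqueness then follows from the a priori estimate produced by the same construction (equivalently, by running the $v=u-A\delta$ reduction and Theorems \ref{T.7.1}, \ref{T.7.2}): a solution of the homogeneous problem satisfies $\|(u,\delta)\|\le CT^{\delta}\|(u,\delta)\|$ for $T\le T_{0}$, hence $(u,\delta)\equiv0$. The main obstacle is precisely the estimate of the remainder $\mathcal{K}$: one must show that the coupling between $u$ and $\delta$ and the freezing of the non-constant coefficient $A=\partial w/\partial\lambda$ generate only terms that are genuinely small for small $T$. This is where the weights $d^{3/2}$, $d^{1/2}$ must be tracked carefully — so that products such as $(d^{1/2}D^{2}w)(d^{3/2}D^{3}\delta)$ land in $C^{\gamma,\gamma/4}_{\gamma/2}(\overline{\Omega}_{T})$ — and where the zero-initial-data hypothesis is indispensable, being exactly what converts all the emerging lower-order weighted seminorms into $T^{\delta}$-small quantities.
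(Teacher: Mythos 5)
Your overall strategy (localize, use the half-space coupled model problem of Lemma \ref{L.8.1} near $\Gamma$, classical non-degenerate theory inside, absorb the errors using the $T^{\delta}$-smallness that the zero-initial-trace spaces provide) coincides with how the paper obtains the a priori estimate \eqref{8.23}; but for \emph{existence} the paper takes a different and simpler route than your regularizator. It first mollifies the coefficient, $A\to A_{\varepsilon}\in C^{\infty}$, then makes the \emph{global} substitution $v=u-A_{\varepsilon}\delta$ (formula \eqref{8.24.1}), which turns \eqref{8.10}--\eqref{8.12+1} into the Neumann problem \eqref{8.25}--\eqref{8.27} for $v$ alone with lower-order remainders $S_{1}[\delta],S_{2}[\delta]$ of $Q$-type; this problem is already solvable by Theorem \ref{T.7.1}, and $\delta$ is then recovered from the Dirichlet trace through the map $M\delta=(\varphi-v|_{\Gamma_{T}})/A_{\varepsilon}$ in \eqref{8.29}, which is a contraction for small $T$ by \eqref{7.22} and \eqref{8.28}; finally $\varepsilon\to0$ using the uniform estimate \eqref{8.23}. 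What the paper's route buys is that no second parametrix has to be built for the coupled system: in particular the fixed extension operator in \eqref{8.12+2} never has to be localized, whereas in your patching the coupling term $A\nabla_{\sigma}(w^{2}\nabla_{\sigma}\nabla_{\sigma}^{2}E\delta)$ is \emph{top order} in $\delta$, so the mismatch between the extension used in each local model solve and the global $E$ is not automatically a small or lower-order contribution to $\mathcal{K}$; your argument would need to fix this explicitly (e.g.\ by running the local model problems with the restricted global extension, so that only cut-off commutators remain), and this is the one genuinely delicate point your outline passes over.

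A second point to correct: under hypothesis \eqref{8.19} the coefficient $A$ is only of class $C^{\gamma,\gamma/4}_{\gamma/2}(\overline{\Omega}_{T})$, so the commutator terms you write in the first paragraph, $\nabla_{\sigma}(w^{2}\nabla_{\sigma}\nabla_{\sigma}^{2}(A\delta))-A\nabla_{\sigma}(w^{2}\nabla_{\sigma}\nabla_{\sigma}^{2}\delta)+(\partial_{t}A)\delta$, involve derivatives of $A$ that need not exist, and even for $A=\partial w/\partial\lambda$ the product $A\,E\delta$ fails to lie in $C^{4+\gamma,\frac{4+\gamma}{4}}_{2,\gamma/2}$ (it would require fifth derivatives of $w$); so the claim that these are ``exactly of $Q_{1}$-, $Q_{2}$-type'' covered by \eqref{8.21}--\eqref{8.22} is not correct as stated, and the same objection applies to your parenthetical suggestion to prove uniqueness ``by running the $v=u-A\delta$ reduction.'' This is precisely the obstruction that the paper's mollification of $A$ is designed to remove. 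Your main construction is not destroyed by this, because in the parametrix you freeze $A$ pointwise rather than differentiate it, and uniqueness follows from the a priori estimate alone; but as written the motivating reduction and the uniqueness remark rely on regularity of $A$ that the proposition does not assume.
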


\begin{proof}

We start with estimate
\eqref{8.23} for a possible solution.
This estimate is obtained by the standard Schauder technique on the base of
Lemma
\ref{L.8.1} about the model problem for neighborhood of the boundary
$\partial\Omega$. The model problems for inner points of $\Omega$ outside of some neighborhood
of the boundary $\partial\Omega$
correspond to non-degenerate case and the estimates of solutions to such problems
can be found in, for example,
\cite{SolParab}.
Therefore we have to prove just the existence of the solution.

Denote by $A_{\varepsilon}(x,t)\in C^{\infty}(\overline{\Omega}_{T})$ the
mollified function $A(x,t)$ with%

\begin{equation}
|A(x,t)-A_{\varepsilon}(x,t)|_{\gamma_{1}/2,\overline{\Omega}_{T}}%
^{(\gamma_{1},\gamma_{1}/4)}\rightarrow0,\quad\varepsilon\rightarrow0.
\label{8.24}%
\end{equation}
The way to obtain such a  function $A_{\varepsilon}(x,t)$ is
described in Lemma
\ref{L.6.4}.
Consider problem
\eqref{8.10}-
\eqref{8.12+2} with $A_{\varepsilon}(x,t)$ instead of $A(x,t)$.
As in Lemma
\ref{L.8.1} introduce the new unknown function

\begin{equation}
v(x,t)=u(x,t)-A_{\varepsilon}(x,t)\delta(x,t). \label{8.24.1}%
\end{equation}
Then for the function $v(x,t)$ we have the Neumann problem%

\begin{equation}
\frac{\partial v(x,t)}{\partial t}+\nabla_{\sigma}(w^{2}\nabla_{\sigma}%
\nabla_{\sigma}^{2}v)+S_{1}[\delta]=f(x,t),(x,t)\in\Omega_{T}, \label{8.25}%
\end{equation}

\begin{equation}
\frac{\partial v}{\partial\overrightarrow{n}}+S_{2}[\delta]=g(x,t),\quad
(x,t)\in\Gamma_{T}, \label{8.26}%
\end{equation}

\begin{equation}
v(x,0)\equiv0,x\in\overline{\Omega},\quad\label{8.27}%
\end{equation}
where $S_{1}[\delta]$ and $S_{2}[\delta]$ are some expressions with lower
order terms and they are completely analogous
to $Q_{1}[u,\delta]$ and $Q_{2}[u,\delta]$. Similar to
\eqref{8.21},
\eqref{8.22} we have%

\begin{equation}
|S_{1}[\delta]|_{\gamma/2,\overline{\Omega}_{T}}^{(\gamma,\gamma/4)}%
+|S_{2}[\delta]|_{C^{1+\gamma/2,1/2+\gamma/4}(\Gamma_{T})}\leq CT^{\delta
}|\delta|_{C^{2+\gamma/2,1+\gamma/4}(\Gamma_{T})}. \label{8.28}%
\end{equation}
Define a linear operator $M:C^{1+\gamma/2,1/2+\gamma/4}(\Gamma_{T})\rightarrow
C^{1+\gamma/2,1/2+\gamma/4}(\Gamma_{T})$ in the follwing way. We substitute
a given $\delta\in C^{1+\gamma/2,1/2+\gamma/4}(\Gamma_{T})$ in $S_{1}[\delta]$
and $S_{2}[\delta]$ and on the base of Theorem
\ref{T.7.1} we find
the solution $v(x,t)$ of problem
\eqref{8.25}-
\eqref{8.27}. Then from
\eqref{8.24} and
\eqref{8.12}
we define%

\begin{equation}
M\delta\equiv\frac{\left(  \varphi(x,t)-v(x,t)|_{\Gamma_{T}}\right)
}{A_{\varepsilon}(x,t)}. \label{8.29}%
\end{equation}
From estimate
\eqref{7.22} and
\eqref{8.28} it follows that the operator $M$ is a linear contraction for
a sufficiently small $T>0$ and thus it has the unique fixed point
$\delta_{\varepsilon}(x,t)$. This gives us the unknown
function $\delta_{\varepsilon}(x,t)$. The unknown function $u_{\varepsilon
}(x,t)$ is then given by (by virtue of
\eqref{8.24.1})%

\[
u_{\varepsilon}(x,t)=v(x,t)+A_{\varepsilon}(x,t)\delta_{\varepsilon}(x,t).
\]
This gives us the solution $(u_{\varepsilon}(x,t),\delta_{\varepsilon}(x,t))$
for a smoothed function $A_{\varepsilon}(x,t)$.
Now the solution of the original problem is obtained by letting $\varepsilon
\rightarrow0$ on the base of
estimate
\eqref{8.23} in view of
\eqref{8.24}.

\end{proof}

\begin{theorem}
\label{T.8.1}

Under conditions
\eqref{8.17}-
\eqref{8.20} problem
\eqref{8.10}-
\eqref{8.12+2} has
the unique solution $u(x,t)\in C_{2,\gamma/2,0}^{4+\gamma,\frac{4+\gamma}{4}%
}(\overline{\Omega}_{T})$ , $\delta(x,t)\in C_{0}^{2+\gamma/2,1+\frac{\gamma
}{4}}(\Gamma_{T})$ for a sufficiently small $T\leq T_{0}$
and%
\[
|u|_{2,\gamma/2,\overline{\Omega}_{T}}^{(4+\gamma,\frac{4+\gamma}{4})}%
+|\delta|_{C^{2+\gamma/2,1+\gamma/4}(\Gamma_{T})}\leq
\]
\begin{equation}
\leq C_{T}\left(
|f|_{\gamma/2,\overline{\Omega}_{T}}^{(\gamma,\gamma/4)}+|\varphi
|_{C^{2+\gamma/2,1+\gamma/4}(\Gamma_{T})}+|g|_{C^{1+\gamma/2,1/2+\gamma
/4}(\Gamma_{T})}\right)  . \label{8.30}%
\end{equation}

\end{theorem}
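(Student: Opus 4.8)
The plan is to view problem \eqref{8.10}--\eqref{8.12+2} as a small, lower order perturbation of the principal linear problem already solved in Proposition \ref{P.8.1}, the perturbation being the operators $Q_{1}[u,\delta]$ and $Q_{2}[u,\delta]$ whose norms carry the gain $T^{\delta}$ furnished by Lemma \ref{L.8.1}. Concretely, I would work in the Banach space $X$ consisting of pairs $(u,\delta)$ with $u\in C_{2,\gamma/2,0}^{4+\gamma,\frac{4+\gamma}{4}}(\overline{\Omega}_{T})$ and $\delta\in C_{0}^{2+\gamma/2,1+\frac{\gamma}{4}}(\Gamma_{T})$ (the interior extension being always $\delta(x,t)=E\delta(\omega,t)$), normed by $\|(u,\delta)\|_{X}=|u|_{2,\gamma/2,\overline{\Omega}_{T}}^{(4+\gamma,\frac{4+\gamma}{4})}+|\delta|_{C^{2+\gamma/2,1+\gamma/4}(\Gamma_{T})}$, and reduce the theorem to a contraction-mapping argument in $X$.

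The steps would be as follows. First I would define a map $\Phi\colon X\to X$ by letting $\Phi(u,\delta)=(\widetilde{u},\widetilde{\delta})$ be the solution, produced by Proposition \ref{P.8.1}, of problem \eqref{8.10}--\eqref{8.12+2} with $Q_{1}\equiv Q_{2}\equiv0$ and with data $f-Q_{1}[u,\delta]$, $g-Q_{2}[u,\delta]$, $\varphi$. To see that $\Phi$ is well defined one checks that $Q_{1}[u,\delta]$ and $Q_{2}[u,\delta]$ still lie in the ``zero'' classes of \eqref{8.20}: this follows from the vanishing of the coefficients at $t=0$ in \eqref{8.17}, \eqref{8.18} together with the initial conditions \eqref{8.12+1}, so that Proposition \ref{P.8.1} indeed applies. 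Clearly $(u,\delta)$ solves \eqref{8.10}--\eqref{8.12+2} if and only if it is a fixed point of $\Phi$. Since $Q_{1}$, $Q_{2}$ are linear in $(u,\delta)$, the map $\Phi$ is affine, and combining estimate \eqref{8.23} of Proposition \ref{P.8.1} with the bounds \eqref{8.21}, \eqref{8.22} of Lemma \ref{L.8.1} gives, for any $(u_{i},\delta_{i})\in X$,
\[
\|\Phi(u_{1},\delta_{1})-\Phi(u_{2},\delta_{2})\|_{X}\leq C_{T}\,C\,T^{\delta}\,\|(u_{1},\delta_{1})-(u_{2},\delta_{2})\|_{X}.
\]
Choosing $T\leq T_{0}$ so small that $C_{T}\,C\,T^{\delta}\leq 1/2$, the map $\Phi$ is a contraction, hence has a unique fixed point in $X$; this simultaneously yields existence and uniqueness of the solution. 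For the a priori bound \eqref{8.30} I would apply Proposition \ref{P.8.1} to the solution itself, estimate the extra right hand side terms $Q_{1}[u,\delta]$, $Q_{2}[u,\delta]$ by $C_{T}\,C\,T^{\delta}\|(u,\delta)\|_{X}$ via \eqref{8.21}, \eqref{8.22}, and absorb them into the left hand side.

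The main obstacle I anticipate is the dependence of the constant $C_{T}$ from \eqref{8.23} on $T$: the argument requires that $C_{T}\,T^{\delta}$ can be made small, i.e. that $C_{T}$ does not blow up as $T\to 0$. This is handled exactly as in the proof of the model estimate, namely by first establishing the Schauder-type estimate on a short time interval whose length is independent of the data (using the interpolation inequalities \eqref{2.3}--\eqref{2.4.0}, as in Lemma \ref{L.8.1}) and then iterating it finitely many times over $[0,T]$; on a fixed spatial domain $C_{T}$ then stays bounded for $T$ in a neighbourhood of $0$. The remaining verifications — that $\Phi$ respects the boundary value $u|_{\Gamma_{T}}=\varphi$ and the constraint $\delta=E\delta(\omega,t)$ (both built into Proposition \ref{P.8.1}), and that no compatibility conditions are lost since all data belong to the zero classes — are routine, so the proof is essentially a direct application of the contraction principle on top of Proposition \ref{P.8.1} and Lemma \ref{L.8.1}.
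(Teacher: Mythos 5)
Your proposal is correct and is essentially the paper's own argument: the paper likewise defines the map $(u,\delta)\mapsto(\overline{u},\overline{\delta})$ by moving $Q_{1}[u,\delta]$, $Q_{2}[u,\delta]$ to the right-hand side, solving via Proposition \ref{P.8.1}, and concluding it is a contraction for small $T$ from \eqref{8.23} together with \eqref{8.21}, \eqref{8.22}. Your extra remarks on the behaviour of $C_{T}$ as $T\to0$ and on the absorption argument for \eqref{8.30} are consistent with, and only slightly more detailed than, the paper's proof.
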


\begin{proof}

The proof resembles the proof of the previous proposition. Define a linear operator
$M$ from $C_{2,\gamma/2,0}^{4+\gamma,\frac{4+\gamma}{4}}(\overline{\Omega}%
_{T})\times C_{0}^{2+\gamma/2,1+\frac{\gamma}{4}}(\Gamma_{T})$ to itself in
the following way. We substitute a given
element $(u,\delta)$ in $Q_{1}[u,\delta]$ and $Q_{2}[u,\delta]$ and solve the
obtained problem
\eqref{8.10}-
\eqref{8.12+2}
on the base of Proposition
\ref{P.8.1}. We put the obtained solution $(\overline{u},\overline{\delta})$
as the value of
the operator $M$ at $(u,\delta)$, $(\overline{u},\overline{\delta})\equiv
M(u,\delta)$. From
\eqref{8.23} and
\eqref{8.21},
\eqref{8.22} it
follows that the operator $M$ is a linear contraction for a sufficiently small
$T>0$ and this
completes the proof of the theorem.
\end{proof}

\section{The Proof of Theorem
\ref{T.1.1}.}
\label{s9}

We now conclude the proof of Theorem
\ref{T.1.1}.

Let $\emph{B}_{r}$ be the ball from
\eqref{004.9} and consider on this ball the operator $F(\psi)$ from%
\eqref{004.10},
\eqref{004.12}. From Proposition
\ref{P.004.1} it follows that $F(\psi)$ is Frechet - continuously
differentiable on $\emph{B}_{r}$ and from Theorem
\ref{T.8.1} it follows that it's Frechet derivative
$F^{\prime}(0)[\psi]$ has the bounded inverse operator for a sufficiently
small $T\leq T_{0}$. Besides, relation%
\eqref{004.45} means that the value $||F(0)||$ can be made arbitrary small
for a sufficiently small $T\leq T_{0}$.
Thus, due to the Corollary
\ref{CIF} we conclude that for $T\leq T_{0}$ the equation $F(\psi)=0$ has
a solution $\psi_{0}\in\emph{B}_{r}$. The uniqueness of such element $\psi
_{0}\in\emph{B}_{r}$ for a sufficiently small $T\leq T_{0}$ is
proved exactly as in
\cite{D1}. According to the way of the construction of the operator $F(\psi)$ this
gives the unique smooth solution to problem
\eqref{1.1}-
\eqref{1.5} and proves Theorem
\ref{T.1.1}.

\end{document}